\documentclass[12pt,a4paper,twoside]{article}
\usepackage{amsmath}
\usepackage{amssymb}
\usepackage{dsfont}
\usepackage{graphicx}
\usepackage[utf8]{inputenc}
\usepackage[T1]{fontenc}
\usepackage{amsmath,amssymb,amsthm}
\usepackage{mathrsfs}
\usepackage{mathtools}
\usepackage{geometry}
\usepackage{hyperref}
\usepackage{cleveref}
\usepackage{xcolor}
\usepackage{enumitem}
\usepackage{url}
\usepackage{fancyhdr}
\usepackage{booktabs}
\usepackage{longtable}
\usepackage{float}
\usepackage{upgreek}
\usepackage{environ}
\usepackage[super]{nth}
\usepackage{hyperref}
\hypersetup{colorlinks, citecolor={black}, linkcolor={black}, urlcolor={black}}
\usepackage{cancel}
\usepackage{tikz}
\usetikzlibrary{patterns}
\NewEnviron{comment}{}
\theoremstyle{plain} 
\newtheorem{theorem}{Theorem}[section]
\newtheorem{lemma}[theorem]{Lemma}
\newtheorem{corollary}[theorem]{Corollary}
\newtheorem{proposition}[theorem]{Proposition}

\theoremstyle{definition} 
\newtheorem{definition}[theorem]{Definition}
\newtheorem*{notation}{Notation}
\newtheorem{remark}[theorem]{Remark}
\newtheorem{example}[theorem]{Example}

\usepackage{xparse} 

\newcounter{mytag}[section] 
\renewcommand{\themytag}{\thesection.\arabic{mytag}} 

\usepackage{xparse} 
\NewDocumentCommand{\mytag}{o}{%
    \refstepcounter{mytag}
    \tag{\themytag}%
    \IfNoValueF{#1}{\label{#1}}%
}

\setlist[enumerate]{label=(\roman*)}

\newcommand{\R}{\mathbb{R}}
\newcommand{\N}{\mathbb{N}}
\newcommand{\T}{\mathbb{T}}

\newcommand{\Z}{\boldsymbol{\mathcal{Z}}}
\newcommand{\Q}{\mathbb{Q}}
\newcommand{\X}{\boldsymbol{\mathcal{X}}}
\newcommand{\F}{\mathcal{F}}
\newcommand{\Y}{\boldsymbol{\mathcal{Y}}}
\newcommand{\U}{\boldsymbol{\mathcal{U}}}
\newcommand{\V}{\mathbb{V}}
\newcommand{\W}{\mathbf{W}}
\newcommand{\Wbar}{\overline{\mathbf{W}}}
\newcommand{\I}{\boldsymbol{\mathcal{I}}}

\renewcommand{\H}{\boldsymbol{\mathcal{H}}}

\let\oldupvarphi\upvarphi
\renewcommand{\upvarphi}{\boldsymbol{\oldupvarphi}}

\let\oldUpphi\Upphi
\renewcommand{\Upphi}{\boldsymbol{\oldUpphi}}

\newcommand{\remainder}{\mathfrak{R}}
\newcommand{\tildeY}{\Tilde{y}}

\newcommand{\conditionRDE}{(\mathfrak{C})}

\renewcommand{\epsilon}{\varepsilon}
\renewcommand{\phi}{\varphi}

\newcommand{\metricDynamicalSystem}{(\Omega,\mathcal{F},P,(\theta_t)_{t\in\R})}
\newcommand{\metricDynamicalSystemT}{(\Omega,\mathcal{F},P,(\theta_t)_{t\in\T})}

\newcommand{\toinf}{\rightarrow\infty}
\newcommand{\formallyEqual}{\approx^{\mathrm{f}}}
\newcommand{\FzeroPrimeZero}{F_0'(0)}
\newcommand{\FPrimeZero}{F'(0)}

\newcommand{\randomVariables}[1]{\mathrm{RV}(#1)}
\newcommand{\homogeneousPolynomials}[1]{H_{n,D}(#1)}
\newcommand{\driverRoughPath}[2]{\mathscr{C}^\alpha(#1,#2)}
\newcommand{\controlledRoughPath}[3]{\mathscr{D}^{2\alpha}_{#1}(#2,#3)}
\newcommand{\linearMaps}[2]{\mathscr{L}(#1,#2)}

\newcommand{\abs}[1]{\left| #1\right|}
\newcommand{\id}{\mathrm{id}}
\newcommand{\ad}[1]{\mathrm{ad}_{n}#1}

\newcommand{\expectationPathwise}[1]{\int_\Omega #1\,\mathrm{d}P(\omega)}
\newcommand{\norm}[2]{\left\|#1\right\|_{#2}}
\newcommand{\normCalpha}[1]{\norm{#1}{\mathscr{C}^\alpha}}
\newcommand{\normDalpha}[1]{\norm{#1}{\mathscr{D}^{2\alpha}_\W}}
\newcommand{\normDalphaOmega}[1]{\norm{#1}{\mathscr{D}^{2\alpha}_{\W(\omega)} }}
\newcommand{\spectrum}[1]{\mathrm{spec}\left(#1\right)}
\newcommand{\diam}[1]{\mathrm{diam}\left(#1\right) }

\newcommand{\projection}{\mathrm{pr}}
\newcommand{\range}{\mathrm{ran}}
\newcommand{\conv}{\mathrm{conv}}

\title{Normal Forms for Rough Differential Equations}
\author{Gideon Chiusole\thanks{Technical University of Munich. \texttt{g.chiusole@tum.de}}~, Christian Kuehn\thanks{Technical University of Munich. \texttt{ckuehn@ma.tum.de}} ~and Maximilian Semenowicz\thanks{Technical University of Munich. \texttt{maximilian.semenowicz@tum.de}}}
\date{June 12, 2026}

\begin{document}

\maketitle

\begin{abstract}
    We address the existence of normal forms for rough ordinary differential equations. We assume suitable smoothness and the hyperbolicity of an equilibrium point. In this context, we establish local formal equivalence of the two solution flows generated by a random nonlinear RDE and its linearized version. This provides the foundation for extending normal form theory to rough differential equations.
\end{abstract}
\bigskip
\noindent\textbf{Mathematics Subject Classification (2020):} 60L90, 60L20, 37G05, 60G15

\medskip
\noindent\textbf{Keywords:} rough differential equations, normal forms, homological equation, nonlinear coordinate transformation.



\tableofcontents

\section{Introduction}\label{sec:introduction}
Normal form theory, originally initiated in \cite{poincare1892methodes}, aims to simplify a given differential equation through a smooth coordinate transformation, ideally achieving linearity. We work locally in the phase space $\R^D$ near a fixed point at the origin. Concretely, given a smooth diffeomorphism $\phi:\R^D\to\R^D$ with $\phi(0)=0$, that we imagine as a map coming from a corresponding flow, the normal form problem is to find a coordinate transformation $H:\R^D\to\R^D$ with $H(0)=0$ and $\mathrm{D}H(0)=\id$ such that $\psi:=H^{-1}\circ\phi\circ H$ is as simple as possible. The normal form of $\phi$ near $0$, which is the simplest $\psi$ we are able to reach, may thus be viewed as a generalization of the Jordan canonical form to the nonlinear setting.\\

In this work, we study the question how to extend normal form theory to rough ordinary differential equations (RDEs)
\begin{align*}
\mathrm{d}Y_t=F_0(Y_t)\,\mathrm{d}t+F(Y_t)\,\mathrm{d}\W_t(\omega)\mytag\label{eq:introRODE}
\end{align*}
where $\W$ denotes a centered Gaussian rough path that is also a rough path cocycle. In particular, we assume that the origin $0$ is an equilibrium point and ask, whether there is a coordinate transformation of (\ref{eq:introRODE}) to its linearized version
\begin{align*}
\mathrm{d}Y_t=\mathrm{D}F_0(0)Y_t\,\mathrm{d}t+\mathrm{D}F(0)Y_t\,\mathrm{d}\W_t(\omega).
\end{align*}
The existing deterministic normal form framework \cite{wigginsDeterministic}, \cite{birkhoff1927dynamical}, \cite{guckenheimer1983nonlinear}, \cite{Kuznetsov2023} must be refined to accommodate the irregularity of rough paths and the randomness of stochastic processes. For more classical stochastic differential equations (SDEs) in the Stratonovich sense, there exist normal form results as present in \cite{ArnoldImkeller1998} and \cite{Arnold1998RDS}. The challenge is whether it is possible to transfer the theory to the rough setting, where much broader classes of driving/forcing processes are allowed.

The normal form problem we intend to tackle boils down to the search for near-identity random coordinate transforms $H:\Omega\times\R^D\to\R^D$ conjugating two random local flows, over a metric dynamical system $\metricDynamicalSystem$ with shift $\theta_t:\omega(\cdot)\mapsto\omega(\cdot+t)-\omega(t)$ on the Wiener space. In our case, these flows are generated by random rough differential equations whose driver $(\W(\omega))_\omega$ forms a rough path cocycle in the sense of \cite{BAILLEUL_riedel_20175792}, linking rough path theory with random dynamical systems theory. The central obstruction to linearization, also appearing in deterministic and stochastic settings, is the possible failure of hyperbolicity. Said hyperbolicity for random dynamical systems is calculated via the Lyapunov spectrum $\spectrum{\Phi}$ of a linearized flow $\Phi$. The  multiplicative ergodic theorem ensures under broad hypotheses the existence and regularity of a Lyapunov spectrum \cite{Oseledets1968English}, \cite{Filip2017MET}, \cite{Viana_2014}, \cite{Arnold1998RDS}. If $0\in\spectrum{\Phi}$, then hyperbolicity fails and there is a resonance effect generically blocking the transformation to a normal form. Hence, ruling out resonance of all orders is an essential assumption.

As expected, extending normal form theory from SDEs to RDEs introduces further analytical challenges. The effort is justified, though, for many real-world systems are driven by rough, non-martingale noise - such as fractional Brownian motion with Hurst parameter $\in(0,1/2)$, see  \cite{Biagini2008}, \cite{Ndiaye2026Averaging} - where standard stochastic calculus fails \cite{Coviello2011StochasticCalculus}, \cite{protter2005stochastic}, \cite{BeigelboeckSiorpaes2014BichtelerDellacherie}. The two main obstacles we overcome in this work are:
\begin{itemize}
 \item Classical SDE normal form theory uses martingale tools such as the Burkholder-Davis-Gundy inequality; these are unavailable for RDEs. We develop pathwise estimates and Gaussian tail bounds in their place.
 \item Coordinate changes are themselves rough in time, as they are built from solutions to RDEs. In particular, they carry the structure of rough paths - a richer object compared to its deterministic or SDE counterparts. This requires again additional technical tools to be developed.
 \end{itemize}

Although we have not discussed all the technical details of the setup yet, let us still state the main result of this work to clarify the final goal.

\begin{theorem}[Formal Linearization of an RDE]\label{main thm in introduction}
    Let  $(\W(\omega))_\omega\subseteq\driverRoughPath{\R}{\R^M}$ be a centered Gaussian rough path that is also a weakly geometric rough path cocycle. Furthermore, let
 $F_0\in C^\infty(\R^D,\R^D)\cap C^3_{\mathrm{b}}(\R^D,\R^D)$ and
     $F\in C^\infty(\R^D,\linearMaps{\R^{M}}{\R^D})\cap C^3_{\mathrm{b}}(\R^D,\linearMaps{\R^{M}}{\R^D})$ with $F_0(0)=0$ and $F(0)=0$.
     We consider the RDE (\ref{eq:introRODE})
    in $\R^D$. If the linear cocycle generated by the linearized equation
    \begin{align*}
        \mathrm{d}Y_t=\mathrm{D}F_0(0)Y_t\,\mathrm{d}t+\mathrm{D}F(0)Y_t\,\mathrm{d}\W_t(\omega)
    \end{align*}
    is non-resonant of all orders $\in\N_{\geq2}$, that is, if the linear RDEs
    \begin{align*}
        \mathrm{d}Y_t =\ad \mathrm{D}F_0(0)Y_t \,\mathrm{d}t+ \ad \mathrm{D}F(0)Y_t \,\mathrm{d}\W_t(\omega)
    \end{align*}
    generate hyperbolic linear cocycles for all $n\in\N_{\geq2}$,
    then there exists a random coordinate transform $H:\Omega\times\R^D\to\R^D$ that formally locally conjugates the solution flows of the initial RDE and its linearization, rendering these two equations formally equivalent.

    In addition to that, the formal Taylor expansion $H(\omega,x)\formallyEqual x+\sum_{n=2}^\infty H^n(\omega,x)$ is uniquely determined and is composed of coefficients that are controlled rough paths
    \begin{align*}
        t\mapsto \H^n(\theta_t\omega):=(H^n(\theta_t\omega),\dot H^n(\theta_t\omega))\in\controlledRoughPath{\W(\omega)}{\R}{H_{n,D}(\R^D)}
    \end{align*}
    themselves. Both the path and the Gubinelli derivative meet the temperedness condition.
\end{theorem}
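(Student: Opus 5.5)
We follow the classical Arnold--Imkeller scheme and build $H$ order by order from homological equations. The rough-path analysis replaces the martingale estimates. First we write the nonlinear cocycle $\phi(t,\omega,x)$ and its linearization $\Phi(t,\omega)$ as formal Taylor series in $x$, using the smoothness of $F_0,F$ and the differentiability of the RDE flow in the initial condition. We then make the ansatz $H(\omega,x)\formallyEqual x+\sum_{n\ge2}H^n(\omega,x)$ with $H^n(\omega)\in H_{n,D}(\R^D)$. The conjugacy requirement $H(\theta_t\omega,\phi(t,\omega,x))=\Phi(t,\omega)H(\omega,x)$ is expanded degree by degree. At degree $n$ this gives a linear cohomological equation
\begin{align*}
H^n(\theta_t\omega)=\Psi_n(t,\omega)H^n(\omega)+R^n(t,\omega),
\end{align*}
where $\Psi_n(t,\omega)h:=\Phi(t,\omega)\,h\circ\Phi(t,\omega)^{-1}$ is the linear cocycle on $H_{n,D}(\R^D)$. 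The inhomogeneity $R^n$ depends only on $H^2,\dots,H^{n-1}$ and on the Taylor coefficients of $\phi$ of order at most $n$. Differentiating in $t$ (in the rough sense) shows that $\Psi_n$ is exactly the cocycle generated by the RDE with coefficients $\ad\mathrm{D}F_0(0)$ and $\ad\mathrm{D}F(0)$. It also shows that the degree-$n$ equation is equivalent to an affine RDE for $t\mapsto H^n(\theta_t\omega)$, whose forcing is controlled by $\W(\omega)$.

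Next we solve each affine equation by induction on $n$. By hypothesis $\Psi_n$ is hyperbolic, so the multiplicative ergodic theorem splits $H_{n,D}(\R^D)=E^s_n(\omega)\oplus E^u_n(\omega)$, with tempered projections and exponential dichotomy estimates. We define the unique tempered solution by the variation-of-constants formula, integrating the stable part forward from $-\infty$ and the unstable part backward from $+\infty$:
\begin{align*}
H^n(\omega)=\int_{-\infty}^0\Psi_n(-s,\theta_s\omega)^{-1}\pi^s\,\mathrm{d}\mathcal R^n_s-\int_0^\infty\Psi_n(s,\omega)^{-1}\pi^u\,\mathrm{d}\mathcal R^n_s,
\end{align*}
Here $\mathcal R^n$ is the inhomogeneous forcing, and the integrals with respect to $\W$ are rough integrals. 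Uniqueness follows because the difference of two tempered solutions is a tempered orbit of a hyperbolic cocycle, and any such orbit must vanish.

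The main obstacle is convergence of these improper rough integrals, together with showing that the resulting object is a controlled rough path in $\controlledRoughPath{\W(\omega)}{\R}{H_{n,D}(\R^D)}$ with tempered path and Gubinelli derivative. We would split the half-line into unit intervals $[k,k+1]$ and use the cocycle property of $\W$ to shift each piece back to $[0,1]$ via $\theta_k$. On each piece the integral is bounded by a polynomial in $\normCalpha{\W(\theta_k\omega)}$ times the controlled norms of the integrand. Gaussian tail bounds (Fernique-type integrability of the rough-path norm) combined with Birkhoff's ergodic theorem and Borel--Cantelli show that $\normCalpha{\W(\theta_k\omega)}$ grows subexponentially in $k$. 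This subexponential growth is dominated by the exponential dichotomy rate, so the sums converge. The inductive hypothesis that $H^2,\dots,H^{n-1}$ are tempered controlled paths keeps $\mathcal R^n$ tempered, since temperedness is preserved under sums, products and compositions with polynomial growth. The Gubinelli derivative is read off from the affine RDE: $\dot H^n$ is $\ad\mathrm{D}F(0)H^n$ plus the derivative of the forcing, so it inherits temperedness. Finally, collecting the $H^n$ gives the formal series, which is near-identity by construction and formally conjugates the two local flows.
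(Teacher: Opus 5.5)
Your overall architecture is the paper's. It consists of degree-wise homological equations, the cocycle $\Psi_n(t,\omega)h=\Phi(t,\omega)\,h\circ\Phi(t,\omega)^{-1}$ on $H_{n,D}(\R^D)$ generated by the $\mathrm{ad}_n$-RDE, stable/unstable variation-of-constants sums over unit intervals, and uniqueness from hyperbolicity. Your conjugacy $H\circ\phi=\Phi\circ H$ is the formal inverse of the paper's $\phi\circ H=H\circ\psi$ and yields the same $\Psi_n$.

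The tools differ in two places:
\begin{enumerate}
\item You derive the homological equation from Taylor coefficients of the flows, where only products of finitely many controlled paths occur. The paper instead uses its rough It\^o--Wentzell formula.
\item You get convergence and temperedness from pathwise window estimates plus Borel--Cantelli. The paper propagates the moment condition $\conditionRDE$ through the hierarchy and uses Arnold's Kolmogorov-type Lemma 8.5.3 to get $\mathbb{E}\sup_{K}\sup_t|X^N_t|^p\le C\,\mathrm{diam}(K)^q$. It then evaluates these random fields at the tempered, possibly non-integrable, $\xi_1,\dots,\xi_{N-1}$ via Arnold's Proposition 8.5.12.
\end{enumerate}
Your route is viable and cheaper. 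Temperedness only needs $\log$-integrability, so exponential rough Gronwall bounds are harmless for you; the paper must avoid them because $\exp(C\normCalpha{\W}^{1/\alpha})$ need not be integrable.

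As written, however, the convergence step has a gap. After shifting $[k,k+1]$ to $[0,1]$, the Gubinelli estimate bounds each piece by sup norms and $\mathscr{D}^{2\alpha}$-seminorms on $[0,1]$ of two objects. One is $u\mapsto\Psi_n(u,\theta_k\omega)^{-1}$. The other is the forcing, i.e.\ polynomials in $u\mapsto H^j(\theta_{k+u}\omega)$ for $j<n$.

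Your induction hypothesis is only pointwise temperedness of path and Gubinelli derivative, which is all the theorem asserts. The fact that temperedness survives sums, products and polynomials concerns pointwise quantities. It gives no control of the H\"older seminorm of the derivative or of the remainder on $[k,k+1]$. This is exactly why the paper puts the $\mathscr{D}^{2\alpha}$-seminorm into $\conditionRDE$.

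To close the gap, strengthen the induction as follows.
\begin{enumerate}
\item Show that the solutions of the first $n$ stages on $[0,1]$, started at $(y_1,\dots,y_n)$, have sup norms and $\mathscr{D}^{2\alpha}_{\W(\omega)}$-seminorms bounded by $G_n(\omega)(1+|y_1|+\dots+|y_n|)^{m_n}$ with $\log G_n\in L^1(P)$.
\item Insert $y_j=H^j(\theta_k\omega)$, using that $u\mapsto H^j(\theta_{k+u}\omega)$ is the $j$-th stage solution over $\theta_k\omega$ started at $H^j(\theta_k\omega)$.
\end{enumerate}
Note that $G_n$ is not polynomial in $\normCalpha{\W}$. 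Already the linear cocycle on a unit window is only bounded by something like $\exp\bigl(C(1+\normCalpha{\W})^{1/\alpha}\bigr)$. Hence subexponential growth of $\normCalpha{\W(\theta_k\omega)}$ is too weak; you need $\normCalpha{\W(\theta_k\omega)}^{1/\alpha}=o(k)$ almost surely. This follows from $\mathbb{E}\normCalpha{\W}^{1/\alpha}<\infty$ and Borel--Cantelli. With these additions the argument closes.

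Two smaller points. In the stable part the kernel must be $\Psi_n(-s,\theta_s\omega)=\Psi_n(s,\omega)^{-1}$, not its inverse, which expands along $E^s_n$. And to obtain an actual random coordinate transformation rather than a formal series, you still need a measurable Borel lemma, as the paper does via Arnold's Lemma 8.2.12.
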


At this point, we should clarify what we mean by "formally". We say that Theorem \ref{main thm in introduction} holds true formally because we do not take into account the question of convergence/divergence of the (therefore a priori formal) power series defining the random coordinate transform $H$. Instead, we manipulate the Taylor series coefficient by coefficient and work with underlying algebraic relations. This formal procedure here happens in analogy to the deterministic and, especially, to the stochastic counterpart.
Be aware that proper convergence is already a delicate issue in the deterministic setting. In fact, \cite{Krikorian2022} proves that for a large amount of coefficient functions, the deterministic Birkhoff normal form is divergent. \\

The paper is structured as follows. Section 2 recalls some fundamentals of random dynamical systems presented in \cite{Arnold1998RDS}, and Section 3 provides a concise reminder of rough path theory in the sense of Gubinelli \cite{GUBINELLI200486}, following \cite{FrizHairer2020}.

Sections 4 and 5 establish the analytical framework for the rough normal form. Section 4 develops the necessary calculus for coordinate transformations in the rough path setting, with a central contribution being a rough Itô-Wentzell-type formula stated in Theorem \ref{rde lem ito-wentzell}, which handles coefficients exhibiting rough time-dependence themselves. Section 4 also bridges rough path theory with random dynamical systems by verifying that RDEs driven by rough path cocycles generate cocycles in the sense of random dynamical system, and that linearized rough flows are eligible for applications of the multiplicative ergodic theorem.

Section 5 presents the main normal form construction, building upon the SDE approach of \cite[Section 8.5]{Arnold1998RDS} while addressing the analytical challenges posed by rough drivers. A formal power series ansatz of the random coordinate transform $H(\omega,x)\formallyEqual x+\sum_{n=2}^\infty H^n(\omega,x)$ is substituted into the to-be-satisfied conjugation relation, reducing the normal form problem to finding invariant measures corresponding to a recursive hierarchical system of affine RDEs. An important technical task is to extend the moment conditions coined in \cite[Lemma 8.5.3]{Arnold1998RDS} to the rough setting and to prove that these moment bounds propagate through the hierarchy. Doing so, we inherently have to study Gaussian tail estimates first examined in \cite{CLL}. Under the non-resonance assumption, we establish that each stage of the conjugation relation admits a unique tempered stationary solution, constructed via integral series over the Oseledets decomposition. The result is the unique tempered random coordinate transformation $H$ that formally linearizes the original RDE.\\

For the reader's convenience, we attach a table of generic but frequently used notation in Table \ref{tab:notation} in Appendix \ref{appendix notation}.

\section{Background: Random Dynamical Systems}\label{sec:rds}

The theory of random dynamical systems provides a framework for the study of dynamical systems (RDS) subject to randomness. A random dynamical system is defined as a measurable cocycle over a metric dynamical system, where the latter models the temporal evolution of the random input itself, and is therefore regarded as the underlying base flow of the probability space. Thereby, this formulation extends classical deterministic dynamical systems to a random setting.
For completeness and to fix the notation, we review the most fundamental concepts of RDS with direct importance to our work. For a detailed examination of the broad field of RDS, we refer the reader to the classic monograph \cite{Arnold1998RDS}.\\

    Let $(\Omega,\mathcal{F},P)$ denote a probability space. We use Borel $\sigma$-algebras when dealing with measurability questions in the context of Euclidean spaces. We write $\T$ to denote time; in our work, that means $\T\in\{\R,\R_{\geq0}\}$.

    \begin{definition}[{\cite[Definition 1.1.1]{Arnold1998RDS}}]\label{sde def metric dyn system}
        Let $(\theta_t:\Omega\rightarrow\Omega)_{t\in\T}$ be a family of $P$-invariant transformations, that is, they satisfy $P=P\circ \theta_t^{-1}$ for all $t\in\T$, which has the following properties:
        \begin{enumerate}
            \item $(t,\omega)\mapsto\theta_t\omega$ is 
            measurable,
            \item $\theta_0=\id$,
            \item $\theta_{t+s}=\theta_t\theta_s$ for all $s,t\in\T$.
        \end{enumerate}
        The tuple $\metricDynamicalSystemT$ is called a metric dynamical system.
    \end{definition}

    \begin{definition}[{\cite[Definition 1.1.2]{Arnold1998RDS}}]\label{sde def random dynamical system}
 A random dynamical system on a finite-dimensional Banach space $\V$ over a metric dynamical system $\metricDynamicalSystemT$, or short cocycle, is a measurable map
        \begin{align*}
            \phi:\T\times \Omega\times \V\rightarrow \V,\;(t,\omega,x)\mapsto\phi(t,\omega,x)
        \end{align*}
        which has the following properties:
        \begin{enumerate}
            \item $\phi(0,\omega,\cdot)=\id$ for all $\omega\in\Omega$,
            \item $\phi(t+s,\omega,x)=\phi(t,\theta_s\omega,\phi(s,\omega,x))$ for all $x \in \V$, $\omega\in\Omega$, and $s,t\in \T$,
            \item $\phi(\cdot,\omega,\cdot):\T\times \V\rightarrow \V$ is continuous for all $\omega\in\Omega$.
        \end{enumerate}
    We refer to (ii) as the cocycle property or cocycle identity.
    \end{definition}

    Next, we introduce the important concept of sub-exponential growth of a random variable with respect to a metric dynamical system, which appears in various places in RDS theory.

    \begin{definition}[{\cite[Definition 3.1]{ArnoldImkeller1998}}]
        We call a $\V$-valued random variable $X$ tempered with respect to the metric dynamical system $\metricDynamicalSystem$ if
        \begin{align*}
            \lim_{\abs{t}\rightarrow \infty} \frac{\log^+\abs{X(\theta_t\omega)}}{\abs{t}}=0
        \end{align*}
        holds for almost every $\omega\in\Omega$, where $\log^+ x:=\max\{\log x,0\}$ for $x\in\R_{\geq0}$.
    \end{definition}

    \begin{definition}[{\cite[Section 3.3]{ArnoldImkeller1998}}]
        A random probability measure $\mu$ on $(\Omega\times\R^D,\mathcal{F}\otimes \mathrm{Borel}(\R^D))$, given by $\,\mathrm{d}\mu(\omega,x)=\mathrm{d}\mu_\omega(x)\,\mathrm{d}P(\omega)$, with marginal $P$ on $(\Omega,\mathcal{F})$ is called invariant for the cocycle $\phi$ if $\phi(t,\omega)_*\mu_\omega=\mu_{\theta_t\omega}$ holds true almost surely and for all $t\in\R$, where the subscript $_*$ denotes the pushforward.
    \end{definition}

    Last, we recall Oseledets' celebrated multiplicative ergodic theorem (usually abbreviated as MET). We present only the statements that are the most important to our work, but see {\cite[Section 3.4]{Arnold1998RDS}}, especially \cite[Theorem 3.4.1 and Theorem 3.4.11]{Arnold1998RDS}, for a thorough account of MET variants.

    \begin{theorem}[Multiplicative Ergodic Theorem; {\cite[Theorem 3.4.11]{Arnold1998RDS}}]\label{thm MET}
        Let $\Phi$ be a linear cocycle over $\metricDynamicalSystem$. Assume the following integrability condition:
        \begin{align*}
            \int_\Omega\log^+\left(\sup_{t\in[0,1]}\abs{\Phi(t,\omega,0)}\right)+\log^+\left(\sup_{t\in[0,1]}\abs{\Phi(t,\omega,0)^{-1}}\right)\mathrm{d}P(\omega) <\infty.
        \end{align*}
        Then, there exists a $\theta$-invariant set $\Omega_0\in\mathcal{F}$ with $P(\Omega_0)=1$ on which there is the random Oseledets splitting of $\R^D$, which has the following properties:
        \begin{enumerate}
            \item For all $\omega\in\Omega_0$, we have the splitting into the Oseledets spaces given by \begin{align*}\R^D= \bigoplus_{r=1}^{R(\omega)}E_r(\omega).\end{align*}
            The mapping $\omega\mapsto E_r(\omega)$ is measurable for each $r=1,\dots,R(\omega)$. The Oseledets subspaces $E_r(\omega)$ possess dimension $\dim(E_r(\omega))=d_r(\omega)$, and are invariant under the flow in the sense that $\Phi(t,\omega)E_r(\omega)=E_r(\theta_t\omega)$ for all $t\in\R$ and $r=1,\dots,R(\omega)$. Equivalently, if $\projection^r(\omega):\R^D\to E_r(\omega)$ denotes the projection onto $E_r(\omega)$ along $\bigoplus_{r'=1, r'\neq r}^{R(\omega)}E_{r'}(\omega)$, then $ \Phi(t,\omega)\projection^r(\omega)=\projection^r(\theta_t\omega)\Phi(t,\omega)$ holds true for all $t\in\R$ and $r=1,\dots,R(\omega)$.
            \item  For every $x\in\R^D\setminus\{0\}$, the limit $\lim_{\abs{t}\rightarrow\infty}\log\abs{\Phi(t,\omega)x}/\abs{t}$ exists and fulfills for $r=1,\dots,R(\omega)$ the condition
            \begin{align*}
                x\in E_r(\omega)\setminus\{0\}\text{ if and only if } \lim_{\abs{t}\rightarrow\infty} \frac{\log\abs{\Phi(t,\omega)x}}{|t|}=\lambda_r(\omega),
            \end{align*}
            where the $\lambda_r(\omega)$ are the so-called Lyapunov exponents constituting the Lyapunov spectrum $\spectrum{\Phi}:=\{(\lambda_r(\cdot),d_r(\cdot)):r=1,\dots,R(\cdot)\}$, see also {\cite[Definition 3.3.8]{Arnold1998RDS}}. Be aware that sometimes we also just refer to the Lyapunov exponents when talking about the Lyapunov spectrum $\spectrum{\Phi}:=\{\lambda_r(\cdot):r=1,\dots,R(\cdot)\}$.
        \end{enumerate}
    \end{theorem}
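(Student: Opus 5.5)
This result is quoted from \cite[Theorem 3.4.11]{Arnold1998RDS}. The plan is the classical one: first prove the discrete-time two-sided theorem for the time-one map, then pass to continuous time using the integrability of the supremum over $[0,1]$.

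\textbf{Discrete-time theorem.} Set $A(\omega):=\Phi(1,\omega)$. The cocycle $\Phi(n,\omega)=A(\theta_{n-1}\omega)\cdots A(\omega)$ over the invertible map $\theta_1$ then satisfies the discrete integrability condition $\log^+\abs{A^{\pm1}}\in L^1(P)$, which is implied by the hypothesis.
\begin{enumerate}
\item For the forward direction, apply Kingman's subadditive ergodic theorem to $\log\abs{\Lambda^k\Phi(n,\omega)}$ for $k=1,\dots,D$, where $\Lambda^k$ denotes exterior powers. Subadditivity comes from $\abs{\Lambda^k(AB)}\le\abs{\Lambda^kA}\abs{\Lambda^kB}$. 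This yields limits $\gamma_1+\dots+\gamma_k$, which are $\theta$-invariant and hence deterministic on ergodic components, and so the exponents $\lambda_r$ with multiplicities $d_r$.
\item Following Raghunathan's argument, show that $(\Phi(n,\omega)^*\Phi(n,\omega))^{1/2n}$ converges. The key estimate is $\sum_n \frac1n\log^+\abs{A(\theta_n\omega)}<\infty$ almost surely, which is a Borel--Cantelli consequence of $L^1$-integrability. This produces the forward Oseledets filtration $V_1(\omega)\supsetneq V_2(\omega)\supsetneq\cdots$ characterised by the growth rates as $t\to+\infty$.
\item Applying the same construction to the inverse cocycle over $\theta_{-1}$ gives a backward filtration $U_r(\omega)$ with growth rates as $t\to-\infty$.
\item Define $E_r(\omega):=V_r(\omega)\cap U_r(\omega)$. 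Transversality follows from a dimension count together with the equality of forward and backward exponent sums from Kingman applied to determinants, giving $\log\abs{\det}$ rates. Invariance $\Phi(n,\omega)E_r(\omega)=E_r(\theta_n\omega)$ follows from invariance of both filtrations. Measurability of $\omega\mapsto E_r(\omega)$ follows because each object is a pointwise limit of measurable objects on Grassmannians.
\end{enumerate}

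\textbf{Interpolation to continuous time.} Let $g(\omega):=\log^+\sup_{t\in[0,1]}\abs{\Phi(t,\omega)}+\log^+\sup_{t\in[0,1]}\abs{\Phi(t,\omega)^{-1}}$, which lies in $L^1$ by hypothesis. Birkhoff's ergodic theorem gives $g(\theta_n\omega)/n\to0$ almost surely. For $t\in[n,n+1]$, the cocycle property gives $\Phi(t,\omega)=\Phi(t-n,\theta_n\omega)\Phi(n,\omega)$, so
\[
\abs{\log\abs{\Phi(t,\omega)x}-\log\abs{\Phi(n,\omega)x}}\le g(\theta_n\omega),
\]
and the same holds for negative times. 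Hence the continuous-time limit in (ii) exists and coincides with the discrete one.

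\textbf{Continuous-time invariance and the invariant set.} I would show invariance under $\Phi(t,\omega)$ for all real $t$, not only integers, via the dynamical characterisation of $E_r$. Since $\Phi(t,\omega)$ is invertible and commutes with the time-one shifts through the cocycle identity, the growth-rate characterisation of $\Phi(t,\omega)E_r(\omega)$, taken from base point $\theta_t\omega$, matches that of $E_r(\theta_t\omega)$. Consequently $\Phi(t,\omega)E_r(\omega)=E_r(\theta_t\omega)$, and the projection identity $\Phi(t,\omega)\projection^r(\omega)=\projection^r(\theta_t\omega)\Phi(t,\omega)$ follows. Finally, take $\Omega_0$ to be the $\theta$-invariant full-measure set obtained by intersecting the good sets with their $\theta_t$-translates. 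I would do this by first intersecting over $t\in\Q$, then extending to all $t$ using the characterisation above.

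\textbf{Main obstacle.} The hardest step is the discrete two-sided theorem, specifically the convergence of $(\Phi^*\Phi)^{1/2n}$ and the transversality of the forward and backward filtrations. I would follow Raghunathan's proof as presented in \cite[Section 3.4]{Arnold1998RDS} rather than attempt anything new.
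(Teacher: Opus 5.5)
The paper gives no proof of this statement; it simply imports Arnold's Theorem 3.4.11. Your architecture matches the proof there: the discrete two-sided theorem for $\Phi(1,\cdot)$ over $\theta_1$, followed by interpolation through the $\sup_{t\in[0,1]}$ integrability. However, you claim that your interpolation establishes (ii) as written, and (ii) as written is false. With the normalisation $1/\abs{t}$, the limit over $\abs{t}\to\infty$ cannot exist when $\lambda_r\neq0$. For $D=1$ and $\Phi(t,\omega)=e^{\lambda t}$ one gets $\log\abs{\Phi(t,\omega)x}/\abs{t}\to\lambda$ as $t\to+\infty$ but $\to-\lambda$ as $t\to-\infty$; this is precisely the hyperbolic situation the paper uses later. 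Moreover, for $x$ with nonzero components in two different $E_r(\omega)$, the forward and backward rates differ even with the normalisation $1/t$. Your bound $\abs{\log\abs{\Phi(t,\omega)x}-\log\abs{\Phi(n,\omega)x}}\le g(\theta_n\omega)$ only transfers one-sided discrete limits to continuous time. What you can and should prove is the corrected statement: for every $x\neq0$ the limits $\lim_{t\to+\infty}\frac1t\log\abs{\Phi(t,\omega)x}$ and $\lim_{t\to-\infty}\frac1t\log\abs{\Phi(t,\omega)x}$ exist separately, and both equal $\lambda_r(\omega)$ if and only if $x\in E_r(\omega)\setminus\{0\}$.

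Two steps of your discrete-time sketch also fail as stated.

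\emph{The key estimate.} The claim $\sum_n\frac1n\log^+\abs{A(\theta_n\omega)}<\infty$ is false. For a constant $A$ with $\abs{A}>1$ it is a harmonic series, and Birkhoff makes the partial sums $\sum_{n\le N}\log^+\abs{A(\theta_n\omega)}$ grow linearly whenever $\log^+\abs{A}$ has positive mean. Borel--Cantelli, via $\sum_nP(\log^+\abs{A}>\varepsilon n)<\infty$, gives only $\frac1n\log^+\abs{A(\theta_n\omega)}\to0$. That is what Raghunathan's argument actually uses. Take a unit vector $v$ in the span of the singular directions of $\Phi(n,\omega)$ with exponents at most $\lambda_{r+1}$. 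Then $\abs{\Phi(n+1,\omega)v}\le\abs{A(\theta_n\omega)}e^{n(\lambda_{r+1}+\varepsilon)}\le e^{n(\lambda_{r+1}+2\varepsilon)}$. Meanwhile, the component of $v$ along the singular directions of $\Phi(n+1,\omega)$ with exponents at least $\lambda_r$ is stretched by at least $e^{(n+1)(\lambda_r-\varepsilon)}$. Hence consecutive singular subspaces differ by $\lesssim e^{-n(\lambda_r-\lambda_{r+1}-3\varepsilon)}$, which is summable.

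\emph{Transversality.} The condition $V_{r+1}(\omega)\cap U_r(\omega)=\{0\}$ does not follow from a dimension count plus equality of forward and backward exponents or determinant rates. The count only gives $\dim E_r\ge d_r$. A counterexample is the non-stationary fundamental matrix $\Phi(t)=\mathrm{diag}(e^t,1)$ for $t\ge0$ and $\Phi(t)=\mathrm{diag}(1,e^t)$ for $t\le0$. It has forward and backward spectra both equal to $\{1,0\}$ and equal determinant rates, yet $V_2=U_1=\mathrm{span}(e_2)$.

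Stationarity must therefore be used again. Suppose $B:=\{V_{r+1}\cap U_r\neq\{0\}\}$ had positive measure. Pick $K\subseteq B$ of positive measure on which, for all $n\ge0$, $\abs{\Phi(n,\omega)x}\le Ce^{n(\lambda_{r+1}+\varepsilon)}\abs{x}$ for $x\in V_{r+1}(\omega)$ and $\abs{\Phi(-n,\omega)x}\le Ce^{-n(\lambda_r-\varepsilon)}\abs{x}$ for $x\in U_r(\omega)$. These are uniform subspace bounds that Raghunathan's construction supplies. Poincar\'e recurrence yields $\omega\in K$ with $\theta_{-n}\omega\in K$ for arbitrarily large $n$. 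For a unit $v\in V_{r+1}(\omega)\cap U_r(\omega)$, the vector $w=\Phi(-n,\omega)v\in V_{r+1}(\theta_{-n}\omega)$ satisfies $1=\abs{\Phi(n,\theta_{-n}\omega)w}\le C^2e^{-n(\lambda_r-\lambda_{r+1}-2\varepsilon)}$, a contradiction.

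With these repairs, and (ii) corrected, your outline is the standard proof.
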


\section{Background: Rough Path Theory}\label{sec:rough-path-theory}
    \subsection{Rough Paths}
        
\begin{comment}
In this preliminary section, we provide a brief introduction to the realm of Rough Path Theory originally initiated in \cite{Lyons1994}. However, we do not only understand Rough Path Theory in Lyons's original setting, which is highly developed in \cite{Friz_Victoir_2010}. Rather, we adopt the viewpoint proposed in \cite{GUBINELLI200486}, where another notion of controlled rough paths was introduced, which turns out to be a natural object for our purposes that include integration at heart. For a detailed account of the topic, the standard reference is \cite{FrizHairer2020}.

 In our work, we exclusively deal with rough paths with "irregularity parameter" $\alpha\in(1/3,1/2]$ that refers to Hölder continuity. The reason is that while this regularity range makes room for a real extension of Young's integration theory initiated in \cite{Young1936}, it also constitutes the simplest and in contemporary literature most prominently featured case. From now on, we shall refrain from introducing $\alpha\in(1/3,1/2]$ in every statement, for any upcoming occurrence of $\alpha$ shall mean the regularity regime. Many of the upcoming estimations implicitly depend upon $\alpha$, which we usually suppress as the regularity regime $\alpha$ always exists in the background.
 Upon further notice, $T\in\R_{>0}$ denotes a finite time horizon. Recall that for any path $X:\mathrm{dom}(X)\to\V$ and $s,t\in\mathrm{dom}(X)\subseteq\R$ with $s<t$, we abbreviate increments $X_{s,t}:=X_t-X_s$, as is customary in Rough Path Theory.
 \end{comment}
Rough path theory, initiated in \cite{Lyons1994}, provides a robust analytical framework for studying differential equations driven by highly irregular signals. In particular, the theory extends classical integration and differential equation techniques beyond the semimartingale setting, allowing one to give pathwise meaning to equations driven by rough noise such as fractional Brownian motion \cite{FrizHairer2020}. Central to the theory is the enhancement of a driving signal by its iterated integrals which encode higher-order oscillatory information and, therefore, eliminate obstructions we encounter with standard martingale methods.
 In this section, we introduce the key concepts and notation regarding rough paths, following \cite{FrizHairer2020}, to which we refer the reader for a detailed account. We shall refrain from introducing the parameter $\alpha\in(1/3,1/2]$ controlling the regularity of a rough path in every statement. Upon further notice, $T\in\R_{>0}$ denotes a finite time horizon.

        \begin{definition}[{\cite[Definition 2.1]{FrizHairer2020}}]\label{rde def driver rough path}
             Put $\Delta:=\{(s,t)\in[0,T]^2:s<t\}$. An $\alpha$-Hölder rough path (over $\V$) is a pair $\W:=(W,\mathbb{W})$, where the first-order path $W_\cdot:[0,T]\rightarrow \V$ satisfies \begin{align*}
                \norm{W}{\alpha} := \sup_{(s,t)\in\Delta} \frac{\abs{W_{s,t}}}{(t-s)^\alpha} <\infty,
            \end{align*}
            the second-order process $\mathbb{W}_{\cdot,\cdot}:[0,T]^2\rightarrow \V\otimes \V$ fulfills
            \begin{align*}
                \norm{\mathbb{W}}{2\alpha} := \sup_{(s,t)\in\Delta} \frac{\abs{\mathbb{W}_{s,t}}}{(t-s)^{2\alpha}} <\infty,
            \end{align*}
            and
            Chen's relation, which reads $W_{s,u}\otimes W_{u,t}=\mathbb{W}_{s,t}-\mathbb{W}_{s,u}-\mathbb{W}_{u,t}$ for all $s,u,t\in[0,T]$ with $s<u<t$, is met.
            We denote the space of $\alpha$-Hölder rough paths by $\mathscr{C}^\alpha([0,T],\V)$. Last, we impose a metric on $\mathscr{C}^\alpha([0,T],\V)$ defined as
            \begin{align*}
                \rho_{\mathscr{C}^\alpha}: \mathscr{C}^\alpha([0,T],\V) \times \mathscr{C}^\alpha([0,T],\V) \rightarrow \R,\; (\W,\widetilde{\W})\mapsto \norm{W-\widetilde{W}}{\alpha} + \norm{\mathbb{W}-\widetilde{\mathbb{W}}}{2\alpha}.
            \end{align*}
            In turn, $\rho_{\mathscr{C}^\alpha}$ gives rise to the semi-norm
            \begin{align*}
                \norm{\cdot}{\mathscr{C}^\alpha}: \mathscr{C}^\alpha([0,T],\V) \rightarrow \R,\;\W\mapsto \norm{W}{\alpha} + \norm{\mathbb{W}}{2\alpha}.
            \end{align*}
        \end{definition}
    Rough paths are typically defined on compact intervals, which is mainly due to Hölder continuity being satisfied only on compact intervals as opposed to infinite intervals. A meaningful extension to the entire real line $\R$ is given as follows:
        \begin{definition}[{\cite[Section 1]{BAILLEUL_riedel_20175792}}]\label{rde def rough driver on R} Set $\mathscr{I}_0:=\{I\subseteq\R: I\text{ is a compact interval with }0\in I\}$.
        We denote by $\mathscr{C}^\alpha(\R,\V)$ the space of pairs $\W:=(W,\mathbb{W})$, where $W_\cdot:\R\rightarrow \V$ and $\mathbb{W}_{\cdot,\cdot}:\R^2\rightarrow \V\otimes \V$, such that $\W|_I:=(W|_I,\mathbb{W}|_{I\times I})\in\driverRoughPath{I}{\V}$ for each $I\in\mathscr{I}_0$. We equip $\driverRoughPath{\R}{\V}$ with the coarsest topology for which the maps $\W\mapsto \W|_I$ are continuous for all $I\in\mathscr{ I}_0$.
        \end{definition}

    When we lift a stochastic process to a rough path, we often call it an enhanced or enriched rough path.
    While Definition \ref{rde def driver rough path} covers driving signals for rough differential equations (RDEs), we still have to deal with a suitable notion for the corresponding sought-after unknowns.

        \begin{definition}[{\cite[Definition 4.6]{FrizHairer2020}}]\label{def controlled rough path}
            Let $\W\in \mathscr{C}^\alpha([0,T],\V_1)$ and $\Y:=(Y,Y')\in C^\alpha([0,T],\V_2)\times C^\alpha([0,T],\mathscr{L}(\V_1,\V_2))$. We say that $\Y$ is an ($\alpha$-Hölder continuous) controlled rough path controlled by $\W$ if the remainder $\remainder\Y$, implicitly defined for $s,t\in\Delta$ by
           \begin{align*}
                Y_{s,t}= Y'_sW_{s,t}+\mathfrak{R} \Y_{s,t},
            \end{align*}
            is regular in the sense that
            \begin{align*}
                \norm{\mathfrak{R}\Y}{2\alpha}:= \sup_{(s,t)\in\Delta} \frac{\abs{\mathfrak{R}\Y_{s,t}}}{(t-s)^{2\alpha}} <\infty,
            \end{align*}
            where $\Delta:=\{(s,t)\in[0,T]^2:s<t\}$.
            The space of ($\alpha$-Hölder continuous) controlled rough paths controlled by $\W$ is denoted by $\mathscr{D}_\W^{2\alpha}([0,T],\V_2)$. The path $Y'$ indeed plays the role of a derivative of $Y$ with respect to $\W$ and is called the Gubinelli derivative. We endow $\mathscr{D}_\W^{2\alpha}([0,T],\V_2)$ with the semi-norm
            \begin{align*}
                \norm{\cdot}{\mathscr{D}_\W^{2\alpha}}: \mathscr{D}_\W^{2\alpha}([0,T],\V_2) \rightarrow \R,\; \Y\mapsto \norm{Y'}{\alpha}+\norm{\mathfrak{R}\Y}{2\alpha}.
            \end{align*}

            In analogy to Definition \ref{rde def rough driver on R}, we also regard $\Y\in\controlledRoughPath{\W}{\R}{\V_2}$, given $\W\in\driverRoughPath{\R}{\V_1}$, where this means that   $\Y|_I\in\controlledRoughPath{\W|_I}{I}{\V_2}$  for each compact interval $I\subseteq\R$ with $0\in I$.
        \end{definition}

        \begin{notation}
            Let $I\subseteq\R$ either be a compact interval with $0\in I$ or $I=\R$.
            We use bold font to denote rough paths. For rough drivers in $\driverRoughPath{I}{\V_1}$, we stick to $\W\in\driverRoughPath{I}{\V_1}$, exclusively. In turn, we denote controlled rough paths in $\controlledRoughPath{\W}{I}{\V_2}$ by caligraphic symbols like  $\Y$ for Latin letters and $\upvarphi$ for Greek letters. Sometimes, we refer to the explicit pair, too. For the Gubinelli derivative of  $\Y\in\controlledRoughPath{\W}{I}{\V_2}$, we usually write $Y'$ in Lagrange's style. If we want to stress that we are dealing with a time-derivative as opposed to differentiation with respect to any other potential argument, we also employ Newton's notation $\dot Y$.
        \end{notation}

        We shall below require the fundamental property of a rough path being (weakly) geometric. The power of this property lies in the closer resemblance to well-known calculi.
        \begin{definition}[{\cite[Section 2.2]{FrizHairer2020}}, {\cite[Definition 2.2]{castrequini2022itowentzellformularoughpaths}}]\label{rde def geometric rough path}
            Let $\W\in\driverRoughPath{[0,T]}{\V}$ and $\Delta:=\{(s,t)\in[0,T]^2:s<t\}$. We call $\W$ weakly geometric if
            \begin{align*}
                \mathrm{Sym}(\mathbb{W}_{s,t})=\frac{1}{2}W_{s,t}\otimes W_{s,t}
            \end{align*}
            holds true for all $(s,t)\in\Delta$. Here, given $\mathbb{X}\in\V_0\otimes\V_0$, we set the symmetric part $2\mathrm{Sym}(\mathbb{X}):=\mathbb{X}+\mathbb{X}^T$ and the antisymmetric part $2\mathrm{Anti}(\mathbb{X}):=\mathbb{X}-\mathbb{X}^T$. If otherwise $\W$ is not weakly geometric, we measure the deviation by the bracket of $\W$ that is given by
            \begin{align*}
                [\W]_{s,t}:=W_{s,t}\otimes W_{s,t}-2\mathrm{Sym}(\mathbb{W}_{s,t})
            \end{align*}
            for $(s,t)\in\Delta$. Notice that $[\W]\in C^{2\alpha}(\Delta,\V\otimes\V)$.

            If there exists a sequence of smooth paths $(W^n)_{n\in\N}\subseteq C^\infty([0,T],\V)$ with sequence of canonical lifts $(\W^n)_{n\in\N}\subseteq\driverRoughPath{[0,T]}{\V}$ defined as in \cite[Exercise 2.1]{FrizHairer2020} such that $\W^n\rightarrow\W$, that is, $\rho_{\mathscr{C}^\alpha}(\W^n, \W)\to0$, then we call $\W$ geometric.
        \end{definition}

        We realize that any geometric rough path is already weakly geometric, as a result of the lifts of smooth paths being weakly geometric. Weakly geometric rough paths will play an important role to us since, like Stratonovich calculus, they imitate first-order calculus quite well, see \cite[Section 2.2]{FrizHairer2020}. And, the (enhanced Gaussian) rough paths we will be interested in conveniently/purposefully turn out to be weakly geometric.  \\

        We have now encountered the general underlying concepts. Let us transition to the main operations that we have to carry out with (controlled) rough paths. The most basic yet integral operations are controlled integration and composition with regular maps.

        \begin{theorem}[Gubinelli; {\cite[Theorem 4.10]{FrizHairer2020}}]\label{theorem gubinelli estimates} Let $\W\in\mathscr{C}^\alpha([0,T],\V_1)$ and $\Y\in\mathscr{D}_\W^{2\alpha}([0,T],\mathscr{L}(\V_1,\V_2))$. Then, for every $s,t\in[0,T]$ with $s<t$, and finite partitions $\mathrm{P}\subseteq[s,t]$, the limit
        \begin{align*}
            \int_s^t Y_r\,\mathrm{d}\W_r:=\lim_{\abs{\mathrm{P}}\rightarrow0} \sum_{[u,v]\in \mathrm{P}} Y_u W_{u,v} + Y'_u \mathbb{W}_{u,v}
        \end{align*}
        exists in $\V_2$. In addition to that, $\I\in\mathscr{D}_\W^{2\alpha}([0,T],\V_2) $, where the integral path $\I:=(I,I')$ is defined by $I:=\int_0^\cdot Y_r\,\mathrm{d}\W_r$ and $I'=Y$. Furthermore, for some constant $C=C(\alpha)\in\R$,  the following two estimates hold true:
        \begin{enumerate}
            \item For every $s,t\in[0,T]$ with $s<t$, we have
            \begin{align*}
                \abs{\int_s^t Y_r\,\mathrm{d}\W_r - Y_sW_{s,t}  - Y'_s\mathbb{W}_{s,t}}\leq C(\norm{W}{\alpha}\norm{\mathfrak{R}\Y}{2\alpha} +\norm{\mathbb{W}}{2\alpha}\norm{Y'}{\alpha})\abs{t-s}^{3\alpha}.
            \end{align*}
            \item The map
            $
             \mathscr{D}_\W^{2\alpha}([0,T],\mathscr{L}(\V_1,\V_2)) \rightarrow \mathscr{D}_\W^{2\alpha}([0,T],\V_2),\; \Y \mapsto \I
            $
            is continuous and linear and satisfies the bound
            \begin{align*}
                \norm{\I}{\mathscr{D}_\W^{2\alpha}} \leq \norm{Y}{\alpha} + \norm{Y'}{\sup}\norm{\mathbb{W}}{2\alpha} + CT^\alpha(\norm{W}{\alpha}\norm{\mathfrak{R}\Y}{2\alpha} +\norm{\mathbb{W}}{2\alpha}\norm{Y'}{\alpha}).
            \end{align*}
        \end{enumerate}
        In particular, thanks to \cite[(4.20)]{FrizHairer2020}, (i) and (ii) have the direct consequences
        \begin{enumerate}[label=(\roman*')]
                \item $\abs{\int_s^t Y_r\,\mathrm{d}\W_r}\leq \abs{Y_sW_{s,t}}+\abs{Y'_s\mathbb{W}_{s,t}} + C \norm{\W}{\mathscr{C}^\alpha}\norm{\Y}{\mathscr{D}^{2\alpha}_\W}\abs{t-s}^{3\alpha}$, and
                \item $\norm{\I}{\mathscr{D}^{2\alpha}_\W} \leq
                C(1+\normCalpha{\W})(\abs{Y_0'}+T^\alpha\normDalpha{\Y})$.
            \end{enumerate}
        \end{theorem}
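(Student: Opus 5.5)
The plan follows the classical sewing-lemma argument. Fix $s<t$ and, for $u<v$ in $[s,t]$, consider the germ
\[
\Xi_{u,v}:=Y_uW_{u,v}+Y'_u\mathbb{W}_{u,v}.
\]
First compute the defect $\delta\Xi_{u,m,v}:=\Xi_{u,v}-\Xi_{u,m}-\Xi_{m,v}$ for $u<m<v$. Chen's relation $\mathbb{W}_{u,v}-\mathbb{W}_{u,m}-\mathbb{W}_{m,v}=W_{u,m}\otimes W_{m,v}$ and the controlled-path decomposition $Y_{u,m}=Y'_uW_{u,m}+\remainder\Y_{u,m}$ (Definition \ref{def controlled rough path}) give
\[
\delta\Xi_{u,m,v}=-\remainder\Y_{u,m}W_{m,v}-Y'_{u,m}\mathbb{W}_{m,v}.
\]
Hence
\[
\abs{\delta\Xi_{u,m,v}}\leq(\norm{W}{\alpha}\norm{\remainder\Y}{2\alpha}+\norm{\mathbb{W}}{2\alpha}\norm{Y'}{\alpha})\abs{v-u}^{3\alpha},
\]
and $3\alpha>1$ because $\alpha>1/3$.

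Next, prove the sewing lemma in this concrete form. For a partition $\mathrm{P}$ of $[s,t]$ with $r$ intervals, a pigeonhole argument yields a point $m$ whose two adjacent intervals have combined length at most $2\abs{t-s}/(r-1)$. Removing $m$ changes the Riemann sum by at most $K(2\abs{t-s}/(r-1))^{3\alpha}$, where $K$ is the constant above. Iterating down to the trivial partition $\{s,t\}$ bounds $\abs{\sum_{\mathrm{P}}\Xi-\Xi_{s,t}}$ by $K2^{3\alpha}\zeta(3\alpha)\abs{t-s}^{3\alpha}$, uniformly in $\mathrm{P}$. This is estimate (i), with $C=2^{3\alpha}\zeta(3\alpha)$.

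The main obstacle is existence of the limit as $\abs{\mathrm{P}}\to0$. I would handle it by comparing any two fine partitions $\mathrm{P},\mathrm{P}'$ with their common refinement $\mathrm{P}\cup\mathrm{P}'$. Apply the uniform bound on each interval $[u,v]\in\mathrm{P}$ separately and sum over the intervals:
\[
\abs{\textstyle\sum_{\mathrm{P}}\Xi-\sum_{\mathrm{P}\cup\mathrm{P}'}\Xi}\lesssim K\sum_{[u,v]\in\mathrm{P}}\abs{v-u}^{3\alpha}\leq K\abs{\mathrm{P}}^{3\alpha-1}\abs{t-s}\to0.
\]
The same holds with $\mathrm{P}'$ in place of $\mathrm{P}$, so the Riemann sums are Cauchy and the limit exists. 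The resulting integral is additive over adjacent intervals, since partitions can be chosen to contain the midpoint.

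Finally, turn to $\I=(I,Y)$. By (i),
\[
I_{s,t}=Y_sW_{s,t}+\remainder\I_{s,t},\qquad \remainder\I_{s,t}=Y'_s\mathbb{W}_{s,t}+O(\abs{t-s}^{3\alpha}),
\]
so that $\norm{\remainder\I}{2\alpha}\leq\norm{Y'}{\sup}\norm{\mathbb{W}}{2\alpha}+CT^\alpha(\dots)$. Together with $\norm{I'}{\alpha}=\norm{Y}{\alpha}$, this gives (ii); linearity is clear, and continuity follows from the bound because the map is linear. For (i'), apply the triangle inequality to (i) and bound $\norm{W}{\alpha}\norm{\remainder\Y}{2\alpha}+\norm{\mathbb{W}}{2\alpha}\norm{Y'}{\alpha}\leq\normCalpha{\W}\normDalpha{\Y}$. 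For (ii'), use the interpolation bounds $\norm{Y}{\alpha}\leq\abs{Y'_0}\norm{W}{\alpha}+T^\alpha(\dots)$ and $\norm{Y'}{\sup}\leq\abs{Y'_0}+T^\alpha\norm{Y'}{\alpha}$, which are \cite[(4.20)]{FrizHairer2020}, and substitute them into (ii).
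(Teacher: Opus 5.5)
Your proposal is correct and follows the route behind the result the paper cites, \cite[Theorem 4.10]{FrizHairer2020}: you sew the germ $\Xi_{u,v}=Y_uW_{u,v}+Y'_u\mathbb{W}_{u,v}$, whose defect $\delta\Xi_{u,m,v}=-\remainder\Y_{u,m}W_{m,v}-Y'_{u,m}\mathbb{W}_{m,v}$ is $O(\abs{v-u}^{3\alpha})$ by Chen's relation, and then obtain (ii), (i') and (ii') via \cite[(4.20)]{FrizHairer2020}. The only difference is that you prove the sewing step by hand, using Young's point-removal bound and comparison through common refinements, instead of quoting the sewing lemma, which is a valid self-contained substitute; note that continuity of $\Y\mapsto\I$ has to be understood with respect to the norm $\abs{Y_0}+\abs{Y'_0}+\normDalpha{\Y}$, not the seminorm alone, and (ii') controls exactly that norm.
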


        \begin{notation}
            We often write $\int_0^\cdot\Y_r\,\mathrm{d}\W_r:=\I$ for the controlled rough path that comes from controlled integration as in Theorem \ref{theorem gubinelli estimates}.
        \end{notation}

        \begin{lemma}[{\cite[Lemma 7.3]{FrizHairer2020}}]\label{rde lem composing with regular fct}
            Let $\W\in\driverRoughPath{[0,T]}{\V_1}$ and $\Y\in\controlledRoughPath{\W}{[0,T]}{\V_2}$, and let $f\in C^2_{\mathrm{b}}(\V_2,\V_3)$. Then, $f(\Y):=(f(Y_\cdot),\mathrm{D}f(Y_\cdot)Y'_\cdot)\in\controlledRoughPath{\W}{[0,T]}{\V_3}$. Furthermore, there exists $C=C(T,\alpha)\in\R$ such that
            \begin{align*}
                \normDalpha{f(\Y)}\leq C\max\left\{1,\abs{Y_0'}+\normDalpha{\Y}\right\} \norm{f}{C^2_{\mathrm{b}}}(1+\norm{W}{\alpha})^2\left(\abs{Y_0'}+\normDalpha{\Y}\right).
            \end{align*}
            At last, a uniform choice of $C$ over $T\in(0,1]$ is valid.
        \end{lemma}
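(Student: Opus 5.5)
We need to prove Lemma 7.3 of Friz–Hairer, i.e. that composing a controlled rough path with a $C^2_{\mathrm b}$ map yields a controlled rough path, together with the stated norm bound.

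\emph{Step 1: the controlled structure.} We show that $f(\Y)$ is controlled with Gubinelli derivative $\mathrm{D}f(Y)Y'$ by a second-order Taylor expansion. For $(s,t)\in\Delta$ we write
\begin{align*}
f(Y_t)-f(Y_s)=\mathrm{D}f(Y_s)Y_{s,t}+R^f_{s,t},\qquad |R^f_{s,t}|\le \tfrac12\norm{\mathrm{D}^2f}{\infty}|Y_{s,t}|^2 .
\end{align*}
Inserting $Y_{s,t}=Y'_sW_{s,t}+\remainder\Y_{s,t}$ gives
\begin{align*}
\remainder f(\Y)_{s,t}=\mathrm{D}f(Y_s)\remainder\Y_{s,t}+R^f_{s,t}.
\end{align*}
This leads to
\begin{align*}
\norm{\remainder f(\Y)}{2\alpha}\le \norm{f}{C^2_{\mathrm b}}\bigl(\norm{\remainder\Y}{2\alpha}+\tfrac12\norm{Y}{\alpha}^2\bigr).
\end{align*}

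\emph{Step 2: Hölder norm of the Gubinelli derivative.} We split
\begin{align*}
\mathrm{D}f(Y_t)Y'_t-\mathrm{D}f(Y_s)Y'_s=\bigl(\mathrm{D}f(Y_t)-\mathrm{D}f(Y_s)\bigr)Y'_t+\mathrm{D}f(Y_s)Y'_{s,t}.
\end{align*}
This yields
\begin{align*}
\norm{\mathrm{D}f(Y)Y'}{\alpha}\le \norm{f}{C^2_{\mathrm b}}\bigl(\norm{Y}{\alpha}\norm{Y'}{\sup}+\norm{Y'}{\alpha}\bigr).
\end{align*}

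\emph{Step 3: reducing to the controlled norm.} It remains to bound $\norm{Y}{\alpha}$ and $\norm{Y'}{\sup}$ in terms of $|Y'_0|+\normDalpha{\Y}$. This is the standard estimate \cite[(4.20)]{FrizHairer2020}. First,
\begin{align*}
\norm{Y'}{\sup}\le |Y'_0|+T^\alpha\norm{Y'}{\alpha}.
\end{align*}
Second,
\begin{align*}
\norm{Y}{\alpha}\le \norm{Y'}{\sup}\norm{W}{\alpha}+T^\alpha\norm{\remainder\Y}{2\alpha}.
\end{align*}
Combining, we obtain
\begin{align*}
\norm{Y}{\alpha}\le C(1+\norm{W}{\alpha})\bigl(|Y'_0|+\normDalpha{\Y}\bigr),
\end{align*}
with $C$ depending only on an upper bound for $T$.

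\emph{Step 4: collecting terms.} We insert Step 3 into Steps 1 and 2. The terms $\norm{Y}{\alpha}^2$ and $\norm{Y}{\alpha}\norm{Y'}{\sup}$ are quadratic in $N:=|Y'_0|+\normDalpha{\Y}$ and carry at most $(1+\norm{W}{\alpha})^2$. The remaining terms $\norm{\remainder\Y}{2\alpha}$ and $\norm{Y'}{\alpha}$ are linear in $N$. Bounding both types by $\max\{1,N\}\,N$ gives exactly the asserted estimate.

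\emph{Uniformity and main difficulty.} Since only the powers $T^\alpha$ and $T^{2\alpha}$ appear, these are bounded by $1$ for $T\in(0,1]$, so a uniform choice of $C$ is valid there. The only delicate point is this bookkeeping. One must make sure that $\norm{Y}{\alpha}$ is controlled only through $\norm{W}{\alpha}$ and not through $\mathbb{W}$, which is why only the factor $(1+\norm{W}{\alpha})^2$ appears. One must also make sure that the constant in front of the quadratic term does not depend on $T$ beyond powers $T^\alpha$ with $T\le 1$.
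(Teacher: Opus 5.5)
Your proposal is correct and is the standard argument behind \cite[Lemma 7.3]{FrizHairer2020}: a second-order Taylor expansion for the remainder, splitting the product for the Gubinelli derivative, and then the \cite[(4.20)]{FrizHairer2020}-type bounds on $\norm{Y}{\alpha}$ and $\norm{Y'}{\sup}$. The paper gives no proof of its own and only cites that reference, so your approach matches the one the paper relies on.
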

    \subsection{Rough Differential Equations}

We employ an extension of the theory described in \cite[Chapter 8]{FrizHairer2020} to RDEs with drift (as opposed to driftless RDEs). Nevertheless, we still refer to \cite{FrizHairer2020} as the extension is straightfoward. For example, one
could include explicit time in an enlarged driving rough path and alter the coefficient function accordingly. Of course, whenever we speak of an RDE like
    \begin{align*}
        \mathrm{d}Y_t=F_0(Y_t)\,\mathrm{d}t+F(Y_t)\,\mathrm{d}\W_t,
    \end{align*}
    assuming well-definedness, we really mean the corresponding integral equation
     \begin{align*}
        Y_t=Y_0+\int_0^tF_0(Y_s)\,\mathrm{d}s+\int_0^tF(Y_s)\,\mathrm{d}\W_s.
    \end{align*}
    A common well-posedness result is the following:

        \begin{theorem}[{\cite[Theorem 8.3 and Theorem 8.5]{FrizHairer2020}}, {\cite[Theorem 2.12]{Geng2021_RoughPathsNotes}}]\label{rde thm solutions to rdes}
            Fix some initial value $y_0\in \V_2$. Let $\W\in\driverRoughPath{[0,T]}{\V_1}$, and let $F\in C^3(\V_2,\linearMaps{\V_1}{\V_2} )$ and $F_0\in C^3(\V_2,\V_2)$.
            \begin{enumerate}
                \item  \underline{Existence and Uniqueness}:   Then, there are some $T_*\in(0,T]$ and some unique $\Y\in\controlledRoughPath{\W}{[0,T_*]}{\V_2}$ with $Y'=F(Y)$ such that we have
            \begin{align*}
                Y_t=y_0+\int_0^tF_0(Y_s)\,\mathrm{d}s+\int_0^tF(Y_s)\,\mathrm{d}\W_s
            \end{align*}
            for all $t\in[0,T_*]$. Moreover, if $F$ and $F_0$ are linearly growing or $\in C^3_{\mathrm{b}}$, then $T_*=T$.
            \item \underline{Continuity of the Solution Map, also Known as Universal Limit Theorem}: Let $\Y$ be as in (i). In addition, let $\widetilde{\W}\in\driverRoughPath{[0,T]}{\V_1}$ and let $\widetilde{\Y}\in\controlledRoughPath{\widetilde{\W}}{[0,T]}{\V_2}$ satisfy
            \begin{align*}
                \widetilde{Y}_t=\Tilde{y}_0+\int_0^tF_0(\widetilde{Y}_s)\,\mathrm{d}s+\int_0^tF(\widetilde{Y}_s)\,\mathrm{d}\widetilde{\W}_s,
            \end{align*}
            where $t\in [0,T_*]$ and $\Tilde{y}_0\in\V_2$. Then, it holds
            \begin{align*}
                &\quad\norm{Y'-\widetilde{Y}'}{\alpha}+\norm{\remainder \Y-\remainder\widetilde{\Y}}{2\alpha} \\
                &\leq Q\left(\alpha,T,\norm{F}{C^3_{\mathrm{b}}},\norm{F_0}{C^3_{\mathrm{b}}},\normCalpha{\W},\normCalpha{\widetilde{\W}}\right)\left(\rho_{\mathscr{C}^\alpha}(\W,\widetilde{\W})+\abs{y_0-\Tilde{y}_0}\right),
            \end{align*}
        where $Q:\R_{\geq0}^6\to\R_{\geq0}$ denotes an increasing polynomial. Furthermore,
        \begin{align*}
            \norm{Y-\widetilde{Y}}{\alpha}\leq Q\left(\alpha,T,\norm{F}{C^3_{\mathrm{b}}},\norm{F_0}{C^3_{\mathrm{b}}},\normCalpha{\W},\normCalpha{\widetilde{\W}}\right)\left(\rho_{\mathscr{C}^\alpha}(\W,\widetilde{\W})+\abs{y_0-\Tilde{y}_0}\right).
        \end{align*}
            \end{enumerate}
        \end{theorem}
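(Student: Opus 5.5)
The plan is to reduce the RDE with drift to a driftless RDE. We then invoke the driftless results \cite[Theorem 8.3 and Theorem 8.5]{FrizHairer2020}, following the "enlarge the driver by time" idea mentioned before the statement.

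\textbf{Step 1: enlarged driver.} Given $\W=(W,\mathbb{W})\in\driverRoughPath{[0,T]}{\V_1}$, define $\widehat W_t:=(t,W_t)\in\R\oplus\V_1$. Its second level $\widehat{\mathbb{W}}_{s,t}$ has four blocks:
\begin{itemize}
\item the time--time block $\tfrac12(t-s)^2$;
\item the space--space block $\mathbb{W}_{s,t}$;
\item the two cross blocks $\int_s^t (r-s)\,\mathrm{d}W_r$ and $\int_s^t W_{s,r}\,\mathrm{d}r$, both defined as Young integrals since $1+\alpha>1$.
\end{itemize}
Chen's relation holds blockwise by additivity of these integrals. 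The cross blocks are bounded by $C\norm{W}{\alpha}|t-s|^{1+\alpha}\le C T^{1-\alpha}\norm{W}{\alpha}|t-s|^{2\alpha}$. Hence $\widehat\W\in\driverRoughPath{[0,T]}{\R\oplus\V_1}$ with $\normCalpha{\widehat\W}\le C(T)(1+\normCalpha{\W})$. Since the cross blocks are bilinear in the first-level paths, we also get the Lipschitz bound $\rho_{\mathscr{C}^\alpha}(\widehat\W,\widehat{\widetilde\W})\le C(T)\,\rho_{\mathscr{C}^\alpha}(\W,\widetilde\W)$. Set $\widehat F(y)(a,v):=F_0(y)a+F(y)v$, so $\widehat F\in C^3(\V_2,\linearMaps{\R\oplus\V_1}{\V_2})$ and $\norm{\widehat F}{C^3_{\mathrm{b}}}\le\norm{F_0}{C^3_{\mathrm{b}}}+\norm{F}{C^3_{\mathrm{b}}}$.

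\textbf{Step 2: identification of the integrals.} For a controlled path $\Y$ with respect to $\widehat\W$ with derivative $\widehat F(Y)$, consider the compensated Riemann sum $\widehat F(Y_u)\widehat W_{u,v}+\mathrm{D}\widehat F(Y_u)\widehat F(Y_u)\widehat{\mathbb{W}}_{u,v}$ and expand it blockwise:
\begin{itemize}
\item the pieces $F_0(Y_u)(v-u)+F(Y_u)W_{u,v}+\mathrm{D}F(Y_u)F(Y_u)\mathbb{W}_{u,v}$ survive;
\item all terms involving the time--time or cross blocks are $O(|v-u|^{1+\alpha})$ and vanish in the limit $\abs{\mathrm{P}}\to0$.
\end{itemize}
Therefore $\int\widehat F(Y)\,\mathrm{d}\widehat\W=\int F_0(Y)\,\mathrm{d}s+\int F(Y)\,\mathrm{d}\W$, where the latter is the rough integral of Theorem \ref{theorem gubinelli estimates}. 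Its Gubinelli derivative with respect to the $\W$-component is $F(Y)$, so $\Y$ is controlled by $\W$ with $Y'=F(Y)$. Conversely, any solution controlled by $\W$ is controlled by $\widehat\W$ with zero time-derivative. So the two solution notions coincide.

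\textbf{Step 3: existence, uniqueness and continuity.} For $F,F_0\in C^3_{\mathrm{b}}$, the driftless theorem gives a unique global solution on $[0,T]$. Its proof runs a contraction in $\controlledRoughPath{\widehat\W}{[0,T_0]}{\V_2}$ (via Lemma \ref{rde lem composing with regular fct} and Theorem \ref{theorem gubinelli estimates}(ii')), where $T_0$ depends only on the norms. Concatenating these steps covers $[0,T]$. For merely $C^3$ coefficients, localize: replace $F,F_0$ by $C^3_{\mathrm{b}}$ functions agreeing with them on a ball around $y_0$, solve globally, and stop before the solution exits the ball. Continuity of the $\alpha$-Hölder path yields some $T_*>0$, and uniqueness follows from uniqueness for the truncated equations. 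Part (ii) follows from \cite[Theorem 8.5]{FrizHairer2020} applied to $\widehat\W,\widehat{\widetilde\W}$, combined with the Lipschitz bound of Step 1. This produces an increasing polynomial $Q$ in $T$, the coefficient norms and the driver norms. The estimate for $\norm{Y-\widetilde Y}{\alpha}$ follows from $Y_{s,t}=Y'_sW_{s,t}+\remainder\Y_{s,t}$ and the bounds on $Y'-\widetilde Y'$ and $\remainder\Y-\remainder\widetilde\Y$.

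\textbf{Main obstacle.} I expect the hardest step to be global existence under only linear growth, since there $\widehat F$ is not bounded and the fixed-point time $T_0$ is not uniform. I would first prove an a priori bound via a greedy partition of $[0,T]$ into intervals on which $\normCalpha{\widehat\W}$ is small, as in the driftless argument for linear vector fields. On each interval, estimate $\sup|Y|$ by a fixed multiple of $1+|Y_{\text{start}}|$. This bound excludes blow-up of the localized solutions, so the stopping time from Step 3 can be pushed to $T$.
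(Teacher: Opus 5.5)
Your reduction is the route the paper itself points to and gives no further proof for. You enlarge $\W$ by time, identify $\int\widehat F(Y)\,\mathrm{d}\widehat\W$ with $\int F_0(Y)\,\mathrm{d}s+\int F(Y)\,\mathrm{d}\W$, invoke the driftless theory, and localize. This correctly yields existence and uniqueness on $[0,T]$ for $C^3_{\mathrm{b}}$ coefficients and local well-posedness for $C^3$ coefficients. The gap lies in two claims you carry over from the statement without proof; both are false as written, and the cited results do not contain them either.

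\emph{First, the polynomial $Q$.} \cite[Theorem 8.5]{FrizHairer2020} only gives a constant $C_M$ depending on an upper bound $M$ for the rough path norms. Concatenating over subintervals multiplies local Lipschitz constants over a number of pieces that grows with $\normCalpha{\widehat\W}$, which produces exponential, not polynomial, dependence. No polynomial can work. Take $\V_1=\V_2=\R$, $F_0=0$, $F=\sin$, the smooth driver $W_t=Lt$ with $\mathbb{W}_{s,t}=\frac{L^2}{2}(t-s)^2$, $\widetilde\W=\W$, $y_0=0$ and $\tilde y_0=\epsilon$. Then $Y\equiv0$, while $\widetilde Y'_t=\sin\widetilde Y_t=\epsilon\,\mathrm{e}^{Lt}+\mathrm{o}(\epsilon)$. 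Hence $\norm{Y'-\widetilde Y'}{\alpha}/\abs{y_0-\tilde y_0}\to(\mathrm{e}^{LT}-1)/T^\alpha$ as $\epsilon\to0$, and the same holds for $\norm{Y-\widetilde Y}{\alpha}$. But $\normCalpha{\W}=LT^{1-\alpha}+\frac{L^2}{2}T^{2-2\alpha}$ is only polynomial in $L$. So your argument can deliver an increasing, locally bounded constant of exponential type, not a polynomial.

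\emph{Second, $T_*=T$ under linear growth.} Your greedy-partition a priori bound needs the local estimates to be affine in $\abs{Y}$. Those estimates involve $F(Y)$, $\mathrm{D}F(Y)F(Y)$ and, through the remainder, $\mathrm{D}^2F(Y)$. Bounded $\mathrm{D}F,\mathrm{D}^2F$ give this, as for linear vector fields, but linear growth of $F$ alone does not, and global existence genuinely fails. Let $\V_1=\R^2$, $\V_2=\R$, $F_0=0$, and $F(y)v=F_1(y)v^1+F_2(y)v^2$ with $F_1(y)=\sqrt{1+y^2}\cos y$ and $F_2(y)=\sqrt{1+y^2}\sin y$; both are smooth with linear growth. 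Take the pure-area rough path $W\equiv0$, $\mathbb{W}_{s,t}=(t-s)A$ with $A^{12}=-A^{21}=a$, which is $\alpha$-Hölder and even weakly geometric. Since $W_{u,v}=0$, the compensated Riemann sums reduce to $\mathrm{D}F(Y_u)F(Y_u)\mathbb{W}_{u,v}$. The RDE therefore becomes the ODE $\dot Y=\pm a\,(F_2'F_1-F_1'F_2)(Y)=\pm a(1+Y^2)$, which explodes before time $\pi/\abs{a}$, hence before $T$ once $\abs{a}>\pi/T$. So the ``Moreover'' assertion needs an extra hypothesis on the derivatives, such as bounded $\mathrm{D}F,\mathrm{D}^2F$ or conditions like those the paper imposes in Lemma \ref{rde Lem RDEs generate cocycles}. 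Your a priori estimate must be run under such a hypothesis.
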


        In fact, our requirements on the drift vector field $F_0$ are rather restrictive and allow for easy relaxation, see \cite[Exercise 8.5]{FrizHairer2020}. However, the exact regularity assumptions are of no importance to us, so, for ease of presentation, we align the conditions on $F_0$ and $F$. Now that we have fixed the rough path fundamentals, we advance to a detailed investigation of RDEs, placing more emphasis on links to our normal form procedure. Specifically, we are interested in the connection to the theory of RDS as presented in \cite{Arnold1998RDS}.

\section{Rough Itô-Wentzell Formula, Link from Rough Differential Equations to Random Dynamical Systems}\label{sec:rde}
    In this section, we fix a finite time horizon $T\in\R_{>0}$. Unless otherwise stated, all rough paths and RDEs are considered on the time domain $[0,T]$. Although it would be convenient to choose a fixed driver $\W\in\driverRoughPath{[0,T]}{\R^M} $, too, we refrain from doing so to avoid imposing unnecessary a priori-requirements on all results. Instead, we describe the respective conditions on $\W$ in each statement.

    \subsection{A Rough Chain Rule and a Rough Itô-Wentzell-type Formula}
        Let us quickly present the classical Gubinelli derivative product rule.
 \begin{lemma}[Rough Product Rule; {\cite[Corollary 7.4]{FrizHairer2020}}]\label{rde lem product rule}
 Let $\W\in\driverRoughPath{[0,T]}{\R^M}$, and
        let $\Upphi \in\controlledRoughPath{\W}{[0,T]}{\linearMaps{\R^D}{\R^K}}$ and $\Y\in\controlledRoughPath{\W}{[0,T]}{\linearMaps{\R^L}{\R^D}}$. Then, we have $\Upphi\Y\in\controlledRoughPath{\W}{[0,T]}{\linearMaps{\R^L}{\R^K}}$ through $(\Phi Y)_t:=\Phi_tY_t$ and the product rule $(\Phi Y)'_t:=\Phi_t'Y_t+\Phi_tY'_t$ with the interpretation $\Phi_t'Y_t\in\linearMaps{\R^M}{\linearMaps{\R^L}{\R^K}}$ via $\Phi_t'Y_tv:= \Phi_t'vY_t$ for $v\in\R^M$.
        Moreover,  \begin{align*}
            \normDalpha{\Upphi\Y}\leq \norm{\Phi'}{\sup}\norm{Y}{\alpha}(1+\norm{W}{\alpha})+2\norm{Y}{\sup} \normDalpha{\Upphi} +2\norm{\Phi}{\sup}\normDalpha{\Y} +\norm{\Phi}{\alpha}\norm{Y'}{\sup}.
        \end{align*}
    \end{lemma}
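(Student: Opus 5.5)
The plan is to check the two defining properties of a controlled rough path for the pair $(\Phi Y,\Phi'Y+\Phi Y')$ directly. First, the candidate derivative $(\Phi Y)'$ must be $\alpha$-Hölder. Second, the remainder $\remainder(\Upphi\Y)$ must be $2\alpha$-Hölder. The semi-norm bound then falls out of the same estimates. Membership in $\controlledRoughPath{\W}{[0,T]}{\linearMaps{\R^L}{\R^K}}$ also needs $\Phi Y\in C^\alpha$, and that follows immediately from $(\Phi Y)_{s,t}=\Phi_{s,t}Y_t+\Phi_sY_{s,t}$.

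\textbf{Step 1: the remainder.} For $(s,t)\in\Delta$, start from the exact algebraic splitting
\begin{align*}
(\Phi Y)_{s,t}=\Phi_{s,t}Y_s+\Phi_sY_{s,t}+\Phi_{s,t}Y_{s,t}.
\end{align*}
Substitute $\Phi_{s,t}=\Phi'_sW_{s,t}+\remainder\Upphi_{s,t}$ and $Y_{s,t}=Y'_sW_{s,t}+\remainder\Y_{s,t}$ into the first two terms. This gives
\begin{align*}
\remainder(\Upphi\Y)_{s,t}=\remainder\Upphi_{s,t}\,Y_s+\Phi_s\,\remainder\Y_{s,t}+\Phi_{s,t}Y_{s,t}.
\end{align*}
The first two terms are bounded by $\norm{Y}{\sup}\norm{\remainder\Upphi}{2\alpha}(t-s)^{2\alpha}$ and $\norm{\Phi}{\sup}\norm{\remainder\Y}{2\alpha}(t-s)^{2\alpha}$. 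The cross term is bounded by $\norm{\Phi}{\alpha}\norm{Y}{\alpha}(t-s)^{2\alpha}$.

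To get the exact form of the stated estimate, the cross term should be split further. Write $\Phi_{s,t}=\Phi'_sW_{s,t}+\remainder\Upphi_{s,t}$ inside it. The piece $\Phi'_sW_{s,t}Y_{s,t}$ is bounded by $\norm{\Phi'}{\sup}\norm{W}{\alpha}\norm{Y}{\alpha}(t-s)^{2\alpha}$. The piece $\remainder\Upphi_{s,t}Y_{s,t}$ is bounded by $2\norm{Y}{\sup}\norm{\remainder\Upphi}{2\alpha}(t-s)^{2\alpha}$. This is where the factors $(1+\norm{W}{\alpha})$ and $2$ in the statement come from.

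\textbf{Step 2: the Gubinelli derivative.} Write
\begin{align*}
(\Phi'Y+\Phi Y')_{s,t}=\Phi'_{s,t}Y_t+\Phi'_sY_{s,t}+\Phi_{s,t}Y'_t+\Phi_sY'_{s,t}.
\end{align*}
The four terms are bounded by $\norm{Y}{\sup}\norm{\Phi'}{\alpha}$, $\norm{\Phi'}{\sup}\norm{Y}{\alpha}$, $\norm{\Phi}{\alpha}\norm{Y'}{\sup}$ and $\norm{\Phi}{\sup}\norm{Y'}{\alpha}$ respectively, each times $(t-s)^\alpha$.

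Summing Steps 1 and 2 and grouping terms gives exactly
\begin{align*}
\norm{\Phi'}{\sup}\norm{Y}{\alpha}(1+\norm{W}{\alpha})+2\norm{Y}{\sup}\normDalpha{\Upphi}+2\norm{\Phi}{\sup}\normDalpha{\Y}+\norm{\Phi}{\alpha}\norm{Y'}{\sup},
\end{align*}
using $\normDalpha{\cdot}=\norm{\cdot'}{\alpha}+\norm{\remainder\cdot}{2\alpha}$. The constants leave room to spare.

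The only point needing care is the bookkeeping for the interpretation of $\Phi'_tY_t\in\linearMaps{\R^M}{\linearMaps{\R^L}{\R^K}}$, namely $v\mapsto(\Phi'_tv)Y_t$. One must check that $\Phi'_sW_{s,t}Y_s$ really is $(\Phi'_sY_s)W_{s,t}$ under this convention; it is, by linearity. There is no genuine obstacle: the statement is a direct computation, as in \cite[Corollary 7.4]{FrizHairer2020}.
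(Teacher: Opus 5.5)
Your direct-computation strategy is the right one, and the paper itself gives no proof beyond the citation to Friz--Hairer. The qualitative part is fine: Step 2 and your three-term formula for $\remainder(\Upphi\Y)_{s,t}$ are correct.

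\textbf{The gap.} It is in the final claim that summing Steps 1 and 2 ``gives exactly'' the stated estimate. In your bookkeeping, $\norm{Y}{\sup}\norm{\remainder\Upphi}{2\alpha}$ enters once from $\remainder\Upphi_{s,t}Y_s$ and twice more from the piece $\remainder\Upphi_{s,t}Y_{s,t}$. Together with $\norm{Y}{\sup}\norm{\Phi'}{\alpha}$ from Step 2, you obtain
\begin{align*}
\norm{Y}{\sup}\left(\norm{\Phi'}{\alpha}+3\norm{\remainder\Upphi}{2\alpha}\right).
\end{align*}
This is not dominated by $2\norm{Y}{\sup}\normDalpha{\Upphi}=2\norm{Y}{\sup}(\norm{\Phi'}{\alpha}+\norm{\remainder\Upphi}{2\alpha})$ unless $\norm{\remainder\Upphi}{2\alpha}\leq\norm{\Phi'}{\alpha}$. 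The slack on the $\Phi$-side cannot absorb the excess. Take $Y$ constant and non-zero with $Y'=0$, $\Phi'\equiv0$ and $\Phi$ non-constant. Then your sum of bounds equals $3\norm{Y}{\sup}\norm{\remainder\Upphi}{2\alpha}$, while the target equals $2\norm{Y}{\sup}\norm{\remainder\Upphi}{2\alpha}$. So the factor $2$ in the statement does not come from that piece, and the constants in your version do not leave room to spare.

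\textbf{The fix.} Do not estimate $\remainder\Upphi_{s,t}Y_s$ and $\remainder\Upphi_{s,t}Y_{s,t}$ separately. They combine to $\remainder\Upphi_{s,t}Y_t$. Equivalently, start from the two-term splitting $(\Phi Y)_{s,t}=\Phi_{s,t}Y_t+\Phi_sY_{s,t}$, which you already wrote down for the $C^\alpha$ check. This gives
\begin{align*}
\remainder(\Upphi\Y)_{s,t}=\remainder\Upphi_{s,t}Y_t+\Phi'_sW_{s,t}Y_{s,t}+\Phi_s\remainder\Y_{s,t},
\end{align*}
hence
\begin{align*}
\norm{\remainder(\Upphi\Y)}{2\alpha}\leq\norm{Y}{\sup}\norm{\remainder\Upphi}{2\alpha}+\norm{\Phi'}{\sup}\norm{W}{\alpha}\norm{Y}{\alpha}+\norm{\Phi}{\sup}\norm{\remainder\Y}{2\alpha}.
\end{align*}
Adding your Step 2 then yields the claimed inequality, even with both factors $2$ replaced by $1$.
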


    \begin{remark}
        Setting $L=1$ and $D=K$, it follows immediately from Lemma \ref{rde lem product rule} that a similar result holds true for the regular matrix-vector application $\Upphi\Y\in\controlledRoughPath{\W}{[0,T]}{\R^D}$ given controlled rough paths $\Upphi\in\controlledRoughPath{\W}{[0,T]}{\linearMaps{\R^D}{\R^D}}$ and $\Y\in\controlledRoughPath{\W}{[0,T]}{\R^D}$.
    \end{remark}
Our approach to the normal form procedure is structurally based upon the SDE approach discussed in \cite[Section 8.5]{Arnold1998RDS}. For this reason, we need analogues of cornerstone results of stochastic analysis, namely the Stratonovich chain rule and the Itô formula. This section deals with the generalization of these statements.

\begin{definition}\label{rde def multi-indices}
            For $n\in\N$, we define the set
            \begin{align*}
                \N_n^D:=\left\{ \tau\in \N_0^D: \abs{\tau}:=\sum_{d=1}^D\tau_d=n\right\}
            \end{align*}
            of $D$-dimensional multi-indices of length $n$. For $\tau\in\N_n^D$, we abbreviate 
            $X^\tau:=\prod_{d=1}^DX_d^{\tau_d} $. Furthermore, we denote by
            \begin{align*}
                H_{n,D}(\V):=\left\{f=\sum_{\tau\in\N_n^D}  f_\tau X^\tau:f_\tau\in \V \right\}
            \end{align*}
            the vector space of homogeneous polynomials of order $n$ in $D$ variables with values in $\V$. Note that we adopt the standard notation from algebra to write an abstract polynomial $f$ in terms of indeterminates $X=(X_1,\dots,X_D)$. When dealing with a specific polynomial function $f(\cdot)$, we still refer to it as $f$.
        \end{definition}

    \begin{lemma}\label{rde lem polynomials simeq R^D}
        For any $n,D\in\N$, we have $\homogeneousPolynomials{\R^D}\simeq\R^{\overline{D}} $ for some suitable $\overline{D}\in\N$.
    \end{lemma}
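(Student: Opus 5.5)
The plan is to exhibit an explicit linear isomorphism by counting dimensions. By Definition \ref{rde def multi-indices}, every $f\in\homogeneousPolynomials{\R^D}$ is written uniquely as $f=\sum_{\tau\in\N_n^D} f_\tau X^\tau$ with $f_\tau\in\R^D$. Addition and scalar multiplication act coefficientwise, so the coefficient map
\begin{align*}
    \iota:\homogeneousPolynomials{\R^D}\to(\R^D)^{\N_n^D},\quad f\mapsto (f_\tau)_{\tau\in\N_n^D},
\end{align*}
is linear. It is bijective, with inverse $(c_\tau)_\tau\mapsto\sum_\tau c_\tau X^\tau$.

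The first step is to check that $\N_n^D$ is finite. Each $\tau\in\N_n^D$ satisfies $0\le\tau_d\le n$ for every $d$, so $\N_n^D\subseteq\{0,\dots,n\}^D$ and $\abs{\N_n^D}\le(n+1)^D$. A stars-and-bars count gives the exact value $\abs{\N_n^D}=\binom{n+D-1}{D-1}$, but finiteness is all we need.

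Next, fix an enumeration of $\N_n^D$ and concatenate the coefficient vectors. This identifies $(\R^D)^{\N_n^D}$ with $\R^{\overline D}$, where $\overline D:=D\binom{n+D-1}{D-1}$. Composing this identification with $\iota$ gives the claimed isomorphism $\homogeneousPolynomials{\R^D}\simeq\R^{\overline D}$.

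The only point needing care is that the representation in terms of the $f_\tau$ is unique. Here we read Definition \ref{rde def multi-indices} as defining abstract polynomials in indeterminates, so the monomials $X^\tau$ are linearly independent by construction. If instead one views $f$ as a polynomial function on $\R^D$, uniqueness still holds: $\R$ is an infinite field, so a polynomial function vanishes identically only if all its coefficients vanish. For example, $f_\tau$ is recovered as $\partial^\tau f(0)/\tau!$. Either way $\iota$ is well defined and injective. I expect no genuine obstacle here; the lemma is a bookkeeping statement that lets later arguments treat $H^n$ as an $\R^{\overline D}$-valued path.
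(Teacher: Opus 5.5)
Your proof is correct: it is the standard identification of a homogeneous polynomial with its coefficient vector, giving $\overline D = D\binom{n+D-1}{D-1}$, and this is exactly the argument the paper delegates to \cite[Section 8.1]{Arnold1998RDS} without writing it out. Your added remark that the coefficients are determined by the polynomial function, via $f_\tau=\partial^\tau f(0)/\tau!$, correctly covers the case where $f$ is read as a function rather than an abstract polynomial.
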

    \begin{proof}
        For the straightforward proof, see \cite[Section 8.1]{Arnold1998RDS}.
    \end{proof}

 \begin{comment}
 \begin{lemma}\label{rde lem preliminary 0}
        Let $\W\in\driverRoughPath{[0,T]}{\R^M}$ and $\H\in\controlledRoughPath{\W}{[0,T]}{\homogeneousPolynomials{\V}}$. Then, every coefficient of $\H=(H,H')$ is $\in\controlledRoughPath{\W}{[0,T]}{\V}$.
    \end{lemma}
    \begin{proof}
        For $\sigma\in\N_n^D$, define $\projection^\sigma:\homogeneousPolynomials{\V}\rightarrow \V,\,f=\sum_{\tau\in\N_n^D}f_\tau X^\tau \mapsto f_\sigma$. Because $\projection^\sigma$ is a linear map between finite-dimensional Banach spaces, it is clearly smooth. Consequently, $\projection^\sigma\H\in\controlledRoughPath{\W}{[0,T]}{\V}$. But $ (\projection^\sigma H)_t=H_{\sigma,t}$ and $(\projection^\sigma H)'_t=H'_{\sigma,t}$ for all $t\in [0,T]$. 
    \end{proof}
    \begin{remark}
        Opposite to Lemma \ref{rde lem preliminary 0}, given a finite sequence $(\H_\tau)_{\tau\in\N_n^D}\subseteq\controlledRoughPath{\W}{[0,T]}{\V}$, it is straightforward to show that $\sum_{\tau\in\N_n^D}\H_\tau X^\tau\in\controlledRoughPath{\W}{[0,T]}{\homogeneousPolynomials{\V}}$. Therefore,
        Lemma \ref{rde lem preliminary 0} allows us to characterize  $\H\in\controlledRoughPath{\W}{[0,T]}{\homogeneousPolynomials{\V}}$ in terms of its coefficients, that is,
        \begin{align*}
            H_t=\sum_{\tau\in\N_n^D}H_{\tau,t}X^\tau \text{ and } H_t'=\sum_{\tau\in\N_n^D}H'_{\tau,t}X^\tau
        \end{align*}
        with $\mathfrak{R}\H_{s,t}=\sum_{\tau\in\N_n^D}\mathfrak{R}\H_{\tau,s,t}$.
    \end{remark}
    \end{comment}

\begin{lemma}[Rough Chain Rule]\label{rde lem preliminary 1}
Let $\W\in\driverRoughPath{[0,T]}{\R^M}$ and let $F:[0,T]\times\R^D\rightarrow\R^D$ be continuous and satisfy $F(t,\cdot)\in C^\infty(\R^D,\R^D)$ for all $t\in[0,T]$, with formal Taylor series expansion $F(t,x)\formallyEqual \sum_{n=0}^\infty F^n(t,x)$. Assume that, for every $n\in\N_{0}$ and any fixed $x\in\R^D$, we have $t\mapsto (F^n(t,x),\dot F^n(t,x))\in\controlledRoughPath{\W}{[0,T]}{\R^D}$. Here,
\begin{align*}
F^n(t,x):=\sum_{\tau\in\N_n^D}F^n_\tau (t)x^\tau
\text{ and }
\dot F^n(t,x):=\sum_{\tau\in\N_n^D}\dot F^n_\tau (t)x^\tau,
\end{align*}
with $\mathfrak{R}(F^n(\cdot,x),\dot F^n(\cdot,x))_{s,t}=\sum_{\tau\in\N_n^D} \mathfrak{R}(F^n_\tau(\cdot),\dot F^n_\tau(\cdot))_{s,t}x^\tau$. Then, the following statements hold true:
\begin{enumerate}
\item On a formal level,
\begin{align*}
t\mapsto (F(t,x),\dot F(t,x)):\formallyEqual\left(\sum_{n=0}^\infty F^n(t,x),\sum_{n=0}^\infty \dot F^n(t,x)\right)\in\controlledRoughPath{\W}{[0,T]}{\R^D}.
\end{align*}
\item For every $n\in\N_{0}$, we have
\begin{align*}
t\mapsto\left(\mathrm{D}_xF^n(t,x),\mathrm{D}_x\dot F^n (t,x)\right)\in\controlledRoughPath{\W}{[0,T]}{\linearMaps{\R^D}{\R^D}}.
\end{align*}
\item On a formal level,
\begin{align*}
t\mapsto (\mathrm{D}_xF(t,x),(\mathrm{D}_x F(\cdot,x))'_t):\formallyEqual\left(\sum_{n=1}^\infty \mathrm{D}_xF^n(t,x),\sum_{n=1}^\infty \mathrm{D}_x\dot F^n (t,x)\right)
\end{align*}
is an element of $\controlledRoughPath{\W}{[0,T]}{\linearMaps{\R^D}{\R^D}}.$
\end{enumerate}
Let now in addition $\Y\in\controlledRoughPath{\W}{[0,T]}{\R^D}$.
\begin{enumerate}\setcounter{enumi}{3}
\item Then, for every $n\in\N_{0}$, we have
\begin{align*}
t\mapsto(F^n(t, Y_t),\dot F^n(t,Y_t)+\mathrm{D}_xF^n(t,Y_t)Y_t')\in\controlledRoughPath{\W}{[0,T]}{\R^D}.
\end{align*}
\item On a formal level,
\begin{align*}
t\mapsto&(F(t,Y_t),\dot F(t,Y_t)+\mathrm{D}_xF(t,Y_t)Y_t')\\
&\quad:\formallyEqual\left(\sum_{n=0}^\infty F^n(t,Y_t),\sum_{n=0}^\infty \dot F^n(t,Y_t)+\mathrm{D}_xF^n(t,Y_t)Y_t'\right)
\end{align*}
is an element of $\controlledRoughPath{\W}{[0,T]}{\R^D}.$
\item For every $n\in\N_{0}$, we have that
\begin{align*}
t\mapsto\left(\mathrm{D}_xF^n(t,Y_t)Y_t',\mathrm{D}_x\dot F^n (t,Y_t)Y_t'+\mathrm{D}_x^2F^n(t,Y_t)(Y_t'\otimes Y_t')+\mathrm{D}_xF^n(t,Y_t)Y_t''\right)
\end{align*}
is an element of $\controlledRoughPath{\W}{[0,T]}{\linearMaps{\R^M}{\R^D}}.$
\item On a formal level, with the usual abbreviation of the sum,
\begin{align*}
t\mapsto&\left(\mathrm{D}_xF(t,Y_t)Y_t',\mathrm{D}_x\dot F (t,Y_t)Y_t'+\mathrm{D}_x^2F(t,Y_t)(Y_t'\otimes Y_t')+\mathrm{D}_xF(t,Y_t)Y_t''\right)
\end{align*}
is an element of $\controlledRoughPath{\W}{[0,T]}{\linearMaps{\R^M}{\R^D}}$.
\end{enumerate}
Assume now that already $t\mapsto(F(t,x),\dot F(t,x))\in\controlledRoughPath{\W}{[0,T]}{\R^D}$ for any fixed $x$. Then, the statements (v) and (vii) hold true even non-formally. To be precise:
\begin{enumerate}[label=(\roman*')]\setcounter{enumi}{4}
    \item We have that
    $
        t\mapsto (F(t,Y_t),\dot F(t,Y_t)+\mathrm{D}_xF(t,Y_t)Y_t')\in\controlledRoughPath{\W}{[0,T]}{\R^D}
    $ and
    \setcounter{enumi}{6}
    \item that $t\mapsto\big(\mathrm{D}_xF(t,Y_t)Y_t',\mathrm{D}_x\dot F (t,Y_t)Y_t'+\mathrm{D}_x^2F(t,Y_t)(Y_t'\otimes Y_t')+\mathrm{D}_xF(t,Y_t)Y_t''\big)$ is an element of $\controlledRoughPath{\W}{[0,T]}{\linearMaps{\R^M}{\R^D}}$.
\end{enumerate}
\end{lemma}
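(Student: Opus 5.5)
The proof reduces everything to the coefficient level. By hypothesis each coefficient pair $t\mapsto(F^n_\tau(t),\dot F^n_\tau(t))$ is a controlled rough path in $\controlledRoughPath{\W}{[0,T]}{\R^D}$, and the remainder decomposes coefficientwise. Each $F^n(t,x)$ is a finite linear combination of these coefficient paths, with constant weights $x^\tau$. The space $\controlledRoughPath{\W}{[0,T]}{\cdot}$ is a vector space and the Gubinelli derivative is linear. So each finite truncation $\sum_{n\le N}$ is a controlled rough path. Statements (i), (iii), (v) and (vii) are then understood term by term, i.e.\ as identities of formal power series whose every homogeneous component is controlled. Hence the formal statements follow once (ii), (iv) and (vi) are established for each $n$. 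In particular, (i) is immediate.

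For (ii), I would use that $\mathrm{D}_xF^n(t,x)=\sum_{\tau}F^n_\tau(t)\,\mathrm{D}x^\tau$, where $\mathrm{D}x^\tau\in\linearMaps{\R^D}{\R}$ is constant in $t$. Multiplying a controlled path by a constant linear map, or tensoring with a fixed vector, preserves controlledness: the remainder is $\mathfrak{R}(F^n_\tau,\dot F^n_\tau)_{s,t}\,\mathrm{D}x^\tau$. The same argument gives the Gubinelli derivative $\mathrm{D}_x\dot F^n$.

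For (iv), the key point is that $F^n(t,Y_t)=\sum_\tau F^n_\tau(t)\,Y_t^\tau$. Here $Y^\tau=\prod_d Y_d^{\tau_d}$ is obtained from $\Y$ by composing with the smooth polynomial $y\mapsto y^\tau$. Lemma \ref{rde lem composing with regular fct} does not apply verbatim, since polynomials are not $C^2_{\mathrm{b}}$. However, $Y$ is bounded on $[0,T]$, so the polynomial can be replaced by a $C^2_{\mathrm{b}}$ function that agrees with it on a neighbourhood of the range of $Y$. This shows $(Y^\tau,\mathrm{D}(y^\tau)(Y)Y')\in\controlledRoughPath{\W}{[0,T]}{\R}$. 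The rough product rule (Lemma \ref{rde lem product rule}), applied to $F^n_\tau$ and $Y^\tau$, then yields a controlled path with derivative
\begin{align*}
\dot F^n_\tau(t)\,Y_t^\tau+F^n_\tau(t)\,\mathrm{D}(y^\tau)(Y_t)Y'_t .
\end{align*}
Summing over $\tau$ gives exactly $\dot F^n(t,Y_t)+\mathrm{D}_xF^n(t,Y_t)Y'_t$.

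Item (vi) follows the same pattern. Here one additionally needs $\Y'$ to be controlled, and I read the symbol $Y''$ as presupposing this. We write $\mathrm{D}_xF^n(t,Y_t)Y'_t$ as the product of the controlled path from (ii)$\circ$(iv), namely $\mathrm{D}_xF^n(\cdot,Y_\cdot)$ (apply the argument of (iv) to the coefficients of $\mathrm{D}_xF^n$), with $(Y',Y'')$. The product rule then gives the stated derivative; the term $\mathrm{D}_x^2F^n(Y'\otimes Y')$ arises from differentiating $\mathrm{D}(y^\tau)$ along $Y$.

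The non-formal statements (v') and (vii') are the main obstacle. Here $F(t,\cdot)$ is merely smooth in $x$, with controlledness assumed only pointwise in $x$, whereas (v') requires uniform control when $x=Y_t$ varies. My plan is to decompose the increment as
\begin{align*}
F(t,Y_t)-F(s,Y_s)=[F(t,Y_s)-F(s,Y_s)]+[F(t,Y_t)-F(t,Y_s)].
\end{align*}
The first bracket should be treated by pointwise controlledness at the frozen point $x=Y_s$. The second bracket should be treated by a second-order Taylor expansion in $x$ together with $Y_{s,t}=Y'_sW_{s,t}+\mathfrak{R}\Y_{s,t}$. What is needed is that the Hölder constants of $\dot F(\cdot,x)$ and $\mathfrak{R}F(\cdot,x)$, and the $C^2$ norms of $F(t,\cdot)$, are locally uniform in $x$ over a compact set containing the range of $Y$. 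I would try to extract this from the standing assumptions, e.g.\ via an implicit local-uniformity interpretation of the hypothesis. Failing that, I would impose it explicitly, as is done in Itô–Wentzell-type results such as \cite{castrequini2022itowentzellformularoughpaths}. The cross term $\dot F(t,Y_t)-\dot F(s,Y_s)$ is handled analogously, and (vii') then follows from (v') applied to $\mathrm{D}_xF$ combined with the product rule.
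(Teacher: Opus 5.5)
Your coefficientwise treatment of (i)--(vii) is the routine argument the paper has in mind. The paper's proof only remarks that an $H_{n,D}$-valued controlled path is characterized by its coefficient paths. The ingredients match: fixed weights $x^\tau$ and $\mathrm{D}(x^\tau)$, replacing $y\mapsto y^\tau$ by a $C^2_{\mathrm{b}}$ function agreeing with it near the compact range of $Y$, and the rough product rule. You are also right that (vi) and (vii) tacitly require $(Y',Y'')\in\controlledRoughPath{\W}{[0,T]}{\linearMaps{\R^M}{\R^D}}$, which is not among the hypotheses.

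\textbf{The gap is in (v') and (vii').} Your first option there, extracting local uniformity in $x$ from the standing assumptions, cannot be carried out, because (v') is false as stated. Take $D=M=1$, $T=2$, any $\W$, and
\[
g(\tau,\xi):=\frac{\tau\xi}{\tau^2+\xi^2}\cdot\frac{1}{\log\bigl(2+(\tau^2+\xi^2)^{-1}\bigr)}\quad\text{for }(\tau,\xi)\neq(0,0),\qquad g(0,0):=0.
\]
Set $F(t,x):=\chi(x)\,g(t-1,x-1)$ with $\chi\in C^\infty_{\mathrm{c}}((\tfrac12,\tfrac32))$ and $\chi\equiv1$ near $1$. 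This $F$ satisfies every hypothesis of the lemma:
\begin{itemize}
\item Since $\abs{g}\le 1/\bigl(2\log(2+(\tau^2+\xi^2)^{-1})\bigr)$, $F$ is jointly continuous.
\item Every line $\{t=\mathrm{const}\}$ or $\{x=\mathrm{const}\}$ either avoids $(1,1)$ or $F$ vanishes on it. Hence $F(t,\cdot)$ and $F(\cdot,x)$ are $C^\infty$, so $(F(\cdot,x),0)\in\controlledRoughPath{\W}{[0,2]}{\R}$ for every $x$ (using $2\alpha\le1$).
\item All Taylor coefficients at $x=0$ vanish, so the hypotheses on the $F^n$ hold trivially.
\end{itemize}
Yet for $Y_t:=t$, which is controlled with $Y'=0$, one gets $F(t,Y_t)=\chi(t)\big/\bigl(2\log(2+(2(t-1)^2)^{-1})\bigr)$ for $t\neq1$ and $F(1,Y_1)=0$. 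This is not H\"older continuous of any positive order at $t=1$.

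So an extra hypothesis is unavoidable. The paper's one-line proof does not supply one either. In the rough It\^{o}--Wentzell theorem, where the lemma is used, the continuity assumptions on $x\mapsto(\eta(\cdot,x),\dot\eta(\cdot,x))$ and $x\mapsto(\mathrm{D}_x\eta(\cdot,x),(\mathrm{D}_x\eta(\cdot,x))')$ play exactly this role.

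Once you impose such a hypothesis your splitting works, but complete the list of what must be locally uniform in $x$. You list $\norm{\dot F(\cdot,x)}{\alpha}$, $\norm{\mathfrak{R}F(\cdot,x)}{2\alpha}$ and $\sup_t\abs{\mathrm{D}_x^kF(t,x)}$ for $k\le2$. Your argument also uses two further bounds:
\begin{itemize}
\item $\sup_{x\in K}\norm{\mathrm{D}_xF(\cdot,x)}{\alpha}<\infty$. This is needed for the term $[\mathrm{D}_xF(t,Y_s)-\mathrm{D}_xF(s,Y_s)]Y_{s,t}$ in your second bracket. It is also needed for the $\alpha$-H\"older continuity of the part $\mathrm{D}_xF(t,Y_t)Y'_t$ of the Gubinelli derivative, which you do not discuss.
\item A Lipschitz bound for $\dot F(t,\cdot)$, uniform in $t$, for the difference $\dot F(t,Y_t)-\dot F(s,Y_s)$.
\end{itemize}
The clean way to get both is to assume locally uniform bounds on $\abs{\dot F(0,x)}+\norm{(F(\cdot,x),\dot F(\cdot,x))}{\mathscr{D}^{2\alpha}_\W}$, and on the same quantity for $(\mathrm{D}_xF(\cdot,x),\mathrm{D}_x\dot F(\cdot,x))$.

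Similarly, for (vii') the step ``apply (v') to $\mathrm{D}_xF$'' needs $(\mathrm{D}_xF(\cdot,x),\mathrm{D}_x\dot F(\cdot,x))$ to be controlled with such bounds, plus one more $x$-derivative. This does not follow from pointwise-in-$x$ controlledness of $(F(\cdot,x),\dot F(\cdot,x))$. It must be assumed, together with $(Y',Y'')$ being controlled.
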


\begin{proof}
The proof is a tedious but routine calculus exercise we refrain from performing here. It is important to realize that it is valid to characterize  $\H\in\controlledRoughPath{\W}{[0,T]}{\homogeneousPolynomials{\V}}$ in terms of its coefficients $\H_\tau\in\controlledRoughPath{\W}{[0,T]}{\V}$, that is,
        \begin{align*}
            H=\sum_{\tau\in\N_n^D}H_{\tau,\cdot}X^\tau \text{ and } H'=\sum_{\tau\in\N_n^D}H'_{\tau,\cdot}X^\tau
        \end{align*}
        with $\mathfrak{R}\H_{s,t}=\sum_{\tau\in\N_n^D}\mathfrak{R}\H_{\tau,s,t}$. See also \cite[Section 3]{castrequini2022itowentzellformularoughpaths}.
\end{proof}

With the chain rule done, we still must establish an Itô-type formula that suits our purposes. The issue we face is the fact that the Itô formula stated in \cite[Theorem 7.7]{FrizHairer2020} is not strong enough, because it does not cover coefficients that explicitly depend upon time in a rough way. Thus, the current objective is to expand the result into such a setting. Our model is \cite[Theorem 4.1]{castrequini2022itowentzellformularoughpaths}, whose proof we upgrade as required by our normal form calculations taking place below.
\begin{theorem}[Rough Itô-Wentzell Formula]\label{rde lem ito-wentzell}
    Fix a driver $\W\in\driverRoughPath{[0,T]}{\R^M} $. Let $(\eta(\cdot,x),\dot\eta(\cdot,x))\in\controlledRoughPath{\W}{[0,T]}{\linearMaps{\R^M}{\R^D}}$ for all $x\in\R^D$. Assume the following conditions on $(\eta,\dot\eta)$:
    \begin{enumerate}
        \item The maps $\R^D\to\controlledRoughPath{\W}{[0,T]}{\linearMaps{\R^M}{\R^D}},\,x\mapsto(\eta(\cdot,x),\dot\eta(\cdot,x))$ and $\eta:[0,T]\times\R^D\to\linearMaps{\R^M}{\R^D}$ are continuous and $\eta(t,\cdot)\in C^\infty(\R^D,\linearMaps{\R^M}{\R^D})$ for every $t\in[0,T]$.
        \item The map $\mathrm{D}_x\eta:[0,T]\times\R^D\to\linearMaps{\R^D}{\linearMaps{\R^M}{\R^D}} $ is continuous and $\mathrm{D}_x\eta(t,\cdot)\in C^\infty(\R^D,\linearMaps{\R^D}{\linearMaps{\R^M}{\R^D}})$ for every $t\in[0,T]$.
        \item The map $\R^D\to\controlledRoughPath{\W}{[0,T]}{\linearMaps{\R^D}{\linearMaps{\R^M}{\R^D}}},\,x\mapsto (t\mapsto(\mathrm{D}_x\eta(t,x),(\mathrm{D}_x\eta(\cdot,x))'_t))$ is continuous.
    \end{enumerate}
    Let moreover $\eta_0:[0,T]\times\R^D\to\R^D$ be continuous and satisfy $\eta_0(t,\cdot)\in C^\infty(\R^D,\R^D)$ for all $t\in[0,T]$. Then, define for all $x\in\R^D$ the controlled rough path $\H(x):=(H(\cdot,x),\dot H(\cdot,x))\in\controlledRoughPath{\W}{[0,T]}{\R^D}$ by
    \begin{align*}
        H(t,x):=H(0,x)+\int_0^t\eta_0(s,x)\,\mathrm{d}s+\int_0^t\eta(s,x)\,\mathrm{d}\W_s.
    \end{align*}
    Assume that $\H$ enjoys the following properties:
    \begin{enumerate}
        \item For any $t\in[0,T]$, we have $H(t,\cdot)\in C^\infty(\R^D,\R^D)$.
        \item The maps $(t,x)\mapsto H(t,x)$, $(t,x)\mapsto \mathrm{D}_xH(t,x)$, $(t,x)\mapsto \mathrm{D}_x^2H(t,x)$ and $(t,x)\mapsto \mathrm{D}_x^3H(t,x)$ are continuous.
    \end{enumerate}
    As last object, we introduce $\Y\in\controlledRoughPath{\W}{[0,T]}{\R^D}$ by setting
        \begin{align*}
            Y_t:=Y_0+\int_0^t\Upsilon_0(s)\,\mathrm{d}s+\int_0^t\Upsilon_s\,\mathrm{d}\W_s,
        \end{align*}
    where $\Upsilon_0:[0,T]\to\R^D$ is continuous, and $(\Upsilon,\Upsilon')\in\controlledRoughPath{\W}{[0,T]}{\linearMaps{\R^M}{\R^D}}$.
    Then, the Itô-Wentzell-type formula
    \begin{align*}
          H(t,Y_t)&=H(0,Y_0)+ \int_0^t  \eta(s,Y_s)+ \mathrm{D}_x H(s,Y_s)\Upsilon_s\,\mathrm{d}\W_s\\
        &\quad+\int_0^t\eta_0(s,Y_s)+\mathrm{D}_xH(s,Y_s)\Upsilon_0(s)\,\mathrm{d}s\\
        &\quad +\int_0^t\mathrm{D}_x\eta(s,Y_s)Y_s'\,\mathrm{d}[\W]_s +\frac{1}{2}\int_0^t\mathrm{D}_x^2H(s,Y_s)(Y_s'\otimes Y_s')\,\mathrm{d}[\W]_s
    \end{align*}
    holds true, where the last two integrals are understood in the Young sense, see \cite{Young1936}. In particular, if $\W$ is weakly geometric, then these two Young integrals vanish due to $[\W]=0$.
\end{theorem}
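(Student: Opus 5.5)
We prove the formula by a telescoping argument over a partition, in the spirit of \cite[Theorem 4.1]{castrequini2022itowentzellformularoughpaths}. Fix $t\in(0,T]$ and a partition $\mathrm{P}$ of $[0,t]$. Then
\begin{align*}
H(t,Y_t)-H(0,Y_0)=\sum_{[u,v]\in\mathrm{P}}\big(H(v,Y_v)-H(v,Y_u)\big)+\big(H(v,Y_u)-H(u,Y_u)\big).
\end{align*}
We expand each bracket to order $3\alpha>1$ and identify the local expansion as the germ of the claimed right-hand side. The sewing lemma behind Theorem \ref{theorem gubinelli estimates}, together with its Young analogue for $\mathrm{d}s$ and $\mathrm{d}[\W]$, then gives convergence of the sum to the stated integrals as $\abs{\mathrm{P}}\to0$.

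For the time increment at frozen space point $x=Y_u$, the definition of $H$ and estimate (i) of Theorem \ref{theorem gubinelli estimates} give
\begin{align*}
H(v,Y_u)-H(u,Y_u)=\eta_0(u,Y_u)(v-u)+\eta(u,Y_u)W_{u,v}+\dot\eta(u,Y_u)\mathbb{W}_{u,v}+O(\abs{v-u}^{3\alpha}).
\end{align*}
The $O$-constant involves $\normDalpha{(\eta(\cdot,x),\dot\eta(\cdot,x))}$ at $x=Y_u$. The continuity assumption (i) on $x\mapsto(\eta,\dot\eta)$, applied on the compact set $\{Y_s:s\in[0,T]\}$, makes this bound uniform in $u$.

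For the space increment at time $v$, we Taylor expand $H(v,\cdot)$ to second order around $Y_u$, using the continuity of $\mathrm{D}^3_xH$ on compacts:
\[
H(v,Y_v)-H(v,Y_u)=\mathrm{D}_xH(v,Y_u)Y_{u,v}+\tfrac12\mathrm{D}_x^2H(v,Y_u)(Y_{u,v}\otimes Y_{u,v})+O(\abs{v-u}^{3\alpha}).
\]
Next we replace $v$ by $u$ inside $\mathrm{D}_xH(v,Y_u)$. Differentiating the defining integral of $H$ in $x$ is justified by (ii)–(iii) and the linearity and continuity of the integration map, and yields
\[
\mathrm{D}_xH(v,x)-\mathrm{D}_xH(u,x)=\mathrm{D}_x\eta(u,x)W_{u,v}+O(\abs{v-u}^{2\alpha}).
\]
In the $\mathrm{D}^2_x H$ term the replacement costs only $O(\abs{v-u}^{3\alpha})$. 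We then insert $Y_{u,v}=\Upsilon_0(u)(v-u)+\Upsilon_uW_{u,v}+\Upsilon'_u\mathbb{W}_{u,v}+O(\abs{v-u}^{3\alpha})$, where $Y'=\Upsilon$ and $Y''=\Upsilon'$. Collecting terms, the space increment becomes
\begin{align*}
&\mathrm{D}_xH(u,Y_u)\big(\Upsilon_0(u)(v-u)+\Upsilon_uW_{u,v}+\Upsilon_u'\mathbb{W}_{u,v}\big)+\mathrm{D}_x\eta(u,Y_u)(W_{u,v}\otimes \Upsilon_u W_{u,v})\\
&\quad+\tfrac12\mathrm{D}_x^2H(u,Y_u)(\Upsilon_uW_{u,v}\otimes\Upsilon_uW_{u,v})+O(\abs{v-u}^{3\alpha}).
\end{align*}

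To match the germ of the claimed right-hand side, we compute the Gubinelli derivative of the integrand $\eta(s,Y_s)+\mathrm{D}_xH(s,Y_s)\Upsilon_s$ via Lemma \ref{rde lem preliminary 1}(v') and the product rule, Lemma \ref{rde lem product rule}. It equals
\[
\dot\eta(s,Y_s)+\mathrm{D}_x\eta(s,Y_s)Y'_s+(\mathrm{D}_xH(\cdot,Y_\cdot))'_s\Upsilon_s+\mathrm{D}_xH(s,Y_s)\Upsilon'_s,
\]
where $(\mathrm{D}_xH(\cdot,Y_\cdot))'_s=\mathrm{D}_x\eta(s,Y_s)+\mathrm{D}^2_xH(s,Y_s)Y'_s$.

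Hence the rough-integral germ $\Xi_{u,v}=Y\text{-integrand}_u W_{u,v}+(\cdot)'_u\mathbb{W}_{u,v}$ contains cross terms of the form $\mathrm{D}_x\eta\,\Upsilon\,\mathbb{W}$ twice and $\mathrm{D}^2_xH(\Upsilon\otimes\Upsilon)\mathbb{W}$ once. The expansion above contains $W\otimes W$ instead. Writing $W_{u,v}\otimes W_{u,v}=2\mathrm{Sym}(\mathbb{W}_{u,v})+[\W]_{u,v}$ and using the symmetry of $\mathrm{D}_x^2H$, the differences are exactly $\mathrm{D}_x\eta(u,Y_u)Y'_u[\W]_{u,v}$ and $\tfrac12\mathrm{D}^2_xH(u,Y_u)(Y'_u\otimes Y'_u)[\W]_{u,v}$, up to $O(\abs{v-u}^{3\alpha})$. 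These are the Young germs of the bracket terms.

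The $\mathrm{D}_x\eta$ terms are the delicate part. They arise twice: once from freezing time in $\mathrm{D}_xH$, and once inside $\eta(u,Y_v)-\eta(u,Y_u)$ hidden in the time increment, which is why we ordered the telescoping with space first at time $v$. Checking that the antisymmetric parts of $\mathbb{W}$ cancel correctly is the main obstacle. I would handle it by carefully expanding $\dot\eta$-independent terms and verifying equality of germs modulo $O(\abs{v-u}^{3\alpha})$.

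Uniformity of all $O$-terms follows from compactness of the range of $Y$ and the continuity hypotheses. Summing over $\mathrm{P}$ then gives
\[
\sum_{[u,v]\in\mathrm{P}} O(\abs{v-u}^{3\alpha})\to0,
\]
which yields the formula. If $\W$ is weakly geometric, then $[\W]=0$ and the two Young integrals vanish.
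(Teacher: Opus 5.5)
Your telescoping decomposition (space increment at time $v$, then time increment at the frozen point $Y_u$) and your strategy of matching each local expansion against the germ of the claimed right-hand side are the same as the paper's. Your uniformity argument via compactness of $\{Y_s\}$ and the continuity hypotheses is also fine. The gap is the one step that carries the actual content of the theorem, which you explicitly leave open (``checking that the antisymmetric parts of $\mathbb{W}$ cancel correctly is the main obstacle'').

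The justification you give for the differences uses $W_{u,v}\otimes W_{u,v}=2\mathrm{Sym}(\mathbb{W}_{u,v})+[\W]_{u,v}$ together with the symmetry of $\mathrm{D}_x^2H$. That only disposes of the Hessian term. The bilinear form $(a,b)\mapsto(\mathrm{D}_x\eta(u,Y_u)[Y'_ua])\,b$ on $\R^M\times\R^M$ has no symmetry. Read literally, ``$\mathrm{D}_x\eta\,\Upsilon\,\mathbb{W}$ twice'' means subtracting $2\,\mathrm{D}_x\eta(u,Y_u)Y'_u\mathbb{W}_{u,v}$ from the single symmetric term $\mathrm{D}_x\eta(u,Y_u)Y'_u(W_{u,v}\otimes W_{u,v})$ of your expansion. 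That leaves the residual $\mathrm{D}_x\eta(u,Y_u)Y'_u\big([\W]_{u,v}-2\mathrm{Anti}(\mathbb{W}_{u,v})\big)$, which contains a spurious L\'evy-area term that survives even for weakly geometric $\W$.

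The missing observation is that the two cross terms in the germ carry transposed orderings.
\begin{itemize}
\item The chain rule for $s\mapsto\eta(s,Y_s)$ contributes $\mathrm{D}_x\eta(u,Y_u)Y'_u\mathbb{W}_{u,v}$.
\item The product-rule term $(\mathrm{D}_xH(\cdot,Y_u))'_u\Upsilon_u\mathbb{W}_{u,v}$ equals $\mathrm{D}_x\eta(u,Y_u)Y'_u\mathbb{W}^T_{u,v}$. Under the convention $\Phi'_tY_tv=\Phi'_tvY_t$ of Lemma \ref{rde lem product rule}, the Gubinelli direction of $\mathrm{D}_xH(\cdot,x)$ (the noise slot of $\eta$) occupies the first tensor slot, and the noise direction of $\Upsilon$ occupies the second. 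This is the reverse of the chain-rule term.
\end{itemize}
Their sum is $\mathrm{D}_x\eta(u,Y_u)Y'_u\,2\mathrm{Sym}(\mathbb{W}_{u,v})$. So the antisymmetric parts cancel identically, and the difference from your expansion is exactly $\mathrm{D}_x\eta(u,Y_u)Y'_u[\W]_{u,v}$. The paper singles this out as the decisive identity; without it the argument is not complete.

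Relatedly, your account of where the two terms come from is off. In your ordering the time increment is taken at the frozen point $Y_u$ and produces only $\dot\eta(u,Y_u)\mathbb{W}_{u,v}$; no $\eta(u,Y_v)-\eta(u,Y_u)$ appears there. The second $\mathrm{D}_x\eta$ contribution sits in the germ of $\int\eta(s,Y_s)\,\mathrm{d}\W_s$. The order of the telescoping plays no role in the cancellation.
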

To understand the depth of what is happening, it is crucial to be aware of the fact that a solution to an RDE is considered to have two Gubinelli derivatives in the sense that its Gubinelli derivative possesses a meaningful Gubinelli derivative itself. Let us illuminate this by reviewing the generic RDE $\,\mathrm{d}Z_t=F_0(Z_t)\,\mathrm{d}t+F(Z_t)\,\mathrm{d}\W_t$, assuming that everything is well-defined. The solution $\Z\in\controlledRoughPath{\W}{[0,T]}{\R^D}$ actually reads $\Z=(Z,F(Z))$. In addition to that, we know that $(F(Z),\mathrm{D}F(Z)Z')=(F(Z),\mathrm{D}F(Z)F(Z))\in\controlledRoughPath{\W}{[0,T]}{\linearMaps{\R^M}{\R^D}}$, which explains why it makes sense to talk about $Z''=\mathrm{D}F(Z)F(Z)$.

Now, if the integral representations of $H(\cdot,x)$ and $Y$ in Theorem \ref{rde lem ito-wentzell} appear as mild solution formulations of suitable RDEs, meaning that $(\eta(\cdot,x),\dot\eta(\cdot,x))=(\dot H(\cdot,x),\ddot H(\cdot,x))$ and $(\Upsilon,\Upsilon')=(Y',Y'')$, where we could write out the Gubinelli derivatives explicitly given the respective (drift and) diffusion vector fields, then the rough Itô-Wentzell formula is effectively a "mild solution formula"-type result for a certain composition of RDE solvers in terms of their respective drift and diffusion vector fields plus higher-order correction terms in form of Young integrals.

\begin{proof}[Proof of Theorem \ref{rde lem ito-wentzell}]
    To begin with, notice that $Y'=\Upsilon$ and $\dot H(\cdot,x)=\eta(\cdot,x)$ by definition. That being said, let $s,t\in[0,T]$ with $s<t$. We start by expanding
    \begin{align*}
        H(t,Y_t)-H(s,Y_s)=H(t,Y_t)-H(t,Y_s)+H(t,Y_s)-H(s,Y_s).
    \end{align*}
    Let us examine both summands separately.

    $\underline{\textit{Regarding the first summand}}$. A simple spatial Taylor expansion produces
    \begin{align*}
        H(t,Y_t)-H(t,Y_s)=\mathrm{D}_xH(t,Y_s)Y_{s,t} +\frac{1}{2}\mathrm{D}_x^2H(t,Y_s)(Y_{s,t}\otimes Y_{s,t})+ \mathrm{O}(\abs{t-s}^{3\alpha}).
    \end{align*}
    We make use of the two assertions
    \begin{align*}
        \mathrm{D}_xH(t,Y_s)Y_{s,t} =\mathrm{D}_xH(s,Y_s)Y_{s,t} +\mathrm{D}_x\eta (s,Y_s)Y_s'(W_{s,t}\otimes W_{s,t}) + \mathrm{O}(\abs{t-s}^{3\alpha})
    \end{align*}
    and
    \begin{align*}
        \mathrm{D}_x^2H(t,Y_s)(Y_{s,t}\otimes Y_{s,t})=2 \mathrm{D}_x^2H(s,Y_s)(Y_s'\otimes Y_s')\mathbb{W}_{s,t} +\mathrm{D}_x^2H(s,Y_s)(Y_s'\otimes Y_s')[\W]_{s,t}+\mathrm{O}(\abs{t-s}^{3\alpha}),
    \end{align*}
    whose justifications we postpone to Appendix \ref{appendix rde proof of ito-wentzell}. Substituting these into the Taylor expansion yields
    \begin{align*}
        H(t,Y_t)-H(t,Y_s)&=\mathrm{D}_xH(s,Y_s)Y_{s,t} +\mathrm{D}_x\eta(s,Y_s)Y_s'(W_{s,t}\otimes W_{s,t})\\
        &\quad +\mathrm{D}_x^2H(s,Y_s)(Y_s'\otimes Y_s')\mathbb{W}_{s,t} +\frac{1}{2}\mathrm{D}_x^2H(s,Y_s)(Y_s'\otimes Y_s')[\W]_{s,t}+\mathrm{O}(\abs{t-s}^{3\alpha}).
    \end{align*}
    According to the definition of $Y$ and Theorem \ref{theorem gubinelli estimates}, we have
    \begin{align*}
        Y_{s,t}=\Upsilon_sW_{s,t}+\Upsilon_s'\mathbb{W}_{s,t}+\int_s^t\Upsilon_0(u)\,\mathrm{d}u+\mathrm{O}(\abs{t-s}^{3\alpha}),
    \end{align*}
    inserting which lets us advance the previous line to
    \begin{align*}
        H(t,Y_t)-H(t,Y_s)&=\mathrm{D}_xH(s,Y_s)\Upsilon_sW_{s,t} + \mathrm{D}_xH(s,Y_s)\Upsilon_s'\mathbb{W}_{s,t}+\mathrm{D}_xH(s,Y_s)\int_s^t\Upsilon_0(u)\,\mathrm{d}u \\
        &\quad+\mathrm{D}_x\eta(s,Y_s)Y_s'(W_{s,t}\otimes W_{s,t})+\mathrm{D}_x^2H(s,Y_s)(Y_s'\otimes Y_s')\mathbb{W}_{s,t}\\
        &\quad+\frac{1}{2}\mathrm{D}_x^2H(s,Y_s)(Y_s'\otimes Y_s')[\W]_{s,t}+\mathrm{O}(\abs{t-s}^{3\alpha}).\mytag\label{rde tag ito-wentzell 1}
    \end{align*}
    Let us now take a different point of view. By Theorem \ref{theorem gubinelli estimates} and Lemma \ref{rde lem preliminary 1},
    \begin{align*}
        \int_s^t\mathrm{D}_xH(u,Y_u)\Upsilon_u\,\mathrm{d}\W_u&=\mathrm{D}_xH(s,Y_s)\Upsilon_sW_{s,t}+\Big((\mathrm{D}_x H(\cdot,Y_s))'_s\Upsilon_s\\
        &\quad+\mathrm{D}_x^2H(s,Y_s)(Y_s'\otimes \Upsilon_s)+\mathrm{D}_xH(s,Y_s)\Upsilon_s'\Big)\mathbb{W}_{s,t}+\mathrm{O}(\abs{t-s}^{3\alpha})\\
        &=\mathrm{D}_xH(s,Y_s)\Upsilon_sW_{s,t}+\mathrm{D}_x \eta(s,Y_s)\Upsilon_s\mathbb{W}_{s,t}^T\\
        &\quad+\mathrm{D}_x^2H(s,Y_s)(Y_s'\otimes Y_s')\mathbb{W}_{s,t}+\mathrm{D}_xH(s,Y_s)\Upsilon_s'\mathbb{W}_{s,t}+\mathrm{O}(\abs{t-s}^{3\alpha}),\mytag\label{rde tag ito-wentzell 1.1}
    \end{align*}
    where, in the last equation, we substituted $Y'=\Upsilon$ and, more importantly,
    recognized the decisive information $(\mathrm{D}_x H(\cdot,Y_s))'_s\Upsilon_s\mathbb{W}_{s,t}=\mathrm{D}_x\eta(s,Y_s)\Upsilon_s\mathbb{W}_{s,t}^T$. 
    Next, we subtract (\ref{rde tag ito-wentzell 1.1}) from (\ref{rde tag ito-wentzell 1}) and recall $W_{s,t}\otimes W_{s,t}=[\W]_{s,t}+2\mathrm{Sym}(\mathbb{W}_{s,t})$ and $Y'=\Upsilon$ to deduce
    \begin{align*}
        &\quad H(t,Y_t)-H(t,Y_s)-\int_s^t\mathrm{D}_xH(u,Y_u)\Upsilon_u\,\mathrm{d}\W_u\\
        &=\mathrm{D}_x\eta(s,Y_s)Y_s'[\W]_{s,t}+\frac{1}{2}\mathrm{D}_x^2H(s,Y_s)(Y_s'\otimes Y_s')[\W]_{s,t}+\mathrm{D}_xH(s,Y_s)\int_s^t\Upsilon_0(u)\,\mathrm{d}u\\
        &\quad-\mathrm{D}_x\eta(s,Y_s)Y_s'(\mathbb{W}^T_{s,t}-2\mathrm{Sym}(\mathbb{W}_{s,t}))+\mathrm{O}(\abs{t-s}^{3\alpha}).\mytag\label{rde tag ito-wentzell 2}
    \end{align*}
    Furthermore, standard rules for Riemann integration provide
    \begin{align*}
        \int_s^t\mathrm{D}_xH(u,Y_u)\Upsilon_0(u)\,\mathrm{d}u-\mathrm{D}_xH(s,Y_s)\int_s^t\Upsilon_0(u)\,\mathrm{d}u=\mathrm{o}(\abs{t-s}),
    \end{align*}
    so that, ultimately, equation (\ref{rde tag ito-wentzell 2}) becomes
    \begin{align*}
        &\quad H(t,Y_t)-H(t,Y_s)-\int_s^tD_xH(u,Y_u)\Upsilon_u\,\mathrm{d}\W_u\\
        &=\mathrm{D}_x\eta(s,Y_s)Y_s'[\W]_{s,t}+\frac{1}{2}\mathrm{D}_x^2H(s,Y_s)(Y_s'\otimes Y_s')[\W]_{s,t}+\int_s^tD_xH(u,Y_u)\Upsilon_0(u)\,\mathrm{d}u\\
        &\quad-\mathrm{D}_x\eta(s,Y_s)Y_s'(\mathbb{W}^T_{s,t}-2\mathrm{Sym}(\mathbb{W}_{s,t}))+\mathrm{o}(\abs{t-s}),\mytag\label{rde tag ito-wentzell 3}
    \end{align*}
    where we made use of $\mathrm{O}(\abs{t-s}^{3\alpha})=\mathrm{o}(\abs{t-s})$. Let us now tackle the other term.\\

    $\underline{\textit{Regarding the second summand}}$.
    Thanks to the definition of $H$ and Theorem \ref{theorem gubinelli estimates}, we again infer
    \begin{align*}
        H(t,x)-H(s,x)=\eta(s,x)W_{s,t}+\dot \eta(s,x)\mathbb{W}_{s,t}+\int_s^t\eta_0(u,x)\,\mathrm{d}u+\mathrm{O}(\abs{t-s}^{3\alpha}).
    \end{align*}
    Meanwhile, Theorem \ref{theorem gubinelli estimates} and Lemma \ref{rde lem preliminary 1} applied to the integral of the concatenation of $\eta$ and $Y$ instead of just $\eta$ produce
    \begin{align*}
        \int_s^t \eta(u,Y_u)\,\mathrm{d}\W_u=\eta(s,Y_s)W_{s,t}+\left(\dot \eta(s,Y_s)+\mathrm{D}_x\eta(s,Y_s)Y_s'\right)\mathbb{W}_{s,t}+\mathrm{O}(\abs{t-s}^{3\alpha}).
    \end{align*}
    For $x=Y_s$, subtracting the second line from the first line results in
    \begin{align*}
        H(t,Y_s)-H(s,Y_s)-\int_s^t \eta(u,Y_u)\,\mathrm{d}\W_u=\int_s^t\eta_0(u,Y_s)\,\mathrm{d}u-\mathrm{D}_x\eta(s,Y_s)Y_s'\mathbb{W}_{s,t}+\mathrm{O}(\abs{t-s}^{3\alpha}).
    \end{align*}
    Again, with $\int_s^t\eta_0(u,Y_s)\,\mathrm{d}u=\eta_0(s,Y_s)\abs{t-s}+\mathrm{o}(\abs{t-s})$ and $\int_s^t\eta_0(u,Y_u)\,\mathrm{d}u=\eta_0(s,Y_s)\abs{t-s}+\mathrm{o}(\abs{t-s})$, and $\mathrm{O}(\abs{t-s}^{3\alpha})=\mathrm{o}(\abs{t-s})$, a final substitution leads to
    \begin{align*}
        H(t,Y_s)-H(s,Y_s)-\int_s^t \eta(u,Y_u)\,\mathrm{d}\W_u=\int_s^t\eta_0(u,Y_u)\,\mathrm{d}u-\mathrm{D}_x\eta(s,Y_s)Y_s'\mathbb{W}_{s,t}+\mathrm{o}(\abs{t-s}).\mytag\label{rde tag ito-wentzell 4}
    \end{align*}

    At this point, we find ourselves in a position to finish the proof by putting together the identities that we have derived thus far. Indeed, writing $\mathrm{P}\subseteq[0,t]$ for partitions, identifying a telescopic sum in (1) and inserting (\ref{rde tag ito-wentzell 3}) and (\ref{rde tag ito-wentzell 4}) in (2), we end up with
    \begin{align*}
        H(t,Y_t)-H(0,Y_0)
        &\overset{(1)}{=}\sum_{[u,v]\in\mathrm{P}} (H(v,Y_v)-H(v,Y_u)) +(H(v,Y_u)-H(u,Y_u))\\
        &\overset{(2)}{=}\sum_{[u,v]\in\mathrm{P}}\bigg(\bigg(\int_u^v\mathrm{D}_xH(w,Y_w)\Upsilon_w\,\mathrm{d}\W_w+\mathrm{D}_x\eta(u,Y_u)Y_u'[\W]_{u,v}\\
        &\quad\quad+\frac{1}{2}\mathrm{D}_x^2H(u,Y_u)(Y_u'\otimes Y_u')[\W]_{u,v}+\int_u^v\mathrm{D}_xH(w,Y_w)\Upsilon_0(w)\,\mathrm{d}w\\
        &\quad\quad-\mathrm{D}_x\eta(u,Y_u)Y_u'(\mathbb{W}^T_{u,v}-2\mathrm{Sym}(\mathbb{W}_{u,v}))+\mathrm{o}(\abs{v-u})\bigg)\\
        &\quad+\bigg(\int_u^v \eta(w,Y_w)\,\mathrm{d}\W_w+\int_u^v\eta_0(w,Y_w)\,\mathrm{d}w-\mathrm{D}_x\eta(u,Y_u)Y_u'\mathbb{W}_{u,v}+\mathrm{o}(\abs{v-u})\bigg)\bigg)\\
        &\overset{(3)}{=}\sum_{[u,v]\in\mathrm{P}} \int_u^v\eta(w,Y_w)+\mathrm{D}_xH(w,Y_w)\Upsilon_w\,\mathrm{d}\W_w\\
        &\quad+\sum_{[u,v]\in\mathrm{P}}\int_u^v\eta_0(w,Y_w)+\mathrm{D}_xH(w,Y_w)\Upsilon_0(w)\,\mathrm{d}w\\
        &\quad+\sum_{[u,v]\in\mathrm{P}}\mathrm{D}_x\eta(u,Y_u)Y_u'[\W]_{u,v}+\frac{1}{2}\mathrm{D}_x^2H(u,Y_u)(Y_u'\otimes Y_u')[\W]_{u,v}\\
        &\quad+\sum_{[u,v]\in\mathrm{P}}\mathrm{o}(\abs{v-u})\\
        &\xrightarrow{(4)}\int_0^t \eta(w,Y_w)+ \mathrm{D}_x H(w,Y_w)\Upsilon_w\,\mathrm{d}\W_w\\
        & \quad+\int_0^t\eta_0(w,Y_w)+\mathrm{D}_xH(w,Y_w)\Upsilon_0(w)\,\mathrm{d}w\\
        &\quad +\int_0^t\mathrm{D}_x\eta(w,Y_w)Y_w'\,\mathrm{d}[\W]_w +\frac{1}{2}\int_0^t\mathrm{D}_x^2H(w,Y_w)(Y_w'\otimes Y_w')\,\mathrm{d}[\W]_w,
    \end{align*}
    as desired. In (3), we employed linearity of the controlled integration, as well as $\mathbb{W}_{u,v}+\mathbb{W}^T_{u,v}=2\mathrm{Sym}(\mathbb{W}_{u,v}) $. Last, in (4), we summed over all controlled Gubinelli integrals and took the limit $|\mathrm{P}|\rightarrow0$ to obtain the Young integrals, as well as $\sum_{[u,v]\in\mathrm{P}}\mathrm{o}(\abs{v-u})\rightarrow0 $.
\end{proof}
     \subsection{Random Dynamical Systems from a Rough Point of View}
            \begin{definition}[{\cite[Definition 2]{BAILLEUL_riedel_20175792}}]
            Let $\metricDynamicalSystem$ be a metric dynamical system. A random rough path $(\W(\omega))_\omega\subseteq\mathscr{C}^\alpha(\R,\V)$ is called a rough path cocycle if $\W_{t_1+s,t_2+s}(\omega)=\W_{t_1,t_2}(\theta_s\omega)$ holds true for every $\omega\in\Omega$ and all $s,t_1,t_2\in\R$, $t_1<t_2$.

     When the randomness is known from the context, we sometimes just say that $\W$ is a rough path cocycle.
        \end{definition}

 \begin{notation}
     Henceforward, we will almost exclusively work with random rough paths $(\W(\omega))_\omega\subseteq\driverRoughPath{\R}{\R^M}$. While we will constantly indicate the random input $\omega$ in the driver $\W(\omega)\in\driverRoughPath{\R}{\R^M}$ in the notation, we will not display it always explicitly in the context of controlled rough paths $\Y\in\controlledRoughPath{\W(\omega)}{\R}{\linearMaps{\R^M}{\R^D}}$. The dependence of $\Y=\Y(\omega)$ upon $\omega$ will stem from the controlling driver $\W(\omega)$ and will always be clear from the context.
 \end{notation}

We start this section by generalizing a statement discussed in \cite[Lemma 3.7]{kuehn2021roughcentermanifolds} and \cite[Theorem 19]{BAILLEUL_riedel_20175792} that deals with the fact that RDEs driven by rough path cocycles generate, similar to the situation with SDEs, a random dynamical system even on all of $\R$. But before, we must quote a useful time-shift rule for controlled integration that shall be featured multiple times throughout this work.

\begin{lemma}[Time-Shift Rule for Integration; {\cite[Lemma 3.7]{kuehn2021roughcentermanifolds}}]\label{rde lem time-shift}
Let $(\W(\omega))_{\omega}\subseteq \driverRoughPath{\R}{\R^M}$ be a rough path cocycle and let $\Z\in\controlledRoughPath{\W(\omega)}{\R}{\R^D}$ and $\U\in\controlledRoughPath{\W(\omega)}{\R}{\linearMaps{\R^M}{\R^D}}$. Then, for any $s,t\in\R$, we have
\begin{align*}
    \begin{cases}
        \int_0^t Z_{u+s}\,\mathrm{d}u=\int_s^{t+s}Z_u\,\mathrm{d}u, &t\in\R_{\geq0}\\
        \int_t^0Z_{u+s}\,\mathrm{d}u=\int_{t+s}^sZ_u\,\mathrm{d}u,&t\in\R_{<0}
    \end{cases},
\end{align*}
and also
\begin{align*}
    \begin{cases}
        \int_0^t U_{u+s}\,\mathrm{d}\W_u(\theta_{s}\omega)=\int_s^{t+s}U_u\,\mathrm{d}\W_u(\omega), &t\in\R_{\geq0}\\
        \int_t^0U_{u+s}\,\mathrm{d}\W_u(\theta_s\omega)=\int_{t+s}^sU_u\,\mathrm{d}\W_u(\omega),&t\in\R_{<0}
    \end{cases}.
\end{align*}
\end{lemma}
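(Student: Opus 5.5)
The Lebesgue part is just the substitution $u\mapsto u+s$ in a Riemann integral of the continuous path $Z$, so all the work lies in the rough integral.

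For the rough integral, fix $t\ge0$ (the case $t<0$ is symmetric, interchanging the endpoints). Write $\widetilde U_u:=U_{u+s}$ with Gubinelli derivative $\widetilde U'_u:=U'_{u+s}$. The first step is to check that $(\widetilde U,\widetilde U')\in\controlledRoughPath{\W(\theta_s\omega)}{[0,t]}{\linearMaps{\R^M}{\R^D}}$. This follows from the cocycle identity:
\[
W_{u+s,v+s}(\omega)=W_{u,v}(\theta_s\omega),
\]
so the remainder satisfies
\[
\widetilde U_{u,v}-\widetilde U'_uW_{u,v}(\theta_s\omega)=\remainder\U_{u+s,v+s}.
\]
Its $2\alpha$-Hölder norm on $[0,t]$ is the norm of $\remainder\U$ on $[s,t+s]$, which is finite because $\U$ is controlled on every compact interval. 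The Hölder norm of $\widetilde U'$ transfers in the same way.

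The second step uses Theorem \ref{theorem gubinelli estimates}, which represents the left-hand side as the limit of compensated Riemann sums:
\[
\lim_{|\mathrm{P}|\to0}\sum_{[u,v]\in\mathrm{P}}U_{u+s}W_{u,v}(\theta_s\omega)+U'_{u+s}\mathbb{W}_{u,v}(\theta_s\omega).
\]
By the rough path cocycle property, $W_{u,v}(\theta_s\omega)=W_{u+s,v+s}(\omega)$ and $\mathbb{W}_{u,v}(\theta_s\omega)=\mathbb{W}_{u+s,v+s}(\omega)$. The sum therefore equals
\[
\sum_{[u',v']\in\mathrm{P}+s}U_{u'}W_{u',v'}(\omega)+U'_{u'}\mathbb{W}_{u',v'}(\omega),
\]
where $\mathrm{P}+s$ is the shifted partition of $[s,t+s]$, which has the same mesh as $\mathrm{P}$. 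Letting the mesh tend to zero and invoking Theorem \ref{theorem gubinelli estimates} again, now for the driver $\W(\omega)$ on an interval containing $[s,t+s]$, gives $\int_s^{t+s}U_u\,\mathrm{d}\W_u(\omega)$. Both limits exist, so they coincide.

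The only mildly delicate point is that Theorem \ref{theorem gubinelli estimates} is stated on $[0,T]$ with integrals over $[s,t]\subseteq[0,T]$. For $s<0$ one has to work on a compact interval $I\in\mathscr{I}_0$ containing $[s,t+s]$ and apply the theorem to the time-translate of $I$ onto $[0,|I|]$. The compensated Riemann sum limit is defined locally, so this translation is harmless, and with that the proof is complete.
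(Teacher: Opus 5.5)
Your proof is correct. The paper does not prove this lemma itself; it imports it from \cite[Lemma 3.7]{kuehn2021roughcentermanifolds}, and your argument is the standard proof of this shift rule: you check that $u\mapsto(U_{u+s},U'_{u+s})$ is controlled by $\W(\theta_s\omega)$, and then use the cocycle relation for both $W$ and $\mathbb{W}$ to identify the compensated Riemann sums over $\mathrm{P}$ with those over the shifted partition $\mathrm{P}+s$, which has the same mesh.
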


\begin{lemma}\label{rde Lem RDEs generate cocycles}
    Let $(\W(\omega))_{\omega}\subseteq \driverRoughPath{\R}{\R^M}$ be a rough path cocycle. Then, the RDE  $\,\mathrm{d}Y_t=G_0(Y_t)\,\mathrm{d}t+G(Y_t)\,\mathrm{d}\W_t(\omega)$ with $Y_0=y_0\in\R^D$ generates a cocycle on the entire time axis $\R$. Here, $G_0\in  C^\infty(\R^D,\R^D)\cap C^3_{\mathrm{b}}(\R^D,\R^D)$ and $G\in  C^\infty(\R^D,\linearMaps{\R^M}{\R^D})\cap C^3_{\mathrm{b}}(\R^D,\linearMaps{\R^M}{\R^D})$ enjoy the following properties taken from \cite[Section 3.1]{mildGronwall2025} that ensure global solubility:
    \begin{enumerate}
        \item The drift $G_0$ is Lipschitz and linearly growing.
        \item The derivative of $\mathrm{D}G(\cdot)G(\cdot)$ is bounded.
    \end{enumerate}
\end{lemma}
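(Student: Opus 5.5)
We define the cocycle by solving the RDE forwards and backwards in time and then check the three properties of Definition~\ref{sde def random dynamical system}.

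\emph{Construction.} For $t\ge0$ we set $\phi(t,\omega,y_0):=Y_t$, where $Y$ solves the RDE on $[0,T]$ for any $T\ge t$. Global existence on every compact interval comes from Theorem~\ref{rde thm solutions to rdes}(i), since $G_0,G\in C^3_{\mathrm{b}}$; the extra conditions (i)--(ii) from \cite{mildGronwall2025} give the a priori bounds that rule out blow-up. Uniqueness makes solutions on nested intervals consistent, so $\phi$ is well defined on $[0,\infty)$.

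For $t<0$ we use time reversal. Put $\overleftarrow{W}_u:=W_{-u}$ and build the reversed rough path $\overleftarrow{\W}$ with second level $\overleftarrow{\mathbb{W}}_{u,v}$ given by $-\mathbb{W}_{-v,-u}^T+W_{-v,-u}\otimes W_{-v,-u}$ (up to sign conventions). This is again an $\alpha$-Hölder rough path. We then solve $\mathrm{d}Z=-G_0(Z)\,\mathrm{d}u+G(Z)\,\mathrm{d}\overleftarrow{\W}_u$ and set $\phi(t,\omega,y_0):=Z_{-t}$. One checks that this is exactly the solution of the original integral equation on $[t,0]$ with terminal value $y_0$, as in \cite[Theorem 19]{BAILLEUL_riedel_20175792}. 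Equivalently, $\phi(-t,\omega)$ is the inverse of $\phi(t,\theta_{-t}\omega)$, since the forward flow is a homeomorphism.

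\emph{Cocycle property.} Property (i) of Definition~\ref{sde def random dynamical system} is immediate. For (ii), take $s,t\ge0$ and set $\tilde Y_u:=Y_{u+s}$. Then
\begin{align*}
\tilde Y_u=Y_s+\int_s^{u+s}G_0(Y_r)\,\mathrm{d}r+\int_s^{u+s}G(Y_r)\,\mathrm{d}\W_r(\omega),
\end{align*}
and Lemma~\ref{rde lem time-shift} turns the right-hand side into $Y_s+\int_0^u G_0(\tilde Y_r)\,\mathrm{d}r+\int_0^u G(\tilde Y_r)\,\mathrm{d}\W_r(\theta_s\omega)$. Before applying the lemma we must note that $(\tilde Y,G(\tilde Y))$ is controlled by $\W(\theta_s\omega)$; this follows from the cocycle identity $\W_{t_1+s,t_2+s}(\omega)=\W_{t_1,t_2}(\theta_s\omega)$. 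By uniqueness in Theorem~\ref{rde thm solutions to rdes}, $\tilde Y_u=\phi(u,\theta_s\omega,\phi(s,\omega,y_0))$. Mixed-sign and negative times follow the same way from the backward construction, or from the inverse relation together with $\theta_{t+s}=\theta_t\theta_s$.

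\emph{Continuity and measurability.} For (iii), joint continuity of $(t,y_0)\mapsto\phi(t,\omega,y_0)$ follows from the universal limit theorem, Theorem~\ref{rde thm solutions to rdes}(ii), which gives continuity in $y_0$ uniformly in $t$ on compacts, together with the Hölder continuity of solutions in time. Measurability in $\omega$ comes from continuity of the Itô--Lyons map in $\W$, again Theorem~\ref{rde thm solutions to rdes}(ii), composed with measurability of $\omega\mapsto\W(\omega)$ in the topology of Definition~\ref{rde def rough driver on R}. Joint measurability then holds because the map is Carathéodory, i.e.\ continuous in $(t,y)$ and measurable in $\omega$.

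\emph{Main obstacle.} The hardest step is the backward-time part: checking rigorously that the reversed rough path is well defined, that the reversed equation is globally solvable under (i)--(ii), and that its solution matches the forward flow inverse, in particular for the second-level terms. Here we would lean on \cite[Theorem 19]{BAILLEUL_riedel_20175792} and \cite[Lemma 3.7]{kuehn2021roughcentermanifolds}, adding the drift through the time-augmented driver $(t,W_t)$ as suggested after Theorem~\ref{rde thm solutions to rdes}.
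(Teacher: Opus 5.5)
Your proposal is correct in outline, but for negative times it takes a different route from the paper.

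\textbf{The paper's route.} The paper takes the forward cocycle on $\R_{\geq0}$ from the literature. It then uses that $\phi(t,\omega)$ is a $C^\infty$-diffeomorphism \cite[Proposition 11.11]{Friz_Victoir_2010} to \emph{define} $\phi(t,\omega):=\phi(-t,\theta_t\omega)^{-1}$ for $t<0$. The cocycle identity is checked by splitting the $(s,t)$-plane into six sectors, using only the forward cocycle identity and $\theta_{t+s}=\theta_t\theta_s$. That this extension actually solves the equation on $[t,0]$ with terminal value $y_0$ is established only afterwards, in Lemma~\ref{rde Lem rdes cocycles but mild formulation}.

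\textbf{What your route buys.} You build the backward flow directly from the time-reversed driver, which gives you several things the paper either cites or calls straightforward:
\begin{enumerate}
\item The backward integral representation is built in.
\item Continuity and measurability for $t<0$ follow from the universal limit theorem for the reversed equation, rather than from properties of $(t,\omega,y)\mapsto\phi(-t,\theta_t\omega)^{-1}y$.
\item Invertibility of the forward map becomes an output rather than an input. Forward uniqueness gives $\phi(t,\theta_{-t}\omega)Z_t=y_0$, and uniqueness for the reversed equation gives injectivity. So you do not actually need the homeomorphism property you invoke.
\item Solutions through $(0,y_0)$ are then unique on all of $\R$. Applying Lemma~\ref{rde lem time-shift} to $u\mapsto\phi(u+s,\omega,y_0)$ for arbitrary $u,s\in\R$ therefore gives the cocycle identity in all sectors at once, replacing the case distinction.
\end{enumerate}

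\textbf{What it costs.} You must verify that rough integration against the reversed driver reproduces $\int_t^0\cdot\,\mathrm{d}\W(\omega)$. This includes checking that the reversed path is controlled by the reversed driver with Gubinelli derivative $G(Z)$. It is essentially the computation in Appendix~\ref{appendix rde further details on generation and backward integration}, whereas the paper's route needs no reversed-driver analysis at all.

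\textbf{A correction in the step you flag as the main obstacle.} The reversed second level is $\overleftarrow{\mathbb{W}}_{u,v}=-\mathbb{W}_{-v,-u}+W_{-v,-u}\otimes W_{-v,-u}$ (cf.\ Proposition~\ref{appendix prop 1}), with no transpose. Take $u<v<w$ and set $a=-w<b=-v<c=-u$. With your $-\mathbb{W}^T_{-v,-u}$, Chen's relation produces the cross term $W_{a,b}\otimes W_{b,c}$. The required cross term is $\overleftarrow{W}_{u,v}\otimes\overleftarrow{W}_{v,w}=W_{b,c}\otimes W_{a,b}$. Chen's relation therefore fails whenever consecutive increments are not parallel. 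This is not a sign convention: in the weakly geometric case the correct formula reduces to $\mathbb{W}^T_{-v,-u}$, while yours reduces to $\mathbb{W}_{-v,-u}$. Your reversed equation $\mathrm{d}Z=-G_0(Z)\,\mathrm{d}u+G(Z)\,\mathrm{d}\overleftarrow{\W}_u$ is otherwise correct.
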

\begin{proof}
Denote the solution flow of the RDE for times $\in\R_{\geq0}$ by $\phi$. It was shown on several occasions that $\phi:\R_{\geq0}\times \Omega\times \R^D\to\R^D$ is a cocycle, see, for instance, \cite[Theorem 19]{BAILLEUL_riedel_20175792}, \cite[Lemma 3.7]{kuehn2021roughcentermanifolds} and \cite[Lemma 5.8]{mildGronwall2025}.
    Thus, the task upon us is to extend $\phi$ to the negative axis $\R_{<0}$.

 Remembering the connection between RDEs in Lyons's original sense and in Gubinelli's sense, as described in \cite[Section 8.8]{FrizHairer2020}, we know from \cite[Proposition 11.11]{Friz_Victoir_2010} that $\phi(t,\omega)$ is a $C^\infty$-diffeomorphism for every $t\in\R_{\geq0}$ and every $\omega$. Hence, reviewing \cite[Theorem 1.1.6]{Arnold1998RDS}, it is natural to extend $\phi$ to the negative axis by defining $\phi(t,\omega):=\phi(-t,\theta_t\omega)^{-1}$ for each $t\in\R_{<0}$ and every $\omega$, which gives rise to a map $\phi:\R\times\Omega\times\R^D\to\R^D$. Let us verify that the extended $\phi$ remains a cocycle.

    First, we study the cocycle property, dividing the $(s,t)$-plane into six areas:
    \begin{comment}
    as illustrated in the figure below.
    \begin{center}
        \begin{tikzpicture}[scale=0.5]

        \draw[thick][->] (-5,0) -- (5,0) node[right] {{\scriptsize$s$}};
        \draw[thick][->] (0,-5) -- (0,5) node[above] {{\scriptsize$t$}};

        \draw[thick] (-4,4) -- (4,-4);

        \node at (2.5,2.5) {{\scriptsize $A_1$}};

        \node at (-2,4) {{\scriptsize$A_2$}};

        \node at (-4,2) {{\scriptsize$A_3$}};

        \node at (4,-2) {{\scriptsize$A_6$}};

        \node at (2,-4) {{\scriptsize$A_5$}};

        \node at (-2.5,-2.5) {{\scriptsize$A_4$}};

        \end{tikzpicture}
    \end{center}
    As seen,
    \end{comment}
    $A_1:=\{(s,t)\in\R^2:s,t\geq 0\}$, $A_2:=\{(s,t)\in\R^2:s\leq0,t\geq 0,t\geq-s\}$, $A_3:=\{(s,t)\in\R^2:s\leq0,t\geq 0,t\leq-s\}$, $A_4:=\{(s,t)\in\R^2:s,t\leq0\}$, $A_5:=\{(s,t)\in\R^2:s\geq0,t\leq0,t\leq -s\}$, and $A_6:=\{(s,t)\in\R^2:s\geq0,t\leq0,t\geq-s\}$.
    We handle each area separately in a case distinction. For $(s,t)\in A_1$, we already know that the cocycle identity holds true. Let now $(s,t)\in A_2$. Using the cocycle identity for pairs $\in A_1$, we compute
    \begin{align*}
    \phi(t,\theta_s\omega)\circ \phi(s,\omega)&=\phi(t+s-s,\theta_s\omega)\circ \phi(-s,\theta_s\omega)^{-1}\\
    &=\phi(t+s,\theta_{-s+s}\omega)\circ \phi(-s,\theta_s\omega)\circ\phi(-s,\theta_s\omega)^{-1}=\phi(t+s,\omega).
        \end{align*}
    We work out the remaining cases for pairs $\in A_3,A_4,A_5,A_6$ in a similar fashion.

    At last, we realize that the measurability and continuity matters are straightforward consequences of the respective properties of $\phi|_{t\in\R_{\geq0}}$.
    \begin{comment}
    , when analyzing the partial maps
    \begin{align*}
        (t,\omega,y_0)\mapsto(-t,\theta_t\omega,y_0)\mapsto \phi(-t,\theta_{t}\omega,y_0)\mapsto\phi(-t,\theta_t\omega)^{-1}(y_0).
    \end{align*}
    \end{comment}
    This finishes the proof.
\end{proof}

It turns out to be quite fruitful to further investigate the form of $\phi$ instead of leaving it in its abstract definition given in Lemma \ref{rde Lem RDEs generate cocycles}. Of course, $\phi|_{t\in\R_{\geq 0}}$ corresponds to solution flows of the RDE, but, a priori, we do not have a meaningful interpretation of $\phi|_{t\in\R_{<0}}$ with regard to the RDE. Let us close this gap.
\begin{lemma}\label{rde Lem rdes cocycles but mild formulation}
    Under the conditions of Lemma \ref{rde Lem RDEs generate cocycles}, we have a mild formulation of $\phi:\R\times\Omega\times\R^D\to\R^D$ on all of $\R$ given as
    \begin{align*}
         \phi(t,\omega,y_0)= \begin{cases}
             y_0+\int_0^t G_0(\phi(u,\omega,y_0))\,\mathrm{d}u+\int_0^t G(\phi(u,\omega,y_0))\,\mathrm{d}\W_u(\omega), &t\in\R_{\geq 0}\\
             y_0-\int_t^0 G_0(\phi(u,\omega,y_0))\,\mathrm{d}u-\int_t^0 G(\phi(u,\omega,y_0))\,\mathrm{d}\W_u(\omega), &t\in\R_{<0}
         \end{cases}.
    \end{align*}
    That means that, for times $t\in\R_{<0}$, we set $\phi(t,\omega,y_0)$ to be the unique position in space such that $\phi(t,\omega,y_0)+\int_t^0 G_0(\phi(u,\omega,y_0))\,\mathrm{d}u+\int_t^0 G(\phi(u,\omega,y_0))\,\mathrm{d}\W_u(\omega)=y_0$.
\end{lemma}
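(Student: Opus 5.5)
For $t\ge0$ there is nothing to prove: $\phi$ restricted to $\R_{\geq0}$ is by definition the solution flow of the RDE, so the mild formula holds by Theorem \ref{rde thm solutions to rdes}. The plan is therefore to fix $t<0$ and verify the backward identity directly from the definition $\phi(t,\omega):=\phi(-t,\theta_t\omega)^{-1}$ used in Lemma \ref{rde Lem RDEs generate cocycles}. We will then deduce uniqueness of the position from the diffeomorphism property.

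Set $x:=\phi(t,\omega,y_0)$, so that $\phi(-t,\theta_t\omega,x)=y_0$. Writing out the forward mild formulation on $[0,-t]$ driven by $\W(\theta_t\omega)$ gives
\begin{align*}
y_0=x+\int_0^{-t}G_0(\phi(r,\theta_t\omega,x))\,\mathrm{d}r+\int_0^{-t}G(\phi(r,\theta_t\omega,x))\,\mathrm{d}\W_r(\theta_t\omega).
\end{align*}
The key observation is that the integrand coincides with a shifted trajectory through $x$. For $u\in[t,0]$, the cocycle identity of Lemma \ref{rde Lem RDEs generate cocycles} (with $s=t$, $u-t\ge0$) yields
\begin{align*}
\phi(u-t,\theta_t\omega,x)=\phi(u-t,\theta_t\omega,\phi(t,\omega,y_0))=\phi(u,\omega,y_0).
\end{align*}
So, with $r=u-t$, the integrands equal $G_0(\phi(r+t,\omega,y_0))$ and $G(\phi(r+t,\omega,y_0))$.

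Next I apply the time-shift rule, Lemma \ref{rde lem time-shift}, with shift $s=t$ and upper limit $-t\ge0$. This turns $\int_0^{-t}U_{r+t}\,\mathrm{d}\W_r(\theta_t\omega)$ into $\int_t^0U_u\,\mathrm{d}\W_u(\omega)$, and the drift integral likewise into $\int_t^0G_0(\phi(u,\omega,y_0))\,\mathrm{d}u$. Here $U_u:=G(\phi(u,\omega,y_0))$. Rearranging gives exactly the claimed formula for $t<0$.

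The main obstacle is justifying that $U$ is a legitimate element of $\controlledRoughPath{\W(\omega)}{\R}{\linearMaps{\R^M}{\R^D}}$, as Lemma \ref{rde lem time-shift} requires. Concretely, $u\mapsto\phi(u,\omega,y_0)$ must be controlled by $\W(\omega)$ on $[t,0]$ with Gubinelli derivative $G(\phi(u,\omega,y_0))$. I would obtain this by noting that on $[t,0]$ the path equals $r\mapsto\phi(r,\theta_t\omega,x)$ shifted by $t$, which is an RDE solution driven by $\W(\theta_t\omega)$. Since the cocycle property gives $\W_{r_1+t,r_2+t}(\omega)$ in place of $\W_{r_1,r_2}(\theta_t\omega)$, the controlledness and the remainder bounds transfer verbatim. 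The Gubinelli derivative $G(\phi)$ is then controlled by Lemma \ref{rde lem composing with regular fct}, because $G\in C^3_{\mathrm{b}}$.

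Finally, uniqueness of the position $x$ solving the implicit equation follows since any such $x$ satisfies $\phi(-t,\theta_t\omega,x)=y_0$ by the same computation read backwards. This uses uniqueness of forward RDE solutions from Theorem \ref{rde thm solutions to rdes}, and $\phi(-t,\theta_t\omega)$ is a bijection.
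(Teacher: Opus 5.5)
Your proof is correct and uses exactly the paper's ingredients: the forward mild formulation of $\phi(-t,\theta_t\omega,\cdot)$, the cocycle identity to rewrite the integrands, and the time-shift rule of Lemma \ref{rde lem time-shift}. The only differences are minor. You plug in $x=\phi(t,\omega,y_0)$ directly, whereas the paper evaluates the candidate right-hand side at $\phi(-t,\theta_t\omega,y_0)$ and shows it returns $y_0$, so that the candidate is $\phi(-t,\theta_t\omega)^{-1}$. You also check explicitly that the integrand is controlled, which the paper leaves implicit.
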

\begin{proof}
    For non-negative times $\in\R_{\geq0}$, the statement is clear by the integral formulation of the solution flow. So, let $t\in\R_{<0}$. We check that the RHS evaluated at $\phi(-t,\theta_t\omega,y_0)$ equals $y_0$, so that the RHS really is $\phi(-t,\theta_t\omega)^{-1}=\phi(t,\omega)$, as defined in Lemma \ref{rde Lem RDEs generate cocycles}. Indeed, we calculate
    \begin{align*}
        \text{RHS}|_{\phi(-t,\theta_t\omega,y_0)}
        &= \phi(-t,\theta_t\omega,y_0)-\int_t^0 G_0(\phi(u,\omega,\phi(-t,\theta_t\omega,y_0)))\,\mathrm{d}u\\
        &\quad-\int_t^0 G(\phi(u,\omega,\phi(-t,\theta_t\omega,y_0)))\,\mathrm{d}\W_u(\omega)\\
        &\overset{(1)}{=}y_0+\int_0^{-t}G_0(\phi(u,\theta_t\omega,y_0))\,\mathrm{d}u+\int_0^{-t}G(\phi(u,\theta_t\omega,y_0))\,\mathrm{d}\W_u(\theta_t\omega)\\
        &\quad-\int_t^0 G_0(\phi(u,\omega,\phi(-t,\theta_t\omega,y_0)))\,\mathrm{d}u-\int_t^0 G(\phi(u,\omega,\phi(-t,\theta_t\omega,y_0)))\,\mathrm{d}\W_u(\omega)\\
        &\overset{(2)}{=}y_0+\int_t^{0}G_0(\phi(u-t,\theta_t\omega,y_0))\,\mathrm{d}u+\int_t^{0}G(\phi(u-t,\theta_t\omega,y_0))\,\mathrm{d}\W_u(\omega)\\
        &\quad-\int_t^0 G_0(\phi(u-t,\theta_t\omega,y_0))\,\mathrm{d}u-\int_t^0 G(\phi(u-t,\theta_t\omega,y_0))\,\mathrm{d}\W_u(\omega)=y_0,
    \end{align*}
    where, in (1), we wrote down the mild formulation of the solution flow $\phi|_{t'\in\R_{\geq0}}$, and, in (2), we employed the time-shift rule from Lemma \ref{rde lem time-shift} for the first two integrals and used the fact that $\phi$ enjoys the cocycle property, as proven in Lemma \ref{rde Lem RDEs generate cocycles}, in the last two integrands.
\end{proof}

\begin{remark}\label{rde rem regarding generation of cocycles and backward integration}
    We point out an interesting detail: Upon comparing Lemma \ref{rde Lem rdes cocycles but mild formulation} and \cite[Theorem 2.3.39]{Arnold1998RDS}, we find that the backward integration at the heart of the latter suddenly vanishes in the former, although we would expect both theories to be more or less parallel in our discussion. Yet, we also know from \cite[Section 5.4]{FrizHairer2020} that the time-direction in controlled integration has less of an impact on the outcome, because the second-order process brings in robust extra information missing in classical stochastic integration. We elaborate on this topic in Appendix \ref{appendix rde further details on generation and backward integration}. We explain that the mentioned issue only exists when conducting a superficial analysis, whereas upon closer inspection, we realize that there is no real difference.
\end{remark}

Note that linear RDEs generate linear cocycles. Therefore, it makes sense to be interested in whether the MET, Theorem \ref{thm MET}, is applicable to such cocycles. But, before we get to the link to the MET, let us shortly deviate to another important tool: Duhamel's formula, also known as the variation-of-constants formula, for solutions to affine differential equations. Now that we know about the existence of suitable fundamental matrices for linear RDEs, let us derive a version of Duhamel's formula for affine RDEs.

\begin{remark}\label{rde rem matrix-valued rdes}
    Let $(\W(\omega))_\omega\subseteq\driverRoughPath{\R}{\R^M}$ be a rough path cocycle, and let $B_0\in\linearMaps{\R^D}{\R^D}$ and $B\in\linearMaps{\R^D}{\linearMaps{\R^M}{\R^D}}$.
    Throughout this work, we will frequently encounter matrix-valued RDEs of the form $\,\mathrm{d}J_t=B_0J_t\,\mathrm{d}t+BJ_t\,\mathrm{d}\W_t(\omega)$, that have a phase space given by $\linearMaps{\R^D}{\R^D}$, on a regular basis. The importance of these equations results from the observation that linear cocycles $\Phi$ generated by linearized equations $\,\mathrm{d}Y_t=B_0Y_t\,\mathrm{d}t+BY_t\,\mathrm{d}\W_t(\omega)$ actually give rise to solutions of the above matrix-valued RDE themselves. However, before showing this, we must understand the equation, which requires careful interpretation: While the drift $B_0J_t\in\linearMaps{\R^D}{\R^D}$ is harmless, we must remember $\linearMaps{\R^D}{\linearMaps{\R^M}{\R^D}}\simeq \linearMaps{\R^D\otimes\R^M}{\R^D}$ in order to give a meaning to $BJ_t\in\linearMaps{\R^M}{\linearMaps{\R^D}{\R^D}}$ via $BJ_tv:=B(\cdot\otimes v)J_t$ for $v\in\R^M$.  More elegantly, we understand $B\in\linearMaps{\R^D}{\linearMaps{\R^M}{\R^D}}\simeq\linearMaps{\R^M}{\linearMaps{\R^D}{\R^D}}$ via $Bv:=\sum_{m=1}^MB_mv^m$, where $B_m\in\linearMaps{\R^D}{\R^D}$ for $m=1,\dots,M$, so that
    \begin{align*}
        BJ_tv=B(\cdot\otimes v)J_t=\sum_{m=1}^MB_mJ_tv^m.    \end{align*}
    \begin{comment}
    Now, by definition, for any $y_0\in\R^D$ and for all $t\in[0,T]$, we have the identity
    \begin{align*}
        0&=\Phi(t,\omega)y_0-y_0-\int_0^tB_0\Phi(s,\omega)y_0\,\mathrm{d}s-\int_0^tB\Phi(s,\omega)y_0\,\mathrm{d}\W_s(\omega)\\
        &=\left(\Phi(t,\omega)-\id-\int_0^tB_0\Phi(s,\omega)\,\mathrm{d}s-\int_0^tB\Phi(s,\omega)\,\mathrm{d}\W_s(\omega)\right)y_0,
    \end{align*}
    which is readily verified due to linearity properties and the fact that $y_0$ is independent of time. This already implies
    \begin{align*}
        \Phi(t,\omega)=\id+\int_0^tB_0\Phi(s,\omega)\,\mathrm{d}s+\int_0^tB\Phi(s,\omega)\,\mathrm{d}\W_s(\omega).
    \end{align*}
    Of course, the argument works the other way around, too,
    \end{comment}
    This quickly results in a one-to-one correspondence between linear vector-valued and matrix-valued RDEs via the fundamental matrix $\Phi$. It is clear that the corresponding solution to $\,\mathrm{d}J_t=B_0J_t\,\mathrm{d}t+BJ_t\,\mathrm{d}\W_t(\omega)$ with $J_0=\id$ written out properly is \begin{align*}\Upphi(\cdot,\omega):=
    (\Phi(\cdot,\omega),B\Phi(\cdot,\omega))\in\controlledRoughPath{\W(\omega)}{[0,T]}{\linearMaps{\R^D}{\R^D}}.\end{align*}
\end{remark}

    \begin{lemma}[The Inverse of a Linear Cocycle Solves an RDE]\label{rde lem phi inverse is also controlled rough path}
Let $(\W(\omega))_\omega\subseteq\driverRoughPath{\R}{\R^M}$ be a weakly geometric rough path cocycle.
        In addition, let $B_0\in\linearMaps{\R^D}{\R^D}$ and $B\in\linearMaps{\R^{D}}{\linearMaps{\R^M}{\R^{D}}}$, and denote by $\Phi$ the linear cocycle generated by the linear RDE $\,\mathrm{d}Y_t=B_0Y_t\,\mathrm{d}t+B Y_t\,\mathrm{d}\W_t(\omega)$ in $\R^{D}$. Assume that $\Phi(t,\omega)$ is invertible for every $t\in[0,T]$. Then, the following statements hold true:
        \begin{enumerate}
            \item We have $t\mapsto(\Phi(t,\omega)^{-1}, -\Phi(t,\omega)^{-1}\Phi'(t,\omega)\Phi(t,\omega)^{-1})\in\controlledRoughPath{\W(\omega)}{[0,T]}{\linearMaps{\R^{D}}{\R^{D}}}$. Here, we interpret $\Phi(t,\omega)^{-1} \Phi'(t,\omega) \Phi(t,\omega)^{-1}\in\linearMaps{\R^M}{\linearMaps{\R^{D}}{\R^{D}}}$ via \\$\Phi(t,\omega)^{-1} \Phi'(t,\omega) \Phi(t,\omega)^{-1}v:=\Phi(t,\omega)^{-1} \Phi'(t,\omega)v \Phi(t,\omega)^{-1}$ for $v\in\R^M$.
            \item This controlled rough path is the solution to the linear matrix-valued RDE
            \begin{align*}
            \mathrm{d}J_t=-J_tB_0\,\mathrm{d}t-J_tB\,\mathrm{d}\W_t(\omega)
            \end{align*} in $\linearMaps{\R^{D}}{\R^{D}}$ with initial condition $J_0=\id$, where $J_tB\in\linearMaps{\R^M}{\linearMaps{\R^D}{\R^D}}$ via $J_tBv:=J_tB(\cdot\otimes v)=J_t\sum_{m=1}^MB_mv^m$ for $v\in\R^M$ and suitable $B_m\in\linearMaps{\R^D}{\R^D}$, $m=1,\dots,M$.
            \item Moreover, we have \begin{align*}t\mapsto\left(\Phi(t,\omega)^{-T},-B^T\Phi(t,\omega)^{-T}\right)\in\controlledRoughPath{\W(\omega)}{[0,T]}{\linearMaps{\R^D}{\R^D}},
            \end{align*}
            where $B^T:=(B_1^T,\dots,B^T_M)$.
        \item This controlled rough path solves the linear matrix-valued RDE \begin{align*}\,\mathrm{d}J_t=-B_0^TJ_t\,\mathrm{d}t-B^TJ_t\,\mathrm{d}\W_t(\omega)\end{align*} with initial condition $J_0=\id$, which lives in $\linearMaps{\R^D}{\R^D}$. Here, $B^TJ_t\in\linearMaps{\R^M}{\linearMaps{\R^D}{\R^D}}$ via $B^TJ_tv:=B^T(\cdot\otimes v)J_t=\sum_{m=1}^MB_m^Tv^mJ_t$ for $v\in\R^M$.
        \end{enumerate}
    \end{lemma}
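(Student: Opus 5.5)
The plan is to avoid inverting $\Phi$ directly as a controlled path. Instead, I will solve the candidate RDE in (ii) and show that its solution is a left inverse of $\Phi$; for square matrices that makes it $\Phi^{-1}$.

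Let $\mathbf{J}=(J,-JB)$ be the unique solution of $\mathrm{d}J_t=-J_tB_0\,\mathrm{d}t-J_tB\,\mathrm{d}\W_t(\omega)$ with $J_0=\id$. This is a linear RDE in $\linearMaps{\R^D}{\R^D}$, and it exists globally on $[0,T]$ by Theorem \ref{rde thm solutions to rdes}. Linear vector fields are not $C^3_{\mathrm{b}}$, but they grow linearly, and the statement allows linear growth. The Gubinelli derivative of $\Upphi=(\Phi,B\Phi)$ is given by Remark \ref{rde rem matrix-valued rdes}.

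Next I apply the rough product rule, Lemma \ref{rde lem product rule}, to $J_t\Phi(t,\omega)$. Because $\W$ is weakly geometric, I expect no bracket correction. To justify this, I would use the Itô-Wentzell formula, Theorem \ref{rde lem ito-wentzell}, with $H(t,x)=J_tx$ evaluated along $Y_t=\Phi(t,\omega)y_0$, taking $\eta(t,x)=-J_tBx$ and $\eta_0(t,x)=-J_tB_0x$.

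\begin{itemize}
\item $H$ is linear in $x$, so $\mathrm{D}_x^2H=0$ and the second Young term vanishes.
\item The remaining Young term is multiplied by $[\W]$, which is zero in the weakly geometric case.
\end{itemize}

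The formula then gives
\begin{align*}
J_t\Phi(t,\omega)y_0 = y_0 &+\int_0^t \left(-J_sB_0\Phi(s,\omega)y_0+J_sB_0\Phi(s,\omega)y_0\right)\mathrm{d}s\\
&+\int_0^t\left(-J_sB\Phi(s,\omega)y_0+J_sB\Phi(s,\omega)y_0\right)\mathrm{d}\W_s ,
\end{align*}
where $B$ acts via the components $B_m$ as explained in the statement. Both integrands vanish identically, so $J_t\Phi(t,\omega)=\id$ for all $t\in[0,T]$. Since $\Phi(t,\omega)$ is invertible by hypothesis, it follows that $J_t=\Phi(t,\omega)^{-1}$. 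This proves (ii), and shows that $t\mapsto\Phi(t,\omega)^{-1}$ is controlled with Gubinelli derivative $-\Phi^{-1}B$.

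For (i), I identify this derivative with the stated expression. Since $\Phi'v=(Bv)\Phi$, we get
\begin{align*}
-\Phi^{-1}(\Phi' v)\Phi^{-1}=-\Phi^{-1}(Bv)\Phi\Phi^{-1}=-\Phi^{-1}(Bv),
\end{align*}
which is exactly $-JBv$.

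For (iii) and (iv), I transpose the equation in (ii). Transposition is a linear isometry of $\linearMaps{\R^D}{\R^D}$, so it maps controlled rough paths to controlled rough paths, with Gubinelli derivative transposed componentwise. The map commutes with both the Riemann integral and the rough integral, because the Riemann sums defining them are transposed termwise. This gives $\mathrm{d}J^T_t=-B_0^TJ^T_t\,\mathrm{d}t-B^TJ^T_t\,\mathrm{d}\W_t$ with derivative $-B^T\Phi^{-T}$, which is (iii) and (iv).

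The main obstacle is the product-rule step: showing that the rough integral of a product of two controlled paths satisfies the classical Leibniz identity with no correction term. That is exactly where weak geometricity enters. The cleanest route is the Itô-Wentzell formula as described above, together with checking its continuity and regularity hypotheses, which are trivial here because $H$ is linear in $x$.
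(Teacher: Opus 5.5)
Your proof is correct, but it runs in the opposite direction from the paper's.

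\textbf{Comparison of routes.} The paper gets (i) and (iii) directly from Lemma \ref{rde lem composing with regular fct}: it composes $\Upphi(\cdot,\omega)$ with $A\mapsto A^{-1}$ and $A\mapsto A^{-T}$, so the Gubinelli derivative $-\Phi^{-1}\Phi'\Phi^{-1}$ comes out of the chain rule. It then treats (ii) and (iv) as routine, with a pointer to Friz--Hairer. You instead solve the candidate equation first. You then apply Theorem \ref{rde lem ito-wentzell} to the linear field $H(t,x)=J_tx$, which is the same device the paper later uses for Corollary \ref{rde corollary duhamels formula}, and combine it with uniqueness. This gives $J_t\Phi(t,\omega)=\id$, hence (i) and (ii) together, and (iii), (iv) follow by transposition.

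\textbf{What each route buys.} Your route has three advantages:
\begin{enumerate}
\item It supplies the computation for (ii) that the paper omits.
\item It avoids applying a $C^2_{\mathrm{b}}$ composition lemma to matrix inversion. Strictly, that step needs a cutoff away from the compact set $\Phi([0,T],\omega)\subseteq\mathrm{GL}_D(\R)$.
\item It shows the invertibility hypothesis is automatic, since a one-sided inverse of a square matrix is two-sided.
\end{enumerate}
The cost is that your (i) is derived from (ii) and therefore uses weak geometricity. The composition argument gives (i) for an arbitrary rough path. Without weak geometricity, (i) still holds but (ii) acquires a bracket correction.

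\textbf{A detail to make explicit.} The cancellation in the $\mathrm{d}\W$-integrand must hold for the controlled path, not only for its path component. For a general weakly geometric $\W$ (for example one carrying pure area), a controlled path with vanishing path can have a nonzero Gubinelli derivative and hence a nonzero rough integral. In your case the cancellation does hold. The terms $\eta(s,Y_s)$ and $\mathrm{D}_xH(s,Y_s)\Upsilon_s$ are $\mp$ the product $s\mapsto J_sB\Phi(s,\omega)y_0$. Their chain-rule derivatives from Lemma \ref{rde lem preliminary 1} coincide with $\mp$ the product-rule derivative
$a\mapsto\bigl[b\mapsto J_s\bigl((Bb)(Ba)-(Ba)(Bb)\bigr)\Phi(s,\omega)y_0\bigr]$, so they cancel as well.
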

    \begin{proof}
        Assertions \textit{(i)} and \textit{(iii)} follow immediately from composition of controlled rough paths with regular functions as in Lemma \ref{rde lem composing with regular fct}, whereas \textit{(ii)} and \textit{(iv)} are tedious but routine calculations; a key hint is \cite[Remark 8.14]{FrizHairer2020}.
    \end{proof}

    \begin{corollary}[Rough Duhamel Formula]\label{rde corollary duhamels formula}        Let $(\W(\omega))_\omega\subseteq\driverRoughPath{\R}{\R^M}$ be a weakly geometric rough path cocycle.
        We consider the non-autonomous affine RDE \begin{align*}
        \mathrm{d}Y_t=(B_0Y_t+Z_t)\,\mathrm{d}t+(BY_t+U_t)\,\mathrm{d}\W_t(\omega)\end{align*} 
        in $\R^D$ with initial condition $Y_0=y_0\in\R^D$, given linear maps $B_0\in\linearMaps{\R^D}{\R^D}$ and $B\in\linearMaps{\R^D}{\linearMaps{\R^M}{\R^D}}$, and $\Z\in\controlledRoughPath{\W(\omega)}{[0,T]}{\R^D}$ and $\U\in \controlledRoughPath{\W(\omega)}{[0,T]}{\linearMaps{\R^{M}}{\R^D}}$. Denote by $\Phi$ the linear cocycle generated by the linearized RDE $\,\mathrm{d}Y_t=B_0Y_t\,\mathrm{d}t+B Y_t\,\mathrm{d}\W_t(\omega)$, and by $\phi(\cdot,\omega)$ the solution flow generated by the initial affine RDE. If $\Phi(t,\omega)$ is invertible for all $t\in[0,T]$, then we have the formula
        \begin{align*}
           \phi(\cdot,\omega)y_0=\Phi(\cdot,\omega)y_0+\Phi(\cdot,\omega)\int_0^\cdot\Phi(s,\omega)^{-1}Z_s\,\mathrm{d}s+\Phi(\cdot,\omega)\int_0^\cdot\Phi(s,\omega)^{-1}U_s\,\mathrm{d}\W_s(\omega).
        \end{align*}
    \end{corollary}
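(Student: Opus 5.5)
We verify the formula by showing that the right-hand side, call it $X_t:=\Phi(t,\omega)\big(y_0+I_t\big)$ with
\begin{align*}
I_t:=\int_0^t\Phi(s,\omega)^{-1}Z_s\,\mathrm{d}s+\int_0^t\Phi(s,\omega)^{-1}U_s\,\mathrm{d}\W_s(\omega),
\end{align*}
is a controlled rough path solving the affine RDE with $X_0=y_0$. Uniqueness of solutions (Theorem \ref{rde thm solutions to rdes}(i), applied to the affine, hence linearly growing, vector fields, e.g.\ after absorbing the time dependence of $\Z,\U$ into an enlarged driver or by a direct contraction argument on $[0,T]$) then identifies $X$ with $\phi(\cdot,\omega)y_0$.

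First we check that $I$ is well defined as a controlled rough path. By Lemma \ref{rde lem phi inverse is also controlled rough path}(i), $\Phi(\cdot,\omega)^{-1}$ is controlled by $\W(\omega)$ with Gubinelli derivative $-\Phi^{-1}B\cdot$, in the sense of part (ii) of that lemma. By the rough product rule, Lemma \ref{rde lem product rule}, the integrand $\Phi^{-1}\U$ is controlled with derivative $\Phi^{-1}U'-\Phi^{-1}B U$. Theorem \ref{theorem gubinelli estimates} then gives $I\in\controlledRoughPath{\W(\omega)}{[0,T]}{\R^D}$ with $I'=\Phi^{-1}U$ (the Riemann part has vanishing Gubinelli derivative).

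Next we expand $X=\Phi\,(y_0+I)$ using a rough product formula for integrals. The clean way is to apply Theorem \ref{rde lem ito-wentzell} with $H(t,x):=\Phi(t,\omega)x$, which is linear in $x$, together with the process $y_0+I$. The hypotheses are satisfied trivially: $H$ is smooth and linear in $x$, with $\eta(t,x)=B\Phi(t,\omega)x$, $\eta_0(t,x)=B_0\Phi(t,\omega)x$, $\mathrm{D}_x^2H=0$, and $\mathrm{D}_x\eta=B\Phi$, which is controlled. Since $\W$ is weakly geometric, the bracket terms vanish. The formula yields
\begin{align*}
X_t=y_0+\int_0^t\big(B_0\Phi_s(y_0+I_s)+\Phi_s\Phi_s^{-1}Z_s\big)\,\mathrm{d}s+\int_0^t\big(B\Phi_s(y_0+I_s)+\Phi_s\Phi_s^{-1}U_s\big)\,\mathrm{d}\W_s(\omega),
\end{align*}
which is exactly $\mathrm{d}X=(B_0X+Z)\,\mathrm{d}t+(BX+U)\,\mathrm{d}\W$.

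The main obstacle lies in this last step. The rough integral in the output of Theorem \ref{rde lem ito-wentzell} is taken with respect to the composed controlled path, whose Gubinelli derivative must match that of $BX+U$, namely $B X'+U'=B(BX+U)+U'$, so that the two rough integrals coincide. Checking this requires computing the derivative of $t\mapsto B\Phi_t(y_0+I_t)+U_t$ via Lemma \ref{rde lem preliminary 1}(iv). This gives $B B\Phi(y_0+I)+B\Phi\Phi^{-1}U+U'$, and the term from $\Phi\cdot\Phi^{-1}$, whose derivative is $B\Phi\Phi^{-1}-\Phi\Phi^{-1}B=0$, cancels correctly. Carefully tracking the index conventions for $BJ$ versus $JB$ in $\linearMaps{\R^M}{\linearMaps{\R^D}{\R^D}}$ from Remark \ref{rde rem matrix-valued rdes} is where errors could hide. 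Alternatively, one can verify directly on each subinterval $[s,t]$ that the germ expansions of both sides agree up to $\mathrm{O}(\abs{t-s}^{3\alpha})$, and then conclude by the sewing lemma.
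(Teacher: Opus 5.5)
Your proposal is correct and follows the paper's proof essentially verbatim. Like the paper, you set $X=\Phi(y_0+I)$, apply the rough It\^o--Wentzell formula (Theorem \ref{rde lem ito-wentzell}) to $H(t,x)=\Phi(t,\omega)x$, drop the bracket terms via $[\W]=0$, and conclude by uniqueness for the affine RDE. Your explicit check is a step the paper performs only implicitly, and it is a worthwhile addition: the Gubinelli derivative of the integrand $B\Phi(y_0+I)+\Phi\Phi^{-1}U$ reduces to $B(BX+U)+U'$, so the resulting rough integral genuinely coincides with $\int(BX+U)\,\mathrm{d}\W$.
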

    \begin{proof}
        To begin with, note that $\Upphi(\cdot,\omega)^{-1}\U\in\controlledRoughPath{\W(\omega)}{[0,T]}{\linearMaps{\R^M}{\R^D}}$ and $\Upphi(\cdot,\omega)^{-1}\Z\in\controlledRoughPath{\W(\omega)}{[0,T]}{\R^D}$ according to Lemma \ref{rde lem phi inverse is also controlled rough path} and  Lemma \ref{rde lem product rule}, where the controlled rough path $\Upphi(\cdot,\omega)^{-1}\in\controlledRoughPath{\W(\omega)}{[0,T]}{\linearMaps{\R^D}{\R^D}}$ is explained in Lemma \ref{rde lem phi inverse is also controlled rough path}.
        We intend to apply Theorem \ref{rde lem ito-wentzell}. To this end, set \begin{align*}
            \mathcal{X}:=\left(y_0+\int_0^\cdot\Phi(s,\omega)^{-1}Z_s\,\mathrm{d}s+\int_0^\cdot\Phi(s,\omega)^{-1}U_s\,\mathrm{d}\W_s(\omega),\Phi(\cdot,\omega)^{-1}U\right)\in\controlledRoughPath{\W(\omega)}{[0,T]}{\R^D}.
        \end{align*}
        In addition, let $H:[0,T]\times\R^D\rightarrow\R^D,\,(t,x)\mapsto\Phi(t,\omega)x$, so that, for every $x\in\R^D$, the controlled rough path $t\mapsto(H(t,x),\dot H(t,x))$ satisfies the mild solution formulation
        \begin{align*}
            H(t,x)=x+\int_0^tB_0H(s,x)\,\mathrm{d}s+\int_0^tB H(s,x)\,\mathrm{d}\W_s(\omega).
        \end{align*}
        It is easy to see that all the requirements imposed in Theorem \ref{rde lem ito-wentzell} are met. Therefore, for all $t\in[0,T]$, we obtain, by definition of $H$, that
        \begin{align*}
            \Phi(t,\omega)X_t=H(t,X_t)
            &=y_0+\int_0^tB\Phi(s,\omega)X_s+\Phi(s,\omega)\Phi(s,\omega)^{-1}U_s\,\mathrm{d}\W_s(\omega)\\
            &\quad+\int_0^tB_0\Phi(s,\omega)X_s+\Phi(s,\omega)\Phi(s,\omega)^{-1}Z_s\,\mathrm{d}s\\
            &=y_0+\int_0^tB_0\Phi(s,\omega)X_s+Z_s\,\mathrm{d}s+\int_0^tB\Phi(s,\omega)X_s+U_s\,\mathrm{d}\W_s(\omega).
        \end{align*}
        Thus, the Duhamel representation indeed provides a solution to the initial affine RDE. We conclude by the fact that linear-growth RDEs possess unique solutions.
    \end{proof}

    \begin{remark}
        All of the above results are easily extended to integrals that take place on the non-positive axis $\R_{\leq 0}$. For example, we highlight that, in analogy to Lemma \ref{rde Lem rdes cocycles but mild formulation}, Duhamel's formula as in Corollary \ref{rde corollary duhamels formula} possesses for $t\in\R_{<0}$ the version
        \begin{align*}
            \phi(t,\omega)y_0=\Phi(t,\omega)y_0-\Phi(t,\omega)\int_t^0\Phi(s,\omega)^{-1}Z_s\,\mathrm{d}s-\Phi(t,\omega)\int_t^0\Phi(s,\omega)^{-1}U_s\,\mathrm{d}\W_s(\omega).
        \end{align*}
    \end{remark}
 With Duhamel's formula established, let us return to the connection between the linear cocycle generated by a linear RDE and the MET.

\begin{lemma}\label{rde lem linear RDE cocycle atuomatically satisfy IC of MET}
    Let $(\W(\omega))_\omega\subseteq\driverRoughPath{\R}{\R^M}$ be a weakly geometric rough path cocycle that satisfies $\expectationPathwise{\normCalpha{\W(\omega)}^{1/\alpha}}<\infty$ on compact intervals.
    Furthermore, let $\Phi$ be a linear cocycle generated by the linear RDE $\,\mathrm{d}Y_t=B_0Y_t\,\mathrm{d}t+BY_t\,\mathrm{d}\W_t(\omega)$ in $\R^D$, where $B_0\in\linearMaps{\R^D}{\R^D}$ and $B\in\linearMaps{\R^D}{\linearMaps{\R^M}{\R^D}}$. Then, $\Phi$ meets the integrability condition of the MET, Theorem \ref{thm MET}, that is,
    \begin{align*}
        \expectationPathwise{\sup_{t\in[0,1]}\log^+\abs{\Phi(t,\omega)}+\sup_{t\in[0,1]}\log^+\abs{\Phi(t,\omega)^{-1}}} <\infty.
    \end{align*}
    In particular, the MET automatically applies to $\Phi$.
\end{lemma}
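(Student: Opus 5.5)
The plan is to obtain a pathwise bound on $\sup_{t\in[0,1]}\abs{\Phi(t,\omega)}$ and $\sup_{t\in[0,1]}\abs{\Phi(t,\omega)^{-1}}$ that grows at most exponentially in a power of $\normCalpha{\W(\omega)}$ on $[0,1]$. Taking $\log^+$ then turns this into a polynomial bound in $\normCalpha{\W(\omega)}$, and the assumed moment $\expectationPathwise{\normCalpha{\W(\omega)}^{1/\alpha}}<\infty$ finishes the argument. By Remark \ref{rde rem matrix-valued rdes}, $\Phi(\cdot,\omega)$ solves the matrix-valued linear RDE $\mathrm{d}J_t=B_0J_t\,\mathrm{d}t+BJ_t\,\mathrm{d}\W_t(\omega)$ with $J_0=\id$. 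By Lemma \ref{rde lem phi inverse is also controlled rough path}, which uses weak geometricity, $\Phi(\cdot,\omega)^{-1}$ solves $\mathrm{d}J_t=-J_tB_0\,\mathrm{d}t-J_tB\,\mathrm{d}\W_t(\omega)$. Both are linear RDEs of the same type, with linear vector fields on $\linearMaps{\R^D}{\R^D}$, so one a priori estimate handles both terms.

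The core step is the standard exponential a priori bound for linear RDEs:
\begin{align*}
\sup_{t\in[0,1]}\abs{J_t}\leq C\exp\left(C\left(1+\normCalpha{\W(\omega)}\right)^{1/\alpha}\right)\abs{J_0},
\end{align*}
where $C$ depends only on $\alpha,\abs{B_0},\abs{B}$. I would prove it by the usual greedy-partition argument. Fix a small length $h$ with $h^\alpha(1+\normCalpha{\W})\leq c$ for a small constant $c$. On each interval $[s,s+h]$, Theorem \ref{theorem gubinelli estimates} (ii') together with the linearity of the vector field (so that $J'=BJ$ and $J''=B^2J$ on the controlled level) gives $\normDalpha{\mathcal{J}}\lesssim\abs{J_s}$ on that interval. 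Hence $\sup_{[s,s+h]}\abs{J}\leq 2\abs{J_s}$, say. Iterating over $N\approx h^{-1}\approx(1+\normCalpha{\W})^{1/\alpha}$ intervals yields the bound $2^N$. Alternatively, one could cite the known Gronwall-type linear RDE estimate (e.g.\ \cite{mildGronwall2025} or \cite[Chapter 10]{FrizHairer2020}) directly; the drift $B_0$ contributes only an extra factor $e^{\abs{B_0}}$.

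Taking logarithms gives $\log^+\sup_{t\in[0,1]}\abs{\Phi(t,\omega)}\leq C(1+(1+\normCalpha{\W(\omega)})^{1/\alpha})$, and the same bound holds for the inverse. Integrating over $\Omega$ and using $(1+x)^{1/\alpha}\leq 2^{1/\alpha}(1+x^{1/\alpha})$ together with the moment hypothesis on $[0,1]$ gives finiteness. Measurability of the suprema follows from continuity in $t$. Since Lemma \ref{rde Lem RDEs generate cocycles} supplies the cocycle on $\R$, Theorem \ref{thm MET} applies.

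The main obstacle is carrying out the local estimate cleanly for the rough integral with a linear (unbounded) vector field. Theorem \ref{rde thm solutions to rdes} assumes $C^3_{\mathrm b}$ and does not directly give exponential-in-$\normCalpha{\W}^{1/\alpha}$ dependence. The remedy is to exploit linearity: the remainder and Gubinelli derivative scale linearly in $\abs{J_s}$, which closes the local fixed-point bound uniformly in the size of $J$ and makes the patching argument work.
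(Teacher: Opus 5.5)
Your proposal is correct and is essentially the paper's argument: the paper also applies the linear rough Gronwall bound (Proposition 8.13 of Friz--Hairer, exponential in $\normCalpha{\W(\omega)}^{1/\alpha}$) twice, once to $\Phi$ and once to $\Phi^{-1}$ through its inverse RDE, and then concludes with the $1/\alpha$-moment hypothesis. The only difference is cosmetic. The paper removes the drift by enlarging the driver with time and working with the driftless equation $\mathrm{d}Y_t=(B_0,B)Y_t\,\mathrm{d}\Wbar_t(\omega)$, whereas you absorb $B_0$ directly into the greedy-partition argument; note also that your local step really gives $\normDalpha{\mathcal{J}}\lesssim(1+\normCalpha{\W})\abs{J_s}$ rather than $\lesssim\abs{J_s}$, which still yields $\sup_{[s,s+h]}\abs{J}\leq 2\abs{J_s}$ for your choice of $h$.
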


This extremely convenient fact should not come as a surprise, however, for this result mirrors the SDE situation exactly. In that situation, any linear cocycle generated by a linear SDE instantly satisfies the integrability condition of the MET, as well, see \cite[Remark 6.2.12]{Arnold1998RDS}.

\begin{proof}[Proof of Lemma \ref{rde lem linear RDE cocycle atuomatically satisfy IC of MET}]
    The proof consists of an application (rather, two applications) of the rough Gronwall statement \cite[Proposition 8.13]{FrizHairer2020}. However, first, we need to transform the RDE to work with a driftless equation. To this end, we perform the standard enlargement of the driving signal by embedding time as a new entry to end up with a new driver rough path $\overline{\W}\in\driverRoughPath{[0,1]}{\R^{1+M}}$ and the new driftless RDE
    \begin{align*}
        \mathrm{d}Y_t=(B_0,B)Y_t\,\mathrm{d}\Wbar_t(\omega),
    \end{align*}
    which is equivalent to the original RDE with drift.
\end{proof}

    This completes the section on necessary preliminaries required as a basic setting to even start to analyze rough normal forms.

\section{Normal Form Theory for Rough Differential Equations}\label{sec:rde-normal-forms}
    In the rest of this work,  $(\W(\omega))_\omega\subseteq\driverRoughPath{\R}{\R^M}$ denotes a fixed enhanced centered Gaussian rough path that is also a weakly geometric rough path cocycle. For details on the construction and central properties, we refer to the classic monographs \cite[Chapter 10]{FrizHairer2020} and \cite[Chapter 15]{Friz_Victoir_2010}, and to the recent work \cite[Section 3.3]{mildGronwall2025}. For instance, the reader might think of enriched fractional Brownian motion with Hurst parameter $\in(1/3,1/2]$ as a prominent example.

Recall from the classical construction that the underlying probability space $(\Omega,\F,P)$ is given by the two-sided Wiener space, that is, $\Omega=C^0(\R,\R^M)$, $\F$ is the Borel $\sigma$-algebra and $P$ is the probability measure determined by finite-dimensional Gaussian distributions. In addition, we equip $(\Omega,\F,P)$ with the filtration $(\F^t_{-\infty})_{t\in\R} $, where, for any $t\in\R$, the $\sigma$-algebra $\F^t_{-\infty} $ denotes the completion of $\sigma(\{W_s:s\in(-\infty,t]\})$, and with the base shift $\theta:\R\times\Omega\to\Omega,\,(t,\omega)\mapsto\theta_t\omega:=\omega(\cdot+t)-\omega(t)$. After, if necessary, restricting the filtered probability space $(\Omega,\F,(\F^t_{-\infty})_{t\in\R},P)$, we are allowed to speak about all $\omega$ instead of almost all $\omega$.
Keep in mind also the important aspect from the construction of the enhancement that it preserves adaptedness, that is, $\mathbb{W}_{s,t}$ and, hence, $\W_t$ is $\F^t_{-\infty}$-measurable for every $s,t\in\R$ with $s<t$, so the second-order process $\mathbb{W}$ and, thus, the entire rough path $\W$ is non-anticipative.\\

Finally, we officially rewind the central object our work revolves around.
    \begin{definition}[{\cite[Definition 8.2.1]{Arnold1998RDS}}]\label{sde def random coordinate }
            We name a measurable map $H:\Omega\times \R^D\rightarrow \R^D$ a random coordinate transformation if $H(\omega,\cdot)$ is a $C^\infty$-diffeomorphism and $H(\omega,0)=0$ and $\mathrm{D}_xH(\omega,0)=\id$ hold true for almost every $\omega\in\Omega$.
        \end{definition}

In the following, we assume that the reader is familiar with the discussion of the SDE case presented in \cite[Section 8.5]{Arnold1998RDS}. Keep in mind that we intend to parallel the SDE discussion, only accounting for additional technical obstacles. Thus, our first target is to transform the rough normal form problem into the examination of a certain hierarchical system of affine RDEs and its invariant measures.

    \subsection{Arriving at the Hierarchical System of Affine Rough Differential Equations}

 We fix drift and diffusion coefficient functions
 $F_0\in C^\infty(\R^D,\R^D)\cap C^3_{\mathrm{b}}(\R^D,\R^D)$ and
     $F\in C^\infty(\R^D,\linearMaps{\R^{M}}{\R^D})\cap C^3_{\mathrm{b}}(\R^D,\linearMaps{\R^{M}}{\R^D})$ with $F_0(0)=0$ and $F(0)=0$,
     and consider the generic random RDE
    \begin{align*}
        \mathrm{d}Y_t=F_0(Y_t)\,\mathrm{d}t+F(Y_t)\,\mathrm{d}\W_t(\omega)
        \mytag\label{rde tag standard RDE}
    \end{align*}
    in $\R^D$. As a reminder, in analogy to the SDE situation \cite[Remark 2.3.27]{Arnold1998RDS} and to Lemma \ref{rde Lem rdes cocycles but mild formulation}, the RDE (\ref{rde tag standard RDE}) is to be read as
    \begin{align*}
        Y_t=\begin{cases}
            Y_0+\int_0^tF_0(Y_s)\,\mathrm{d}s+\int_0^tF(Y_s)\,\mathrm{d}\W_s(\omega),&t\in\R_{\geq0}\\
            Y_0-\int_t^0F_0(Y_s)\,\mathrm{d}s-\int_t^0F(Y_s)\,\mathrm{d}\W_s(\omega),&t\in\R_{<0}
        \end{cases}.
    \end{align*}
    As $\W$ is a rough path cocycle, according to Lemma \ref{rde Lem RDEs generate cocycles}, the RDE (\ref{rde tag standard RDE}) generates a cocycle $\phi$ on $\R$ over the metric dynamical system $\metricDynamicalSystem$.
    In addition to that, let $\psi$ be a cocycle generated by the linearized RDE $\,\mathrm{d}Y_t=\mathrm{D}F_0(0)Y_t\,\mathrm{d}t+\mathrm{D}F(0)Y_t\,\mathrm{d}\W_t(\omega)$. While full linearization represents an ambitious and optimistic target, we will deduce that under certain circumstances this wish is indeed realistic. Furthermore, this simplifies the problem: If we did not anticipate full linearization, $\psi$ would, a priori, only be generated by some RDE such as $\,\mathrm{d}Y_t=G_0(\theta_t\omega,Y_t)\,\mathrm{d}t+G(\theta_t\omega,Y_t)\,\mathrm{d}\W_t(\omega)$, where the randomness of drift and diffusion introduces additional difficulties.
    Our normal form procedure seeks a random coordinate transform $H:\Omega\times\R^D\rightarrow\R^D$ that locally conjugates $\phi$ and $\psi$, so our objective is to find some random coordinate transform $H$ that fulfills
    \begin{align*}
        \phi(t,\omega,H(\omega,x))=H(\theta_t\omega,\psi(t,\omega,x)) 
        \mytag\label{rde tag conjugate first}
    \end{align*}
    locally in a neighborhood of the origin.
    In particular, $H $ should be almost surely smooth in space, so we work with a formal Taylor expansion
    \begin{align*}
        H(\omega,x)\formallyEqual x+\sum_{n=2}^\infty H^n(\omega,x),
    \end{align*}
    where $H^n(\omega)\in\homogeneousPolynomials{\R^D}$.
    Let us make a bold guess\footnote{This idea does not come out of thin air. In fact, we consider the direct analogue to the SDE case.} about the nature of the $H^n(\omega)$: We postulate that the $H^n(\omega)$ give rise to stationary solutions to the RDEs
    \begin{align*}
        \mathrm{d}H^n(\theta_t\omega)&=\big(\ad \mathrm{D}F_0(0)H^n(\theta_t\omega)+K^n_0(\theta_t\omega)\big)\,\mathrm{d}t\\
        &\quad+\big(\ad \mathrm{D}F(0)H^n(\theta_t\omega)+K^n(\theta_t\omega)\big)\,\mathrm{d}\W_t(\omega)\\
        &=:\Gamma_0^n(\theta_t\omega,H^n(\theta_t\omega))\,\mathrm{d}t+\Gamma^n(\theta_t\omega,H^n(\theta_t\omega))\,\mathrm{d}\W_t(\omega
        )\mytag\label{rde tag bold guess}
    \end{align*}
    that live in $\homogeneousPolynomials{\R^D}\simeq :\R^{D_n}$, for each $n\in\N_{\geq2}$. Here, the linear maps $\mathrm{ad}_n\mathrm{D}F_0(0)$ and $\mathrm{ad}_n\mathrm{D}F(0)$ are given by
    \begin{align*}
        \mathrm{ad}_n\mathrm{D}F_0(0):H_{n,D}(\R^D)\rightarrow H_{n,D}(\R^D),\; H \mapsto \mathrm{D}F_0(0)H-\mathrm{D}_xH\mathrm{D}F_0(0),
    \end{align*}
    where $\mathrm{D}F_0(0)H-\mathrm{D}_xH\mathrm{D}F_0(0)=(x\mapsto \mathrm{D}F_0(0)H(x)-\mathrm{D}_xH(x)\mathrm{D}F_0(0)x)$, and
    \begin{align*}
        \mathrm{ad}_n\mathrm{D}F(0):H_{n,D}(\R^D)\rightarrow H_{n,D}(\linearMaps{\R^M}{\R^D}),\; H \mapsto \mathrm{D}F(0)H-\mathrm{D}_xH\mathrm{D}F(0),
    \end{align*}
    where $\mathrm{D}F(0)H-\mathrm{D}_xH\mathrm{D}F(0)=(x\mapsto \mathrm{D}F(0)H(x)-\mathrm{D}_xH(x)\mathrm{D}F(0)x)$.
    And, the to-be-specified additive terms are $t\mapsto (K^n_0(\theta_t\omega),\dot K^n_0(\theta_t\omega))\in \controlledRoughPath{\W(\omega)}{\R}{ H_{n,D}(\R^D)}$ and $t\mapsto (K^n(\theta_t\omega),\dot K^n(\theta_t\omega))\in\controlledRoughPath{\W(\omega)}{\R}{ H_{n,D}(\linearMaps{\R^M}{\R^D})} $. It is equivalent to say that, for every $x\in\R^D$, each $H^n(\omega,x)$ solves
    \begin{align*}
        \mathrm{d}H^n(\theta_t\omega,x)&=\big(\ad \mathrm{D}F_0(0)H^n(\theta_t\omega,x)+K^n_0(\theta_t\omega,x)\big)\,\mathrm{d}t\\
        &\quad+\big(\ad \mathrm{D}F(0)H^n(\theta_t\omega,x)+K^n(\theta_t\omega,x)\big)\,\mathrm{d}\W_t(\omega)\\
        &=\Gamma_0^n(\theta_t\omega,H^n(\theta_t\omega,x))\,\mathrm{d}t+\Gamma^n(\theta_t\omega,H^n(\theta_t\omega,x))\,\mathrm{d}\W_t(\omega
        ),
    \end{align*} which instead is an RDE in $\R^D$.
    We emphasize that, in particular, for every $n\in\N_{\geq2}$ and any fixed $x\in\R^D$, we have $t\mapsto\H^n(\theta_t\omega,x)\in\controlledRoughPath{\W(\omega)}{\R}{\R^D}$, with path $t \mapsto H^n(\theta_t\omega,x)=\sum_{\tau\in\N_n^D}H^n_{\tau}(\theta_t\omega)x^\tau$ and  Gubinelli derivative $t\mapsto \dot H^n(\theta_t\omega,x)=\sum_{\tau\in\N_n^D}\dot H^n_{\tau}(\theta_t\omega)x^\tau$. This is significant and special, for it means that the coordinate transform $H$, being composed of the $H^n$, is in fact an enriched object $\H$, constituted of the $\H^n$, itself.\\

    Now, we transition to providing a justification that illustrates the usefulness of our $H^n(\omega)$. We conduct the upcoming computations for $t\in\R_{\geq0}$, noting that the extension to $t\in\R_{<0}$ is analogous. Writing out $H(\theta_t\omega,\psi(t,\omega,x))\formallyEqual\psi(t,\omega,x)+\sum_{n=2}^\infty H^n(\theta_t\omega,\psi(t,\omega,x))$,
    with the mild solution formulation employed twice and Theorem \ref{rde lem ito-wentzell} applied to each $H^n(\omega)$ on the RHS, the conjugate identity (\ref{rde tag conjugate first}) equivalently reads on a formal level
    \begin{align*}
        &\quad H(\omega,x)+\int_0^tF_0(H(\theta_u\omega,\psi(u,\omega,x)))\,\mathrm{d}u + \int_0^tF(H(\theta_u\omega,\psi(u,\omega,x)))\,\mathrm{d}\W_u(\omega)\\
        &\overset{\mathclap{(\ref{rde tag conjugate first})}}{=}H(\omega,x)+\int_0^tF_0(\phi(u,\omega,H(\omega,x)))\,\mathrm{d}u + \int_0^tF(\phi(u,\omega,H(\omega,x)))\,\mathrm{d}\W_u(\omega)\\
        &\overset{\mathclap{(\ref{rde tag conjugate first})}}{=}H(\omega,x)+\int_0^t  \mathrm{D}F_0(0)\psi(u,\omega,x)\,\mathrm{d}u+\int_0^t\mathrm{D}F(0)\psi(u,\omega,x)\,\mathrm{d}\W_u(\omega)\\
        &\quad+\sum_{n=2}^\infty\int_0^t  \Gamma^n(\theta_u\omega,H^n(\theta_u\omega,\psi(u,\omega,x)))+ \mathrm{D}_x H^n(\theta_u\omega,\psi(u,\omega,x))\mathrm{D}F(0)\psi(u,\omega,x)\,\mathrm{d}\W_u(\omega)\\
        &\quad+\sum_{n=2}^\infty\int_0^t \Gamma_0^n(\theta_u\omega,H^n(\theta_u\omega,\psi(u,\omega,x)))+\mathrm{D}_xH^n(\theta_u\omega,\psi(u,\omega,x))\mathrm{D}F_0(0)\psi(u,\omega,x)\,\mathrm{d}u,\mytag\label{rde tag conjugate in Gubinelli level}
    \end{align*}
    where we already inserted $[\W(\omega)]=0$.

    Next, we write $F_0$ and $F$ in terms of their respective formal Taylor expansion:
    \begin{align*}
        F_0(x)\formallyEqual \mathrm{D}F_0(0)x+\sum_{n=2}^\infty\frac{1}{n!}\mathrm{D}^nF_0(0)(x,\dots,x)=: \mathrm{D}F_0(0)x + \sum_{n=2}^\infty F^n_0(x)
    \end{align*}
    and
    \begin{align*}
    F(x)\formallyEqual \mathrm{D}F(0)x+\sum_{n=2}^\infty\frac{1}{n!}\mathrm{D}^nF(0)(x,\dots,x)=:  \mathrm{D}F(0)x+\sum_{n=2}^\infty F^n(x),
    \end{align*}
    with $F^n_0\in H_{n,D}(\R^D)$ and $F^n\in H_{n,D}(\linearMaps{\R^{M}}{\R^D})$.
    Let us insert these, as well as the formal Taylor expansion of $H$, into (\ref{rde tag conjugate in Gubinelli level}) and separate each total order $N\in\N_0$ in the spatial argument $Y_u$, where we abbreviate $\Y:=\boldsymbol{\uppsi}(\cdot,\omega,x)$. Moreover, recalling that all $\Gamma^N_0$ and $\Gamma^N$ are linear in their space component (we purposefully refrain from separating the constant-order additive terms of $\Gamma^N_0$ and $\Gamma^N$ here), and defining 
    $\underline{N}_n:=\{\tau\in\N^n:\sum_{i=1}^n\tau_i=N\}$ for
    $N\in\N_{\geq2}$ and $n=1,\dots,N$, 
    we formally arrive at
    \begin{align*}
        &\quad H(\omega,x)+\int_0^t\mathrm{D}F_0(0)Y_u\,\mathrm{d}u+\sum_{N=2}^\infty \int_0^t\bigg(F^N_0(Y_u) + \sum_{n=2}^{N-1}\sum_{\tau\in\underline{N}_n}F^n_0(H^{\tau_1}(\theta_u\omega,Y_u),\dots,H^{\tau_n}(\theta_u\omega,Y_u))\\&\quad\quad+\mathrm{D}F_0(0)H^N(\theta_u\omega,Y_u)\bigg)\,\mathrm{d}u\\
        &\quad+\int_0^t\mathrm{D}F(0)Y_u\,\mathrm{d}\W_u(\omega) +\sum_{N=2}^\infty \int_0^t\bigg(F^N(Y_u) +\sum_{n=2}^{N-1} \sum_{\tau\in\underline{N}_n} F^n(H^{\tau_1}(\theta_u\omega,Y_u),\dots,H^{\tau_n}(\theta_u\omega,Y_u)) \\ 
        &\quad\quad+\mathrm{D}F(0)H^N(\theta_u\omega,Y_u)\bigg)\,\mathrm{d}\W_u(\omega)\\
        &=H(\omega,x)+\int_0^t\mathrm{D}F_0(0)Y_u\,\mathrm{d}u+\sum_{N=2}^\infty\int_0^t\bigg(\Gamma_0^N(\theta_u\omega,H^N(\theta_u\omega,Y_u))+ \mathrm{D}_xH^N(\theta_u\omega,Y_u)\mathrm{D}F_0(0)Y_u\bigg)\,\mathrm{d}u\\
        &\quad+\int_0^t\mathrm{D}F(0)Y_u\,\mathrm{d}\W_u(\omega)+\sum_{N=2}^\infty\int_0^t\bigg(\Gamma^N(\theta_u\omega,H^N(\theta_u\omega,Y_u))+\mathrm{D}_xH^N(\theta_u\omega,Y_u)\mathrm{D}F(\theta_u\omega,0)Y_u\bigg)\,\mathrm{d}\W_u(\omega).
    \end{align*}
    Of course, this equation holds true - and thus the conjugate identity (\ref{rde tag conjugate first}) is satisfied - if it does order-wise for each $N\in\N_0$.
    The constant order and the linear order are already equal on both sides. After writing out the definitions of $\Gamma^N_0$ and $\Gamma^N$, the to-be-analyzed equations for the remaining orders $N\in\N_{\geq2}$ become
    \begin{align*}
        &\quad\int_0^t\bigg(F^N_0(Y_u) + \sum_{n=2}^{N-1} \sum_{\tau\in\underline{N}_n} F^n_0(H^{\tau_1}(\theta_u\omega,Y_u),\dots,H^{\tau_n}(\theta_u\omega,Y_u))
        +\mathrm{D}F_0(0)H^N(\theta_u\omega,Y_u)\bigg)\,\mathrm{d}u\\
        &\quad+\int_0^t\bigg(F^N(Y_u) + \sum_{n=2}^{N-1} \sum_{\tau\in\underline{N}_n} F^n(H^{\tau_1}(\theta_u\omega,Y_u),\dots,H^{\tau_n}(\theta_u\omega,Y_u))
        +\mathrm{D}F(0)H^N(\theta_u\omega,Y_u)\bigg)\,\mathrm{d}\W_u(\omega)\\
        &=\int_0^t\bigg(\mathrm{ad}_N\mathrm{D}F_0(0)H^N(\theta_u\omega,Y_u)+K^N_0(\theta_u\omega,Y_u)+ \mathrm{D}_xH^N(\theta_u\omega,Y_u)\mathrm{D}F_0(0)Y_u\bigg)\,\mathrm{d}u\\
        &\quad+\int_0^t\bigg(\mathrm{ad}_N\mathrm{D}F(0)H^N(\theta_u\omega,Y_u)+K^N(\theta_u\omega,Y_u) + \mathrm{D}_xH^N(\theta_u\omega,Y_u)\mathrm{D}F(0)Y_u\bigg)\,\mathrm{d}\W_u(\omega).\mytag\label{rde tag conjugate id fulfilled}
    \end{align*}
    We readily confirm that the equations (\ref{rde tag conjugate id fulfilled}) indeed hold true for our choice of $H^N(\omega)$: After rearranging and recalling the definitions of $\mathrm{ad}_N\mathrm{D}F_0(0)$ and $\mathrm{ad}_N\mathrm{D}F(0)$, it is left to define the additive terms $K^N_0(\theta_t\omega,Y_t)$ and $K^N(\theta_t\omega,Y_t)$ as the combination of the left-over mixed terms. Specifically, $K^N_0(\theta_t\omega,Y_t)$ is a deterministic order-$N$ homogeneous polynomial in $Y_t$ depending upon the set $\{F^N_0(Y_t),F^n_0(Y_t),H^n(\theta_t\omega,Y_t):n=2,\dots,N-1\}$, whereas $K^N(\theta_t\omega,Y_t)$ is a deterministic order-$N$ homogeneous polynomial in $Y_t$ depending upon the set $\{F^N(Y_t),F^n(Y_t),H^n(\theta_t\omega,Y_t):n=2,\dots,N-1\}$.
     We emphasize two further consequences: First, $K^2_0\equiv F^2_0$ and $K^2\equiv F^2$ are deterministic, and, second, the randomness in $K^N_0(\omega,Y_t)$ and $K^N(\omega,Y_t)$ only enters via the random Taylor terms $H^n(\omega,Y_t)$ of lower orders $n=2,\dots,N-1$.

     This entire derivation showcases that the choice in (\ref{rde tag bold guess}) exactly provides the RDEs whose solutions give rise to the sought-after random coordinate transform which formally fulfills the conjugate relation (\ref{rde tag conjugate first}) for the initial RDE (\ref{rde tag standard RDE}) and its linearization \begin{align*}\,\mathrm{d}Y_t=\mathrm{D}F_0(0)Y_t\,\mathrm{d}t+\mathrm{D}F(0)Y_t\,\mathrm{d}\W_t(\omega),\end{align*}which is the desired (formal) normal form statement for RDEs. The open question we wish to answer affirmatively reads: Do stationary solutions to (\ref{rde tag bold guess}) exist for all $n\in\N_{\geq2}$?\\

    In summary, in order to arrive at a formal normal form result, we must study the RDEs (\ref{rde tag bold guess}) which are given by
    \begin{align*}
        \mathrm{d}H^n(\theta_t\omega)&=\left(\mathrm{ad}_n\mathrm{D}F_0(0)H^n(\theta_t\omega)+K^n_0(\theta_t\omega) \right)\mathrm{d}t\\
        &\quad+\left(\mathrm{ad}_n\mathrm{D}F(0)H^n(\theta_t\omega)+K^n(\theta_t\omega)\right)\mathrm{d}\W_t(\omega)
    \end{align*}
    and live in $\R^{D_n}$ for all $n\in\N_{\geq 2}$.

    We wish to find stationary solutions, which are solutions of the form $t\mapsto\phi_n(t,\omega)H^n(\omega)=H^n(\theta_t\omega)$, where $\phi_n(\cdot,\omega)$ is the solution flow generated by the $n$-th RDE, and
     we intend to investigate these RDEs by successively solving the stages of the following hierarchical system of affine RDEs:
    \begin{align*}
    \mathrm{d}Y_t^1 &= \left( A^1_0 Y_t^1 + P^1_0 \right)\mathrm{d}t+ \left( A^1 Y_t^1 + P^1 \right)\mathrm{d}\W_t(\omega),  \quad Y_0^1 = y_1 \in \mathbb{R}^{D_1},  \\
    \mathrm{d}Y_t^2 &=\left( A^2_0 Y_t^2 + P^2_0(Y_t^1) \right)\mathrm{d}t+ \left( A^2 Y_t^2 + P^2(Y_t^1) \right)\mathrm{d}\W_t(\omega), \quad Y_0^2 = y_2 \in \mathbb{R}^{D_2}, \\
    &\vdots \\
    \mathrm{d}Y_t^n &=\left( A^n_0 Y_t^n + P^n_0(Y_t^1, \dots, Y_t^{n-1}) \right)\mathrm{d}t+  \left( A^n Y_t^n + P^n(Y_t^1, \dots, Y_t^{n-1}) \right)\mathrm{d}\W_t(\omega), \\
    &\quad\quad\quad\quad\quad\quad\quad\quad\quad\quad\quad\quad\quad\quad\quad\quad\quad\quad\quad\quad\quad\quad\quad\quad\quad Y_0^n = y_n \in \mathbb{R}^{D_n}.\\
    &\vdots\mytag\label{rde hierarchical system basic}
    \end{align*}
    Of course, (\ref{rde hierarchical system basic}) mirrors (\ref{rde tag bold guess}). The linear maps $A_0^n\in\linearMaps{\R^{D_n}}{\R^{D_n}}$ and $A^n\in\linearMaps{\R^{D_n}}{\linearMaps{\R^{M}}{\R^{D_n}}}$ and the polynomials $P^n_0:\R^{D_1}\times\dots\times\R^{D_{n-1}}\rightarrow\R^{D_n}$ and $P^n:\R^{D_1}\times\dots\times\R^{D_{n-1}}\rightarrow\linearMaps{\R^{M}}{\R^{D_n}}$ are deterministic maps, with the exception that $P^1_0\in\R^{D_1}$ and $P^1\in\linearMaps{\R^{M}}{\R^{D_1}}$ are fixed objects.
    To enhance visual appeal, the hierarchical system starts with the first index $1$ rather than the first index $2$, which stems from the derivation above. Also, this index shift takes place in \cite[Section 8.5]{Arnold1998RDS}, so we decided to keep it.

    How do we concretely deal with the hierarchical system of affine RDEs? Initially, we solve the first stage and denote the resulting cocycle by $\phi_1$. Then, we insert $Y^1_t=\phi_1(t,\omega)y_1$ into the second stage, which leads to a non-autonomous affine RDE with solution flow $\phi_2(\cdot,\omega,y_1)$.  Continuing this procedure, the $n$-th stage RDE reads
    \begin{align*}
        \mathrm{d}Y^n_t&=\big(A^n_0Y_t^n+P^n_0(\phi_1(t,\omega)y_1,\dots,\phi_{n-1}(t,\omega,y_1,\dots,y_{n-2})y_{n-1}) \big)\,\mathrm{d}t\\
        &\quad+\big(A^nY_t^n+P^n(\phi_1(t,\omega)y_1,\dots,\phi_{n-1}(t,\omega,y_1,\dots,y_{n-2})y_{n-1}) \big)\,\mathrm{d}\W_t(\omega).
    \end{align*}
    Realize that the first $n$ RDEs considered as one generate a cocycle given by $\phi^{(n)}(\cdot,\omega)(y_1,\dots,y_n)=(\phi_1(\cdot,\omega)y_1,\dots,\phi_{n}(\cdot,\omega,y_1,\dots,y_{n-1})y_{n})$, with $(y_1,\dots,y_n)\in\R^{D_1}\times\dots\times\R^{D_n}$.

    \subsection{Studying the Hierarchical System (\ref{rde                  hierarchical system basic})}
        Remember that our strategy to treat the problem is to borrow the entire train of thought followed in \cite[Section 8.5]{Arnold1998RDS} and transfer it to the current RDE framework.
        \subsubsection{Bounded Polynomial Moments of Solutions to (\ref{rde hierarchical system basic})}

        Let us describe the conditions $\conditionRDE$ that we take from \cite[Lemma 8.5.3]{Arnold1998RDS} and that govern this subsection: We say that a parametrized family of random controlled rough paths $\U=(\U(\omega,y_1))_{\omega\in\Omega,y_1\in\V_1}\subseteq\bigcup_{\omega\in\Omega}\controlledRoughPath{\W(\omega)}{[0,1]}{\V_2}$ satisfies $\conditionRDE$ if $\U$ enjoys the following properties:
        \begin{enumerate}
            \item For each $\omega$ and every parameter $y_1\in\V_1$, we have $\U(\omega,y_1)\in\controlledRoughPath{\W(\omega)}{[0,1]}{\V_2}$.
            \item The entire process \begin{align*}[0,1]\times\Omega\to\V_2\times\linearMaps{\R^M}{\V_2},\;(t,\omega)\mapsto \left(U_t(\omega,y_1),U_t'(\omega,y_1)\right)\end{align*} is $(\F^t_{-\infty})_{t\in[0,1]}$-adapted for every $y_1\in\V_1$.
            \item For any $p\in[2,\infty)$ and any compact set $K\subseteq \V_1$ with $0\in K$, there exist constants $C=C(p),C(p,K)\in\R$ such that \begin{align*}\expectationPathwise{\sup_{t\in[0,1]}\abs{U_t(\omega,0)}^p+\sup_{t\in[0,1]}\abs{U'_t(\omega,0)}^p+ \norm{\U(\omega,0)}{\mathscr{D}_{\W(\omega)}^{2\alpha}}^p} \leq C\end{align*}
                 and, for all $y_1,\Tilde{y}_1\in K$, it holds \begin{align*}
                    &\int_\Omega\bigg(\sup_{t\in[0,1]}\abs{U_t(\omega,y_1)-U_t(\omega,\Tilde{y}_1)}^p+\sup_{t\in [0,1]}\abs{U'_t(\omega,y_1)-U_t'(\omega,\Tilde{y}_1)}^p\\
                    &\quad+ \norm{\U(\omega,y_1)-\U(\omega,\Tilde{y}_1)}{\mathscr{D}_{\W(\omega)}^{2\alpha}}^p\bigg)\,\mathrm{d}P(\omega) \leq C \abs{y_1-\Tilde{y}_1}^{p/2}.
                \end{align*}
            \end{enumerate}
        Slightly inaccurately, we often say that $\U(\omega)$ satisfies $\conditionRDE$, but we always mean this notion.
        Here and in the following, we employ the explicit integral notation for the expectation in order to stress the pathwise nature of the rough paths. Let us quickly write down an immediate consequence of this new notion.
        \begin{lemma}\label{rde lem compactness lemma}
            Let $\U(\omega):=(\U(\omega,y_1))_{y_1\in\V_1}\subseteq\controlledRoughPath{\W(\omega)}{[0,1]}{\V_2}$ meet $\conditionRDE$. It holds \begin{align*}
               \expectationPathwise{\sup_{t\in[0,1]}\abs{U_t(\omega,y_1)}^p+\sup_{t\in[0,1]}\abs{U'_t(\omega,y_1)}^p+ \norm{\U(\omega,y_1)}{\mathscr{D}_{\W(\omega)}^{2\alpha}}^p}\leq C
           \end{align*}
           for all $y_1\in\V_1$, where $C=C(p,y_1)\in\R$. Moreover, if $y_1\in K$ for a compact $K\subseteq\V_1$ with $0\in K$, then $C=C(p,K)$.
        \end{lemma}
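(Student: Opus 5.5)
The plan is to compare the family at $y_1$ with the family at $0$ via the triangle inequality, and then control each piece with one of the two bounds in $\conditionRDE$(iii). Fix $p\in[2,\infty)$ and $y_1\in\V_1$. Let $K$ be any compact set containing both $0$ and $y_1$, for instance the closed segment $\{sy_1:s\in[0,1]\}$ or a closed ball around $0$ of radius at least $\abs{y_1}$.

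The first step is to split each of the three quantities into the value at $0$ and the difference. By the triangle inequality and linearity of the remainder, $\remainder(\U(\omega,y_1))=\remainder(\U(\omega,0))+\remainder(\U(\omega,y_1)-\U(\omega,0))$, and the Gubinelli derivative splits the same way. Consequently $\norm{\cdot}{\mathscr{D}^{2\alpha}_{\W(\omega)}}$ is subadditive on $\controlledRoughPath{\W(\omega)}{[0,1]}{\V_2}$. This gives
\begin{align*}
\sup_{t\in[0,1]}\abs{U_t(\omega,y_1)}\leq \sup_{t\in[0,1]}\abs{U_t(\omega,0)}+\sup_{t\in[0,1]}\abs{U_t(\omega,y_1)-U_t(\omega,0)},
\end{align*}
and analogously for $U'$ and for the $\mathscr{D}^{2\alpha}_{\W(\omega)}$-seminorm.

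The second step is to raise these inequalities to the power $p$ using $(a+b)^p\leq 2^{p-1}(a^p+b^p)$, then sum the three terms and integrate against $P$. The terms at $0$ are bounded by $C(p)$ via the first inequality in $\conditionRDE$(iii). The difference terms are bounded by $C(p,K)\abs{y_1-0}^{p/2}$ via the second inequality, applied with $\Tilde{y}_1=0\in K$. Altogether,
\begin{align*}
\expectationPathwise{\sup_{t\in[0,1]}\abs{U_t(\omega,y_1)}^p+\sup_{t\in[0,1]}\abs{U'_t(\omega,y_1)}^p+\norm{\U(\omega,y_1)}{\mathscr{D}^{2\alpha}_{\W(\omega)}}^p}\leq 2^{p-1}\big(C(p)+C(p,K)\abs{y_1}^{p/2}\big).
\end{align*}
The right-hand side depends only on $p$ and $y_1$, since $K$ was chosen from $y_1$.

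For the uniform statement, take $y_1$ in a fixed compact $K\ni 0$ and use that same $K$ in the argument above. Then $\abs{y_1}\leq\diam{K}$, so the bound becomes $2^{p-1}\big(C(p)+C(p,K)\diam{K}^{p/2}\big)=:C(p,K)$. There is no genuine obstacle here. The only point needing care is that the $\mathscr{D}^{2\alpha}$-seminorm is indeed subadditive, which follows because both the Gubinelli derivative and the remainder $\remainder$ are linear in $\U$. Measurability of the integrands is implicitly assumed in $\conditionRDE$.
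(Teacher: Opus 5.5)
Your proof is correct and follows the paper's approach. The paper only says the claim follows from the definition of $\conditionRDE$ and elementary inequalities, and your argument fills that in: split off the value at $0$ using the triangle inequality and subadditivity of the $\mathscr{D}^{2\alpha}_{\W(\omega)}$-seminorm, then apply both bounds in $\conditionRDE$(iii) with $\Tilde{y}_1=0$ and $\abs{y_1}\leq\diam{K}$.
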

        \begin{proof}
           The claim follows immediately from the definition of $\conditionRDE$ and elementary inequalities.
        \end{proof}

        This entire section is devoted to demonstrating that the operations that take place in our procedure pass on the requirements defined by $\conditionRDE$. It turns out that it is absolutely fundamental for us to detour to the theory of tails of enhanced Gaussian processes and RDEs driven by such. From this field, we borrow heavy machinery studied in \cite{CLL},  \cite{integrabilitynonlinearroughdifferential}, and \cite{GHANIVARZANEH2025110676} and \cite{mildGronwall2025}. Deducing these results is a story in itself, so we just state the main consequences that are relevant to us. In fact, these statements will play a role in almost every of the upcoming estimations.

    \begin{proposition}[Finite Moments of Solutions to RDEs Driven by Gaussian Enhancements]\label{rde prop consequences of heavy machinery}
        Let $(\W(\omega))_\omega\subseteq\driverRoughPath{[0,1]}{\R^{M}}$ be a weakly geometric enhancement of a centered Gaussian path. Then, we have $\int_\Omega\norm{W(\omega)}{\alpha}^q\,\mathrm{d}P(\omega)<\infty$ and $\int_\Omega\norm{\W(\omega)}{\mathscr{C}^\alpha}^q\,\mathrm{d}P(\omega)<\infty$ for all $q\in(0,\infty)$.

        Let now in addition $\,\mathrm{d}J_t=B_0J_t\,\mathrm{d}t+B J_t\,\mathrm{d}\W_t(\omega)$ be a matrix-valued linear RDE in $\linearMaps{\R^D}{\R^D}$, given $B_0\in\linearMaps{\R^D}{\R^D}$ and $B\in\linearMaps{\R^D}{\linearMaps{\R^M}{\R^D}}$. Denote by $\Phi(\cdot,\omega)$ the linear solution flow generated by this RDE. Then, any solution $\Upphi(\cdot,\omega,\Phi_0)$ starting in $\Phi_0\in\linearMaps{\R^D}{\R^D}$ fulfills $\int_\Omega\norm{\Upphi(\cdot,\omega,\Phi_0)}{\mathscr{D}^{2\alpha}_{\W(\omega)}}^q\,\mathrm{d}P(\omega) <\infty$ for all $q\in[1,\infty)$.
        Similarly, any solution $\boldsymbol{\Uppsi}(\cdot,\omega,\Psi_0)$ to the matrix-valued linear RDE $\,\mathrm{d}J_t=-J_tB_0\,\mathrm{d}t-J_tB\,\mathrm{d}\W_t(\omega)$ in $\linearMaps{\R^{D}}{\R^D}$ starting in $\Psi_0\in\linearMaps{\R^D}{\R^D}$ also satisfies $\int_\Omega\norm{\boldsymbol{\Uppsi}(\cdot,\omega,\Psi_0)}{\mathscr{D}^{2\alpha}_{\W(\omega)}}^q\,\mathrm{d}P(\omega) <\infty$ for all $q\in[1,\infty)$.
    \end{proposition}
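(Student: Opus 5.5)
The first claim is a Fernique-type statement for Gaussian rough paths, and I would take it directly from the literature rather than reprove it. For a centered Gaussian process whose covariance has finite $\rho$-variation with $\rho<1/(2\alpha)$ (the standing assumption under which the enhancement $\W$ exists), \cite[Chapter 10]{FrizHairer2020} and \cite[Chapter 15]{Friz_Victoir_2010} give Gaussian tails. More precisely, $\norm{W}{\alpha}$ and $\norm{\mathbb{W}}{2\alpha}^{1/2}$ both satisfy $\int_\Omega \exp(\eta\,\cdot^2)\,\mathrm{d}P<\infty$ for some $\eta>0$. The first follows from Fernique applied to a Gaussian element of a Banach space. The second follows because $\mathbb{W}$ lies in the second Wiener chaos, so that hypercontractivity combined with a Garsia--Rodemich--Rumsey/Kolmogorov argument yields the bound. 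Gaussian (or square-root-Gaussian) tails imply finite moments of every order $q\in(0,\infty)$. Since $\normCalpha{\W}=\norm{W}{\alpha}+\norm{\mathbb{W}}{2\alpha}$, Minkowski (or the elementary bound $(a+b)^q\le 2^q(a^q+b^q)$) gives $\int_\Omega\normCalpha{\W(\omega)}^q\,\mathrm{d}P<\infty$.

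For the linear RDE, I would first remove the drift as in the proof of Lemma \ref{rde lem linear RDE cocycle atuomatically satisfy IC of MET}, by enlarging the driver to $\Wbar$ with time as an extra component. The enlarged $\Wbar$ is still Gaussian up to deterministic cross terms, so its moments follow from the first part together with Young estimates for $\int t\,\mathrm{d}W$. The key input is the Cass--Litterer--Lyons estimate \cite{CLL}. Naively, rough Gronwall \cite[Proposition 8.13]{FrizHairer2020} gives $\sup_t\abs{\Phi_t}\le C\exp(C\normCalpha{\Wbar}^{1/\alpha})$, and $1/\alpha>2$ destroys integrability. Instead one bounds the linear solution by $\exp(C N_\beta(\Wbar))$, where $N_\beta$ is the number of greedy intervals on which the $p$-variation of $\Wbar$ reaches $\beta$. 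CLL show that $N_\beta$ has Weibull tails with shape $2/\rho>1$, hence $\exp(CN_\beta)\in L^q$ for all $q$. This handles $\sup_t\abs{\Phi(t,\omega)\Phi_0}$, and for general $\Phi_0$ it suffices to use linearity, $\Phi(\cdot,\omega)\Phi_0$.

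Next I would control the $\mathscr{D}^{2\alpha}$-norm. Since $\Phi'=B\Phi$ and $\remainder\Upphi_{s,t}=\int_s^t B_0\Phi\,\mathrm{d}u+B\Phi_s'\mathbb{W}_{s,t}+O(\abs{t-s}^{3\alpha})$, Theorem \ref{theorem gubinelli estimates} applied on intervals, together with the a priori estimate \cite[Proposition 8.13]{FrizHairer2020}, yields a bound of the form
\begin{align*}
\normDalphaOmega{\Upphi}\le C\big(1+\normCalpha{\Wbar(\omega)}\big)^{k}\sup_{t\in[0,1]}\abs{\Phi(t,\omega)}\,\abs{\Phi_0}
\end{align*}
for some fixed power $k$. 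The argument is to estimate locally on a greedy partition where the rough norm is small and then patch together, with the number of pieces again bounded by $N_\beta$. Hölder's inequality then combines the polynomial moments from the first part with the $L^q$ bound on $\exp(CN_\beta)$. The equation $\mathrm{d}J=-JB_0\,\mathrm{d}t-JB\,\mathrm{d}\W$ is again linear, since right multiplication is a linear map on $\linearMaps{\R^D}{\R^D}$, so the identical argument applies. Alternatively, by Lemma \ref{rde lem phi inverse is also controlled rough path} its solution is $\Psi_0$ composed with $\Phi^{-1}$ and transposes, which reduces it to the previous case.

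The main obstacle is exactly the integrability of the exponential of the rough norm. Standard Gronwall produces $\exp(\normCalpha{\W}^{1/\alpha})$, which is not integrable under Gaussian tails alone. I would therefore rely on the CLL greedy-partition count $N_\beta$ and check carefully that both the sup bound and the patched $\mathscr{D}^{2\alpha}$ estimate are controlled by $\exp(CN_\beta)$ times polynomials in $\normCalpha{\Wbar}$.
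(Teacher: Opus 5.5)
Your proposal is correct and follows essentially the paper's route. The paper's proof simply cites packaged results: \cite[Proposition 4]{integrabilitynonlinearroughdifferential} together with \cite[Theorem 7.44]{Friz_Victoir_2010} for the moments of $\W$, and \cite[Theorem 2.13]{GHANIVARZANEH2025110676} and \cite[Theorem 3.10]{mildGronwall2025} for the two linear equations, which rest on the same Fernique-type tails and Cass--Litterer--Lyons greedy-count argument you sketch.

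The one detail to make explicit is that $\Wbar$ is not itself a centered Gaussian rough path, so the CLL tail bound does not apply to it verbatim. Apply it to $\W$ instead, and note that the control of $\Wbar$ is dominated by that of $\W$ plus $C\abs{t-s}$, which gives $N_\beta(\Wbar)\le C(1+N_{\beta'}(\W))$.
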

    \begin{proof}
        The first statement is a consequence of \cite[Proposition 4]{integrabilitynonlinearroughdifferential}, and, then, we deduce the second statement from the first one combined with \cite[Theorem 7.44]{Friz_Victoir_2010}. Last, \cite[Theorem 2.13]{GHANIVARZANEH2025110676} as well as \cite[Theorem 3.10]{mildGronwall2025} produce the third and fourth results.
    \end{proof}

    Armed with these tools, we proceed with our main agenda. As a convention, whenever we intend to demonstrate inheritance of $\conditionRDE$ under some operation, we solely address point (ii) and the second condition of point (iii) of the definition above. This is simply for the sake of brevity, because (i) is always obvious and the computations for the first condition of (iii) are usually quite similar to, and/or significantly simpler than, the calculations we perform to establish the second condition of (iii).
       \begin{lemma}\label{rde lem Lemma 8.5.3}
        Let $\U(\omega):=(\U(\omega,y_1))_{y_1\in\R^{D_1}}\subseteq\controlledRoughPath{\W(\omega)}{[0,1]}{\linearMaps{\R^M}{\R^{D_2}}}$ and $\Z(\omega):=(\Z(\omega,y_1))_{y_1\in\R^{D_1}}\subseteq\controlledRoughPath{\W(\omega)}{[0,1]}{\R^{D_2}}$ satisfy $\conditionRDE$.
        \begin{enumerate}
            \item Then, $\I(\omega):=(\I(\omega,y_1))_{y_1\in\R^{D_1}}:=\left(\int_0^\cdot \U_s(\omega,y_1)\,\mathrm{d}\W_s(\omega)\right)_{y_1\in\R^{D_1}}\subseteq\controlledRoughPath{\W(\omega)}{[0,1]}{\R^{D_2}}$ fulfills $\conditionRDE$.
            \item Also, $\I^0(\omega):=(\I^0(\omega,y_1))_{y_1\in\R^{D_1}}:=\left(\left(\int_0^\cdot Z_s(\omega,y_1)\,\mathrm{d}s,0\right)\right)_{y_1\in\R^{D_1}}\subseteq\controlledRoughPath{\W(\omega)}{[0,1]}{\R^{D_2}}$ fulfills $\conditionRDE$.
        \end{enumerate}
       \end{lemma}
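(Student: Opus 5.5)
Following the convention fixed above, we only verify (ii) and the difference estimate in (iii). By linearity of the rough integral, the difference $\I(\omega,y_1)-\I(\omega,\Tilde{y}_1)$ is the integral of the controlled rough path $\U(\omega,y_1)-\U(\omega,\Tilde{y}_1)=:\boldsymbol{\Delta}\U$. Similarly, $\I^0(\omega,y_1)-\I^0(\omega,\Tilde{y}_1)$ is the Riemann integral of $Z(\omega,y_1)-Z(\omega,\Tilde{y}_1)=:\Delta Z$. So everything reduces to pathwise a priori bounds for a single integral, followed by H\"older's inequality in $\omega$. The bounds in $\omega$ use the moments of $\normCalpha{\W(\omega)}$ from Proposition \ref{rde prop consequences of heavy machinery}.

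\textbf{Adaptedness.} The Riemann integral $\int_0^t Z_s\,\mathrm{d}s$ is a limit of sums of $\F^t_{-\infty}$-measurable variables, hence measurable with respect to $\F^t_{-\infty}$. The same holds for the compensated Riemann sums $\sum U_uW_{u,v}+U'_u\mathbb{W}_{u,v}$ defining $\int_0^t U\,\mathrm{d}\W$ in Theorem \ref{theorem gubinelli estimates}, because $\U$ is adapted and $\W$ is non-anticipative. The Gubinelli derivatives $I'=U$ and $0$ are adapted by assumption. Joint measurability in $(t,\omega)$ follows from continuity in $t$.

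\textbf{Estimate for (i).} On $[0,1]$, the bound (ii') of Theorem \ref{theorem gubinelli estimates} gives
\[
\normDalphaOmega{\textstyle\int_0^\cdot \boldsymbol{\Delta}\U\,\mathrm{d}\W(\omega)}\le C(1+\normCalpha{\W(\omega)})\big(\abs{\Delta U'_0}+\normDalphaOmega{\boldsymbol{\Delta}\U}\big).
\]
For the sup norm of the path, (i') gives
\[
\sup_t\abs{\Delta I_t}\le \norm{\Delta U}{\sup}\norm{W}{\alpha}+\norm{\Delta U'}{\sup}\norm{\mathbb{W}}{2\alpha}+C\normCalpha{\W}\normDalphaOmega{\boldsymbol{\Delta}\U}.
\]
The sup norm of the derivative is $\norm{\Delta U}{\sup}$ itself. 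Raising to the power $p$ and applying Cauchy--Schwarz separates the terms. For example,
\[
\int_\Omega \normCalpha{\W}^p\normDalphaOmega{\boldsymbol{\Delta}\U}^p\,\mathrm{d}P\le \Big(\int_\Omega\normCalpha{\W}^{2p}\,\mathrm{d}P\Big)^{1/2}\Big(\int_\Omega\normDalphaOmega{\boldsymbol{\Delta}\U}^{2p}\,\mathrm{d}P\Big)^{1/2}.
\]
The first factor is finite by Proposition \ref{rde prop consequences of heavy machinery}. The second factor is at most $(C(2p,K)\abs{y_1-\Tilde{y}_1}^{p})^{1/2}=C\abs{y_1-\Tilde{y}_1}^{p/2}$ by $\conditionRDE$ applied with exponent $2p$. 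This is exactly where the exponent $p/2$ in $\conditionRDE$ is preserved, because the condition is assumed for all $p\ge2$. The other terms are handled the same way.

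\textbf{Estimate for (ii).} With zero Gubinelli derivative, the remainder is the increment itself, so
\[
\abs{\remainder\I^0_{s,t}}\le \norm{\Delta Z}{\sup}(t-s)\le \norm{\Delta Z}{\sup}(t-s)^{2\alpha}\quad\text{on }[0,1].
\]
The sup norm of the path is likewise at most $\norm{\Delta Z}{\sup}$. Hence all three quantities are bounded pathwise by $\norm{\Delta Z}{\sup}$, and $\conditionRDE$ for $\Z$ gives the claim directly, with no H\"older step needed.

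\textbf{Main obstacle.} The only nontrivial point is the product of the random rough path norm with the norms of $\U$ in (i). Cauchy--Schwarz together with the Gaussian moment bounds for $\normCalpha{\W(\omega)}$ resolves it, at the cost of invoking $\conditionRDE$ at exponent $2p$ instead of $p$.
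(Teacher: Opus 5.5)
Your proposal is correct and follows essentially the same route as the paper. For (i) you apply the pathwise bounds (i') and (ii') of Theorem \ref{theorem gubinelli estimates} to the difference integral, then use H\"older/Cauchy--Schwarz in $\omega$, with the moments of $\normCalpha{\W(\omega)}$ from Proposition \ref{rde prop consequences of heavy machinery} and $\conditionRDE$ at exponent $2p$; for (ii) you use the pathwise bound by $\sup_t\abs{Z_t(\omega,y_1)-Z_t(\omega,\Tilde{y}_1)}$ with vanishing Gubinelli derivative, and your explicit note that $(t-s)\le(t-s)^{2\alpha}$ on $[0,1]$ fills in a step the paper leaves implicit.
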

       \begin{proof}
           Let $p\in[2,\infty)$, and fix some compact $K\subseteq\R^{D_1}$ with $0\in K$ and $y_1,\tildeY_1\in K$.

           $\underline{\textit{Regarding (i)}}$.
           We readily validate that the controlled integral of an adapted process is adapted itself by reviewing its definition as a limit of compensated Riemann sums.
           Then, utilizing Theorem \ref{theorem gubinelli estimates}, using elementary inequalities, and considering trivial estimations such as $ \abs{U_0(\omega,y_1)-U_0(\omega,\tildeY_1)}\leq \sup_{t\in[0,1]} \abs{U_t(\omega,y_1)-U_t(\omega,\tildeY_1)}$ first yields
           \begin{align*}
               \expectationPathwise{\sup_{t\in[0,1]} \abs{I_t(\omega,y_1)-I_t(\omega,\tildeY_1)}^p}&\leq C\expectationPathwise{\sup_{t\in[0,1]} \abs{U_t(\omega,y_1)-U_t(\omega,\tildeY_1)}^p\normCalpha{\W(\omega)}^p}\\
               &\quad+C\expectationPathwise{\sup_{t\in[0,1]}\abs{U_t'(\omega,y_1)-U_t'(\omega,\tildeY_1)}^p\normCalpha{\W(\omega)}^p}\\
               &\quad +C\expectationPathwise{\normDalphaOmega{\U(\omega,y_1)-\U(\omega,\tildeY_1)}^p\normCalpha{\W(\omega)}^p}.
           \end{align*}
        Next, we apply Hölder's inequality and absorb the quantity $\expectationPathwise{\normCalpha{\W(\omega)}^{2p}}<\infty$ in the constant, which is possible due to Proposition \ref{rde prop consequences of heavy machinery}. This gives
        \begin{align*}
            \expectationPathwise{\sup_{t\in[0,1]} \abs{I_t(\omega,y_1)-I_t(\omega,\tildeY_1)}^p}&\leq C\left(\expectationPathwise{\sup_{t\in[0,1]} \abs{U_t(\omega,y_1)-U_t(\omega,\tildeY_1)}^{2p}}\right)^{1/2}\\
               &\quad+C\left(\expectationPathwise{\sup_{t\in[0,1]}\abs{U_t'(\omega,y_1)-U_t'(\omega,\tildeY_1)}^{2p}}\right)^{1/2}\\
               &\quad +C\left(\expectationPathwise{\normDalphaOmega{\U(\omega,y_1)-\U(\omega,\tildeY_1)}^{2p}}\right)^{1/2}.
        \end{align*}
        Since $\U$ meets $\conditionRDE$, we end up with $\expectationPathwise{\sup_{t\in[0,1]} \abs{I_t(\omega,y_1)-I_t(\omega,\tildeY_1)}^p}\leq C\abs{y_1-\tildeY_1}^{p/2}$.

        Second, we observe
        \begin{align*}
            \expectationPathwise{\sup_{t\in[0,1]} \abs{I_t'(\omega,y_1)-I_t'(\omega,\tildeY_1)}^p}
            = \expectationPathwise{\sup_{t\in[0,1]} \abs{U_t(\omega,y_1)-U_t(\omega,\tildeY_1)}^p}\leq C\abs{y_1-\tildeY_1}^{p/2}.
        \end{align*}

        Third, by means of Theorem \ref{theorem gubinelli estimates} and similar considerations as before, we deduce
        \begin{align*}
            &\quad\expectationPathwise{\normDalphaOmega{\I(\omega,y_1)-\I(\omega,\tildeY_1)}^p}\\
            &\leq C\expectationPathwise{\left(\abs{U_0'(\omega,y_1)-U_0'(\omega,\tildeY_1)}+\normDalphaOmega{\U(\omega,y_1)-\U(\omega,\tildeY_1)}\right)^p\left(1+\normCalpha{\W(\omega)}\right)^p}\\
            &\leq C\abs{y_1-\tildeY_1}^{p/2},
        \end{align*}
        which completes the proof of claim \textit{(i)}.\\

        $\underline{\textit{Regarding (ii)}}$.
        Adaptedness is a direct consequence of the definition of the Riemann integral as the limit of Riemann sums. Then, note that, since $Z(\omega,y_1)$ is in particular continuous, the mapping $t\mapsto\int_0^t Z_s(\omega,y_1)\,\mathrm{d}s$ is actually continuously differentiable, so that, for all $s,t\in[0,1]$ with $s<t$, we have
        \begin{align*}
            I^0_t(\omega,y_1)-I^0_s(\omega,y_1)=0W_{s,t}(\omega)+\left(I^0_t(\omega,y_1)-I^0_s(\omega,y_1)\right),
        \end{align*}
        for $I^0_t(\omega,y_1)-I^0_s(\omega,y_1)=\mathrm{O}(\abs{t-s}^{2\alpha})$.
        This is why the Gubinelli derivative vanishes. Now, we transition to the estimations.

        It is easy to see that
        \begin{align*}
            \sup_{t\in[0,1]}\abs{I^0_t(\omega,y_1)-I^0_t(\omega,\tildeY_1)}
        \leq \sup_{t\in[0,1]}\abs{Z_t(\omega,y_1)-Z_t(\omega,\tildeY_1)},
        \end{align*}
        which we advance to
        \begin{align*}
            \expectationPathwise{\sup_{t\in[0,1]}\abs{I^0_t(\omega,y_1)-I^0_t(\omega,\tildeY_1)}^p}\leq C\abs{y_1-\tildeY_1}^{p/2}.
        \end{align*}
        Next, recall that the Gubinelli derivative does not need to be considered. Last, we compute
        \begin{align*}
            \normDalphaOmega{\I^0(\omega,y_1)-\I^0(\omega,\tildeY_1)}
            &=\norm{I^0(\omega,y_1)-I^0(\omega,\tildeY_1)}{2\alpha}\\
            &=\sup_{(s,t)\in\Delta}\frac{\abs{\int_s^t Z_r(\omega,y_1)-Z_r(\omega,\tildeY_1)\,\mathrm{d}r}}{\abs{t-s}^{2\alpha}}\\
            &\leq\sup_{t\in[0,1]}\abs{Z_t(\omega,y_1)-Z_t(\omega,\tildeY_1)},
        \end{align*}
        where $\Delta:=\{(s,t)\in[0,1]^2:s<t\}$.
        Putting this together entails
        \begin{align*}
            \expectationPathwise{\normDalphaOmega{\I^0(\omega,y_1)-\I^0(\omega,\tildeY_1)}^p}\leq C\abs{y_1-\tildeY_1}^{p/2},
        \end{align*}
        which closes the proof.
       \end{proof}

    \begin{lemma}\label{rde lem Lemma 8.5.5}
        Let $\U(\omega):=(\U(\omega,y_1))_{y_1\in\R^{D_1}}\subseteq\controlledRoughPath{\W(\omega)}{[0,1]}{\linearMaps{\R^M}{\R^{D_2}}}$ and $\Z(\omega):=(\Z(\omega,y_1))_{y_1\in\R^{D_1}}\subseteq\controlledRoughPath{\W(\omega)}{[0,1]}{\R^{D_2}}$ satisfy $\conditionRDE$, and let $B_0\in\linearMaps{\R^{D_2}}{\R^{D_2}}$ and $B\in\linearMaps{\R^{D_2}}{\linearMaps{\R^M}{\R^{D_2}}}$. Consider the non-autonomous affine RDE
        \begin{align*}
        \mathrm{d}Y_t=(B_0Y_t+Z_t(\omega,y_1))\,\mathrm{d}t+(BY_t+U_t(\omega,y_1))\,\mathrm{d}\W_t(\omega)
       \end{align*}
        in $\R^{D_2}$ with initial condition $Y_0=y_2\in\R^{D_2}$, whose solution flow we denote by $\phi(\cdot,\omega,y_1)$.
        Moreover, we write $\Phi$ for the linear cocycle generated by the linearized equation $\,\mathrm{d}Y_t=B_0Y_t\,\mathrm{d}t+B Y_t\,\mathrm{d}\W_t(\omega)$.  Then, $(\upvarphi(\cdot,\omega,y_1)y_2)_{y_1\in\R^{D_1},y_2\in\R^{D_2}}$ satisfies $\conditionRDE$.
    \end{lemma}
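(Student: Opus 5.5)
The plan is to use the rough Duhamel formula, Corollary \ref{rde corollary duhamels formula}, to write
\begin{align*}
\phi(t,\omega,y_1)y_2=\Phi(t,\omega)\Big(y_2+\int_0^t\Phi(s,\omega)^{-1}Z_s(\omega,y_1)\,\mathrm{d}s+\int_0^t\Phi(s,\omega)^{-1}U_s(\omega,y_1)\,\mathrm{d}\W_s(\omega)\Big),
\end{align*}
and to show that each operation on the right-hand side preserves $\conditionRDE$, now with parameter $(y_1,y_2)\in\R^{D_1}\times\R^{D_2}$. The Gubinelli derivative of the solution is $B\phi(t,\omega,y_1)y_2+U_t(\omega,y_1)$. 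Adaptedness (point (ii)) follows because $\Phi$, $\Phi^{-1}$, controlled integrals and products of adapted processes are adapted. $\Phi$ and $\Phi^{-1}$ solve linear RDEs driven by the non-anticipative $\W$, and their Picard iterates and compensated Riemann sums are adapted.

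For point (iii), I proceed in three steps. First, I show that the products $\Upphi(\cdot,\omega)^{-1}\U(\omega,y_1)$ and $\Upphi(\cdot,\omega)^{-1}\Z(\omega,y_1)$ satisfy $\conditionRDE$. By Lemma \ref{rde lem phi inverse is also controlled rough path}, $\Upphi^{-1}=(\Phi^{-1},-\Phi^{-1}B)$ is controlled. The product rule bound of Lemma \ref{rde lem product rule} controls $\normDalphaOmega{\Upphi^{-1}(\U(y_1)-\U(\tildeY_1))}$ by a sum of products. Each product pairs a factor involving $\norm{\Phi^{-1}}{\sup}$, $\norm{\Phi^{-1}}{\alpha}$, $\normDalphaOmega{\Upphi^{-1}}$ or $\norm{W}{\alpha}$ with one of $\sup|U-\tilde U|$, $\sup|U'-\tilde U'|$, $\norm{U-\tilde U}{\alpha}$ or $\normDalphaOmega{\U-\tilde\U}$. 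The $\alpha$-Hölder norm of $U-\tilde U$ is bounded by $\sup|U'-\tilde U'|\,\norm{W}{\alpha}+\norm{\remainder(\U-\tilde\U)}{2\alpha}$. Then Hölder's inequality with exponents $(2,2)$ (or a three-factor version), together with the finiteness of all moments of $\normCalpha{\W}$ and of $\normDalphaOmega{\Upphi^{-1}}$ from Proposition \ref{rde prop consequences of heavy machinery}, yields a bound $C|y_1-\tildeY_1|^{p/2}$. To bound $\sup_t|\Phi_t^{-1}|$ I use $|\Phi^{-1}_t|\le|\id|+\norm{\Phi^{-1}}{\alpha}$ on $[0,1]$.

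Second, Lemma \ref{rde lem Lemma 8.5.3} gives $\conditionRDE$ for both integrals. Adding the constant path $y_2$ is harmless: it contributes $|y_2-\tilde y_2|^p\le C|y_2-\tilde y_2|^{p/2}$ on compact $K$, so the sum $\mathcal{X}(\omega,y_1,y_2)$ satisfies $\conditionRDE$ in the joint parameter. Third, I multiply by $\Upphi=(\Phi,B\Phi)$, using the product rule and the same Hölder argument with the moment bounds on $\normDalphaOmega{\Upphi}$ from Proposition \ref{rde prop consequences of heavy machinery}. The resulting Gubinelli derivative $B\Phi X+\Phi\Phi^{-1}U$ matches the solution's derivative, so the object obtained is exactly $\upvarphi(\cdot,\omega,y_1)y_2$.

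I expect the main obstacle to be making the product-rule bookkeeping close under moments. Lemma \ref{rde lem product rule} only provides $\normDalphaOmega{\cdot}$ bounds involving $\sup$ and $\alpha$-Hölder norms of both factors, so every such quantity, for $\Upphi$, $\Upphi^{-1}$ and the $\conditionRDE$ families, must be expressed through the three quantities in $\conditionRDE$ and the rough path norm. The repeated Hölder inequalities then raise the exponents to $2p$ or $4p$. This works only because $\conditionRDE$ is assumed for all $p\ge2$ and the Gaussian tail estimates give all moments, with the Hölder exponent $1/2$ turning $|y_1-\tildeY_1|^{p}$ back into $|y_1-\tildeY_1|^{p/2}$. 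I would also check that Lemma \ref{rde lem Lemma 8.5.3}(ii) yields the drift integral with zero Gubinelli derivative, which is consistent with the product computation.
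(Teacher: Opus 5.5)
Your proposal is correct and follows essentially the paper's route. Both write the solution via the rough Duhamel formula, show that $\Upphi^{-1}\U$, $\Upphi^{-1}\Z$ and the resulting integrals inherit $\conditionRDE$, and then control multiplication by $\Upphi$ with the product-rule estimate, Hölder's inequality and the Gaussian moment bounds. The only difference is that the paper derives the sup-norm bound on the path separately, from the integral equation via Gubinelli's estimate and a classical Gronwall inequality after proving the seminorm bound, whereas you read the sup bounds directly off the product $\Upphi\mathcal{X}$, which is slightly more direct.
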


    Due to the lack of "good" and "elementary" BDG-type plus Gronwall-type inequalities we cannot conduct the proof in full analogy with the SDE case: A naive method involving exponential term estimates as in \cite[Proposition 8.13]{FrizHairer2020} is insufficient because, in general, we have no hope of achieving finite exponential moments such as
    \begin{align*}
        \xcancel{\expectationPathwise{\mathrm{e}^{c\normCalpha{\W(\omega)}^{1/\alpha}}} <\infty}
    \end{align*}
    for a generic (possibly large) constant $c\in\R$ plus an exponent $1/\alpha\in[2,3)$. For reference, classical Fernique-type results in the style of \cite[Theorem 4.1]{fernique} state that $\expectationPathwise{\mathrm{e}^{c\abs{W^*(\omega)}^2}}=\infty$ if $c$ is too large and $W^*$ is a generic centered Gaussian random variable. See also \cite[Theorem 15.33]{Friz_Victoir_2010} for a similar Fernique estimate for centered Gaussian rough paths. More advanced, \cite[Lemma 4.2]{mildGronwall2025} provides a mild rough Gronwall inequality that actually circumvents the issues of exponential moments by reducing them to polynomial moments. In fact, the mild Gronwall inequality allows us to conclude Lemma \ref{rde lem Lemma 8.5.5} very quickly, which we briefly illustrate in Remark \ref{rde rem difficult lemma with mild gronwall}. However, we focus on a more elementary presentation which relies solely on Proposition \ref{rde prop consequences of heavy machinery}, from which we obtain finite polynomial moments of all orders. These provide the sufficient key to bypassing the SDE-type argumentation and sticking with standard Gubinelli-type estimates. For us, this procedure has the advantage that, on the way, we encounter the intermediate results Lemma \ref{rde lem product rule preserves (C-RDE)} and Lemma \ref{rde lem bounding expectation of I= int Phi U dW} which are essential for the rest of this section and, therefore, would have been to be established in retrospect, anyway.

    \begin{lemma}\label{rde lem product rule preserves (C-RDE)}
        Let $\Upphi(\cdot,\omega)\in\controlledRoughPath{\W(\omega)}{[0,1]}{\linearMaps{\R^{D_2}}{\R^{D_2}}}$ be the controlled rough path that comes from the linear cocycle $\Phi$ generated by the linear RDE $\,\mathrm{d}Y_t=B_0Y_t\,\mathrm{d}t+BY_t\,\mathrm{d}\W_t(\omega)$, where $B_0\in \linearMaps{\R^{D_2}}{\R^{D_2}}$ and $B\in\linearMaps{\R^{D_2}}{\linearMaps{\R^M}{\R^{D_2}}}$. Moreover, let $\U(\omega):=(\U(\omega,y_1))_{y_1\in\R^{D_1}}\subseteq\controlledRoughPath{\W(\omega)}{[0,1]}{\linearMaps{\R^M}{\R^{D_2}}}$ and $\Z(\omega):=(\Z(\omega,y_1))_{y_1\in\R^{D_1}}\subseteq\controlledRoughPath{\W(\omega)}{[0,1]}{\R^{D_2}}$ satisfy $\conditionRDE$.
         \begin{enumerate}
             \item Then, also $(\Upphi(\cdot,\omega)^{-1}\U(\omega,y_1))_{y_1\in\R^{D_1}}\subseteq\controlledRoughPath{\W(\omega)}{[0,1]}{\linearMaps{\R^M}{\R^{D_2}}}$ meets $\conditionRDE$.
             \item Furthermore, $(\Upphi(\cdot,\omega)^{-1}\Z(\omega,y_1))_{y_1\in\R^{D_1}}\subseteq\controlledRoughPath{\W(\omega)}{[0,1]}{\R^{D_2}}$ fulfills $\conditionRDE$.
         \end{enumerate}
    \end{lemma}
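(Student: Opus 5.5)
Following the convention fixed above, only point (ii) and the second (Lipschitz-type) condition of point (iii) of $\conditionRDE$ will be verified. Point (i) is already given by Lemma \ref{rde lem phi inverse is also controlled rough path} combined with the product rule, Lemma \ref{rde lem product rule}.

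For adaptedness, I use that $\Upphi(\cdot,\omega)^{-1}$ solves the linear RDE $\mathrm{d}J_t=-J_tB_0\,\mathrm{d}t-J_tB\,\mathrm{d}\W_t(\omega)$ with $J_0=\id$ (Lemma \ref{rde lem phi inverse is also controlled rough path} (ii)). Its path and Gubinelli derivative $-J_tB$ are therefore limits of compensated Riemann sums, or Picard iterates, built from $\W$ on $[0,t]$. Since $\W$ is non-anticipative, they are $\F^t_{-\infty}$-measurable. Products of adapted processes are adapted, and so is the Gubinelli derivative $\Phi^{-1}{}'U+\Phi^{-1}U'$ from Lemma \ref{rde lem product rule}.

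For (iii), fix $p\ge2$, a compact $K\ni0$ and $y_1,\tilde y_1\in K$, and set $\Delta\U:=\U(\omega,y_1)-\U(\omega,\tilde y_1)$. Bilinearity gives $\Upphi^{-1}\U(\omega,y_1)-\Upphi^{-1}\U(\omega,\tilde y_1)=\Upphi^{-1}\Delta\U$. The three quantities to bound are then handled as follows.
\begin{enumerate}
\item $\sup_t|\Phi_t^{-1}\Delta U_t|\le \sup_t|\Phi^{-1}_t|\,\sup_t|\Delta U_t|$.
\item The Gubinelli derivative satisfies $\sup_t|(\Phi^{-1})'_t\Delta U_t+\Phi^{-1}_t\Delta U'_t|\le \sup_t|\Phi_t^{-1}B|\sup_t|\Delta U_t|+\sup_t|\Phi^{-1}_t|\sup_t|\Delta U'_t|$.
\item The estimate of Lemma \ref{rde lem product rule} gives
\begin{align*}
\normDalphaOmega{\Upphi^{-1}\Delta\U}\le C\Big(\norm{(\Phi^{-1})'}{\sup}\norm{\Delta U}{\alpha}(1+\norm{W}{\alpha})+\norm{\Delta U}{\sup}\normDalphaOmega{\Upphi^{-1}}+\norm{\Phi^{-1}}{\sup}\normDalphaOmega{\Delta\U}+\norm{\Phi^{-1}}{\alpha}\norm{\Delta U'}{\sup}\Big).
\end{align*}
\end{enumerate}
Here $\norm{\Delta U}{\alpha}\le\norm{\Delta U'}{\sup}\norm{W}{\alpha}+\norm{\remainder\Delta\U}{2\alpha}$ on $[0,1]$. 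So every term is a product of a \emph{random factor coming only from $\Phi^{-1}$ and $\W$} and a \emph{difference quantity of $\U$} of the kind controlled in $\conditionRDE$.

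Next I apply Hölder's inequality, or its generalized version with several factors, to split each expectation. The factors $\sup_t|\Phi_t^{-1}|$, $\norm{\Phi^{-1}}{\alpha}$ and $\normDalphaOmega{\Upphi^{-1}}$ have all moments by the last assertion of Proposition \ref{rde prop consequences of heavy machinery}, applied to $\mathrm{d}J=-JB_0\mathrm{d}t-JB\mathrm{d}\W$ with $\Psi_0=\id$. For the sup norms I use $\sup_t|J_t|\le|J_0|+\norm{J}{\alpha}$ and $\norm{J}{\alpha}\le\norm{J'}{\sup}\norm{W}{\alpha}+\norm{\remainder J}{2\alpha}$, together with $\sup|J'|\le|J_0'|+\norm{J'}{\alpha}$. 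The factors $\norm{W}{\alpha}$ and $\normCalpha{\W}$ have all moments by the first assertion of the same proposition. After absorbing these into constants, each term is bounded by $C\big(\int_\Omega(\text{difference})^{2p}\,\mathrm{d}P\big)^{1/2}$, or a similar root with exponent $4p$ and power $1/4$. Condition $\conditionRDE$ for $\U$, applied with exponent $2p$ or $4p$, gives $C|y_1-\tilde y_1|^{p}$ or $C|y_1-\tilde y_1|^{2p}$ inside the root, hence $C|y_1-\tilde y_1|^{p/2}$ after taking it.

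Part (ii) is identical, using the vector-valued version of the product rule (the remark after Lemma \ref{rde lem product rule}) with $\Z$ in place of $\U$.

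The main obstacle is the moment bookkeeping. One must make sure every factor attached to $\Upphi^{-1}$ (sup norm, $\alpha$-norm of path and derivative, remainder norm) has finite moments of all orders, uniformly since $\Psi_0=\id$ is fixed. Proposition \ref{rde prop consequences of heavy machinery} only states this for the $\mathscr{D}^{2\alpha}$-seminorm, so I reduce the other norms to it and to $\norm{W}{\alpha}$ via the elementary interpolation inequalities above, which hold because the initial values $J_0=\id$ and $J_0'=-B$ are deterministic. Beyond this, it only remains to choose Hölder exponents so that the $\U$-differences appear at power $2p$ or $4p$, which is allowed because $\conditionRDE$ holds for every $p\in[2,\infty)$.
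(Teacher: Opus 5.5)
Your proposal is correct and follows essentially the same route as the paper. You establish adaptedness of $\Upphi(\cdot,\omega)^{-1}$; you argue via Picard iterates or compensated sums, whereas the paper uses continuity of the solution map in the adapted driver, and both are fine. You then apply Hölder's inequality to the sup bound, to the product-rule Gubinelli derivative and to the estimate of Lemma \ref{rde lem product rule}. All factors coming from $\Upphi^{-1}$ and $\W$ are reduced, via the elementary interpolation inequalities, to $\normDalphaOmega{\Upphi(\cdot,\omega)^{-1}}$ and $\norm{W(\omega)}{\alpha}$, whose moments Proposition \ref{rde prop consequences of heavy machinery} supplies.

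Your explicit reduction of $\norm{U(\omega,y_1)-U(\omega,\tilde{y}_1)}{\alpha}$ to remainder and Gubinelli-derivative differences is a detail the paper leaves implicit under ``controlling all these terms works as above''. Treating (ii) via the vector-valued product rule is equivalent to the paper's ``special case $M=1$''.
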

    \begin{proof}
    To start off, notice that, as Lemma \ref{rde lem phi inverse is also controlled rough path} provides $\Upphi(\cdot,\omega)^{-1}\in\controlledRoughPath{\W(\omega)}{[0,1]}{\linearMaps{\R^{D_2}}{\R^{D_2}}}$,
    Lemma \ref{rde lem product rule} guarantees $\Upphi(\cdot,\omega)^{-1}\U(\omega,y_1)\in\controlledRoughPath{\W(\omega)}{[0,1]}{\linearMaps{\R^M}{\R^{D_2}}}$ and $\Upphi(\cdot,\omega)^{-1}\Z(\omega,y_1)\in\controlledRoughPath{\W(\omega)}{[0,1]}{\R^{D_2}}$. To check adaptedness, notice that adaptedness of the solution to an RDE with adapted weakly geometric driver and adapted initial condition is a standard problem resolved by means of continuous dependence of the solution upon the adapted weakly geometric driver, as presented in \cite[Theorem 8.5]{FrizHairer2020} and \cite[Theorem 9.1]{FrizHairer2020} in the case of Brownian motion. This entails that
    $(t,\omega)\mapsto\Upphi(\cdot,\omega)^{-1}$ is an adapted process. As a consequence, the two processes in question inherit adaptedness because matrix-multiplication is continuous and, hence, preserves adaptedness of two adapted processes.

    Now, let $p\in[2,\infty)$ and $y_1,\tildeY_1\in K$ for some arbitrary compact $K\subseteq\R^{D_1}$ with $0\in K$.\\

    $\underline{\textit{Regarding (i)}}$. First, an application of Hölder's inequality yields
    \begin{align*}
        &\quad\expectationPathwise{\sup_{t\in[0,1]}\abs{\Phi(t,\omega)^{-1}(U_t(\omega,y_1)-U_t(\omega,\tildeY_1))}^p}\\
        &\leq C\left(\expectationPathwise{\sup_{t\in[0,1]}\abs{\Phi(t,\omega)^{-1}}^{2p}}\right)^{1/2} \left(\expectationPathwise{\sup_{t\in[0,1]}\abs{U_t(\omega,y_1)-U_t(\omega,\tildeY_1)}^{2p}}\right)^{1/2}.
    \end{align*}
    On the one hand, inserting $(\Phi^{-1})'(\cdot,\omega)=-\Phi(\cdot,\omega)^{-1}B$ and $\Phi(0,\omega)^{-1}=\id$, the computation performed in \cite[(4.20)]{FrizHairer2020}, and $\abs{B}<\infty$, we deduce
     \begin{align*}
            \quad\norm{\Phi(\cdot,\omega)^{-1}}{\sup}
            &\leq \abs{\Phi(0,\omega)^{-1}}+\norm{\Phi(\cdot,\omega)^{-1}}{\alpha}\\
            &\leq C+(1+\norm{W(\omega)}{\alpha})\left(C+\normDalphaOmega{\Upphi(\cdot,\omega)^{-1}}\right),
        \end{align*}
    which by means of Hölder's inequality, elementary inequalities and Proposition \ref{rde prop consequences of heavy machinery} results in finiteness of the first factor. On the other hand, $\expectationPathwise{\sup_{t\in[0,1]}\abs{U_t(\omega,y_1)-U_t(\omega,\tildeY_1)}^{2p}}\leq C\abs{y_1-\tildeY_1}^p$ holds true by assumption. Hence,
    \begin{align*}
        \expectationPathwise{\sup_{t\in[0,1]}\abs{\Phi(t,\omega)^{-1}(U_t(\omega,y_1)-U_t(\omega,\tildeY_1))}^p}\leq C\abs{y_1-\tildeY_1}^{p/2}.
    \end{align*}

    Second, in a similar manner, employing the product rule from Lemma \ref{rde lem product rule} and some additional elementary inequalities, we write
    \begin{align*}
        &\quad\expectationPathwise{\sup_{t\in[0,1]}\abs{\big(\Phi(\cdot,\omega)^{-1}(U(\omega,y_1)-U(\omega,\tildeY_1))\big)'_t}^p}\\
        &\leq C\left(\expectationPathwise{\sup_{t\in[0,1]}\abs{(\Phi^{-1})'(t,\omega)}^{2p}}\right)^{1/2} \left(\expectationPathwise{\sup_{t\in[0,1]}\abs{U_t(\omega,y_1)-U_t(\omega,\tildeY_1)}^{2p}}\right)^{1/2}\\
        &\quad+C \left(\expectationPathwise{\sup_{t\in[0,1]}\abs{\Phi(t,\omega)^{-1}}^{2p}}\right)^{1/2} \left(\expectationPathwise{\sup_{t\in[0,1]}\abs{U_t'(\omega,y_1)-U_t'(\omega,\tildeY_1)}^{2p}}\right)^{1/2}.
    \end{align*}
    From the above calculation, we know how to control the second summand by $C\abs{y_1-\tildeY_1}^{p/2}$. Also, recalling $(\Phi^{-1})'(\cdot,\omega)=-\Phi(\cdot,\omega)^{-1}B$ and borrowing an estimate from above entails
    \begin{align*}
        \quad\norm{(\Phi^{-1})'(\cdot,\omega)}{\sup}
        &\leq \norm{-\Phi^{-1}(\cdot,\omega)}{\sup}\abs{B}\\
        &\leq C+C(1+\norm{W(\omega)}{\alpha})\left(C+\normDalphaOmega{\Upphi(\cdot,\omega)^{-1}}\right).
    \end{align*}
    Consequently, it is, according to standard estimations and Proposition \ref{rde prop consequences of heavy machinery}, easy to see that the first summand is $\leq C\abs{y_1-\tildeY_1}^{p/2}$, too.

    Third, we have to handle $\expectationPathwise{\normDalphaOmega{\Upphi(\cdot,\omega)^{-1}(\U(\omega,y_1)-\U(\omega,\tildeY_1))}^p}$. We obtain
    \begin{align*}
        &\quad \expectationPathwise{\normDalphaOmega{\Upphi(\cdot,\omega)^{-1}(\U(\omega,y_1)-\U(\omega,\tildeY_1))}^p}\\
        &\leq C\left(\expectationPathwise{\norm{(\Phi^{-1})'(\cdot,\omega)}{\sup}^{3p}}\right)^{1/3} \left(\expectationPathwise{\norm{U(\omega,y_1)-U(\omega,\tildeY_1)}{\alpha}^{3p}}\right)^{1/3}\\
        &\quad\quad\cdot\left(\expectationPathwise{(1+\norm{W(\omega)}{\alpha})^{3p}}\right)^{1/3}\\
            &\quad+C \left(\expectationPathwise{\norm{U(\omega,y_1)-U(\omega,\tildeY_1)}{\sup}^{2p}}\right)^{1/2}\left(\expectationPathwise{\normDalphaOmega{\Upphi(\cdot,\omega)^{-1}}^{2p}}\right)^{1/2}\\
            &\quad+ C\left(\expectationPathwise{\norm{\Phi(\cdot,\omega)^{-1}}{\sup}^{2p}}\right)^{1/2}\left(\expectationPathwise{\normDalphaOmega{\U(\omega,y_1)-\U(\omega,\tildeY_1)}^{2p}}\right)^{1/2}\\
            &\quad+ C\left(\expectationPathwise{\norm{\Phi(\cdot,\omega)^{-1}}{\alpha}^{2p}}\right)^{1/2} \left(\expectationPathwise{\norm{U'(\omega,y_1)-U'(\omega,\tildeY_1)}{\sup}^{2p}}\right)^{1/2}
    \end{align*}
    by means of the product rule estimate derived in Lemma \ref{rde lem product rule} and standard elementary inequalities combined with Hölder's inequality.
        Controlling all these terms works as above and produces \begin{align*}
            \expectationPathwise{\normDalphaOmega{\Upphi(\cdot,\omega)^{-1}(\U(\omega,y_1)-\U(\omega,\tildeY_1))}^p}\leq C\abs{y_1-\tildeY_1}^{p/2}.
        \end{align*}

        $\underline{\textit{Regarding (ii)}}$. This is just a special case of \textit{(i)} with $M=1$.
    \end{proof}

    The following intermediate result is now wide open to be picked up:
    \begin{lemma}\label{rde lem bounding expectation of I= int Phi U dW}        Let $\Upphi(\cdot,\omega)\in\controlledRoughPath{\W(\omega)}{[0,1]}{\linearMaps{\R^{D_2}}{\R^{D_2}}}$ be the controlled rough path that comes from the linear cocycle $\Phi$ generated by the linear RDE $\,\mathrm{d}Y_t=B_0Y_t\,\mathrm{d}t+BY_t\,\mathrm{d}\W_t(\omega)$. Furthermore, let $\U(\omega):=(\U(\omega,y_1))_{y_1\in\R^{D_1}}\subseteq\controlledRoughPath{\W(\omega)}{[0,1]}{\linearMaps{\R^M}{\R^{D_2}}}$ and $\Z(\omega):=(\Z(\omega,y_1))_{y_1\in\R^{D_1}}\subseteq\controlledRoughPath{\W(\omega)}{[0,1]}{\R^{D_2}}$ satisfy $\conditionRDE$.
        \begin{enumerate}
            \item Then, $\left(\int_0^\cdot\Upphi(s,\omega)^{-1}\U_s(\omega,y_1)\,\mathrm{d}\W_s(\omega)\right)_{y_1\in\R^{D_1}}\subseteq\controlledRoughPath{\W(\omega)}{[0,1]}{\R^ {D_2}}$ meets $\conditionRDE$.
            \item Also, $\left(\left(\int_0^\cdot\Phi(s,\omega)^{-1}Z_s(\omega,y_1)\,\mathrm{d}s,0\right)\right)_{y_1\in\R^{D_1}}\subseteq\controlledRoughPath{\W(\omega)}{[0,1]}{\R^ {D_2}}$ fulfills $\conditionRDE$.
        \end{enumerate}
    \end{lemma}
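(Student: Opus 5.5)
This statement is a direct composition of the two preceding results, and I would prove it that way. For \textit{(i)}, Lemma \ref{rde lem product rule preserves (C-RDE)}\textit{(i)} gives that the family $(\Upphi(\cdot,\omega)^{-1}\U(\omega,y_1))_{y_1\in\R^{D_1}}\subseteq\controlledRoughPath{\W(\omega)}{[0,1]}{\linearMaps{\R^M}{\R^{D_2}}}$ satisfies $\conditionRDE$. Applying Lemma \ref{rde lem Lemma 8.5.3}\textit{(i)} to this family, in place of $\U$, shows that the controlled integrals $\int_0^\cdot\Upphi(s,\omega)^{-1}\U_s(\omega,y_1)\,\mathrm{d}\W_s(\omega)$ satisfy $\conditionRDE$. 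Their Gubinelli derivative is $\Phi(\cdot,\omega)^{-1}U(\omega,y_1)$, as in Theorem \ref{theorem gubinelli estimates}.

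For \textit{(ii)}, I would argue in the same way. Lemma \ref{rde lem product rule preserves (C-RDE)}\textit{(ii)} shows that $(\Upphi(\cdot,\omega)^{-1}\Z(\omega,y_1))_{y_1}\subseteq\controlledRoughPath{\W(\omega)}{[0,1]}{\R^{D_2}}$ satisfies $\conditionRDE$. Lemma \ref{rde lem Lemma 8.5.3}\textit{(ii)}, applied with $Z$ replaced by $\Phi(\cdot,\omega)^{-1}Z(\omega,y_1)$, then gives that $\left(\int_0^\cdot\Phi(s,\omega)^{-1}Z_s(\omega,y_1)\,\mathrm{d}s,0\right)$ satisfies $\conditionRDE$. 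The path $\Phi(\cdot,\omega)^{-1}Z(\omega,y_1)$ is continuous, which is the only property the proof of Lemma \ref{rde lem Lemma 8.5.3}\textit{(ii)} uses beyond the moment bounds.

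Two small points need checking. First, the hypotheses of both lemmas must match exactly: the time interval is $[0,1]$ in each, the spaces agree, and the same driver $\W(\omega)$ is used. Second, adaptedness of the integrand must carry over to the integral. Lemma \ref{rde lem product rule preserves (C-RDE)} already gives adaptedness of the product, and Lemma \ref{rde lem Lemma 8.5.3} then passes adaptedness through the integral via the compensated Riemann sums. I expect no step to be a real obstacle, since the only nontrivial analysis, namely the Hölder and moment bounds for $\Upphi^{-1}$ from Proposition \ref{rde prop consequences of heavy machinery}, is already contained in Lemma \ref{rde lem product rule preserves (C-RDE)}.
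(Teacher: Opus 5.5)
Your proposal is correct and is exactly the paper's argument: the paper proves this lemma by combining Lemma \ref{rde lem product rule preserves (C-RDE)} with Lemma \ref{rde lem Lemma 8.5.3}, as you do, part by part. Your extra checks on matching hypotheses and on adaptedness passing through the integral are consistent with how those two lemmas are stated and proved.
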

    \begin{proof}
        Pair Lemma \ref{rde lem product rule preserves (C-RDE)} and Lemma \ref{rde lem Lemma 8.5.3}.
    \end{proof}

    We have finally gathered all the necessary ingredients to be able to tackle the proof of Lemma \ref{rde lem Lemma 8.5.5}.
    \begin{proof}[Proof of Lemma \ref{rde lem Lemma 8.5.5}.]
        Let us first verify adaptedness of the entire solution process
        \begin{align*}
            (t,\omega)\mapsto\left(\varphi(t,\omega,y_1)y_2,\varphi'(t,\omega,y_1)y_2\right).
        \end{align*}
        However, this task is immediately resolved by combining Lemma \ref{rde lem bounding expectation of I= int Phi U dW} and Duhamel's formula, Corollary \ref{rde corollary duhamels formula}.
        Let now $p\in[2,\infty)$, and let $K\subseteq\R^{D_1}\times\R^{D_2}$ be compact with $0\in K$. Let in addition $(y_1,y_2),(\Tilde{y}_1,\Tilde{y}_2)\in K$ be arbitrary. We must bound three instances:\begin{enumerate}
            \item \begin{align*}\expectationPathwise{\sup_{t\in[0,1]}\abs{\phi(t,\omega,y_1)y_2-\phi(t,\omega,\Tilde{y}_1)\Tilde{y}_2}^p}\leq C\abs{(y_1,y_2)-(\Tilde{y}_1,\Tilde{y}_2)}^{p/2},\end{align*}
            \item \begin{align*}\expectationPathwise{\sup_{t\in[0,1]}\abs{\phi'(t,\omega,y_1)y_2-\phi'(t,\omega,\Tilde{y}_1)\Tilde{y}_2}^p}\leq C\abs{(y_1,y_2)-(\Tilde{y}_1,\Tilde{y}_2)}^{p/2},\end{align*}
            \item \begin{align*}\expectationPathwise{\norm{\upvarphi(\cdot,\omega,y_1)y_2-\upvarphi(\cdot,\omega,\Tilde{y}_1)\Tilde{y}_2}{\mathscr{D}^{2\alpha}_{\W(\omega)}}^p}\leq  C\abs{(y_1,y_2)-(\Tilde{y}_1,\Tilde{y}_2)}^{p/2}.\end{align*}
        \end{enumerate}
        We begin with (iii) because it turns out that proving (i) and (ii) relies on statement (iii) being true.\\

        \underline{\textit{Regarding} (iii)}. To start off, we do some pathwise considerations. We represent $\upvarphi(\cdot,\omega,y_1)y_2$ by means of Duhamel's formula: Recall from Corollary \ref{rde corollary duhamels formula} that
        \begin{align*}
            \upvarphi(t,\omega,y_1)y_2 &=\Big(\Phi(t,\omega)y_2+\Phi(t,\omega)I^0_t(\omega,y_1)+\Phi(t,\omega)I_t(\omega,y_1),\\
            &\quad\quad
             B\left(\Phi(t,\omega)y_2+\Phi(t,\omega)I^0_t(\omega,y_1)+\Phi(t,\omega)I_t(\omega,y_1)\right)+U_t(\omega,y_1)\Big)
        \end{align*}
        for $t\in[0,1]$, where we set $\I^0(\omega,y_1):=(\int_0^\cdot\Phi(s,\omega)^{-1}Z_s(\omega,y_1)\,\mathrm{d}s,0)$ and   $\I(\omega,y_1):=\int_0^\cdot\Upphi(s,\omega)^{-1}\U_s(\omega,y_1)\,\mathrm{d}\W_s(\omega)$.
        Since the occurring semi-norms are sub-additive, it is easy to see that $\normDalphaOmega{\cdot}$ is sub-additive, too. In our case, that means that
        \begin{align*}
            \normDalphaOmega{\upvarphi(\cdot,\omega,y_1)y_2-\upvarphi(\cdot,\omega,\Tilde{y}_1)\Tilde{y}_2}
             &\leq \normDalphaOmega{\big(\Phi(\cdot,\omega)(y_2-\tildeY_2),B\Phi(\cdot,\omega)(y_2-\tildeY_2)\big)}\\
             &\quad+\normDalphaOmega{\Upphi(\cdot,\omega)\left(\I^0(\omega,y_1)-\I^0(\omega,\tildeY_1)\right)}\\
             &\quad+\normDalphaOmega{\Upphi(\cdot,\omega)\left(\I(\omega,y_1)-\I(\omega,\tildeY_1)\right)}.
        \end{align*}
        Using the product rule estimate from Lemma \ref{rde lem product rule} twice, inserting $(I^0)'(\omega,y_1)=(I^0)'(\omega,\tildeY_1)= 0$, raising to the $p$-th power, taking expectation, and employing elementary inequalities as well as Hölder's inequality, we arrive at
        \begin{align*}
            &\quad\expectationPathwise{\norm{\upvarphi(\cdot,\omega,y_1)y_2-\upvarphi(\cdot,\omega,\Tilde{y}_1)\Tilde{y}_2}{\mathscr{D}^{2\alpha}_{\W(\omega)}}^p}\\
            &\leq C \expectationPathwise{\normDalphaOmega{\big(\Phi(\cdot,\omega)(y_2-\Tilde{y}_2),B\Phi(\cdot,\omega)(y_2-\Tilde{y}_2)\big)}^p}\\
            &\quad + C\left(\expectationPathwise{\norm{\Phi'(\cdot,\omega)}{\sup}^{3p}}\right)^{1/3}
            \left(\expectationPathwise{\norm{I^0(\omega,y_1)-I^0(\omega,\Tilde{y}_1)}{\alpha}^{3p}}\right)^{1/3}\\
            &\quad\quad\cdot\left(\expectationPathwise{(1+\norm{W(\omega)}{\alpha})^{3p }}\right)^{1/3}\\
            &\quad+C\left(\expectationPathwise{\norm{I^0(\omega,y_1)-I^0(\omega,\Tilde{y}_1)}{\sup}^{2p}}\right)^{1/2} \left(\expectationPathwise{\normDalphaOmega{\Upphi(\cdot,\omega)}^{2p}}\right)^{1/2}\\
            &\quad+C\left(\expectationPathwise{\norm{\Phi(\cdot,\omega)}{\sup}^{2p}}\right)^{1/2} \left(\expectationPathwise{\normDalphaOmega{\I^0(\omega,y_1)-\I^0(\omega,\Tilde{y}_1)}^{2p}}\right)^{1/2} \\
            &\quad + C\left(\expectationPathwise{\norm{\Phi'(\cdot,\omega)}{\sup}^{3p}}\right)^{1/3}
            \left(\expectationPathwise{\norm{I(\omega,y_1)-I(\omega,\Tilde{y}_1)}{\alpha}^{3p}}\right)^{1/3}\\
            &\quad\quad\cdot\left(\expectationPathwise{(1+\norm{W(\omega)}{\alpha})^{3p }}\right)^{1/3}\\
            &\quad+C\left(\expectationPathwise{\norm{I(\omega,y_1)-I(\omega,\Tilde{y}_1)}{\sup}^{2p}}\right)^{1/2} \left(\expectationPathwise{\normDalphaOmega{\Upphi(\cdot,\omega)}^{2p}}\right)^{1/2}\\
            &\quad+C\left(\expectationPathwise{\norm{\Phi(\cdot,\omega)}{\sup}^{2p}}\right)^{1/2} \left(\expectationPathwise{\normDalphaOmega{\I(\omega,y_1)-\I(\omega,\Tilde{y}_1)}^{2p}}\right)^{1/2} \\
            &\quad+C\left(\expectationPathwise{\norm{\Phi(\cdot,\omega)}{\alpha}^{2p}}\right)^{1/2}\left(\expectationPathwise{\norm{I'(\omega,y_1)-I'(\omega,\Tilde{y}_1)}{\sup}^{2p}}\right)^{1/2}.
        \end{align*}
        Reviewing the terms, it seems reasonable that we wish to find that the first summand is $\leq C\abs{y_2-\Tilde{y}_2}^{p/2}$ and that all other summands are $\leq C\abs{y_1-\Tilde{y}_1}^{p/2}$.

         But, the observation
         \begin{align*}
            \normDalphaOmega{\big(\Phi(\cdot,\omega)(y_2-\Tilde{y}_2),B\Phi(\cdot,\omega)(y_2-\Tilde{y}_2)\big)}\leq C\normDalphaOmega{\Upphi(\cdot,\omega)}\abs{y_2-\Tilde{y}_2}
        \end{align*}
        combined with standard procedures entails that the first summand is $\leq C\abs{y_2-\tildeY_2}^p$. For the remaining summands, the required steps to arrive at $\leq C\abs{y_1-\tildeY_1}^{p/2}$ are by now routine, thanks to Lemma \ref{rde lem bounding expectation of I= int Phi U dW}. In total, we have
        \begin{align*}
            \quad\expectationPathwise{\norm{\upvarphi(\cdot,\omega,y_1)y_2-\upvarphi(\cdot,\omega,\Tilde{y}_1)\Tilde{y}_2}{\mathscr{D}^{2\alpha}_{\W(\omega)}}^p}
            \leq C(\abs{y_2-\Tilde{y}_2}^p+\abs{y_1-\Tilde{y}_1}^{p/2}).
        \end{align*}
        It remains to recall that $(y_1,y_2),(\Tilde{y}_1,\Tilde{y}_2)\in K$, and that $K$ is compact, which implies $\abs{y_2-\Tilde{y}_2}^p\leq C\abs{y_2-\Tilde{y}_2}^{p/2}$. Merging the two partial Euclidean distances establishes statement (iii).\\

        \underline{\textit{Regarding} (i)}. We insert that $\phi$ is the solution flow, apply standard methods and $\abs{B_0},\abs{B}<\infty$ and utilize Theorem \ref{theorem gubinelli estimates}  to obtain, for all $t\in[0,1]$, the generous estimate
        \begin{align*}
            &\quad\abs{\phi(t,\omega,y_1)y_2-\phi(t,\omega,\Tilde{y}_1)\Tilde{y}_2}\\
            &=\bigg|y_2-\tildeY_2+\int_0^t B_0\left(\phi(s,\omega,y_1)y_2-\phi(s,\omega,\Tilde{y}_1)\Tilde{y}_2\right) + \left(Z_s(\omega,y_1)-Z_s(\omega,\Tilde{y}_1)\right)\mathrm{d}s\\
            &\quad\quad+\int_0^t B\left(\phi(s,\omega,y_1)y_2-\phi(s,\omega,\Tilde{y}_1)\Tilde{y}_2\right) + \left(U_s(\omega,y_1)-U_s(\omega,\Tilde{y}_1)\right)\mathrm{d}\W_s(\omega)\bigg|\\
            &\leq\abs{y_2-\tildeY_2}+\sup_{t\in[0,1]}\abs{Z_t(\omega,y_1)-Z_t(\omega,\tildeY_1)} +C\int_0^t \abs{\phi(s,\omega,y_1)y_2-\phi(s,\omega,\Tilde{y}_1)\Tilde{y}_2}\,\mathrm{d}s\\
            &\quad+\abs{\big(B(y_2-\Tilde{y}_2)+(U_0(\omega,y_1)-U_0(\omega,\Tilde{y}_1))\big)W_{0,t}(\omega)}\\
            &\quad+\abs{\big((B(\phi(\cdot,\omega,y_1)y_2-\phi(\cdot,\omega,\Tilde{y}_1)\Tilde{y}_2))'_0+(U_0'(\omega,y_1)-U_0'(\omega,\Tilde{y}_1))\big)\mathbb{W}_{0,t}(\omega)}\\
            &\quad +C\normCalpha{\W(\omega)}\norm{B(\upvarphi(\cdot,\omega,y_1)y_2-\upvarphi(\cdot,\omega,\Tilde{y}_1)\Tilde{y}_2)+\left(\U(\omega,y_1)-\U(\omega,\Tilde{y}_1)\right)}{\mathscr{D}^{2\alpha}_{\W(\omega)}}\\
            &\leq C\abs{y_2-\tildeY_2}(1+\normCalpha{\W(\omega)})+\sup_{t\in[0,1]}\abs{Z_t(\omega,y_1)-Z_t(\omega,\tildeY_1)} \\
            &\quad+C\sup_{t\in[0,1]}\abs{U_t(\omega,y_1)-U_t(\omega,\Tilde{y}_1)}\normCalpha{\W(\omega)}+\sup_{t\in[0,1]}\abs{U_t'(\omega,y_1)-U_t'(\omega,\Tilde{y}_1)}\normCalpha{\W(\omega)}\\
            &\quad +C\normCalpha{\W(\omega)}\left(\norm{\upvarphi(\cdot,\omega,y_1)y_2-\upvarphi(\cdot,\omega,\Tilde{y}_1)\Tilde{y}_2}{\mathscr{D}^{2\alpha}_{\W(\omega)}}+\norm{\U(\omega,y_1)-\U(\omega,\Tilde{y}_1)}{\mathscr{D}^{2\alpha}_{\W(\omega)}}\right)\\
            &\quad+\int_0^t C\abs{\phi(s,\omega,y_1)y_2-\phi(s,\omega,\Tilde{y}_1)\Tilde{y}_2}\,\mathrm{d}s
        \end{align*}
        on a pathwise level. We realize that, in the last line, only the very last integral term still depends upon $t$, whereas the rest is constant in $t$, so that we rightfully consider it as increasing. Employing a classical Gronwall inequality like \cite[Lemma 2.7]{teschl2012ordinary}, we end up with
        \begin{align*}
            &\quad\abs{\phi(t,\omega,y_1)y_2-\phi(t,\omega,\Tilde{y}_1)\Tilde{y}_2}\\
             &\leq \mathrm{e}^{Ct}\bigg(C\abs{y_2-\tildeY_2}(1+\normCalpha{\W(\omega)})+\sup_{t\in[0,1]}\abs{Z_t(\omega,y_1)-Z_t(\omega,\tildeY_1)} \\
            &\quad+C\sup_{t\in[0,1]}\abs{U_t(\omega,y_1)-U_t(\omega,\Tilde{y}_1)}\normCalpha{\W(\omega)}+\sup_{t\in[0,1]}\abs{U_t'(\omega,y_1)-U_t'(\omega,\Tilde{y}_1)}\normCalpha{\W(\omega)}\\
            &\quad +C\normCalpha{\W(\omega)}\left(\norm{\upvarphi(\cdot,\omega,y_1)y_2-\upvarphi(\cdot,\omega,\Tilde{y}_1)\Tilde{y}_2}{\mathscr{D}^{2\alpha}_{\W(\omega)}}+\norm{\U(\omega,y_1)-\U(\omega,\Tilde{y}_1)}{\mathscr{D}^{2\alpha}_{\W(\omega)}}\right)\bigg)
        \end{align*}
        for all $t\in[0,1]$.
        With the same steps as before and $\mathrm{e}^{Ct}\leq \mathrm{e}^C\leq C$, this leads to
        \begin{align*}
            \expectationPathwise{\sup_{t\in[0,1]}\abs{\phi(t,\omega,y_1)y_2-\phi(t,\omega,\Tilde{y}_1)\Tilde{y}_2}^p}\leq C\abs{(y_1,y_2)-(\Tilde{y}_1,\Tilde{y}_2)}^{p/2},
        \end{align*}
        where we also exploited (iii).
        This bound is exactly what we intended to show in statement (i).\\

        \underline{\textit{Regarding} (ii)}. This is the case that is the easiest to deal with. In fact, with (i) established, we see that
        \begin{align*}
            &\quad\expectationPathwise{\sup_{t\in[0,1]}\abs{\phi'(t,\omega,y_1)y_2-\phi'(t,\omega,\Tilde{y}_1)\Tilde{y}_2}^p}\\
            &=\expectationPathwise{\sup_{t\in[0,1]}\abs{B\left(\phi(t,\omega,y_1)y_2-\phi(t,\omega,\Tilde{y}_1)\Tilde{y}_2\right) + \left(U_t(\omega,y_1)-U_t(\omega,\Tilde{y}_1)\right)}^p}\\
            &\leq C\expectationPathwise{\sup_{t\in[0,1]}\abs{\phi(t,\omega,y_1)y_2-\phi(t,\omega,\Tilde{y}_1)\Tilde{y}_2}^p}\\
            &\quad +C\expectationPathwise{\sup_{t\in[0,1]}\abs{U_t(\omega,y_1)-U_t(\omega,\Tilde{y}_1)}^p}\leq C\abs{(y_1,y_2)-(\Tilde{y}_1,\Tilde{y}_2)}^{p/2}
        \end{align*}
        already produces the desired inequality, and, thus, in total, concludes the proof.
    \end{proof}

    Let us shortly deviate from our main program to illustrate the shortcut that utilizes the mild rough Gronwall inequality.
    \begin{remark}\label{rde rem difficult lemma with mild gronwall} Here, we briefly explain that, under the conditions of the previous proof, the mild rough Gronwall inequality stated in \cite[Lemma 4.2]{mildGronwall2025} simplifies the argumentation by providing
    \begin{align*}\expectationPathwise{\norm{\upvarphi(\cdot,\omega,y_1)y_2-\upvarphi(\cdot,\omega,\Tilde{y}_1)\Tilde{y}_2}{\mathscr{D}^{2\alpha}_{\W(\omega)}}^p}\leq  C\abs{(y_1,y_2)-(\Tilde{y}_1,\Tilde{y}_2)}^{p/2}\end{align*}
    almost instantly. In this paragraph, we adopt the notation from \cite{mildGronwall2025}
    to prevent confusion. It is clear that $C_F$ and $C_G$ may be regarded $\geq 1$. The admissible choice $\kappa^\nu:=1/(2C\Phi_3)\in(0,1)$ in the mild rough Gronwall inequality leads, after plugging it into $C_1$ and $C_2$, and doing basic estimates, to
    \begin{align*}
        &\quad\norm{\upvarphi(\cdot,\omega,y_1)y_2-\upvarphi(\cdot,\omega,\Tilde{y}_1)\Tilde{y}_2}{\mathscr{D}^{2\alpha}_{\W(\omega)}} \\
        &\leq \mathrm{Const}(1+\abs{y_2-\tildeY_2}+\abs{B(y_2-\tildeY_2)+U_0(\omega,y_1)-U_0(\omega,\tildeY_1)})f(\normCalpha{\W(\omega)}),
    \end{align*}
        where $f:\R_{\geq 0}\to\R_{\geq 0}$ denotes an increasing polynomial. Standard estimates as before then entail the desired inequality
        \begin{align*}
            \expectationPathwise{\norm{\upvarphi(\cdot,\omega,y_1)y_2-\upvarphi(\cdot,\omega,\Tilde{y}_1)\Tilde{y}_2}{\mathscr{D}^{2\alpha}_{\W(\omega)}}^p}\leq  C\abs{(y_1,y_2)-(\Tilde{y}_1,\Tilde{y}_2)}^{p/2}.
        \end{align*}
    \end{remark}

\begin{lemma}\label{rde lem Lemma 8.5.6}
    Let $\Z(\omega):=(\Z(\omega,y_1))_{y_1\in\R^{D_1}}\subseteq\controlledRoughPath{\W(\omega)}{[0,1]}{\R^{D_2}}$ meet $\conditionRDE$. If $f:\R^{D_2}\rightarrow \R^{D_3}$ is a polynomial, then $(f(\Z(\omega,y_1)))_{y_1\in\R^{D_1}}\subseteq\controlledRoughPath{\W(\omega)}{[0,1]}{\R^{D_3}}$ also satisfies $\conditionRDE$.
\end{lemma}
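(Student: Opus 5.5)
Since $f$ is a polynomial, I would reduce the claim to two elementary operations that preserve $\conditionRDE$: (a) componentwise products of two families satisfying $\conditionRDE$, and (b) sums together with multiplication by fixed linear maps and constants. Every component of $f$ is a finite linear combination of monomials $z^\tau$, $\tau\in\N_0^{D_2}$. Induction on $\abs{\tau}$ via (a) then shows that each monomial family $(Z(\omega,y_1)^\tau)_{y_1}$ satisfies $\conditionRDE$; constant monomials trivially do. Finally, (b) gives the result for $f(\Z)$.

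Property (i) follows from Lemma~\ref{rde lem composing with regular fct} applied locally. Alternatively, and more directly, one can use the product rule Lemma~\ref{rde lem product rule} with the scalar case $K=L=D=1$, which gives $f(\Z)=(f(Z),\mathrm{D}f(Z)Z')$. Adaptedness (ii) holds because $f$ and $\mathrm{D}f$ are continuous, so $(t,\omega)\mapsto(f(Z_t),\mathrm{D}f(Z_t)Z'_t)$ is a continuous function of an adapted process.

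The core step is (a). Let $\X(\omega,y_1)$ and $\Y(\omega,y_1)$ be scalar families satisfying $\conditionRDE$, and split
\begin{align*}
X(y_1)Y(y_1)-X(\tildeY_1)Y(\tildeY_1)=X(y_1)\big(Y(y_1)-Y(\tildeY_1)\big)+\big(X(y_1)-X(\tildeY_1)\big)Y(\tildeY_1).
\end{align*}
Apply the product rule estimate of Lemma~\ref{rde lem product rule} to each summand. The $\sup$-norm bound and the bound on the Gubinelli derivative $X'Y+XY'$ are immediate from Hölder's inequality. The $\normDalphaOmega{\cdot}$ bound produces terms of the form $\norm{X'}{\sup}\norm{Y(y_1)-Y(\tildeY_1)}{\alpha}(1+\norm{W}{\alpha})$, $\norm{Y(y_1)-Y(\tildeY_1)}{\sup}\normDalphaOmega{\X}$, $\norm{X}{\sup}\normDalphaOmega{\Y(y_1)-\Y(\tildeY_1)}$ and $\norm{X}{\alpha}\norm{Y'(y_1)-Y'(\tildeY_1)}{\sup}$. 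I would use the three-factor Hölder inequality with exponent $3p$ exactly as in the proof of Lemma~\ref{rde lem product rule preserves (C-RDE)}. The factors not involving differences are bounded uniformly over $y_1,\tildeY_1\in K$ by Lemma~\ref{rde lem compactness lemma}, and the moments of $\norm{W(\omega)}{\alpha}$ are finite by Proposition~\ref{rde prop consequences of heavy machinery}.

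The main obstacle is that the difference factors have to be controlled in the norms $\norm{\cdot}{\alpha}$ and $\norm{\cdot}{\sup}$, while $\conditionRDE$ only controls $\sup$-norms and $\normDalphaOmega{\cdot}$. To handle this I would use, as in \cite[(4.20)]{FrizHairer2020}, the estimate $\norm{Y}{\alpha}\leq \norm{Y'}{\sup}\norm{W}{\alpha}+\norm{\remainder\Y}{2\alpha}$ on $[0,1]$, applied to the differences and combined with one more Hölder step. The moment of the difference in the $3p$-th power is then bounded by $C\abs{y_1-\tildeY_1}^{3p/2}$ from $\conditionRDE$ applied with exponent $3p$, and taking the $1/3$-rd power returns $C\abs{y_1-\tildeY_1}^{p/2}$.

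The first condition of (iii), at $y_1=0$, is handled the same way. The constants depend on $p$, $K$ and the degree of $f$, which is harmless because the induction has finitely many steps.
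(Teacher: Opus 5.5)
Your argument is correct. It differs from the paper's mainly in how you control the $\mathscr{D}^{2\alpha}_{\W(\omega)}$-seminorm of $f(\Z(\omega,y_1))-f(\Z(\omega,\tildeY_1))$.

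The paper treats $f$ in one stroke. The $\sup$-bounds for $f(Z)$ and $\mathrm{D}f(Z)Z'$ come from the telescoping identity for $x^\sigma-y^\sigma$ and the factorization of $a^n-b^n$, which is essentially an unrolled form of your product splitting. The seminorm is handled by the local Lipschitz estimate for composition in \cite[Proposition 2.3]{Geng2021_RoughPathsNotes}. To use it, the paper inspects that proof to see that the constant is polynomial in its inputs and that $\norm{f}{C^3_{\mathrm{b}}}$ may be replaced by $\norm{f}{C^3(\zeta)}$ on the convex hull $\zeta$ of the two ranges. The moments of $\norm{f}{C^3(\zeta)}$ are then bounded through a Bernstein-basis expansion.

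You instead prove that $\conditionRDE$ is closed under products of two families, and then assemble $f$ from monomials. For the closure step you use only:
\begin{itemize}
\item the estimate of Lemma \ref{rde lem product rule};
\item the bound $\norm{Y}{\alpha}\leq\norm{Y'}{\sup}\norm{W}{\alpha}+\norm{\remainder\Y}{2\alpha}$ on $[0,1]$;
\item H\"older's inequality;
\item Proposition \ref{rde prop consequences of heavy machinery}.
\end{itemize}
This route is self-contained: you never need to open up an external proof or localize a $C^3$-norm. It relies only on the bilinear structure of the product estimate, not on its exact constants. It also isolates a reusable closure property, of which Lemma \ref{rde lem product rule preserves (C-RDE)} is essentially the special case of a $y_1$-independent factor. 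The price is that it is tied to the ring structure of polynomials. The paper's composition argument would carry over with little change to smooth $f$ whose derivatives up to order three grow polynomially.

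In a write-up, state explicitly that iterating the product rule produces the Gubinelli derivative $\sum_{d}\tau_d Z^{\tau-e_d}(Z^d)'=\mathrm{D}(z^\tau)(Z)Z'$ by the Leibniz rule. That is what guarantees your object coincides with $f(\Z)=(f(Z),\mathrm{D}f(Z)Z')$ as in the statement.
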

\begin{proof}
    The proof consists of by-now-standard estimations supplemented by general calculus tricks like the identity $x^\sigma-y^\sigma=\sum_{d=1}^D(x_d^{\sigma_d}-y_d^{\sigma_d})\prod_{k=1}^{d-1}x_k^{\sigma_k}\prod_{l=d+1}^Dy_l^{\sigma_l}$ that holds for $x,y\in\R^D$ and $\sigma\in\N_0^D$. As the concrete computations are not entirely repetitive, we also supply a detailed proof in Appendix \ref{rde appendix proof of lemma with polynomial composition}.
\end{proof}
\begin{remark}\label{rde rem after Lem 8.5.6}
    Of course, by virtue of $\linearMaps{\R^M}{\R^{D_4}}\simeq\R^{MD_4}$, the statement of Lemma \ref{rde lem Lemma 8.5.6} remains true when studying a matrix-valued polynomial $\R^{D_2}\to \linearMaps{\R^M}{\R^{D_4}}$.
\end{remark}
At this point, we have finished the hard work of this section. Still, we need to harvest the implications that are of real importance to us.
\begin{lemma}\label{rde lem Lemma 8.5.8}
    For $n\in\N$, denote by $\upvarphi_n(\cdot,\omega,y_1,\dots,y_{n-1})y_n$ the solution to the $n$-th equation of (\ref{rde hierarchical system basic}), that is, to
    \begin{align*}
        \mathrm{d}Y_t&=\left(A^n_0Y_t+P_0^n(\phi_1(t,\omega)y_1,\dots,\phi_{n-1}(t,\omega,y_1,\dots,y_{n-2})y_{n-1})\right)\mathrm{d}t\\
        &\quad+\left(A^nY_t+P^n(\phi_1(t,\omega)y_1,\dots,\phi_{n-1}(t,\omega,y_1,\dots,y_{n-2})y_{n-1})\right)\mathrm{d}\W_t(\omega),\; Y_0=y_n\in\R^{D_n}.
    \end{align*}
    For each $n\in\N$, the family  $(\upvarphi_n(\cdot,\omega,y_1,\dots,y_{n-1})y_n)_{y_1\in\R^{D_1},\dots,y_n\in\R^{D_n}}$ satisfies $\conditionRDE$.
\end{lemma}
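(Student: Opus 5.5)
The plan is induction on $n$, with the parameter $y_1$ of $\conditionRDE$ taken to be the tuple $(y_1,\dots,y_{n-1})\in\R^{D_1}\times\dots\times\R^{D_{n-1}}$. For the statement at stage $n$, the relevant parameter is the full tuple $(y_1,\dots,y_n)$.

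For the base case $n=1$, the first stage of (\ref{rde hierarchical system basic}) is an affine RDE with constant inhomogeneities $P^1_0\in\R^{D_1}$ and $P^1\in\linearMaps{\R^M}{\R^{D_1}}$. We regard these as controlled rough paths $(P^1_0,0)$ and $(P^1,0)$ that are constant in time and do not depend on any parameter. Such families trivially satisfy $\conditionRDE$: they are deterministic and hence adapted, their moments are constants, and the difference condition vanishes identically. Lemma \ref{rde lem Lemma 8.5.5}, applied with an empty parameter (formally $\V_1=\{0\}$, or any parameter the coefficients ignore), then yields $\conditionRDE$ for $(\upvarphi_1(\cdot,\omega)y_1)_{y_1}$.

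For the inductive step, assume the claim holds for all stages $k=1,\dots,n-1$. First I concatenate: the family
\begin{align*}
(y_1,\dots,y_{n-1})\mapsto\big(\upvarphi_1(\cdot,\omega)y_1,\dots,\upvarphi_{n-1}(\cdot,\omega,y_1,\dots,y_{n-2})y_{n-1}\big)
\end{align*}
takes values in $\R^{D_1}\times\dots\times\R^{D_{n-1}}$. It satisfies $\conditionRDE$ with respect to the enlarged parameter $(y_1,\dots,y_{n-1})$, for three reasons:
\begin{itemize}
\item each component satisfies $\conditionRDE$ in its own parameters by the induction hypothesis;
\item a component's dependence on fewer variables only shrinks $\abs{\cdot}$ differences, since $\abs{(y_1,\dots,y_k)-(\tilde y_1,\dots,\tilde y_k)}\le\abs{(y_1,\dots,y_{n-1})-(\tilde y_1,\dots,\tilde y_{n-1})}$, and compact sets in the large product project to compact sets containing $0$;
\item the $\mathscr{D}^{2\alpha}$ seminorm, sup norms and $p$-th moments of a tuple are bounded by sums over its components, using $(a_1+\dots+a_k)^p\le C\sum a_i^p$.
\end{itemize}
Adaptedness of the tuple is immediate from adaptedness of its components.

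Next, Lemma \ref{rde lem Lemma 8.5.6} together with Remark \ref{rde rem after Lem 8.5.6} shows that $\Z:=P^n_0(\cdots)$ and $\U:=P^n(\cdots)$, evaluated along this tuple, satisfy $\conditionRDE$. Here the Gubinelli derivatives are those produced by the composition $(\mathrm{D}P(Y)Y')$, as in Lemma \ref{rde lem composing with regular fct}. Now stage $n$ is exactly the non-autonomous affine RDE of Lemma \ref{rde lem Lemma 8.5.5}, with $B_0=A^n_0$, $B=A^n$, parameter $(y_1,\dots,y_{n-1})$ in place of $y_1$ and initial value $y_n$ in place of $y_2$. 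That lemma gives $\conditionRDE$ for $(\upvarphi_n(\cdot,\omega,y_1,\dots,y_{n-1})y_n)$ in the joint parameter $(y_1,\dots,y_n)$, which closes the induction.

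The main obstacle is that Lemma \ref{rde lem Lemma 8.5.5} requires the linear cocycle $\Phi$ of $\mathrm{d}Y=A^n_0Y\,\mathrm{d}t+A^nY\,\mathrm{d}\W$ to be invertible on $[0,1]$, so that Duhamel's formula and Lemma \ref{rde lem phi inverse is also controlled rough path} apply. This holds because linear RDE flows are diffeomorphisms, as noted in Lemma \ref{rde Lem RDEs generate cocycles}, and the moment bounds needed for $\Phi^{-1}$ come from Proposition \ref{rde prop consequences of heavy machinery}. A second point to verify carefully is the bookkeeping in the tuple step: $\conditionRDE$ demands the bound $C(p,K)\abs{\cdot}^{p/2}$ with $K$ compact in the full product space, so I must check that these constants survive the projection. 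That is routine but should be stated explicitly.
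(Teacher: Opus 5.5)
Your proposal is correct and follows the paper's proof. The paper also argues by induction on $n$: the base case comes from Lemma~\ref{rde lem Lemma 8.5.5} with the constant, parameter-independent controlled rough paths $(P^1_0,0)$ and $(P^1,0)$. The inductive step uses Lemma~\ref{rde lem Lemma 8.5.6} with Remark~\ref{rde rem after Lem 8.5.6} for the polynomial inhomogeneities, followed by Lemma~\ref{rde lem Lemma 8.5.5} again.

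Your explicit check that the tuple of earlier-stage solutions satisfies $\conditionRDE$ in the joint parameter $(y_1,\dots,y_{n-1})$ fills in a step the paper leaves implicit. The invertibility of $\Phi$ that you flag as the main obstacle is already handled inside Lemma~\ref{rde lem Lemma 8.5.5}, whose statement carries no such hypothesis, so it does not need to be re-argued here.
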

\begin{proof}
    We perform an inductive argument.

    \underline{$n=1$}. The initial step follows directly from Lemma \ref{rde lem Lemma 8.5.5} by considering the particularly simple, deterministic, constant controlled rough paths $ t\mapsto\Z_t(\omega,y_1):= (P^1_0,0)\in\controlledRoughPath{\W(\omega)}{[0,1]}{\R^{D_1}}$ and $ t\mapsto\U_t(\omega,y_1):= (P^1,0)\in\controlledRoughPath{\W(\omega)}{[0,1]}{\linearMaps{\R^M}{\R^{D_1}}}$, that are additionally independent of the parameter $y_1\in\R^{D_1}$.

    \underline{$n-1\to n$}. By the induction hypothesis, we assume that all of the first $n-1$ families $(\upvarphi_1(\cdot,\omega)y_1)_{y_1\in\R^{D_1}},\dots,(\upvarphi_{n-1}(\cdot,\omega,y_1,\dots,y_{n-2})y_{n-1})_{y_1\in\R^{D_1},\dots,y_{n-1}\in\R^{D_{n-1}}}$ satisfy conditions $\conditionRDE$. According to Lemma \ref{rde lem Lemma 8.5.6} and Remark \ref{rde rem after Lem 8.5.6}, also the two polynomially composed quantities $(P^n_0(\upvarphi_1(\cdot,\omega)y_1,\dots,\upvarphi_{n-1}(\cdot,\omega,y_1,\dots,y_{n-2})y_{n-1}))_{y_1\in\R^{D_1},\dots,y_{n-1}\in\R^{D_{n-1}}}$ and $(P^n(\upvarphi_1(\cdot,\omega)y_1,\dots,\upvarphi_{n-1}(\cdot,\omega,y_1,\dots,y_{n-2})y_{n-1}))_{y_1\in\R^{D_1},\dots,y_{n-1}\in\R^{D_{n-1}}}$ fulfill $\conditionRDE$. Employing again Lemma \ref{rde lem Lemma 8.5.5} yields the assertion.
\end{proof}

\begin{lemma}\label{rde lem Lemma 8.5.9}
    For $n\in\N$, denote by $\Phi_n$ the linear cocycle generated by the linearized version of the $n$-th equation of (\ref{rde hierarchical system basic}), that is, $\Phi_n$ is generated by
    $
        \mathrm{d}Y_t=A_0^nY_t\,\mathrm{d}t+A^nY_t\,\mathrm{d}\W_t(\omega)
    $, with $\Upphi_n(\cdot,\omega)\in\controlledRoughPath{\W(\omega)}{[0,1]}{\linearMaps{\R^{D_n}}{\R^{D_n}}}$ being the corresponding solution to the linear matrix-valued RDE $\,\mathrm{d}J_t=A_0^nJ_t\,\mathrm{d}t+A^nJ_t\,\mathrm{d}\W_t(\omega)$ starting at $\id$.
    \begin{enumerate}
    \item For parameters $y\in\R^{D_1}$ and $B\in\linearMaps{\R^M}{\R^{D_1}}$, and the deterministic, constant controlled rough paths $t\mapsto (y,0)\in\controlledRoughPath{\W(\omega)}{[0,1]}{\R^{D_1}}$ and  $t\mapsto(B,0)\in\controlledRoughPath{\W(\omega)}{[0,1]}{\linearMaps{\R^M}{\R^{D_1}}}$, define
    \begin{align*}
        \X^1(\omega,y,B)
        &:=\left(\int_0^\cdot \Phi_1(s,\omega)^{-1}y\,\mathrm{d}s+\int_0^\cdot \Phi_1(s,\omega)^{-1}B\,\mathrm{d}\W_s(\omega), \Phi_1(\cdot,\omega)^{-1}B\right)\in\controlledRoughPath{\W(\omega)}{[0,1]}{\R^{D_1}}.
    \end{align*}
    Then, $(\X^1(\omega,y,B))_{y\in\R^{D_1},B\in\linearMaps{\R^M}{\R^{D_1}}}$ meets $\conditionRDE$.
    \item For $N\in\N_{\geq 2}$ and a given set of initial conditions $y_1\in\R^{D_1},\dots,y_{N-1}\in\R^{D_{N-1}}$, denote by $\upvarphi_n(\cdot,\omega,y_1,\dots,y_{n-1})y_n$ the solution to the $n$-th equation of (\ref{rde hierarchical system basic}), $n=1,\dots,N-1$. Now, set
    \begin{align*}
        &\quad\X^N(\omega,y_1,\dots,y_{N-1})\\
            &:= \bigg(\int_0^\cdot\Phi_N(s,\omega)^{-1}P^N_0(\varphi_1(s,\omega)y_1,\dots,\varphi_{N-1}(s,\omega,y_1,\dots,y_{N-2})y_{N-1})\,\mathrm{d}s\\
            &\quad\quad\quad+\int_0^\cdot \Phi_N(s,\omega)^{-1}P^N(\varphi_1(s,\omega)y_1,\dots,\varphi_{N-1}(s,\omega,y_1,\dots,y_{N-2})y_{N-1})\,\mathrm{d}\W_s(\omega),\\
            &\quad\quad \Phi_N(\cdot,\omega)^{-1}P^N(\varphi_1(\cdot,\omega)y_1,\dots,\varphi_{N-1}(\cdot,\omega,y_1,\dots,y_{N-2})y_{N-1})\bigg)\in\controlledRoughPath{\W(\omega)}{[0,1]}{\R^{D_N}}.
    \end{align*}
    Then, for each $N\in\N_{\geq 2}$, the family $(\X^N(\omega,y_1,\dots,y_{N-1}))_{y_1\in\R^{D_1},\dots,y_{N-1}\in\R^{D_{N-1}}}$ fulfills $\conditionRDE$.
    \end{enumerate}
\end{lemma}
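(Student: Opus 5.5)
The plan is to reduce both statements to the already established closure properties of $\conditionRDE$, chiefly Lemma \ref{rde lem bounding expectation of I= int Phi U dW}, which handles exactly integrals of the form $\int_0^\cdot\Upphi(s,\omega)^{-1}\U_s\,\mathrm{d}\W_s(\omega)$ and $\int_0^\cdot\Phi(s,\omega)^{-1}Z_s\,\mathrm{d}s$. The only other ingredient is that sums of families satisfying $\conditionRDE$ again satisfy $\conditionRDE$. This holds because the semi-norms $\norm{\cdot}{\sup}$ and $\normDalphaOmega{\cdot}$ are sub-additive, $(a+b)^p\le 2^{p-1}(a^p+b^p)$, and adaptedness is preserved under addition.

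For (i), I would view the parameter as $y_1:=(y,B)\in\R^{D_1}\times\linearMaps{\R^M}{\R^{D_1}}$ and set $\Z(\omega,(y,B)):=(y,0)$ and $\U(\omega,(y,B)):=(B,0)$. These are deterministic, constant in time, hence trivially adapted, with vanishing Gubinelli derivative and remainder. Their moments are bounded by $\abs{y}^p$ and $\abs{B}^p$, and their differences by $\abs{(y,B)-(\tilde y,\tilde B)}^p\le C\abs{(y,B)-(\tilde y,\tilde B)}^{p/2}$ on compacts. So they satisfy $\conditionRDE$. Applying Lemma \ref{rde lem bounding expectation of I= int Phi U dW} (ii) to $\Z$ and (i) to $\U$, with $\Phi=\Phi_1$, gives two families satisfying $\conditionRDE$. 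Their sum has path component equal to that of $\X^1$ and Gubinelli derivative $0+\Phi_1(\cdot,\omega)^{-1}B$, as required. The sum property then yields (i).

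For (ii), I would take the parameter $(y_1,\dots,y_{N-1})$ and argue by the same route. By Lemma \ref{rde lem Lemma 8.5.8}, each family $\upvarphi_n(\cdot,\omega,y_1,\dots,y_{n-1})y_n$ with $n\le N-1$ satisfies $\conditionRDE$. Each such family depends only on the first $n$ parameters, so it trivially satisfies $\conditionRDE$ also as a family indexed by $(y_1,\dots,y_{N-1})$, since $\abs{(y_1,\dots,y_n)-(\tilde y_1,\dots,\tilde y_n)}\le\abs{(y_1,\dots,y_{N-1})-(\tilde y_1,\dots,\tilde y_{N-1})}$. Stacking them into one $\R^{D_1}\times\dots\times\R^{D_{N-1}}$-valued controlled path preserves $\conditionRDE$ by elementary norm equivalences. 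Lemma \ref{rde lem Lemma 8.5.6} and Remark \ref{rde rem after Lem 8.5.6} then show that $\Z:=P^N_0(\dots)$ and $\U:=P^N(\dots)$ satisfy $\conditionRDE$. Lemma \ref{rde lem bounding expectation of I= int Phi U dW} with $\Phi=\Phi_N$, followed by the sum property, yields the claim, the Gubinelli derivative again being $\Phi_N^{-1}P^N(\dots)$.

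I expect no serious obstacle. The one point needing care is the bookkeeping around the parameter space: checking that the compact sets $K$ and the exponent $p/2$ transfer correctly when a lower-stage family is reinterpreted over the larger parameter set, and when constant controlled paths are treated as a $(y,B)$-parametrized family.
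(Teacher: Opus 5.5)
Your proposal is correct and matches the paper's proof: for (i) the paper likewise treats $(y,B)$ as a single long parameter and applies Lemma \ref{rde lem bounding expectation of I= int Phi U dW} to the constant families $(y,0)$ and $(B,0)$, and for (ii) it combines Lemmas \ref{rde lem Lemma 8.5.8}, \ref{rde lem Lemma 8.5.6} and \ref{rde lem bounding expectation of I= int Phi U dW}. Your remarks on closure of $\conditionRDE$ under sums, and on lifting the lower-stage families to the larger parameter space and stacking them, spell out bookkeeping that the paper leaves implicit.
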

\begin{proof}
    \underline{\textit{Regarding (i)}}. Viewing $(y,B)\in\R^{D_1}\times\linearMaps{\R^M}{\R^{D_1}}$ as a long vector $\in\R^{D_1+MD_1}$, the case $N=1$ is a consequence of Lemma \ref{rde lem bounding expectation of I= int Phi U dW} by considering only deterministic and constant $t\mapsto \Z_t(\omega,y,B):=(y,0)\in\controlledRoughPath{\W(\omega)}{[0,1]}{\R^{D_1}}$ and $t\mapsto\U_t(\omega,y,B):=(B,0)\in\controlledRoughPath{\W(\omega)}{[0,1]}{\linearMaps{\R^M}{\R^{D_1}}}$.
    Of course, these controlled rough paths satisfy $\conditionRDE$.\\

    \underline{\textit{Regarding (ii)}}. If otherwise $N\in\N_{\geq 2}$, combining Lemma \ref{rde lem Lemma 8.5.8}, Lemma \ref{rde lem Lemma 8.5.6} and Lemma \ref{rde lem bounding expectation of I= int Phi U dW} entails the claim.
\end{proof}
The fact that all of the above $\X^n$ meet $\conditionRDE$, $n\in\N$, produces strong moment statements which are concretized as follows.
\begin{proposition}\label{rde prop Proposition 8.5.10} The following moment bounds hold true under the specified requirements depending on $N\in \N$:
\begin{enumerate}
    \item Let $\X^1(\omega,y,B)\in\controlledRoughPath{\W(\omega)}{[0,1]}{\R^{D_1}}$ be defined as in Lemma \ref{rde lem Lemma 8.5.9}.
    Then, for any $p\in(D_1+MD_1,\infty)$ and any compact $K\subseteq\R^{D_1}\times\linearMaps{\R^M}{\R^{D_1}}$ with $0\in K$, there exist some $C=C(p)\in\R$ and $q=q(p)\in[1,\infty)$ that satisfy
    \begin{align*}
        &\expectationPathwise{\sup_{(y,B)\in K}\sup_{t\in[0,1]} \abs{X^1_t(\omega,y,B)}^p
        }\leq C(\diam{K})^q.
    \end{align*}

    \item Let $N\in\N_{\geq2}$, and
    let $\X^N(\omega,y_1,\dots,y_{N-1})\in\controlledRoughPath{\W(\omega)}{[0,1]}{\R^{D_N}}$ be defined as in Lemma \ref{rde lem Lemma 8.5.9}. Then, for any $p\in(\sum_{n=1}^{N-1} D_{n},\infty)$ and any compact $K\subseteq \R^{D_1}\times\dots\times\R^{D_{N-1}}$ with $0\in K$, we find some constants $C=C(N,p)\in\R$ and $q=q(p)\in[1,\infty)$ such that
    \begin{align*}
        \expectationPathwise{&\sup_{(y_1,\dots,y_{N-1})\in K}\sup_{t\in[0,1]} \abs{X^N_t(\omega,y_1,\dots,y_{N-1})}^p
        } \leq C(\diam{K})^q.
    \end{align*}
    \end{enumerate}
\end{proposition}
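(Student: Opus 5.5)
The plan is a Kolmogorov-type chaining argument in the parameter variable, in the spirit of the Kolmogorov continuity theorem / Garsia–Rodemich–Rumsey lemma used in \cite[Proposition 8.5.10]{Arnold1998RDS}. The input is exactly what Lemma \ref{rde lem Lemma 8.5.9} provides: the family $\X^N$ satisfies $\conditionRDE$. I treat (i) and (ii) simultaneously by writing $z$ for the parameter, namely $z=(y,B)\in\R^{D_1+MD_1}$ in case (i) and $z=(y_1,\dots,y_{N-1})\in\R^{\overline{D}}$ with $\overline{D}:=\sum_{n=1}^{N-1}D_n$ in case (ii). I view $z\mapsto X^N_\cdot(\omega,z)$ as a random field with values in the Banach space $C^0([0,1],\R^{D_N})$ equipped with the sup norm, so that $\sup_{t}\abs{X^N_t(\omega,z)}=\norm{X^N(\omega,z)}{\sup}$.

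The first step is to record the two moment inputs. From the second condition of $\conditionRDE$(iii), for every $p\ge2$ and $z,\tilde z\in K$,
\[
\expectationPathwise{\norm{X^N(\omega,z)-X^N(\omega,\tilde z)}{\sup}^{p}}\le C(p,K)\abs{z-\tilde z}^{p/2}.
\]
From Lemma \ref{rde lem compactness lemma}, applied at a single base point $z_0=0\in K$, we have $\expectationPathwise{\norm{X^N(\omega,0)}{\sup}^p}\le C(p)$.

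Next I apply a quantitative Kolmogorov / Garsia–Rodemich–Rumsey embedding on the compact set $K$, or on a cube of side comparable to $\diam{K}$ containing it, in dimension $d=\overline{D}$ (respectively $d=D_1+MD_1$). The increment exponent is $p/2$, so the Kolmogorov condition $p/2>d$ requires $p>2d$. To reach every $p>d$ as stated, I use the moment bound with $2p$ in place of $p$, which gives increment exponent $p$ in $L^{2p}$, and then pass back to $L^p$ through Jensen's inequality. Alternatively, I obtain the Hölder-type increment estimate $\abs{z-\tilde z}^{\gamma p}$ with $\gamma p>d$ by interpolation. The dyadic chaining then yields a Hölder exponent $\beta\in(0,\tfrac12-\tfrac dp)$ or similar, and
\[
\expectationPathwise{\sup_{z,\tilde z\in K}\norm{X^N(\omega,z)-X^N(\omega,\tilde z)}{\sup}^p}\le C\,(\diam{K})^{q}
\]
with some $q=q(p)\ge1$. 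Combining this with the base-point estimate via $\sup_K\norm{X^N(\omega,z)}{\sup}\le\norm{X^N(\omega,0)}{\sup}+\sup_{z\in K}\norm{X^N(\omega,z)-X^N(\omega,0)}{\sup}$ gives the claim. A pathwise continuity argument in $z$ ensures that the supremum over $K$ is measurable and is attained by the continuous modification. This works because $\X^N(\omega,z)$ is built from RDE solutions that depend continuously on their data, by Theorem \ref{rde thm solutions to rdes}(ii), so the modification coincides with the original field.

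The main obstacle is bookkeeping the form $C(\diam{K})^q$. The base-point term contributes a constant that is not proportional to a power of $\diam{K}$. I would handle this either by reading the bound as $C\max\{1,\diam{K}\}^q$, which is consistent with $0\in K$ and how such estimates are used later, or by tracking how $C(p,K)$ in $\conditionRDE$ scales with $K$. That scaling is polynomial in $\diam{K}$ because every construction (polynomials $P^N$, Duhamel formula) is polynomial in the parameters. I would check this polynomial dependence inductively through Lemmas \ref{rde lem Lemma 8.5.5}, \ref{rde lem Lemma 8.5.6} and \ref{rde lem Lemma 8.5.8}, and that fixes $q$.
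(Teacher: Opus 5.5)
Your route is essentially the paper's. The paper invokes a version of \cite[Lemma 8.5.3]{Arnold1998RDS} restricted to compact sets containing the origin, which is exactly the Kolmogorov-type chaining in the parameter that you carry out, with the $\conditionRDE$ property of $\X^N$ from Lemma \ref{rde lem Lemma 8.5.9} as input.

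Your bookkeeping caveat is justified, and the paper does not address it. For $N\geq2$ the choice $K=\{0\}$ already violates the literal bound, since $X^N(\omega,0,\dots,0)\neq0$ in general; for $N=1$, linearity of $X^1$ in $(y,B)$ gives the bound with $q=p$. However, your two remedies are not alternatives; you need both. Reading the bound as $C\max\{1,\diam{K}\}^q$ absorbs the base-point term. Tracking the polynomial dependence of $C(p,K)$ through Lemmas \ref{rde lem Lemma 8.5.5}, \ref{rde lem Lemma 8.5.6} and \ref{rde lem Lemma 8.5.8} is what makes the constant independent of $K$. Only their combination yields the polynomial growth in $K$ that the later temperedness argument needs.
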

\begin{proof}
    We intend to recycle \cite[Lemma 8.5.3]{Arnold1998RDS}. But, we point out that, as stated, the quoted result is flawed: The additional assumption that the compact sets in question contain the origin repairs the statement and the subsequent argumentation in \cite[Section 8.5]{Arnold1998RDS} goes through unchanged. We already implemented this necessary modification in our work, and we still refer to \cite[Lemma 8.5.3]{Arnold1998RDS} despite meaning its altered version.

    \underline{\textit{Regarding (ii)}}. Indeed, if we consider $N\in\N_{\geq2}$, and $(t,\omega)\mapsto X^N_t(\omega,y_1,\dots,y_{N-1})$
    as $\R^{D_N}$-valued
    adapted stochastic process,
    the process meets the requirements of \cite[Lemma 8.5.3]{Arnold1998RDS} due to Lemma \ref{rde lem Lemma 8.5.9}. Hence, applying \cite[Lemma 8.5.3]{Arnold1998RDS}
    yields the assertion. \\

    \underline{\textit{Regarding (i)}}. The same reasoning works for $N=1$, which completes the proof.
\end{proof}

\begin{remark}\label{rde rem after prop 8.5.10}
    We do not want to overlook the important detail that, up to this moment, we have only taken into account processes with time domain $[0,1]\subseteq\R_{\geq0}$. It remains an analysis of the "backward integral" situation described in Lemma \ref{rde Lem rdes cocycles but mild formulation} in order to also cover the negative axis $\R_{<0}$. To start off, we extend the notion of conditions $\conditionRDE$:
     We say that a parametrized family of random controlled rough paths $\U^-=(\U^-(\omega,y_1))_{\omega\in\Omega,y_1\in\V_1}\subseteq\bigcup_{\omega\in\Omega}\controlledRoughPath{\W(\omega)}{[-1,0]}{\V_2}$ satisfies $\conditionRDE$ in the $t\in\R_{<0}$-case if $\U^-$ enjoys the following properties:
        \begin{enumerate}
            \item For each $\omega$ and every parameter $y_1\in\V_1$, we have $\U^-(\omega,y_1)\in\controlledRoughPath{\W(\omega)}{[-1,0]}{\V_2}$.
            \item The entire process \begin{align*}[-1,0]\times\Omega\to\V_2\times\linearMaps{\R^M}{\V_2},\;(t,\omega)\mapsto \left(U^-_t(\omega,y_1),(U^-)_t'(\omega,y_1)\right)\end{align*} is $(\F^\infty_{t})_{t\in[-1,0]}$-backward adapted for every $y_1\in\V_1$.
            \item For any $p\in[2,\infty)$ and any compact set $K\subseteq \V_1$ with $0\in K$, there exist constants $C=C(p),C(p,K)\in\R$ such that \begin{align*}\expectationPathwise{\sup_{t\in[-1,0]}\abs{U^-_t(\omega,0)}^p+\sup_{t\in[-1,0]}\abs{(U^-)'_t(\omega,0)}^p+ \norm{\U^-(\omega,0)}{\mathscr{D}_{\W(\omega)}^{2\alpha}}^p} \leq C\end{align*}
                 and, for all $y_1,\Tilde{y}_1\in K$, it holds \begin{align*}
                    &\int_\Omega\bigg(\sup_{t\in[-1,0]}\abs{U^-_t(\omega,y_1)-U^-_t(\omega,\Tilde{y}_1)}^p+\sup_{t\in [-1,0]}\abs{(U^-)'_t(\omega,y_1)-(U^-)_t'(\omega,\Tilde{y}_1)}^p\\
                    &\quad+ \norm{\U^-(\omega,y_1)-\U^-(\omega,\Tilde{y}_1)}{\mathscr{D}_{\W(\omega)}^{2\alpha}}^p\bigg)\,\mathrm{d}P(\omega) \leq C \abs{y_1-\Tilde{y}_1}^{p/2}.
                \end{align*}
            \end{enumerate}
        As before, we also speak of $\U^-(\omega)$ satisfying $\conditionRDE$ in the $t\in\R_{<0}$-case.
        Of course, all results of the current section generalize almost verbatim to this case.

        We must point out that, although we use usual forward integration also for backward time, as presented in Lemma \ref{rde Lem rdes cocycles but mild formulation}, rewriting
        \begin{align*}
            \int_t^0 Y_r\,\mathrm{d}\W_r(\omega)
            &=\lim_{\abs{\mathrm{P}}\to0} \sum_{[u,v]\in\mathrm{P}} Y_uW_{u,v}(\omega)+Y_u'\mathbb{W}_{u,v}(\omega)\\
            &=\lim_{\abs{\mathrm{P}}\to0} \sum_{[u,v]\in\mathrm{P}} Y_vW_{u,v}+Y_v'(\mathbb{W}_{u,v}(\omega)-W_{u,v}(\omega)\otimes W_{u,v}(\omega))+\mathrm{o}(\abs{v-u})
        \end{align*}
        justifies that controlled forward integration does not destroy backward adaptedness.\\

        Finally, let us extend Lemma \ref{rde lem Lemma 8.5.9} and Proposition \ref{rde prop Proposition 8.5.10} paradigmatically: \textit{Still, we denote by $\Phi_n$ the linear cocycle generated by the linearized version of the $n$-th equation of (\ref{rde hierarchical system basic}) with $(\Phi_n(\cdot,\omega),-A^n\Phi_n(\cdot,\omega))\in\controlledRoughPath{\W(\omega)}{[-1,0]}{\linearMaps{\R^{D_n}}{\R^{D_n}}}$ being the corresponding solution to the linear matrix-valued RDE $\,\mathrm{d}J_t=-A_0^nJ_t\,\mathrm{d}t-A^nJ_t\,\mathrm{d}\W_t(\omega)$ with $J_0=\id$, where the extra minus sign is explained in Lemma \ref{rde Lem rdes cocycles but mild formulation}.
    \begin{enumerate}
    \item For parameters $y\in\R^{D_1}$ and $B\in\linearMaps{\R^M}{\R^{D_1}}$, and the deterministic and constant controlled rough paths $t\mapsto (y,0)\in\controlledRoughPath{\W(\omega)}{[-1,0]}{\R^{D_1}}$ and  $t\mapsto(B,0)\in\controlledRoughPath{\W(\omega)}{[-1,0]}{\linearMaps{\R^M}{\R^{D_1}}}$, put
    \begin{align*}
        t\mapsto\X^{-,1}_t(\omega,y,B)
        &:=\left(\int_{t}^0 \Phi_1(s,\omega)^{-1}y\,\mathrm{d}s+\int_t^0 \Phi_1(s,\omega)^{-1}B\,\mathrm{d}\W_s(\omega),\Phi(t,\omega)^{-1}B\right),
    \end{align*}
    so that $\X^{-,1}(\omega,y,B)\in\controlledRoughPath{\W(\omega)}{[-1,0]}{\R^{D_1}}$.
    Then, $(\X^{-,1}(\omega,y,B))_{y\in\R^{D_1},B\in\linearMaps{\R^M}{\R^{D_1}}}$ satisfies $\conditionRDE$ in the $t\in\R_{<0}$-case.
    \item For $N\in\N_{\geq 2}$ and a given set of initial conditions $y_1\in\R^{D_1},\dots,y_{N-1}\in\R^{D_{N-1}}$, we write $\upvarphi_n(\cdot,\omega,y_1,\dots,y_{n-1})y_n$ for the solution to the $n$-th equation of (\ref{rde hierarchical system basic}), $n=1,\dots,N-1$. Now, set
    \begin{align*}
        t\mapsto&\X^{-,N}_t(\omega,y_1,\dots,y_{N-1})\\
            &:= \bigg(\int_t^0\Phi_N(s,\omega)^{-1}P^N_0(\varphi_1(s,\omega)y_1,\dots,\varphi_{N-1}(s,\omega,y_1,\dots,y_{N-2})y_{N-1})\,\mathrm{d}s\\
            &\quad\quad\quad+\int_t^0 \Phi_N(s,\omega)^{-1}P^N(\varphi_1(s,\omega)y_1,\dots,\varphi_{N-1}(s,\omega,y_1,\dots,y_{N-2})y_{N-1})\,\mathrm{d}\W_s(\omega),\\
            &\quad\quad \Phi_N(t,\omega)^{-1}P^N(\varphi_1(t,\omega)y_1,\dots,\varphi_{N-1}(t,\omega,y_1,\dots,y_{N-2})y_{N-1})\bigg),
    \end{align*}
    such that $\X^{-,N}(\omega,y_1,\dots,y_{N-1})\in\controlledRoughPath{\W(\omega)}{[-1,0]}{\R^{D_N}}$.
    Then, for each $N\in\N_{\geq 2}$, the family $(\X^{-,N}(\omega,y_1,\dots,y_{N-1}))_{y_1\in\R^{D_1},\dots,y_{N-1}\in\R^{D_{N-1}}}$ fulfills $\conditionRDE$ in the $t\in\R_{<0}$-case.
    \end{enumerate}}

    \textit{As a consequence of this, the following moment bounds hold true under the specified requirements depending on $N\in \N$:}
\begin{enumerate}[label=\textit{(\roman*)}]
    \item
     \textit{For any $p\in(D_1+MD_1,\infty)$ and any compact $K\subseteq\R^{D_1}\times\linearMaps{\R^M}{\R^{D_1}}$ with $0\in K$, there exist some $C=C(p)\in\R$ and $q=q(p)\in[1,\infty)$ that satisfy}
    \begin{align*}
        &\expectationPathwise{\sup_{(y,B)\in K}\sup_{t\in[-1,0]} \abs{X^{-,1}_t(\omega,y,B)}^p
        }\leq C(\diam{K})^q.
    \end{align*}

    \item \textit{Let $N\in\N_{\geq2}$. Then, for any $p\in(\sum_{n=1}^{N-1} D_{n},\infty)$ and any compact $K\subseteq \R^{D_1}\times\dots\times\R^{D_{N-1}}$ with $0\in K$, we find some constants $C=C(N,p)\in\R$ and $q=q(p)\in[1,\infty)$ such that}
    \begin{align*}
        \expectationPathwise{&\sup_{(y_1,\dots,y_{N-1})\in K}\sup_{t\in[-1,0]} \abs{X^{-,N}_t(\omega,y_1,\dots,y_{N-1})}^p
        } \leq C(\diam{K})^q.
    \end{align*}
    \end{enumerate}
\end{remark}
        \subsubsection{Inheritance of Temperedness Revisited}
            The fact that all three components - the path, the Gubinelli derivative, and the semi-norm\footnote{This is, of course, different from the set of conditions used in \cite[Lemma 8.5.3]{Arnold1998RDS}. In fact, we only have the "path component" in the SDE case.} - were included in the definition of $\conditionRDE$ was motivated from a technical viewpoint. Due to the absence of martingale inequalities as in the SDE case, our estimations had to work with, for instance,  Theorem \ref{theorem gubinelli estimates}, where all three of these objects appear. In particular, in order to suitably control the path component itself like in Proposition \ref{rde prop Proposition 8.5.10}, we needed knowledge of all three components. This is now no longer the case because the invariant measures corresponding to (\ref{rde hierarchical system basic}) will be constructed solely by means of the path component of the random variables that give rise to the stationary solutions to the stages of (\ref{rde hierarchical system basic}).

Especially, temperedness properties of these stationary solutions will be analyzed on the path level (and they are, then, easy to extend to the Gubinelli derivative level, too). Therefore, we do not (have to) extend the results from the respective SDE paragraph \cite[Section 8.5.2, Step 2]{Arnold1998RDS} to the current rough framework. Instead, we are safe to recycle the statements which suffice for our purposes.
        \subsubsection{Invariant Measures of the Hierarchical System (\ref{rde hierarchical system basic})}
             \begin{definition}
        We call a linear cocycle $\Phi$ hyperbolic or non-resonant if $0\not\in\spectrum{\Phi}$.
    \end{definition}

    Recall that, for every $n\in\N$, we denote by $\Phi_n:\R\times\Omega\times\R^{D_n}\to\R^{D_n}$ the linear cocycle generated by $\,\mathrm{d}Y_t=A^n_0Y_t\,\mathrm{d}t+A^nY_t\,\mathrm{d}\W_t(\omega)$, which is the linearization of the $n$-th equation of the hierarchical system (\ref{rde hierarchical system basic}). According to Lemma \ref{rde lem linear RDE cocycle atuomatically satisfy IC of MET}, the MET automatically applies to all $\Phi_n$, resulting in their Lyapunov spectra $\spectrum{ \Phi_n}$. The core assumption for our purposes is that all linear cocycles $\Phi_n$ are hyperbolic. We write $\projection_n^s(\omega):\R^{D_n}\to E^s_n(\omega)$ for the projection with image $E^s_n(\omega)$ and kernel $E^u_n(\omega)$, where $\R^{D_n}=E_n^s(\omega)\oplus E_n^u(\omega)$ is the decomposition into stable and unstable spaces of $\Phi_n$ induced by the Oseledets splitting. And, we use $\projection^u_n(\omega):\R^{D_n}\to E^u_n(\omega)$ in the opposite case.

\begin{proposition}\label{rde prop Thm 8.5.15}
    We analyze the first stage of (\ref{rde hierarchical system basic}), that is, we deal with \begin{align*}\,\mathrm{d}Y_t^1=(A^1_0Y_t^1+P^1_0)\,\mathrm{d}t+(A^1Y_t^1+P^1)\,\mathrm{d}\W_t(\omega)\end{align*} in $\R^{D_1}$. Assume that $\Phi_1$ is hyperbolic, where $\Phi_1$ is the cocycle generated by the linearized equation $\,\mathrm{d}Y_t=A^1_0Y_t\,\mathrm{d}t+A^1Y_t\,\mathrm{d}\W_t(\omega)$. Then, the following statements hold true:
    \begin{enumerate}
        \item We define first
        \begin{align*}
            X_1^-(\omega):=\int_{-1}^0\Phi_1(s,\omega)^{-1}P^1_0\,\mathrm{d}s+\int_{-1}^0\Phi_1(s,\omega)^{-1}P^1\,\mathrm{d}\W_s(\omega)
        \end{align*}
        and
        \begin{align*}
        X_1^+(\omega):=\int_{0}^1\Phi_1(s,\omega)^{-1}P^1_0\,\mathrm{d}s+\int_{0}^1\Phi_1(s,\omega)^{-1}P^1\,\mathrm{d}\W_s(\omega),
        \end{align*}
        starting from which we set
        \begin{align*}
            \xi_1^s(\omega):=\sum_{k=0}^\infty\projection^s_1(\omega)\left(\Phi_1(-k,\omega)^{-1}X^-_1(\theta_{-k}\omega)\right)
        \end{align*}
        and also
        \begin{align*}
            \xi_1^u(\omega):=-\sum_{k=0}^\infty\projection^u_1(\omega)\left(\Phi_1(k,\omega)^{-1}X_1^+(\theta_k\omega)\right).
        \end{align*}
        The last two sums converge, for every $\omega$, absolutely and geometrically.
        \item In addition to that, $\xi_1^s(\omega)$, $\xi_1^u(\omega)$ and $\xi_1(\omega):=\xi^s_1(\omega)\oplus\xi_1^u(\omega)$ meet the temperedness condition.
        \item The affine cocycle $\phi_1$ generated by the initial RDE admits a unique invariant measure given by the random Dirac measure $\omega\mapsto\delta_{\xi_1(\omega)}$, that is, it holds  $\phi_1(t,\omega)\xi_1(\omega)=\xi_1(\theta_t\omega)$ for all $t\in\R$ and all $\omega$.
        \item The Gubinelli derivative of the solution path $t\mapsto\xi_1(\theta_t\omega)$, which is given by $t\mapsto A^1\xi_1(\theta_t\omega)+P^1$, also fulfills the temperedness condition. In particular, so does the entire solution $t\mapsto(\xi_1(\theta_t\omega),A^1\xi_1(\theta_t\omega)+P^1)\in\controlledRoughPath{\W(\omega)}{\R}{\R^{D_1}}$.
    \end{enumerate}
\end{proposition}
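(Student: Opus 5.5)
The plan follows the SDE template of \cite[Theorem 8.5.15]{Arnold1998RDS}, combined with the moment bounds already established in the rough setting.

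\emph{Step 1: convergence of the series in (i).} By Duhamel's formula (Corollary \ref{rde corollary duhamels formula}) together with the time-shift rule (Lemma \ref{rde lem time-shift}) and the cocycle property, I will check the identity $\Phi_1(-k,\omega)^{-1}X_1^-(\theta_{-k}\omega)=\int_{-k-1}^{-k}\Phi_1(s,\omega)^{-1}P^1_0\,\mathrm{d}s+\int_{-k-1}^{-k}\Phi_1(s,\omega)^{-1}P^1\,\mathrm{d}\W_s(\omega)$. This uses $\Phi_1(s+(-k),\omega)=\Phi_1(s,\theta_{-k}\omega)\Phi_1(-k,\omega)$, and the analogous identity holds for $X_1^+$. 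Consequently the series are formal telescopings of $\int_{-\infty}^0\Phi_1(s)^{-1}(\dots)$.

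For geometric convergence I rewrite each summand using the invariance $\projection^s_1(\omega)\Phi_1(-k,\omega)^{-1}=\Phi_1(-k,\omega)^{-1}\projection^s_1(\theta_{-k}\omega)$. On $E^s_1$, $\Phi_1(-k,\omega)^{-1}=\Phi_1(k,\theta_{-k}\omega)$ contracts at a rate of order $e^{(\lambda^s+\epsilon)k}$ with $\lambda^s<0$, and on $E^u_1$ the analogous backward contraction holds. To turn this into a summable bound, I use the uniform MET estimate $\abs{\Phi_1(-k,\omega)^{-1}\projection^s_1(\theta_{-k}\omega)}\le C_\epsilon(\omega)e^{(\lambda^s+\epsilon)k}$ with a tempered $C_\epsilon$ (as in \cite[Section 4.1]{Arnold1998RDS}). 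It then remains to show that $\abs{X^\pm_1}$ is tempered.

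\emph{Step 2: temperedness of $X_1^\pm$.} I expect this to be the main obstacle. Remark \ref{rde rem after prop 8.5.10} and Proposition \ref{rde prop Proposition 8.5.10} give $\expectationPathwise{\sup_{t\in[0,1]}\abs{X^1_t(\omega,P^1_0,P^1)}^p}<\infty$, and likewise on $[-1,0]$. Hence $\log^+\abs{X_1^\pm}\in L^1$, and since $\theta$ is measure preserving, the standard Borel–Cantelli/Birkhoff argument yields $\log^+\abs{X^\pm_1(\theta_t\omega)}/\abs{t}\to0$ along integers. To pass to continuous $t$ I use the $\sup_t$ inside the moment bound. In the plan I would simply invoke \cite[Proposition 4.1.3]{Arnold1998RDS}.

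\emph{Step 3: items (ii)–(iv).} Temperedness of $\xi^s_1$ and $\xi^u_1$ follows as in \cite[Theorem 8.5.15]{Arnold1998RDS}. I shift $\omega\mapsto\theta_t\omega$ and bound the series by $\sum_k C_\epsilon(\theta_t\omega)\abs{X(\theta_{t-k}\omega)}e^{-ck}$, where the tempered factors grow at most like $e^{\epsilon(\abs t+k)}$. Choosing $\epsilon<c/2$ gives subexponential growth in $t$.

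For invariance (iii), I compute $\phi_1(t,\omega)\xi_1(\omega)$ with Duhamel's formula and substitute the integral representations from Step 1. Reindexing shows that the stable sum becomes $\int_{-\infty}^t$ and the unstable sum becomes $-\int_t^\infty$, which equals $\xi_1(\theta_t\omega)$. I do this first for integer $t$ and then for all $t$ via the cocycle property.

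Uniqueness among tempered invariant Dirac measures follows from hyperbolicity. If two tempered random points $\xi$ and $\eta$ are both invariant, then the difference $\xi-\eta$ is invariant under $\Phi_1$. Its stable part grows exponentially backward and its unstable part grows exponentially forward, so temperedness forces both parts to vanish. Uniqueness among general random measures follows by the same reasoning as in \cite{Arnold1998RDS}, applied to the supports.

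Finally, (iv) is immediate, because $\abs{A^1\xi_1(\theta_t\omega)+P^1}\le\abs{A^1}\abs{\xi_1(\theta_t\omega)}+\abs{P^1}$. The Gubinelli derivative $A^1Y+P^1$ is read off the RDE.
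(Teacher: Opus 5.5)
Your skeleton is the paper's. Moment bounds from Proposition \ref{rde prop Proposition 8.5.10} and Remark \ref{rde rem after prop 8.5.10} give temperedness of $X_1^\pm$. Tempered MET estimates give geometric convergence and temperedness of $\xi_1^{s}$, $\xi_1^{u}$; you spell out what the paper takes from \cite[Lemmas 8.5.13 and 8.5.14]{Arnold1998RDS}. Invariance comes from Duhamel's formula, the time-shift rule and the cocycle identity, and your Step 1 identity is the same manipulation as steps (2)--(4) of the appendix computation. Item (iv) is the sum/product rule for tempered variables. The one real variation is Step 2: you get temperedness of $X_1^\pm$ directly from $\log^+$-integrability via \cite[Proposition 4.1.3]{Arnold1998RDS}. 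The paper instead treats $X_1^\pm$ as random fields in $(y,B)$ and applies \cite[Proposition 8.5.12]{Arnold1998RDS}. Because $(P^1_0,P^1)$ is deterministic here, your shortcut is legitimate and simpler. However, only the random-field version carries over to Theorem \ref{rde thm Theorem 8.5.16}, where the parameters $\xi_1,\dots,\xi_{N-1}$ are random.

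Two steps do not go through as written, and a third is misdescribed.

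\emph{Passage to continuous time in Step 2.} The $\sup_t$ in Proposition \ref{rde prop Proposition 8.5.10} runs over the upper integration limit of $t\mapsto X^1_t$, not over the shifts $\theta_t$. So it does not give $\sup_{t\in[0,1]}\log^+\abs{X_1^\pm(\theta_t\omega)}\in L^1$. That is what Proposition 4.1.3 needs, and your Step 3 uses it, since you evaluate $X$ at non-integer times $t-k$. The fix is the identity $X_1^+(\theta_t\omega)=\Phi_1(t,\omega)\big(X^1_{t+1}(\omega)-X^1_t(\omega)\big)$, from the time-shift rule and the cocycle identity. It reduces the claim to two facts: moment bounds for $\sup_{t\in[0,2]}\abs{X^1_t}$ (same proof, or split at $t=1$ via the cocycle), and $\log^+\sup_{t\in[0,1]}\abs{\Phi_1(t,\cdot)}\in L^1$ from Lemma \ref{rde lem linear RDE cocycle atuomatically satisfy IC of MET}. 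The case $X_1^-$ is analogous.

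\emph{Real-time invariance in (iii).} The cocycle property alone cannot upgrade integer-time invariance to all $t$; a deterministic flow with a period-one orbit shows this. It only shows that $\eta_t(\omega):=\phi_1(t,\theta_{-t}\omega)\xi_1(\theta_{-t}\omega)$ is again a fixed point of the integer-time affine cocycle. You then need your hyperbolicity argument applied to $\eta_t-\xi_1$ under $\Phi_1(n,\cdot)$. That in turn requires $\eta_t$ to be tempered along integers, which follows from temperedness of $\xi_1$ and integrability of $\log^+\abs{\Phi_1(t,\cdot)}$ and $\log^+\abs{X^1_t}$. Alternatively, split at $\lfloor t\rfloor$, $\lceil t\rceil$ directly as the paper does. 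That route requires the tail $\projection^u_1(\omega)\int_K^{K+t}\Phi_1(s,\omega)^{-1}(\dots)$ to vanish as $K\to\infty$.

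\emph{Uniqueness among arbitrary random measures.} This does not follow by applying the tempered-point argument ``to the supports'', because points of $\mathrm{supp}\,\mu_\omega$ are not tempered random variables. It needs a measure-level push-forward/pull-back argument using $\theta$-invariance of $P$, as in \cite[Theorem 4]{Arnold1992}. As written, your argument covers only random Dirac measures with tempered atoms.
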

\begin{proof}
    $\underline{\textit{Regarding (i) and (ii)}}$. We interpret $X_1^+$ and $X^-_1$ as $\R^{D_1}$-valued random fields
    \begin{align*}
            X_1^-(\omega,y,B):=\int_{-1}^0\Phi_1(s,\omega)^{-1}y\,\mathrm{d}s+\int_{-1}^0\Phi_1(s,\omega)^{-1}B\,\mathrm{d}\W_s(\omega)
            \end{align*}
    and
    \begin{align*}
    X_1^+(\omega,y,B):=\int_{0}^1\Phi_1(s,\omega)^{-1}y\,\mathrm{d}s+\int_{0}^1\Phi_1(s,\omega)^{-1}B\,\mathrm{d}\W_s(\omega),
        \end{align*}
    with parameters $y\in\R^{D_1}$ and $B\in\linearMaps{ \R^M}{\R^{D_1}}\simeq\R^{MD_1}$, such that $X_1^+(\omega)=X_1^+(\omega,P^1_0,P^1)$ and $X^-_1(\omega)=X_1^-(\omega,P^1_0,P^1)$.
    Continuity in $\R^{D_1}\times\linearMaps{\R^M}{\R^{D_1}}$ is readily demonstrated by means of routine Riemann integral estimates and Theorem \ref{theorem gubinelli estimates}.
    The second requirement of \cite[Proposition 8.5.12]{Arnold1998RDS} follows immediately from Proposition \ref{rde prop Proposition 8.5.10} and Remark \ref{rde rem after prop 8.5.10}. Since the deterministic object $(P^1_0,P^1)$ is trivially tempered, \cite[Proposition 8.5.12]{Arnold1998RDS} provides that  both $X_1^+(\omega)$ and $X_1^-(\omega)$ fulfill the temperedness condition. As a consequence, \cite[Lemma 8.5.14]{Arnold1998RDS} and \cite[Lemma 8.5.13]{Arnold1998RDS} establish the convergence assertions in \textit{(i)} and the temperedness claims in \textit{(ii)}.\\

    $\underline{\textit{Regarding (iii)}}$.
    The invariance is a lengthy but routine computational matter that we postpone to Appendix \ref{appendix rde proof of Thm 8.5.15}.\\

    $\underline{\textit{Regarding (iv)}}$. The three processes $t\mapsto A^1$, $t\mapsto \xi_1(\theta_t\omega)$ and $t\mapsto P^1$ meet the temperedness condition. Invoking \cite[Lemma 4.1.2]{Arnold1998RDS} yields that $t\mapsto A^1\xi_1(\theta_t\omega)+P^1$ also satisfies the temperedness condition.
\end{proof}

    Upon respecting the change in the additive part, this result generalizes almost verbatim to all stages of (\ref{rde hierarchical system basic}).
\begin{theorem}\label{rde thm Theorem 8.5.16}
    Assume that all stages of (\ref{rde hierarchical system basic}) have hyperbolic linear parts, that is, the cocycle $\Phi_N$ generated by the linearized equation $\,\mathrm{d}Y_t=A^N_0Y_t\,\mathrm{d}t+A^NY_t\,\mathrm{d}\W_t(\omega)$ is hyperbolic for all $N\in\N$. For increasing $N\in\N_{\geq 2}$, we recursively analyze the $N$-th stage of (\ref{rde hierarchical system basic}), that is, we treat \begin{align*}
        \mathrm{d}Y_t^N&=\big((A^N_0Y_t^N+P^N_0(\phi_1(t,\omega)y_1,\dots,\phi_{N-1}(t,\omega,y_1,\dots,y_{N-2})y_{N-1})\big)\,\mathrm{d}t\\
        &\quad+\big(A^NY_t^N+P^N(\phi_1(t,\omega)y_1,\dots,\phi_{N-1}(t,\omega,y_1,\dots,y_{N-2})y_{N-1})\big)\,\mathrm{d}\W_t(\omega)
\end{align*} in $\R^{D_N}$, provided all stages $n=1,\dots,N-1$ are already dealt with. Then, the following statements hold true:
    \begin{enumerate}
        \item Given $\xi_1(\omega)$ from Proposition \ref{rde prop Thm 8.5.15}, we recursively define
        \begin{align*}
            X_N^-(\omega)&:=\int_{-1}^0\Phi_N(s,\omega)^{-1}P^N_0(\phi_1(s,\omega)\xi_1(\omega),\dots,\phi_{N-1}(s,\omega,\xi_1(\omega),\dots,\xi_{N-2}(\omega))\xi_{N-1}(\omega))\,\mathrm{d}s\\
            &\quad+\int_{-1}^0\Phi_N(s,\omega)^{-1}P^N(\phi_1(s,\omega)\xi_1(\omega),\dots,\phi_{N-1}(s,\omega,\xi_1(\omega),\dots,\xi_{N-2}(\omega))\xi_{N-1}(\omega))\,\mathrm{d}\W_s(\omega)
        \end{align*}
        and
        \begin{align*}
            X_N^+(\omega)&:=\int_{0}^1\Phi_N(s,\omega)^{-1}P^N_0(\phi_1(s,\omega)\xi_1(\omega),\dots,\phi_{N-1}(s,\omega,\xi_1(\omega),\dots,\xi_{N-2}(\omega))\xi_{N-1}(\omega))\,\mathrm{d}s\\
            &\quad+\int_{0}^1\Phi_N(s,\omega)^{-1}P^N(\phi_1(s,\omega)\xi_1(\omega),\dots,\phi_{N-1}(s,\omega,\xi_1(\omega),\dots,\xi_{N-2}(\omega))\xi_{N-1}(\omega))\,\mathrm{d}\W_s(\omega),
        \end{align*}
        starting from which we set
        \begin{align*}
            \xi_N^s(\omega):=\sum_{k=0}^\infty\projection^s_N(\omega)\left(\Phi_N(-k,\omega)^{-1}X^-_N(\theta_{-k}\omega)\right)
        \end{align*}
        and
        \begin{align*}
            \xi_N^u(\omega):=-\sum_{k=0}^\infty\projection^u_N(\omega)\left(\Phi_N(k,\omega)^{-1}X_N^+(\theta_k\omega)\right).
        \end{align*}
        The last two sums converge, for every $\omega$, absolutely and geometrically.
        \item In addition to that, $\xi_N^s(\omega)$, $\xi_N^u(\omega)$, and $\xi_N(\omega):=\xi^s_N(\omega)\oplus\xi_N^u(\omega)$ meet the temperedness condition.
        \item The affine flow $\phi_N$ generated by the initial RDE admits a unique invariant measure given by the random Dirac measure $\omega\mapsto\delta_{\xi_N(\omega)}$, that is, it holds  $\phi_N(t,\omega)\xi_N(\omega)=\xi_N(\theta_t\omega)$ for all $t\in\R$ and all $\omega$.
        \item The Gubinelli derivative of the solution path $t\mapsto\xi_N(\theta_t\omega)$, which is given by \begin{align*}t\mapsto A^N\xi_N(\theta_t\omega)+P^N(\xi_1(\theta_t\omega),\dots,\xi_{N-1}(\theta_t\omega)),\end{align*} also fulfills the temperedness condition. In particular, so does the entire solution $t\mapsto(\xi_N(\theta_t\omega),A^N\xi_N(\theta_t\omega)+P^N(\xi_1(\theta_t\omega),\dots,\xi_{N-1}(\theta_t\omega))\in\controlledRoughPath{\W(\omega)}{\R}{\R^{D_N}}$.
    \end{enumerate}
\end{theorem}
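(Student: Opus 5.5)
The plan is an induction on $N$, with Proposition \ref{rde prop Thm 8.5.15} as the base case. For the step, I assume that $\xi_1,\dots,\xi_{N-1}$ have been constructed, are tempered, and satisfy $\phi_n(t,\omega,\xi_1(\omega),\dots,\xi_{n-1}(\omega))\xi_n(\omega)=\xi_n(\theta_t\omega)$ for all $t\in\R$ and $n\le N-1$. The structure of the proof of Proposition \ref{rde prop Thm 8.5.15} is then reused. The only new feature is that the additive terms are random and depend on the lower stages; this is where the $N$-th clause of Lemma \ref{rde lem Lemma 8.5.9} and Proposition \ref{rde prop Proposition 8.5.10}(ii), together with Remark \ref{rde rem after prop 8.5.10} for negative times, replace the constant data $(P^1_0,P^1)$.

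For (i) and (ii), I will view $X_N^\pm$ as the random fields $X_N^\pm(\omega,y_1,\dots,y_{N-1})$, that is, as the path components at times $\pm1$ of $\X^N$ and $\X^{-,N}$. These are evaluated at the random, tempered argument $(\xi_1(\omega),\dots,\xi_{N-1}(\omega))$, so $X_N^\pm(\omega)=X_N^\pm(\omega,\xi_1(\omega),\dots,\xi_{N-1}(\omega))$. Continuity in the parameters follows from Theorem \ref{theorem gubinelli estimates} and the continuity of the lower-stage flows in their initial data. The moment-growth hypothesis of \cite[Proposition 8.5.12]{Arnold1998RDS} is exactly the bound $C(\diam{K})^q$ from Proposition \ref{rde prop Proposition 8.5.10}(ii) and its backward analogue. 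Since the argument $(\xi_1,\dots,\xi_{N-1})$ is tempered by the induction hypothesis, \cite[Proposition 8.5.12]{Arnold1998RDS} gives temperedness of $X_N^\pm$. Geometric convergence of the series defining $\xi_N^s$ and $\xi_N^u$, and temperedness of $\xi_N^s$, $\xi_N^u$ and $\xi_N$, then follow verbatim from \cite[Lemma 8.5.14]{Arnold1998RDS} and \cite[Lemma 8.5.13]{Arnold1998RDS}, using hyperbolicity of $\Phi_N$.

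For (iii), I will verify invariance through the rough Duhamel formula, Corollary \ref{rde corollary duhamels formula}, together with its negative-time version. The additive terms along the orbit are $P^N_\bullet(\xi_1(\theta_s\omega),\dots,\xi_{N-1}(\theta_s\omega))$, by the invariance of the lower stages. I will show, mirroring Appendix \ref{appendix rde proof of Thm 8.5.15}, that for integer times
\[
\Phi_N(1,\omega)\bigl(\xi_N(\omega)+X_N^+(\omega)\bigr)=\xi_N(\theta_1\omega).
\]
This reduces to the following ingredients: the cocycle property of $\Phi_N$; the relation $\Phi_N(t,\omega)\projection^{s/u}_N(\omega)=\projection^{s/u}_N(\theta_t\omega)\Phi_N(t,\omega)$; the time-shift rule, Lemma \ref{rde lem time-shift}, which yields $X_N^+(\theta_k\omega)=\Phi_N(k,\omega)\int_k^{k+1}\cdots$; and a telescoping of the series. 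Non-integer times are handled by the same computation on $[0,t]$. Uniqueness comes from hyperbolicity: the difference of two tempered invariant random points is a tempered orbit of the linear cocycle $\Phi_N$, and this forces it to vanish, because its stable component blows up backward and its unstable component blows up forward, which contradicts temperedness.

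For (iv), the Gubinelli derivative $A^N\xi_N(\theta_t\omega)+P^N(\xi_1(\theta_t\omega),\dots,\xi_{N-1}(\theta_t\omega))$ is tempered. This is because polynomials and sums of tempered variables are tempered, which follows from $\log^+$ of products and sums being bounded by sums of $\log^+$ terms plus constants, as in \cite[Lemma 4.1.2]{Arnold1998RDS}. I expect the main obstacle to be step (i)/(ii): specifically, justifying that \cite[Proposition 8.5.12]{Arnold1998RDS} may be applied to a field evaluated at a random tempered argument that is not independent of the noise. The adaptedness structure encoded in $\conditionRDE$ and its backward counterpart is exactly what is needed here, and the argument must handle it carefully.
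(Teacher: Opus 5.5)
Your treatment of (i), (ii) and (iv), and of the existence half of (iii), follows the paper's proof. The steps match: induction from Proposition~\ref{rde prop Thm 8.5.15}; evaluating the random fields $X_N^\pm(\omega,\cdot)$ at $(\xi_1,\dots,\xi_{N-1})$ via Proposition~\ref{rde prop Proposition 8.5.10}(ii), Remark~\ref{rde rem after prop 8.5.10} and \cite[Proposition 8.5.12]{Arnold1998RDS}; then \cite[Lemmas 8.5.13 and 8.5.14]{Arnold1998RDS}; the Duhamel, time-shift and telescoping computation of Appendix~\ref{appendix rde proof of Thm 8.5.15}; and \cite[Lemma 4.1.2]{Arnold1998RDS}. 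The obstacle you flag at the end is not one, and adaptedness plays no role at that step. The $\omega$-dependence of the evaluation point is absorbed by the supremum over $K$ in the moment bound of Proposition~\ref{rde prop Proposition 8.5.10}(ii), which is exactly the mechanism of \cite[Proposition 8.5.12]{Arnold1998RDS}. Adaptedness only enters upstream, in obtaining that bound.

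The genuine gap is the uniqueness half of (iii). The statement asserts that $\delta_{\xi_N}$ is the unique invariant measure, i.e.\ unique among all random probability measures $\mu$ with $\phi_N(t,\omega)_*\mu_\omega=\mu_{\theta_t\omega}$, where the lower stages are frozen at $\xi_1,\dots,\xi_{N-1}$. Your argument only compares $\xi_N$ with another \emph{tempered} invariant random point, and it is temperedness of the difference that produces the contradiction. It therefore excludes neither a Dirac measure at a non-tempered random point nor any non-Dirac invariant measure. The paper covers this by importing the uniqueness part of \cite[Theorem 4]{Arnold1992}.

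The missing idea is to use stationarity instead of temperedness. Since $\phi_N(t,\omega)x=\Phi_N(t,\omega)(x-\xi_N(\omega))+\xi_N(\theta_t\omega)$, the recentred measures $\nu_\omega:=(x\mapsto x-\xi_N(\omega))_*\mu_\omega$ satisfy $\Phi_N(t,\omega)_*\nu_\omega=\nu_{\theta_t\omega}$, and it suffices to show $\nu_\omega=\delta_0$. Let $B_R$ be the closed ball of radius $R$.
\begin{itemize}
\item $P$-invariance of $\theta_n$ gives $\expectationPathwise{\nu_{\theta_n\omega}(B_R)}=\expectationPathwise{\nu_\omega(B_R)}$.
\item Invariance gives $\nu_{\theta_n\omega}(B_R)=\nu_\omega(\{x:|\Phi_N(n,\omega)x|\le R\})$.
\item By the MET and hyperbolicity, $|\Phi_N(n,\omega)x|\to\infty$ for $x\notin E^s_N(\omega)$, so such $x$ lie in only finitely many of these sets.
\end{itemize}
Reverse Fatou, first in $x$ and then in $\omega$, yields $\expectationPathwise{\nu_\omega(E^s_N(\omega))}\ge\expectationPathwise{\nu_\omega(B_R)}$. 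Letting $R\to\infty$, the right-hand side tends to $1$, so $\nu_\omega(E^s_N(\omega))=1$ almost surely. Running the same argument with $n\to-\infty$ gives $\nu_\omega(E^u_N(\omega))=1$, hence $\nu_\omega=\delta_0$ and $\mu=\delta_{\xi_N}$. For Dirac measures this reduces to Poincar\'e recurrence of the stationary sequence $|\eta(\theta_n\omega)-\xi_N(\theta_n\omega)|$, which already removes the need for temperedness.
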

\begin{proof}
    We proceed by induction; the base case $N=1$ is precisely Proposition \ref{rde prop Thm 8.5.15}.

    \underline{\textit{Regarding (i) and (ii)}}. Now, by the induction hypothesis, $(\xi_1(\omega),\dots,\xi_{N-1}(\omega))$ satisfies the temperedness condition, which by virtue of Proposition \ref{rde prop Proposition 8.5.10} and Remark \ref{rde rem after prop 8.5.10} and \cite[Proposition 8.5.12]{Arnold1998RDS} results in $X_N^+(\omega)$ and $X_N^-(\omega)$ satisfying the temperedness condition, when understanding $X^+_N(\omega)=X^+_N(\omega,\xi_1(\omega),\dots,\xi_{N-1}(\omega))$ and $X^-_N(\omega)=X^-_N(\omega,\xi_1(\omega),\dots,\xi_{N-1}(\omega))$ as random fields evaluated at $(\xi_1(\omega),\dots,\xi_{N-1}(\omega))$, similarly to the proof of Proposition \ref{rde prop Thm 8.5.15}. Assertions \textit{(i)} and \textit{(ii)} then follow from \cite[Lemma 8.5.14]{Arnold1998RDS} and \cite[Lemma 8.5.13]{Arnold1998RDS}.\\

    \underline{\textit{Regarding (iii)}}. To establish the invariance claim, consult the proof of the invariance property in Proposition \ref{rde prop Thm 8.5.15} and proceed analogously, while keeping in mind that the invariance relations $\phi_n(t,\omega,\xi_1(\omega),\dots,\xi_{n-1}(\omega))\xi_n(\omega)=\xi_n(\theta_t\omega)$, $n=1,\dots,N-1$, $t\in\R$, hold true by the induction hypothesis.\\

    \underline{\textit{Regarding (iv)}}. By \cite[Lemma 4.1.2]{Arnold1998RDS} and the induction hypothesis, the polynomial composition $t\mapsto P^N(\xi_1(\theta_t\omega),\dots,\xi_{N-1}(\theta_t\omega))$ satisfies the temperedness condition. Furthermore, $t\mapsto A^N$ and $t\mapsto \xi_N(\theta_t\omega)$ fulfill the temperedness condition, too. Applying \cite[Lemma 4.1.2]{Arnold1998RDS} again, we obtain that $t\mapsto A^N\xi_N(\theta_t\omega)+P^N(\xi_1(\theta_t\omega),\dots,\xi_{N-1}(\theta_t\omega))$ meets the temperedness condition.
\end{proof}

This completes the collection of ingredients necessary to finish the proof of the main result, Theorem \ref{main thm in introduction}, which we present next.

    \subsection{Returning to the Original Rough Normal Form Problem}

\begin{proof}[Proof of Theorem \ref{main thm in introduction}]
    Consulting Theorem \ref{rde thm Theorem 8.5.16},  we obtain, for all $n\in\N_{\geq 2}$, unique stationary Taylor coefficients $H^n(\omega)\in H_{n,D}(\R^D)$ as path components of controlled rough paths $t\mapsto\H^n(\theta_t\omega)\in\controlledRoughPath{\W(\omega)}{\R}{H_{n,D}(\R^D)}$ that solve (\ref{rde tag bold guess}). According to Theorem \ref{rde thm Theorem 8.5.16}, both components of $t\mapsto\H^n(\theta_t\omega)$ additionally satisfy the temperedness condition. Patching the $H^n$ together in the sense of Borel's Lemma \cite[Lemma 8.2.12]{Arnold1998RDS} produces the desired random coordinate transform $H$.
\end{proof}

\begin{remark}
    The cocycle generated by the first $n$ RDEs of (\ref{rde hierarchical system basic}) considered as one, which is defined as $\phi^{(n)}(\cdot,\omega)(y_1,\dots,y_n):=(\phi_1(\cdot,\omega)y_1,\dots,\phi_n(\cdot,y_1,\dots,y_{n-1})y_n)$ in $\R^{D_1}\times\dots\times \R^{D_n}$, evaluated at $(\xi_1(\omega),\dots,\xi_n(\omega))$ is the unique stationary solution to these RDEs, so that $\phi^{(n)}(t,\omega)(\xi_1(\omega),\dots,\xi_n(\omega))=(\xi_1(\theta_t\omega),\dots,\xi_n(\theta_t\omega))$ for all $t\in\R$ and all $\omega$.
\end{remark}

Let us close this work by considering a simple example to illustrate the result. To do so, we adapt and expand \cite[Example 8.5.18]{Arnold1998RDS} into more general centered Gaussian processes which are not necessarily Brownian motion.

\begin{example}
    Let $(\W(\omega))_\omega\subseteq\driverRoughPath{\R}{\R}$ be a scalar, two-sided, centered Gaussian process that is a weakly geometric rough path cocycle, and that, additionally, starts at $W_0=0$ surely and satisfies surely a Strong Law of Large Numbers. For example, such processes are given by (restricted) fractional Brownian motion with Hurst parameter $\in(1/3,1/2]$, which is covered by \cite[Theorem 1.1, (C)]{fBM_LLN}. Given $F_0\in C^\infty(\R,\R)\cap C_{\mathrm{b}}^3(\R,\R)$ and $F\in C^\infty(\R,\R)\cap C_{\mathrm{b}}^3(\R,\R)$ with $F_0(0)=F(0)=0$, we study the scalar RDE
    \begin{align*}
        \mathrm{d}Y_t=F_0(Y_t)\,\mathrm{d}t+F(Y_t)\,\mathrm{d}\W_t(\omega).
    \end{align*}
    Its linearization reads $\,\mathrm{d}Y_t=\FzeroPrimeZero Y_t\,\mathrm{d}t+\FPrimeZero Y_t\,\mathrm{d}\W_t(\omega)$ and possesses the explicit fundamental matrix $(t,\omega)\mapsto\Phi(t,\omega)=\mathrm{e}^{\FzeroPrimeZero t+\FPrimeZero W_t(\omega)}$.
    Consequently, $\spectrum{\Phi}=\{\FzeroPrimeZero\}$.

    Let us advance to the hierarchical system. It is straightforward to validate that the $n$-th equation of the hierarchical system reads
    \begin{align*}
        \mathrm{d}Y_t^n=\left((1-n)\FzeroPrimeZero Y_t^n+K^n_0(\theta_t\omega)\right)\mathrm{d}t+\left((1-n)\FPrimeZero Y_t^n+K^n(\theta_t\omega)\right)\mathrm{d}\W_t(\omega).
    \end{align*}
    Each linearized RDE $\,\mathrm{d}Y_t^n=(1-n)\FzeroPrimeZero Y_t^ndt+(1-n)\FPrimeZero Y_t^n\,\mathrm{d}\W_t(\omega)$ again admits an easy fundamental matrix, explicitly given by $(t,\omega)\mapsto\Phi_n(t,\omega)=\mathrm{e}^{(1-n)\FzeroPrimeZero t+(1-n)\FPrimeZero W_t(\omega)}$ with $\spectrum{\Phi_n}=\{(1-n)\FzeroPrimeZero\}$.

    We recall that $n\in\N_{\geq 2}$, so that resonance occurs for some $n$ if and only if resonance occurs for all $n$ if and only if $\FzeroPrimeZero=0$. If $\FzeroPrimeZero\neq0$, then we compute
    \begin{align*}
        H^n(\omega)=\begin{cases}
            -\sum_{k=0}^\infty\Big(\int_0^{1} \mathrm{e}^{(n-1)\FzeroPrimeZero (t+k)+(n-1)\FPrimeZero W_{t+k}(\omega)} K^n_0(\theta_{t+k}\omega)\,\mathrm{d}t\\
        \quad\quad+\int_0^{1} \mathrm{e}^{(n-1)\FzeroPrimeZero (t+k)+(n-1)\FPrimeZero W_{t+k}(\omega)} K^n(\theta_{t+k}\omega)\,\mathrm{d}\W_t(\theta_k\omega)\Big), & \FzeroPrimeZero<0\\
        \sum_{k=0}^\infty\Big(\int_{-1}^{0} \mathrm{e}^{(n-1)\FzeroPrimeZero (t-k)+(n-1)\FPrimeZero W_{t-k}(\omega)} K^n_0(\theta_{t-k}\omega)\,\mathrm{d}t\\
        \quad\quad+\int_{-1}^{0} \mathrm{e}^{(n-1)\FzeroPrimeZero (t-k)+(n-1)\FPrimeZero W_{t-k}(\omega)} K^n(\theta_{t-k}\omega)\,\mathrm{d}\W_t(\theta_{-k}\omega)\Big) , &\FzeroPrimeZero>0
        \end{cases}.
    \end{align*}
\end{example}

{\footnotesize
\bibliographystyle{alpha}
\bibliography{mainFiles/refs}
}

\appendix

\section{Appendix}

\addtocontents{toc}
    {\protect\setcounter{tocdepth}{1}}
    \subsection{Notational Index}\label{appendix notation}

{\small
\renewcommand{\arraystretch}{1.2}
\begin{longtable}{p{0.20\textwidth} p{0.50\textwidth} p{0.22\textwidth}}
\caption{Generic but frequently used notation, together with the first place where each notion is introduced or used systematically.}\label{tab:notation}\\
\toprule
\textbf{Notation} & \textbf{Meaning} & \textbf{First Occurrence} \\
\midrule
\endfirsthead
\toprule
\textbf{Notation} & \textbf{Meaning} & \textbf{First occurrence} \\
\midrule
\endhead
\bottomrule
\endfoot
\bottomrule
\endlastfoot
$\N$, $\N_0$
& Natural numbers $\N=\{1,2,\dots\}$ and non-negative integers $\N_0=\N\cup\{0\}$.
& Theorem \ref{main thm in introduction}.\\

$\R$, $\Q$, $\R_{\geq0},\dots$
& Real numbers, rational numbers, non-negative reals, and similar.
& Section~\ref{sec:introduction}.\\

$\V,\V_0,\V_1,\V_2,\dots$
& Finite-dimensional Banach spaces endowed with norm $\abs{\cdot}$.
& Definition \ref{sde def random dynamical system}.\\


$\linearMaps{\V_1}{\V_2}$
& Space of linear maps $\V_1\to\V_2$.
& Theorem \ref{main thm in introduction}.\\

$v^d$, $v_d$
& Components of $v\in\V$ in a chosen basis $\{b_1,\dots,b_D\}$.
& Definition \ref{rde def multi-indices}, Remark \ref{rde rem matrix-valued rdes}.\\

$\mathrm{dom}(f)$, $\range{(f)}$
& Domain and range/image of a map $f$.
& Appendix \ref{rde appendix proof of lemma with polynomial composition}.\\

$C^k(\V_1,\V_2)$; $C^k_{\mathrm{b}}(\V_1,\V_2)$
& Space of $k\in\N_0\cup\{\infty\}$ times continuously differentiable maps $\V_1\to\V_2$; with bounded derivatives.
& Theorem \ref{main thm in introduction}.\\

$C^\alpha(\V_1,\V_2)$
& Space of $\alpha$-Hölder continuous maps $\V_1\to\V_2$.
& Definition \ref{def controlled rough path}.\\

$gf$
& Composition $g\circ f$ of linear maps, omitting "$\circ$".
& Lemma \ref{rde lem product rule}.\\

$\theta_t\theta_s$
& Composition $\theta_t\circ\theta_s$ of base shifts for $s,t\in\T$, given a metric dynamical system $\metricDynamicalSystemT$, omitting "$\circ$".
& Definition~\ref{sde def metric dyn system}.\\

$\simeq$
& Identification/isomorphism of spaces.
& Lemma \ref{rde lem polynomials simeq R^D}.\\

$v_1\formallyEqual v_2$
& Formal equality of expressions.
& Theorem \ref{main thm in introduction}.\\

$\V_1\otimes\V_2$
& Tensor product of $\V_1$ and $\V_2$ equipped with a compatible norm.
& Definition \ref{rde def driver rough path}.\\

$(v_1\otimes v_2)^T$
& Transposed tensor $v_2\otimes v_1$ for $v_1\in\V_1$ and $v_2\in\V_2$.
& Definition \ref{rde def geometric rough path}.\\



$\phi(\cdot,v)$, $\phi(\cdot)v$
& Evaluation of a flow $\phi$ at $v\in\V$.
& Definition~\ref{sde def random dynamical system}, Theorem \ref{thm MET}.\\

$C$
& Generic constant $\in\R_{\geq0}$ whose value may and will change from line to line. We write $C=C(p_1,\dots,p_L)$ to indicate that the constant $C$ depends upon parameters $p_1,\dots,p_L$.
& Theorem \ref{theorem gubinelli estimates}.\\




$\mathrm{P}$
& Finite partition of an interval with mesh size $|\mathrm{P}|:=\max\{\abs{v-u}:[u,v]\in\mathrm{P}\}$.
& Theorem~\ref{theorem gubinelli estimates}.\\


$X_{s,t}$
& Increment $X_t-X_s$ of a path $X:\R\to\V$.
& Definition \ref{rde def driver rough path}.\\

$\mathrm{O}(\abs{t-s}^\beta)$; $\mathrm{o}(\abs{t-s}^\beta)$
& Standard Landau notation: For $s,t\in \R$, $\beta\in\R_{>0}$, we write $f= \mathrm{O}(\abs{t-s}^\beta)$ if $\limsup_{t\to s}\abs{f(t)-f(s)}/\abs{t-s}^\beta<\infty$; and $f= \mathrm{o}(\abs{t-s}^\beta)$ if $\limsup_{t\to s}\abs{f(t)-f(s)}/\abs{t-s}^\beta=0$.
& Theorem \ref{rde lem ito-wentzell}.\\
\end{longtable}
}

 Let us add a subtle yet fundamental remark: To us, the identifications of main interest are $\R^{D_1}\otimes\R^{D_2}\simeq \R^{D_1\times D_2}\simeq\linearMaps{\R^{D_1}}{\R^{D_2}}$ and $\linearMaps{\R^{D_1}}{\linearMaps{\R^{D_2}}{\R^{D_3}}}\simeq\linearMaps{\R^{D_2}}{\linearMaps{\R^{D_1}}{\R^{D_3}}}$. These identifications are in fact canonical, which enables us to meaningfully identify elements of these spaces. Therefore, if we have, say, $A_1\in\linearMaps{\R^{D_1}}{\linearMaps{\R^{D_2}}{\R^{D_3}}}$ canonically identified with $A_2\in\linearMaps{\R^{D_2}}{\linearMaps{\R^{D_1}}{\R^{D_3}}}$, we write $A_1=A_2$, although $A_1$ and $A_2$ are not really equal, and a priori not even comparable via an equality at all. In any other similar situation, we also proceed like this.

 Also, when we speak of "elementary inequalities", we mean basic facts like the triangle inequality, or, for $p\in[1,\infty)$, the estimate $(a+b)^p\leq C(a^p+b^p)$ that holds for all $a,b\in\R_{\geq0}$, or $\norm{W}{\alpha}\leq \normCalpha{\W}$.

\subsection{Proof of Theorem \ref{rde lem ito-wentzell}: Two Technical Assertions}\label{appendix rde proof of ito-wentzell}
    Recall that, for $s,t\in[0,T]$ with $s<t$, we intend to establish
    \begin{enumerate}
        \item \begin{align*}\mathrm{D}_xH(t,Y_s)Y_{s,t} =\mathrm{D}_xH(s,Y_s)Y_{s,t} +\mathrm{D}_x\eta(s,Y_s)Y_s'(W_{s,t}\otimes W_{s,t}) + \mathrm{O}(\abs{t-s}^{3\alpha}) \text{ and}\end{align*}
        \item \begin{align*}\mathrm{D}_x^2H(t,Y_s)(Y_{s,t}\otimes Y_{s,t})=2 \mathrm{D}_x^2H(s,Y_s)(Y_s'\otimes Y_s')\mathbb{W}_{s,t} +\mathrm{D}_x^2H(s,Y_s)(Y_s'\otimes Y_s')[\W]_{s,t}+\mathrm{O}(\abs{t-s}^{3\alpha}).\end{align*}
    \end{enumerate}

    \underline{\textit{Regarding }(i)}. Performing two Gubinelli expansions, exploiting symmetry of $W_{s,t}\otimes W_{s,t}$ with $(\mathrm{D}_xH(\cdot,x))'=\mathrm{D}_x\eta(\cdot,x)$,  and successively collecting terms of order $=\mathrm{O}(\abs{t-s}^{3\alpha})$, we derive
    \begin{align*}
        \mathrm{D}_xH(t,x)Y_{s,t}
        &=\mathrm{D}_xH(s,x)Y_{s,t}+(\mathrm{D}_xH(\cdot,x))'_sW_{s,t}Y_{s,t}+\mathrm{O}(\abs{t-s}^{3\alpha})\\
        &=\mathrm{D}_xH(s,x)Y_{s,t}+\mathrm{D}_x\eta(s,x)Y_s'(W_{s,t}\otimes W_{s,t})+\mathrm{O}(\abs{t-s}^{3\alpha}).
    \end{align*}
    Completing the proof of (i) requires the choice $x=Y_s$.\\

    \underline{\textit{Regarding }(ii)}. By inserting the Gubinelli expansion $Y_{s,t}=Y_s'W_{s,t}+\mathrm{O}(\abs{t-s}^{2\alpha})$, it is easy to see that
    $
        Y_{s,t}\otimes Y_{s,t}=(Y_s'\otimes Y_s')(W_{s,t}\otimes W_{s,t})+\mathrm{O}(\abs{t-s}^{3\alpha}).
    $
     Hence,
    \begin{align*}
        \frac{1}{2}\mathrm{D}_x^2H(t,Y_s)(Y_{s,t}\otimes Y_{s,t})
        &=\frac{1}{2}\mathrm{D}_x^2H(t,Y_s)(Y_s'\otimes Y_s')(W_{s,t}\otimes W_{s,t})+\mathrm{O}(\abs{t-s}^{3\alpha})\\
        &=\frac{1}{2}\mathrm{D}_x^2H(t,Y_s)(Y_s'\otimes Y_s')[\W]_{s,t}+\mathrm{D}_x^2H(t,Y_s)(Y_s'\otimes Y_s')\mathrm{Sym}(\mathbb{W}_{s,t})\\
        &\quad+\mathrm{O}(\abs{t-s}^{3\alpha}),
    \end{align*}
    where we employed $W_{s,t}\otimes W_{s,t}=[\W]_{s,t}+2\mathrm{Sym}(\mathbb{W}_{s,t})$. Now, it is important to remember that the contraction of a symmetric matrix with an antisymmetric matrix vanishes. Thus, as the Hessian matrix $\mathrm{D}_x^2H(t,Y_s)$ is symmetric, it eliminates any antisymmetric part, which leads to
    \begin{align*}
        \mathrm{D}_x^2H(t,Y_s)(Y_s'\otimes Y_s')\mathbb{W}_{s,t}=\mathrm{D}_x^2H(t,Y_s)(Y_s'\otimes Y_s')\mathrm{Sym}(\mathbb{W}_{s,t}).
    \end{align*}
    With this fact in mind, we finally arrive at
    \begin{align*}
        \frac{1}{2}\mathrm{D}_x^2H(t,Y_s)(Y_{s,t}\otimes Y_{s,t})
        &=\frac{1}{2}\mathrm{D}_x^2H(s,Y_s)(Y_s'\otimes Y_s')[\W]_{s,t}+\mathrm{D}_x^2H(s,Y_s)(Y_s'\otimes Y_s')\mathbb{W}_{s,t}\\
        &\quad+\frac{1}{2}(\mathrm{D}_x^2H(t,Y_s)-\mathrm{D}_x^2H(s,Y_s))(Y_s'\otimes Y_s')[\W]_{s,t}\\
        &\quad+(\mathrm{D}_x^2H(t,Y_s)-\mathrm{D}_x^2H(s,Y_s))(Y_s'\otimes Y_s')\mathbb{W}_{s,t}+\mathrm{O}(\abs{t-s}^{3\alpha})\\
        &=\frac{1}{2}\mathrm{D}_x^2H(s,Y_s)(Y_s'\otimes Y_s')[\W]_{s,t}+\mathrm{D}_x^2H(s,Y_s)(Y_s'\otimes Y_s')\mathbb{W}_{s,t}\\
        &\quad+\mathrm{O}(\abs{t-s}^{3\alpha}),
    \end{align*}
    where we made use of the facts that $t\mapsto H(t,x)\in C^\alpha([0,T],\R^D)$ for every $x$, and that $[\W],\mathbb{W}\in C^{2\alpha}([0,T],\R^M\otimes\R^M)$. The last equation is precisely statement (ii).

    \subsection{Further Explanation Regarding Remark \ref{rde rem regarding generation of cocycles and backward integration}}\label{appendix rde further details on generation and backward integration}

    Remember that, most notably, a major difference is the occurrence of backward integration in the SDE setting, as opposed to usual forward integration in the RDE framework. This mystery is encoded in the fact that controlled integration contains additional information due to the second-order process that is an integral part of the rough path $\W(\omega)$, resulting in the choice of the evaluation point carrying less weight. For more details on controlled backward integration, see \cite[Section 5.4]{FrizHairer2020}.

Before we come to the main discussion about whether there is a fundamental difference between Lemma \ref{rde Lem rdes cocycles but mild formulation} and \cite[Theorem 2.3.39]{Arnold1998RDS}, let us illuminate two elementary claims regarding time-reflected rough paths.

\begin{comment}
\textbf{Assertion 1. }\textit{Let $\W\in\driverRoughPath{[-T,0]}{\R^M} $ be a weakly geometric rough path. Then, also its time-reflection $\W^\leftarrow\in\driverRoughPath{[0,T]}{\R^M}$ is weakly geometric, where, for $s,t\in[0,T]$, $s<t$, we define
    \begin{align*}
        (W^\leftarrow_{s,t},\mathbb{W}_{s,t}^\leftarrow):=(-W_{-t,-s},-\mathbb{W}_{-t,-s}+W_{-t,-s}\otimes W_{-t,-s}).
    \end{align*}}
    \end{comment}

    \begin{proposition}\label{appendix prop 1}
        Let $\W\in\driverRoughPath{[-T,0]}{\R^M} $ be a weakly geometric rough path. Then, also its time-reflection $\W^\leftarrow\in\driverRoughPath{[0,T]}{\R^M}$ is weakly geometric, where, for $s,t\in[0,T]$, $s<t$, we define
    \begin{align*}
        (W^\leftarrow_{s,t},\mathbb{W}_{s,t}^\leftarrow):=(-W_{-t,-s},-\mathbb{W}_{-t,-s}+W_{-t,-s}\otimes W_{-t,-s}).
    \end{align*}
    \end{proposition}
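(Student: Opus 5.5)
We verify directly the three defining properties of a weakly geometric $\alpha$-Hölder rough path for $\W^\leftarrow$ on $[0,T]$: the two Hölder bounds, Chen's relation, and the symmetry condition $\mathrm{Sym}(\mathbb{W}^\leftarrow_{s,t})=\frac12 W^\leftarrow_{s,t}\otimes W^\leftarrow_{s,t}$. Throughout, fix $0\le s<u<t\le T$ and note that $-t<-u<-s$ lie in $[-T,0]$, with $\abs{(-s)-(-t)}=t-s$.

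\emph{Hölder bounds.} We have $\abs{W^\leftarrow_{s,t}}=\abs{W_{-t,-s}}\le\norm{W}{\alpha}(t-s)^\alpha$. For the second-order part,
\begin{align*}
\abs{\mathbb{W}^\leftarrow_{s,t}}\le\abs{\mathbb{W}_{-t,-s}}+\abs{W_{-t,-s}}^2\le\big(\norm{\mathbb{W}}{2\alpha}+\norm{W}{\alpha}^2\big)(t-s)^{2\alpha}.
\end{align*}
So both seminorms are finite. A first-order path with increments $-W_{-t,-s}$ is $t\mapsto W^\leftarrow_t:=W_{-t}-W_0$, which is consistent since $W_{-s}-W_{-t}=-(W_{-t}-W_{-s})$.

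\emph{Chen's relation.} Write $a:=W_{-t,-u}$ and $b:=W_{-u,-s}$, so that $W_{-t,-s}=a+b$, $W^\leftarrow_{s,u}=-b$ and $W^\leftarrow_{u,t}=-a$. Chen's relation for $\W$ at $-t<-u<-s$ gives $\mathbb{W}_{-t,-s}=\mathbb{W}_{-t,-u}+\mathbb{W}_{-u,-s}+a\otimes b$. Then
\begin{align*}
\mathbb{W}^\leftarrow_{s,t}-\mathbb{W}^\leftarrow_{s,u}-\mathbb{W}^\leftarrow_{u,t}
&=-a\otimes b+(a+b)\otimes(a+b)-b\otimes b-a\otimes a\\
&=b\otimes a=W^\leftarrow_{s,u}\otimes W^\leftarrow_{u,t},
\end{align*}
which is exactly Chen's relation for $\W^\leftarrow$. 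The only delicate point in the whole argument is keeping track of the order of tensor factors under the reflection. Here the correction term $W\otimes W$ in the definition is precisely what converts $a\otimes b$ into $b\otimes a$.

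\emph{Weak geometricity.} Since $W_{-t,-s}\otimes W_{-t,-s}$ is symmetric, we get
\begin{align*}
\mathrm{Sym}(\mathbb{W}^\leftarrow_{s,t})=-\mathrm{Sym}(\mathbb{W}_{-t,-s})+W_{-t,-s}\otimes W_{-t,-s}=-\tfrac12 W_{-t,-s}^{\otimes2}+W_{-t,-s}^{\otimes2}=\tfrac12 W^\leftarrow_{s,t}\otimes W^\leftarrow_{s,t}.
\end{align*}
The second equality uses the weak geometricity of $\W$, and the last uses $(-v)\otimes(-v)=v\otimes v$. Hence $\W^\leftarrow\in\driverRoughPath{[0,T]}{\R^M}$ is weakly geometric.
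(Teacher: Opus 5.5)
Your proposal is correct and is the direct verification the paper has in mind. The paper only states that the proof is a straightforward computation analogous to \cite[Exercise 2.4]{FrizHairer2020}, and your three checks all go through as written: the two H\"older bounds, Chen's relation (where the correction term $W_{-t,-s}\otimes W_{-t,-s}$ turns $a\otimes b$ into $b\otimes a$), and $\mathrm{Sym}(\mathbb{W}^\leftarrow_{s,t})=\frac12 W^\leftarrow_{s,t}\otimes W^\leftarrow_{s,t}$.
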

\begin{proof}
    The proof is a straightforward computation analogous to \cite[Exercise 2.4]{FrizHairer2020}.
\end{proof}

\begin{comment}
\textbf{Assertion 2. }\textit{Let $\W\in\driverRoughPath{[-T,0]}{\R^M}$ and $\W^\leftarrow\in\driverRoughPath{[0,T]}{\R^M}$ be as in Assertion 1. In addition to that, let $\Y\in\controlledRoughPath{\W}{[-T,0]}{\R^D}$. Then, putting $\Y^\leftarrow_t:=(-Y_{-t},-Y'_{-t})$ yields a time-reflected controlled rough path $\Y^\leftarrow\in\controlledRoughPath{\W^\leftarrow(\omega)}{[0,T]}{\R^D}$.}
\end{comment}

\begin{corollary}\label{appendix cor 2}
    Let $\W\in\driverRoughPath{[-T,0]}{\R^M}$ and $\W^\leftarrow\in\driverRoughPath{[0,T]}{\R^M}$ be as in Proposition \ref{appendix prop 1}. In addition to that, let $\Y\in\controlledRoughPath{\W}{[-T,0]}{\R^D}$. Then, putting $\Y^\leftarrow_t:=(-Y_{-t},-Y'_{-t})$ yields a time-reflected controlled rough path $\Y^\leftarrow\in\controlledRoughPath{\W^\leftarrow(\omega)}{[0,T]}{\R^D}$.
\end{corollary}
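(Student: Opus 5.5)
We must show that $\Y^\leftarrow$ is a controlled rough path with respect to $\W^\leftarrow$. Concretely, the path $t\mapsto -Y_{-t}$ and its candidate Gubinelli derivative $t\mapsto -Y'_{-t}$ should both be $\alpha$-Hölder on $[0,T]$, and the induced remainder should be of order $2\alpha$. Hölder regularity is immediate: the reflection $t\mapsto -t$ is an isometry of $[0,T]$ onto $[-T,0]$. Hence $\norm{Y^\leftarrow}{\alpha}=\norm{Y}{\alpha}$ and $\norm{(Y^\leftarrow)'}{\alpha}=\norm{Y'}{\alpha}$, both finite because $\Y\in\controlledRoughPath{\W}{[-T,0]}{\R^D}$.

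For the remainder, fix $0\le s<t\le T$ and put $a:=-t<b:=-s$. The increment of the reflected path is
\begin{align*}
Y^\leftarrow_{s,t}=-Y_{-t}+Y_{-s}=Y_{a,b}=Y'_aW_{a,b}+\remainder\Y_{a,b}.
\end{align*}
We need to express this through $(Y^\leftarrow)'_s W^\leftarrow_{s,t}=(-Y'_{b})(-W_{a,b})=Y'_bW_{a,b}$. The difference is
\begin{align*}
Y^\leftarrow_{s,t}-(Y^\leftarrow)'_sW^\leftarrow_{s,t}=\remainder\Y_{a,b}-Y'_{a,b}W_{a,b}.
\end{align*}
Both terms on the right are $\mathrm{O}(\abs{b-a}^{2\alpha})$. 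This gives
\begin{align*}
\norm{\remainder\Y^\leftarrow}{2\alpha}\le\norm{\remainder\Y}{2\alpha}+\norm{Y'}{\alpha}\norm{W}{\alpha}<\infty.
\end{align*}
So the key step is simply swapping the evaluation point of the derivative from the left endpoint $a$ to the right endpoint $b$, at the cost of a $2\alpha$ error. This is where the reflection requires care. It also explains the sign conventions: reflecting the path flips the sign of $W$, and the minus sign on $Y'$ compensates so that the controlling relation holds.

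No real obstacle is expected. The only point to check is consistency with the definition of $\W^\leftarrow$ in Proposition~\ref{appendix prop 1}: the first level of $\W^\leftarrow$ is $-W_{-t,-s}$. Since only first-order increments of the driver enter Definition~\ref{def controlled rough path}, the second-order component $\mathbb{W}^\leftarrow$ plays no role here. The weakly geometric property is likewise not needed for this corollary. It only guarantees, via Proposition~\ref{appendix prop 1}, that $\W^\leftarrow$ is an admissible rough path, so that $\controlledRoughPath{\W^\leftarrow}{[0,T]}{\R^D}$ makes sense.
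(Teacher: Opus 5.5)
Your proof is correct and follows the paper's argument. Both expand $Y_{-t,-s}$ at the left endpoint $-t$ and then move the Gubinelli derivative to the right endpoint $-s$ at a cost of order $\abs{t-s}^{2\alpha}$, using $W^\leftarrow_{s,t}=-W_{-t,-s}$ and $(Y^\leftarrow)'_s=-Y'_{-s}$; your explicit bound $\norm{\remainder\Y^\leftarrow}{2\alpha}\le\norm{\remainder\Y}{2\alpha}+\norm{Y'}{\alpha}\norm{W}{\alpha}$ just quantifies the step the paper records as $\mathrm{O}(\abs{t-s}^{2\alpha})$.
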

\begin{proof}
    The Hölder-continuity requirements are obviously satisfied. Regarding controlledness, we observe, for $s,t\in[0,T]$, $s<t$, that
    \begin{align*}
        Y^\leftarrow_{s,t}=Y_{-t,-s}=Y_{-t}'W_{-t,-s}+\mathrm{O}(\abs{t-s}^{2\alpha})=-Y_{-s}'W_{s,t}^\leftarrow+\mathrm{O}(\abs{t-s}^{2\alpha}),
    \end{align*}
    which ends the proof.
\end{proof}

 The next step is to compare the situation from Lemma \ref{rde Lem rdes cocycles but mild formulation} to the SDE situation explained in \cite[Theorem 2.3.39]{Arnold1998RDS}. To this end, let $W$ be a two-sided $\R^M$-valued Brownian motion with $W_0=0$ almost surely, and let $(\W(\omega))_\omega\subseteq\driverRoughPath{[-T,0]}{\R^M}$ be the weakly geometric Stratonovich enhancement of $W$. On the one hand, it is clear that the time-reflection $W^\leftarrow$ given by $W_t^\leftarrow:=W_{-t}$ is still a Brownian motion. On the other hand, according to Proposition \ref{appendix prop 1}, for almost every $\omega$, the rough path $\W^\leftarrow(\omega)\in\driverRoughPath{[0,T]}{\R^M}$ is well-defined and has first-order path $W^\leftarrow(\omega)$.

In order to ensure that the notions of the integrals are consistent for our purposes, which is essential given our expectation of parallelism between the theories, let us calculate that both versions reduce to the same object. We take an arbitrary suitable integrand process $Y:\R\times \Omega\to\linearMaps{\R^M}{\R^D}$ that we also interpret as being controlled by $\W$, so that $\Y(\omega):=(Y(\omega),Y'(\omega))\in\controlledRoughPath{\W(\omega)}{[-T,0]}{\linearMaps{\R^M}{\R^D}}$. Furthermore, we define, as proposed in Corollary \ref{appendix cor 2}, $Y^\leftarrow_t(\omega):=-Y_{-t}(\omega)$ and $(Y^\leftarrow)'_t(\omega):=-Y'_{-t}(\omega)$ to obtain $\Y^\leftarrow(\omega)\in\controlledRoughPath{\W^\leftarrow(\omega)}{[0,T]}{\linearMaps{\R^M}{\R^D}}$. It remains to check whether both classical Stratonovich integration and controlled Gubinelli integration produce the same output.

For simplicity of presentation, we work with concrete partitions of $[0,T]$ which, for $N\in\N$, are given by $\{0=t_0<t_1<\dots<t_{N-1}<t_N=T\}$.
Obviously, the time-reflected trial points $\{-T=-t_N<-t_{N-1}<\dots<-t_1<-t_0=0\}$ partition $[-T,0]$. In the SDE case, we recall
 the definitions provided in \cite[Section 5.4]{FrizHairer2020} to manipulate
 \begin{align*}
     \int_0^{T}Y^\leftarrow_t\circ \,\mathrm{d}^+W^\leftarrow_r
     &=\int_0^{T}Y_r^\leftarrow \,\mathrm{d}^+W^\leftarrow_r +\frac{1}{2}\langle Y^\leftarrow,W^\leftarrow\rangle_{T}^+\\
     &=\int_{-T}^0Y_r\,\mathrm{d}^-W_r-\frac{1}{2}\langle Y,W\rangle^-_{-T}=\int_{-T}^0Y_r\circ \,\mathrm{d}^-W_r. \mytag\label{appendix tag backward integration SDe}
 \end{align*}
 Yet, be aware that equation (\ref{appendix tag backward integration SDe}) only holds true algebraically, whereas on the martingale level, the LHS is forward-adapted while the RHS is backward-adapted.

The same objective in mind, we compute in the RDE case that
 \begin{align*}
     \int_0^{T}Y_r^\leftarrow(\omega)\,\mathrm{d}\W^\leftarrow_r(\omega)
     &=\lim_{N\toinf} \sum_{n=0}^{N-1} \Big(Y_{-t_n}(\omega) W_{-t_{n+1},-t_{n}}(\omega)\\
     &\quad\quad+Y'_{-t_n} (\omega)\left(\mathbb{W}_{-t_{n+1},-t_{n}}(\omega) -W_{-t_{n+1},-t_n}(\omega)\otimes W_{-t_{n+1},-t_n}(\omega) \right)\Big) \\
     &=\lim_{N\toinf} \sum_{n=0}^{N-1} \Big(Y_{-t_{n+1}}(\omega) W_{-t_{n+1},-t_{n}}(\omega)\\
     &\quad\quad+Y'_{-t_{n+1}} (\omega)\left(\mathbb{W}_{-t_{n+1},-t_{n}}(\omega) -W_{-t_{n+1},-t_n}(\omega)\otimes W_{-t_{n+1},-t_n}(\omega) \right)\\
     &\quad\quad +Y_{-t_{n+1},-t_n}(\omega) W_{-t_{n+1},-t_{n}}(\omega)\\
     &\quad\quad+Y'_{-t_{n+1},-t_n}(\omega) \left(\mathbb{W}_{-t_{n+1},-t_{n}}(\omega) -W_{-t_{n+1},-t_n}(\omega)\otimes W_{-t_{n+1},-t_n}(\omega) \right)\Big)\\
     &=\lim_{N\toinf} \sum_{n=0}^{N-1} Y_{-t_{n+1}}(\omega) W_{-t_{n+1},-t_{n}}(\omega)+Y'_{-t_{n+1}}(\omega) \mathbb{W}_{-t_{n+1},-t_{n}}(\omega)+\mathrm{O}(\abs{t_{n+1}-t_n}^{3\alpha})\\
     &=\int_{-T}^0Y_r(\omega)\,\mathrm{d}\W_r(\omega).\mytag\label{appendix tag backward integration RDE}
 \end{align*}
In the second-to-last equality, we Gubinelli-expanded $Y_{-t_{n+1},-t_n}(\omega)=Y'_{-t_{n+1}}(\omega)W_{-t_{n+1},-t_n}(\omega)+\mathrm{O}(\abs{t_{n+1}-t_n}^{2\alpha})$ and collected terms of order $=\mathrm{O}(\abs{t_{n+1}-t_n}^{3\alpha})$. Furthermore, in the last identity, we made use of $\lim_{N\toinf} \sum_{n=0}^{N-1}\mathrm{O}(\abs{t_{n+1}-t_n}^{3\alpha})=0$. Keep in mind the key aspect that adaptedness constitutes no issue here.

Standard results, such as \cite[Corollary 5.2]{FrizHairer2020}, tell us that the LHSs of (\ref{appendix tag backward integration SDe}) and (\ref{appendix tag backward integration RDE}) coincide almost surely, which is why the RHSs do, as well. This, finally, convinces us that the "loss" of backward integration in Lemma \ref{rde Lem rdes cocycles but mild formulation} compared to \cite[Theorem 2.3.39]{Arnold1998RDS} is merely a subtlety hidden in the definitions and does not really take place.

\subsection{Complete Proof of Lemma \ref{rde lem Lemma 8.5.6}}\label{rde appendix proof of lemma with polynomial composition}
Recall that $f(\Z(\omega,y_1))=(f(Z(\omega,y_1)),\mathrm{D}f(Z(\omega,y_1))Z'(\omega,y_1))$. Smooth mappings pass on adaptedness. Let $p\in[2,\infty)$ and choose a compact $K\subseteq\R^{D_1}$ with $0\in K$ with two fixed elements $y_1,\tildeY_1\in K$.

    Let $i\in\{1,\dots,D_3\}$, so that $f^i:\R^{D_2}\rightarrow\R$ is a polynomial that we write as $f^i=\sum_{\abs{\sigma}=0}^{\deg f^i}f^i_\sigma X^\sigma$ for $\sigma\in\N^{D_2}_0$ and coefficients $f_\sigma^i\in\R$. We utilize the general calculus identity $x^\sigma-y^\sigma=\sum_{d=1}^D(x_d^{\sigma_d}-y_d^{\sigma_d})\prod_{k=1}^{d-1}x_k^{\sigma_k}\prod_{l=d+1}^Dy_l^{\sigma_l}$ that holds for $x,y\in\R^D$ and $\sigma\in\N_0^D$. Adding elementary inequalities as well as Hölder's inequality, we obtain
    \begin{align*}
            &\quad\expectationPathwise{\sup_{t\in[0,1]}\abs{f^i(Z_t(\omega,y_1))-f^i(Z_t(\omega,\tildeY_1))}^p}\\
            &\leq \expectationPathwise{\sup_{t\in[0,1]}\abs{\sum_{\abs{\sigma}=0}^{\deg f^i}f_\sigma^i(Z_t(\omega,y_1)^\sigma-Z_t(\omega,\tildeY_1)^\sigma)}^p}\\
            &\leq C\sum_{\abs{\sigma}=0}^{\deg f^i}\sum_{d=1}^{D_2} \left(\expectationPathwise{\sup_{t\in[0,1]}\abs{(Z_t^d(\omega,y_1)^{\sigma_d}-Z_t^d(\omega,\tildeY_1)^{\sigma_d})}^{3p}}\right)^{1/3}\\
            &\quad\quad\cdot\left(\expectationPathwise{\sup_{t\in[0,1]}\abs{\prod_{k=1}^{d-1}Z_t^k(\omega,y_1)^{\sigma_k}}^{3p}}\right)^{1/3}
            \left(\expectationPathwise{\sup_{t\in[0,1]}
            \abs{\prod_{l=d+1}^{D_2}Z_t^l(\omega,\tildeY_1)^{\sigma_l}}^{3p}}\right)^{1/3}.
    \end{align*}
    The last two factors are easy to control: A simple application of Hölder's inequality shows that it suffices to bound $\expectationPathwise{\sup_{t\in[0,1]}\abs{Z_t^k(\omega,y_1)^{\sigma_k}}^q}$ for a suitable $q\in \{np:n\in\N\}$ and $k=1,\dots,D_2$. We recall that $y_1\in K$, where $K$ is a compact set with $0\in K$. With $\abs{Z_t^k(\omega,y_1)^{\sigma_k}}^q\leq \abs{Z_t(\omega,y_1)}^{\sigma_k q}$, we get by means of Lemma \ref{rde lem compactness lemma} that the two last factors are bounded by $C$. To study the first factor, we recycle the trick of rewriting \begin{align*}Z_t^d(\omega,y_1)^{\sigma_d}-Z_t^d(\omega,\tildeY_1)^{\sigma_d}=(Z_t^d(\omega,y_1)-Z_t^d(\omega,\tildeY_1))\sum_{r=0}^{\sigma_d-1} Z_t^d(\omega,y_1)^rZ_t^d(\omega,\tildeY_1)^{\sigma_d-1-r}.
    \end{align*}
    With this identity, a similar procedure as before and elementary considerations, we see
    \begin{align*}
        &\quad\expectationPathwise{\sup_{t\in[0,1]}\abs{(Z_t^d(\omega,y_1)^{\sigma_d}-Z_t^d(\omega,\tildeY_1)^{\sigma_d})}^{3p}}\\
        &\leq C\left(\expectationPathwise{\sup_{t\in[0,1]}\abs{Z_t^d(\omega,y_1)-Z_t^d(\omega,\tildeY_1)}^{6p}}\right)^{1/2}\\
        &\leq C\left(\expectationPathwise{\sup_{t\in[0,1]}\abs{Z_t(\omega,y_1)-Z_t(\omega,\tildeY_1)}^{6p}}\right)^{1/2} \leq C\abs{y_1-\tildeY_1}^{3p/2}.
    \end{align*}
    Consequently, it holds
    \begin{align*}
        \expectationPathwise{\sup_{t\in[0,1]}\abs{f^i(Z_t(\omega,y_1))-f^i(Z_t(\omega,\tildeY_1))}^p}\leq C\abs{y_1-\tildeY_1}^{p/2}.
    \end{align*}
    Since $i$ was arbitrary, we arrive at
    \begin{align*}
        &\quad\expectationPathwise{\sup_{t\in[0,1]}\abs{f(Z_t(\omega,y_1))-f(Z_t(\omega,\tildeY_1))}^p}\\
        &\leq C\sum_{i=1}^{D_3}\expectationPathwise{\sup_{t\in[0,1]}\abs{f^i(Z_t(\omega,y_1))-f^i(Z_t(\omega,\tildeY_1))}^p}\leq C\abs{y_1-\tildeY_1}^{p/2}.
    \end{align*}

    As a second step, we must derive \begin{align*}\expectationPathwise{\sup_{t\in[0,1]}\abs{\mathrm{D}f(Z_t(\omega,y_1))Z_t'(\omega,y_1)-\mathrm{D}f(Z_t(\omega,\tildeY_1))Z_t'(\omega,\tildeY_1)}^p}\leq C\abs{y_1-\tildeY_1}^{p/2}.
    \end{align*}
    Note that elementary inequalities give
    \begin{align*}
        &\quad\expectationPathwise{\sup_{t\in[0,1]}\abs{\mathrm{D}f(Z_t(\omega,y_1))Z_t'(\omega,y_1)-\mathrm{D}f(Z_t(\omega,\tildeY_1))Z_t'(\omega,\tildeY_1)}^p}\\
        &\leq C\expectationPathwise{\sup_{t\in[0,1]}\abs{(\mathrm{D}f(Z_t(\omega,y_1))-\mathrm{D}f(Z_t(\omega,\tildeY_1)))Z_t'(\omega,y_1)}^p}\\
        &\quad +C\expectationPathwise{\sup_{t\in[0,1]}\abs{\mathrm{D}f(Z_t(\omega,\tildeY_1))(Z_t'(\omega,y_1)-Z_t'(\omega,\tildeY_1))}^p}.
    \end{align*}
    Let us control the second summand. It is obvious that
    \begin{align*}
        \abs{\mathrm{D}f(Z_t(\omega,\tildeY_1))}
        &\leq \sum_{i=1}^{D_3}\sum_{j=1}^{D_2}\abs{\partial_j f^i(Z_t(\omega,\tildeY_1))}.
    \end{align*}
    But, $\partial_j f^i$ remains a polynomial $\R^{D_2}\rightarrow\R$. Thus, as we have already seen a few times in this proof, we deduce a constant bound $C$ such that
        $\expectationPathwise{\sup_{t\in[0,1]} \abs{\mathrm{D}f(Z_t(\omega,\tildeY_1))}^{2p}} \leq C$.
    By means of Hölder's inequality, this results in
    \begin{align*}
        \expectationPathwise{\sup_{t\in[0,1]}\abs{\mathrm{D}f(Z_t(\omega,\tildeY_1))(Z_t'(\omega,y_1)-Z_t'(\omega,\tildeY_1))}^p}\leq C\abs{y_1-\tildeY_1}^{p/2},
    \end{align*}
    for $\expectationPathwise{\sup_{t\in[0,1]}\abs{Z_t'(\omega,y_1)-Z_t'(\omega,\tildeY_1)}^{2p}}\leq C\abs{y_1-\tildeY_1}^p$. Still, we need to treat the first summand. As before, we know $\expectationPathwise{\sup_{t\in[0,1]} \abs{Z_t'(\omega,y_1)}^{2p}}\leq C$, so that Hölder's inequality tells us to focus on $\expectationPathwise{\sup_{t\in[0,1]}\abs{\mathrm{D}f(Z_t(\omega,y_1))-\mathrm{D}f(Z_t(\omega,\tildeY_1))}^{2p}}$. For a suitable expansion $\partial_jf^i=\sum_{\abs{\sigma}=0}^{\deg\partial_jf^i} f^{i,j}_\sigma X^\sigma$, where $\sigma\in\N_0^{D_2}$ and all $f^{i,j}_\sigma\in\R$, we compute
    \begin{align*}
        &\quad\expectationPathwise{\sup_{t\in[0,1]}\abs{\mathrm{D}f(Z_t(\omega,y_1))-\mathrm{D}f(Z_t(\omega,\tildeY_1))}^{2p}}\\
        &\leq \sum_{i=1}^{D_3}\sum_{j=1}^{D_2}\expectationPathwise{\sup_{t\in[0,1]}\abs{\partial_jf ^i(Z_t(\omega,y_1))-\partial_jf^i(Z_t(\omega,\tildeY_1))}^{2p}}\\
        &\leq\sum_{i=1}^{D_3}\sum_{j=1}^{D_2}\expectationPathwise{\sup_{t\in[0,1]}\sum_{\abs{\sigma}=0}^{\deg\partial_jf^i}\abs{f_\sigma^{i,j}(Z_t(\omega,y_1))^\sigma-Z_t(\omega,\tildeY_1)^\sigma)}^{2p}}.
    \end{align*}
    We see that we end up in the same situation as in the above calculation. Therefore, we proceed as before to conclude in summary that \begin{align*}\expectationPathwise{\sup_{t\in[0,1]}\abs{\mathrm{D}f(Z_t(\omega,y_1))Z_t'(\omega,y_1)-\mathrm{D}f(Z_t(\omega,\tildeY_1))Z_t'(\omega,\tildeY_1)}^p}\leq C\abs{y_1-\tildeY_1}^{p/2}.
   \end{align*}

    It remains to establish the third bound
    \begin{align*}
        \expectationPathwise{\normDalphaOmega{f(\Z(\omega,y_1))-f(\Z(\omega,\tildeY_1))}^p}\leq C\abs{y_1-\tildeY_1}^{p/2}.
    \end{align*}
    It is crucial to recall the continuity of composing controlled rough paths with regular functions as seen in \cite[Theorem 7.5]{FrizHairer2020}. More precisely, we refer to \cite[Proposition 2.3]{Geng2021_RoughPathsNotes}. Upon close inspection of the proof of \cite[Proposition 2.3]{Geng2021_RoughPathsNotes}, we first discover that the constant $M(\dots)$ is in fact a polynomial of its inputs, and second that it is enough to substitute $\norm{F}{C^3(\zeta)}$ for $\norm{F}{C^3_{\mathrm{b}}}$, both following the notation of \cite[Proposition 2.3]{Geng2021_RoughPathsNotes}, where $\zeta:=\zeta_1\cup \Tilde{\zeta}_1:= \conv(\range(Z(\omega,y_1)))\cup\conv(\range(Z(\omega,\tildeY_1)))$ and, for any set $S\subseteq\R^{D_2}$, we write $\conv(S):=\{(1-\lambda )s_1+\lambda s_2:s_1,s_2\in S,\lambda\in[0,1]\}$.
    We apply the estimate from \cite[Proposition 2.3]{Geng2021_RoughPathsNotes}, as well as Hölder's inequality, to obtain
    \begin{align*}
        &\quad\expectationPathwise{\normDalphaOmega{f(\Z(\omega,y_1))-f(\Z(\omega,\tildeY_1))}^p}\\
        &\leq\left(\expectationPathwise{\norm{f}{C^3(\zeta)}^{3p}}\right)^{1/3}\\
        &\quad\cdot\left(\expectationPathwise{M\left(\abs{Z_0'(\omega,y_1)},\abs{Z_0'(\omega,\tildeY_1)},\normDalphaOmega{\Z(\omega,y_1)},\normDalphaOmega{\Z(\omega,\tildeY_1)},\normCalpha{\W(\omega)}\right)^{3p}}\right)^{1/3}\\
        &\quad\cdot\left(\expectationPathwise{\left(\normDalphaOmega{\Z(\omega,y_1)-\Z(\omega,\tildeY_1)}+\abs{Z_0(\omega,y_1)-Z_0(\omega,\tildeY_1)}+\abs{Z_0'(\omega,y_1)-Z_0'(\omega,\tildeY_1)}\right)^{3p}}\right)^{1/3}.
    \end{align*}
    We must control all three factors. The second factor is $\leq C$ by standard procedures and Lemma \ref{rde lem compactness lemma} and Proposition \ref{rde prop consequences of heavy machinery}, since $M(\dots)$ is a polynomial of its inputs. Moreover, the third factor is $\leq C\abs{y_1-\tildeY_1}^{p/2}$ by means of usual arguments and the assumption that $\Z$ satisfies $\conditionRDE$. To finish the proof, it is left to establish $\expectationPathwise{\norm{f}{C^3(\zeta)}^{3p}}\leq C$. Notice that $\norm{f}{C^3(\zeta)}\leq C\max_{|\gamma|=0,\dots,3}\sup_{z\in\zeta}\abs{\partial^\gamma f(z)} \leq C\max_{\abs{\gamma}=0,\dots,3}(\sup_{z\in\zeta_1}\abs{\partial^\gamma f(z)}+\sup_{z\in\Tilde{\zeta}_1}\abs{\partial^\gamma f(z)})$ for $\gamma\in\N_0^{D_2}$, which gives that the first factor is bounded via
    \begin{align*}
        \expectationPathwise{\norm{f}{C^3(\zeta)}^{3p}}&\leq C\max_{\abs{\gamma}=0,\dots,3} \Bigg(
        \expectationPathwise{\sup_{s,t\in[0,1]}\sup_{\lambda\in[0,1]}\abs{\partial^\gamma f((1-\lambda)Z_s(\omega,y_1)+\lambda Z_t(\omega,y_1))}^{3p}}\\
        &\quad\quad+\expectationPathwise{\sup_{s,t\in[0,1]}\sup_{\lambda\in[0,1]}\abs{\partial^\gamma f((1-\lambda)Z_s(\omega,\tildeY_1)+\lambda Z_t(\omega,\tildeY_1))}^{3p}}\Bigg).
    \end{align*}
    Of course, it suffices to show that, for a fixed $\gamma\in\N_0^{D_2}$ with $|\gamma|\in\{0,\dots,3\}$, both of the two remaining summands are bounded by $C$.
    The integrands coincide syntactically upon disregarding the tilde, which is why it is enough to study the first summand. As $\partial^\gamma f:\R^{D_2}\to\R^{D_3}$ and $[0,1]\to \R^{D_2},\,\lambda\mapsto \lambda x+(1-\lambda)y$ are polynomials, so too is $[0,1]\to \R^{D_3},\,\lambda\mapsto \partial^\gamma f(\lambda x+(1-\lambda)y)$, where $x,y\in\R^{D_2}$.  We utilize a standard Bernstein basis expansion
    that leads to
    \begin{align*}
        \partial^\gamma f(\lambda x+(1-\lambda)y)=\sum_{i=0}^{\deg \partial^\gamma f}\sum_{j=0}^{\deg \partial^\gamma f-i}\lambda^i(1-\lambda)^j(\partial^\gamma f)_{i,j}(x,y),
    \end{align*}
    where each $(\partial^\gamma f)_{i,j}:\R^{D_2}\times \R^{D_2}\to \R^{D_3}$ is some suitable polynomial.
    Inserting this formula into the first summand entails
    \begin{align*}
        &\quad\expectationPathwise{\sup_{s,t\in[0,1]}\sup_{\lambda\in[0,1]}\abs{\partial^\gamma f((1-\lambda)Z_s(\omega,y_1)+\lambda Z_t(\omega,y_1))}^{3p}}\\
        &=\expectationPathwise{\sup_{s,t\in[0,1]}\sup_{\lambda\in[0,1]}\abs{\sum_{i=0}^{\deg\partial^\gamma f}\sum_{j=0}^{\deg \partial^\gamma f-i} \lambda^i(1-\lambda)^j(\partial^\gamma f)_{i,j}(Z_s(\omega,y_1),Z_t(\omega,y_1))}^{3p}}.
    \end{align*}
    With $\lambda^i(1-\lambda)^j\in [0,2]$ and the fact that $(\partial^\gamma f)_{i,j}$ is a polynomial, standard procedures via elementary inequalities, Hölder's inequality and Lemma \ref{rde lem compactness lemma} let us conclude that the first summand is $\leq C$, which completes the proof.

\subsection{Proof of Proposition \ref{rde prop Thm 8.5.15}, \textit{(iii)}: The Invariance Property}\label{appendix rde proof of Thm 8.5.15}
    For the existence, we have to verify that $\phi_1(t,\omega)\xi_1(\omega)=\xi_1(\theta_t\omega)$, or equivalently that $\projection^s_1(\theta_t\omega)\phi_1(t,\omega)\xi_1(\omega)=\xi_1^s(\theta_t\omega)$ and $\projection^u_1(\theta_t\omega)\phi_1(t,\omega)\xi_1(\omega)=\xi_1^u(\theta_t\omega)$ for all $t\in\R$. For brevity, we only deal with the latter, but the former follows by a similar argument.
    First, we present the decisive calculation that achieves the invariance claim. This manipulation is quite delicate, which is why we ought to treat it very carefully and slowly. Still, some equality signs might raise some further questions that we will resolve afterwards. Also, we focus on $t\in\R_{\geq0}$; the case $t\in\R_{<0}$ works analogously. We compute
        \begin{align*}
    &\quad\xi^u_1(\theta_t\omega) \\
    &\overset{(1)}{=} - \sum_{k=0}^\infty \projection^u_1(\theta_t\omega) \left(\Phi_1(k, \theta_t\omega)^{-1} X^+_1(\theta_{t+k}\omega)\right) \\
    &\overset{(2)}{=} - \sum_{k=0}^\infty \projection^u_1(\theta_t\omega) \left(\int_{0}^1\Phi_1(u+k,\theta_t\omega)^{-1}P^1_0\,\mathrm{d}u+\int_0^{1} \Phi_1(u+k, \theta_t\omega)^{-1} P^1\,\mathrm{d}\W_u(\theta_{t+k}\omega)\right) \\
    &\overset{(3)}{=} - \sum_{k=0}^\infty \projection^u_1(\theta_t\omega) \left(\int_{t}^{t+1}\Phi_1(u+k-t,\theta_t\omega)^{-1}P^1_0\,\mathrm{d}u+\int_t^{t+1} \Phi_1(u+k-t, \theta_t\omega)^{-1} P^1\,\mathrm{d}\W_u(\theta_{k}\omega)\right) \\
    &\overset{(4)}{=} - \sum_{k=0}^\infty \projection^u_1(\theta_t\omega) \left(\int_{t}^{t+1}\Phi_1(t,\omega)\Phi_1(u+k,\omega)^{-1}P^1_0\,\mathrm{d}u+\int_t^{t+1}\Phi_1(t,\omega) \Phi_1(u+k,\omega)^{-1} P^1\,\mathrm{d}\W_u(\theta_{k}\omega)\right) \\
    &\overset{(5)}{=} - \sum_{k=0}^\infty  \Phi_1(t, \omega)\projection^u_1(\omega)\left(\int_{t}^{t+1}\Phi_1(u+k,\omega)^{-1}P^1_0\,\mathrm{d}u+\int_t^{t+1} \Phi_1(u+k,\omega)^{-1} P^1\,\mathrm{d}\W_u(\theta_{k}\omega)\right) \\
    &\overset{(6)}{=} \Phi_1(t, \omega) \Bigg( -\sum_{k=0}^\infty\projection^u_1(\omega)\left(\int_{0}^{1}\Phi_1(u+k,\omega)^{-1}P^1_0\,\mathrm{d}u+\int_0^{1} \Phi_1(u+k,\omega)^{-1} P^1\,\mathrm{d}\W_u(\theta_{k}\omega)\right)\\
    &\quad\quad+ \projection^u_1(\omega)\left(\int_0^t\Phi_1(u,\omega)^{-1}P^1_0\,\mathrm{d}u+\int_0^t \Phi_1(u, \omega)^{-1} P^1\,\mathrm{d}\W_u(\omega)\right) \Bigg)\\
    &\overset{(7)}{=} \Phi_1(t, \omega) \left( \xi^u_1(\omega) + \projection^u_1(\omega) \left(\int_0^t\Phi_1(u,\omega)^{-1}P^1_0\,\mathrm{d}u+\int_0^t \Phi_1(u, \omega)^{-1} P^1\,\mathrm{d}\W_u(\omega)\right) \right)\\
    &\overset{(8)}{=}\projection^u_1(\theta_t\omega)\phi_1(t,\omega)\xi_1(\omega).
    \end{align*}
    Apart from the fact that eight justifications remain, this computation establishes the desired invariance assertion. Finally, we supply these explanations.\\

    In (1), we just write down the definition of $\xi^u_1(\theta_t\omega)$.

        For (2), given partitions $\mathrm{P}\subseteq[0,1]$, the required manipulation reads
        \begin{align*}
            &\quad\Phi_1(k, \theta_t \omega)^{-1} X_1^+(\theta_{t+k} \omega) \\
            &\overset{\mathclap{(2.1)}}{=} \Phi_1(k, \theta_t \omega)^{-1} \bigg(\int_0^1\Phi_1(u,\theta_{t+k}\omega)^{-1}P^1_0\,\mathrm{d}u+\int_0^1 \Phi_1(u, \theta_{t+k} \omega)^{-1} P^1\,\mathrm{d}\W_u(\theta_{t+k} \omega)\bigg) \\
            &\overset{\mathclap{(2.2)}}{=} \Phi_1(k, \theta_t \omega)^{-1} \lim_{|\mathrm{P}| \rightarrow 0} \Bigg(\sum_{[u,v]\in\mathrm{P}}\Phi_1(u,\theta_{t+k}\omega)^{-1}P^1_0(v-u)+\sum_{[u,v] \in \mathrm{P}} \Big(\Phi_1(u, \theta_{t+k} \omega)^{-1} P^1 W_{u,v}(\theta_{t+k} \omega) \\
            &\quad\quad- \Phi_1(u,\theta_{t+k}\omega)^{-1}\Phi_1'(u,\theta_{t+k}\omega)\Phi_1(u,\theta_{t+k}\omega)^{-1}P^1\mathbb{W}_{u,v}(\theta_{t+k}\omega)\Big)\Bigg)\\
            &\overset{\mathclap{(2.3)}}{=}  \lim_{|\mathrm{P}| \rightarrow 0}\Bigg(\sum_{[u,v]\in\mathrm{P}}\Phi_1(k,\theta_t\omega)^{-1}\Phi_1(u,\theta_{t+k}\omega)^{-1}P^1_0(v-u) \\
            &\quad+\sum_{[u,v] \in \mathrm{P}} \Big(\Phi_1(k, \theta_t \omega)^{-1}\Phi_1(u, \theta_{t+k} \omega)^{-1} P^1 W_{u,v}(\theta_{t+k} \omega) \\
            &\quad\quad- \Phi_1(k, \theta_t \omega)^{-1}\Phi_1(u,\theta_{t+k}\omega)^{-1}\Phi_1'(u,\theta_{t+k}\omega)\Phi_1(u,\theta_{t+k}\omega)^{-1}P^1\mathbb{W}_{u,v}(\theta_{t+k}\omega)\Big)\Bigg)\\
            &\overset{\mathclap{(2.4)}}{=} \int_0^1\Phi_1(k,\theta_t\omega)^{-1}\Phi_1(u,\theta_{t+k}\omega)^{-1}P^1_0\,\mathrm{d}u+  \int_0^1\Phi_1(k, \theta_t \omega)^{-1} \Phi_1(u, \theta_{t+k} \omega)^{-1} P^1\,\mathrm{d}\W_u(\theta_{t+k} \omega)  \\
            &\overset{\mathclap{(2.5)}}{=} \int_0^1\Phi_1(u+k,\theta_t\omega)^{-1}P^1_0\,\mathrm{d}u+\int_0^{1} \Phi_1(u+k, \theta_t \omega)^{-1} P^{1}\,\mathrm{d}\W_u(\theta_{t+k} \omega).
        \end{align*}
        Identity (2.1) is just the definition of $X_1^+(\theta_{t+k}\omega)$, and (2.2) follows from inserting the definitions of the Riemann and the controlled integral paired with Lemma \ref{rde lem phi inverse is also controlled rough path}. Since $\Phi_1(k,\theta_t\omega)^{-1}$ is independent of the partition $\mathrm{P}$, we pull it inside the limit process in (2.3). Moreover, $\Phi_1(k,\theta_t\omega)^{-1}$ is independent of the integration variable (which is the same as being independent of $\mathrm{P}$), so that the Gubinelli derivative of $u\mapsto\Phi_1(k, \theta_t \omega)^{-1} \Phi_1(u, \theta_{t+k} \omega)^{-1} P^1 $ is really $u\mapsto\Phi_1(k, \theta_t \omega)^{-1} (\Phi_1^{-1})'(u, \theta_{t+k} \omega) P^1 $. Thus, the definitions of the Riemann and the controlled integral entail (2.4). Equality (2.5) simply uses the cocycle identity.

        The time-shift taking place in (3) was justified  in Lemma \ref{rde lem time-shift}.

        The cocycle identity $\Phi_1(u+k,\omega)=\Phi_1(u+k-t,\theta_t\omega)\Phi_1(t,\omega)$ readily yields $\Phi_1(u+k-t,\theta_t\omega)^{-1}=\Phi_1(t,\omega)\Phi_1(u+k,\omega)^{-1}$, which explains (4).

        To reason
        \begin{align*}
            &\quad\int_{t}^{t+1} \Phi_1(t, \omega)\Phi_1(u+k,\omega)^{-1} P^1_0\,\mathrm{d}u+\int_{t}^{t+1} \Phi_1(t, \omega)\Phi_1(u+k,\omega)^{-1} P^1\,\mathrm{d}\W_u(\theta_k\omega)\\
            &=\Phi_1(t, \omega)\left(\int_{t}^{t+1} \Phi_1(u+k,\omega)^{-1} P^1_0\,\mathrm{d}u+\int_{t}^{t+1} \Phi_1(u+k,\omega)^{-1} P^1\,\mathrm{d}\W_u(\theta_k\omega)\right),
        \end{align*}
        adjust the steps performed in (2.2), (2.3) and (2.4) above. After that, employ $\Phi_1(t,\omega)\projection^u_1(\omega)=\projection^u_1(\theta_t\omega)\Phi_1(t,\omega)$, as provided by the MET, Theorem \ref{thm MET}, to obtain (5).

        In (6), we first pull out $\Phi_1(t,\omega)$. Then, we calculate
        \begin{align*}
            &\quad-\sum_{k=0}^\infty\projection^u_1(\omega)\left(\int_{0}^{1}\Phi_1(u+k,\omega)^{-1}P^1_0\,\mathrm{d}u+\int_0^{1} \Phi_1(u+k,\omega)^{-1} P^1\,\mathrm{d}\W_u(\theta_{k}\omega)\right)\\
            &\quad\quad+ \projection^u_1(\omega)\left(\int_0^t\Phi_1(u,\omega)^{-1}P^1_0\,\mathrm{d}u+\int_0^t \Phi_1(u, \omega)^{-1} P^1\,\mathrm{d}\W_u(\omega)\right)\\
            &\overset{\mathclap{(6.1)}}{=}-\sum_{k=0}^{\lfloor t\rfloor-1 }\projection^u_1(\omega)\left(\int_{0}^{1}\Phi_1(u+k,\omega)^{-1}P^1_0\,\mathrm{d}u+\int_0^{1} \Phi_1(u+k,\omega)^{-1} P^1\,\mathrm{d}\W_u(\theta_{k}\omega)\right)\\
            &\quad- \projection^u_1(\omega)\left(\int_{0}^{1}\Phi_1(u+\lfloor t\rfloor,\omega)^{-1}P^1_0\,\mathrm{d}u+\int_0^{1} \Phi_1(u+\lfloor t\rfloor,\omega)^{-1} P^1\,\mathrm{d}\W_u(\theta_{\lfloor t\rfloor}\omega)\right)\\
             &\quad-\sum_{k=\lceil t\rceil}^{\infty}\projection^u_1(\omega)\left(\int_{0}^{1}\Phi_1(u+k,\omega)^{-1}P^1_0\,\mathrm{d}u+\int_0^{1} \Phi_1(u+k,\omega)^{-1} P^1\,\mathrm{d}\W_u(\theta_{k}\omega)\right)\\
             &\quad +\projection^u_1(\omega)\left(\int_{0}^{t}\Phi_1(u,\omega)^{-1}P^1_0\,\mathrm{d}u+\int_0^{t} \Phi_1(u,\omega)^{-1} P^1\,\mathrm{d}\W_u(\omega)\right)\\
             &\overset{\mathclap{(6.2)}}{=}-\sum_{k=0}^{\lfloor t\rfloor-1 }\projection^u_1(\omega)\left(\int_{k}^{k+1}\Phi_1(u,\omega)^{-1}P^1_0\,\mathrm{d}u+\int_k^{k+1} \Phi_1(u,\omega)^{-1} P^1\,\mathrm{d}\W_u(\omega)\right)\\
            &\quad- \projection^u_1(\omega)\left(\int_{\lfloor t\rfloor}^{t}\Phi_1(u,\omega)^{-1}P^1_0\,\mathrm{d}u+\int_{\lfloor t\rfloor}^{t} \Phi_1(u,\omega)^{-1} P^1\,\mathrm{d}\W_u(\omega)\right)\\
            &\quad- \projection^u_1(\omega)\left(\int_{t}^{\lceil t\rceil}\Phi_1(u,\omega)^{-1}P^1_0\,\mathrm{d}u+\int_t^{\lceil t\rceil} \Phi_1(u,\omega)^{-1} P^1\,\mathrm{d}\W_u(\omega)\right)\\
             &\quad-\sum_{k=\lceil t\rceil}^{\infty}\projection^u_1(\omega)\left(\int_{0}^{1}\Phi_1(u+k,\omega)^{-1}P^1_0\,\mathrm{d}u+\int_0^{1} \Phi_1(u+k,\omega)^{-1} P^1\,\mathrm{d}\W_u(\theta_{k}\omega)\right)\\
             &\quad +\projection^u_1(\omega)\left(\int_{0}^{t}\Phi_1(u,\omega)^{-1}P^1_0\,\mathrm{d}u+\int_0^{t} \Phi_1(u,\omega)^{-1} P^1\,\mathrm{d}\W_u(\omega)\right)\\
             &\overset{\mathclap{(6.3)}}{=}- \projection^u_1(\omega)\left(\int_{t}^{\lceil t\rceil}\Phi_1(u,\omega)^{-1}P^1_0\,\mathrm{d}u+\int_t^{\lceil t\rceil} \Phi_1(u,\omega)^{-1} P^1\,\mathrm{d}\W_u(\omega)\right)\\
             &\quad-\sum_{k=\lceil t\rceil}^{\infty}\projection^u_1(\omega)\left(\int_{0}^{1}\Phi_1(u+k,\omega)^{-1}P^1_0\,\mathrm{d}u+\int_0^{1} \Phi_1(u+k,\omega)^{-1} P^1\,\mathrm{d}\W_u(\theta_{k}\omega)\right)\\
             &\overset{\mathclap{(6.4)}}{=}-\sum_{k=0}^\infty\projection^u_1(\omega)\left(\int_t^{t+1}\Phi_1(u+k,\omega)^{-1}P^1_0\,\mathrm{d}u+\int_{t}^{t+1}\Phi_1(u+k,\omega)^{-1}P^1\,\mathrm{d}\W_u(\theta_k\omega)\right).
        \end{align*}
        In (6.1), we split the sum, and, in (6.2), we performed a time-shift similar to the time-shift in (3). In (6.3), we applied linearity of both Riemann and controlled integration in order to detect cancellations. In (6.4), we performed a routine time-shift and re-ordering, so that an index shift of the sum-variable produces the result. Note that we employed the common notation that $\lfloor t\rfloor:=\max\{ r\in\N_0: r\leq t\}$ and $\lceil t\rceil:=\min\{r\in\N_0:r>t\}$.

        Identity (7) is argued analogously to (2) and (1) backwards.

        For (8), we remember that the projection $\projection^u_1(\omega)$ is a linear map. Then, we again utilize the rule $\Phi_1(t,\omega)\projection^u_1(\omega)=\projection^u_1(\theta_t\omega)\Phi_1(t,\omega)$ to conclude with Duhamel's formula.\\

    This ends the proof of the existence. To treat uniqueness, copy the uniqueness part of the proof of \cite[Theorem 4]{Arnold1992}.
\end{document}